\renewcommand{\eprint}[1]{\href{https://arxiv.org/abs/#1}{{\tt arXiv:#1}}}
\newcommand{\R}{\mathbb{R}}
\newcommand{\C}{\mathbb{C}}
\newcommand{\Z}{\mathbb{Z}}
\renewcommand{\Im}{\text{Im}}
\newcommand{\ms}{\mathscr}
\newcommand{\mc}{\mathcal}
\newcommand{\mf}{\mathfrak}
\newcommand{\oh}{\mathcal{O}}
\newcommand{\bi}{\bar{i}}
\newcommand{\pbar}{\bar{\partial}}
\newcommand{\genK}{\mc{J}_\omega}
\DeclareMathOperator{\GL}{GL}
\DeclareMathOperator{\Spin}{Spin}
\DeclareMathOperator{\im}{Im}
\DeclareMathOperator{\Pic}{Pic}
\DeclareMathOperator{\hol}{hol}
\DeclareMathOperator{\Ann}{Ann}
\DeclareMathOperator{\curv}{curv}
\DeclareMathOperator{\Pf}{Pf}
\DeclareMathOperator{\Pfr}{Pfr}
\DeclareMathOperator{\End}{End}
\DeclareMathOperator{\Diff}{Diff}
\DeclareMathOperator{\Hom}{Hom}
\DeclareMathOperator{\Alt}{Alt}
\DeclareMathOperator{\shfHom}{\mathscr{H}\textnormal{\kern -3pt {\calligra\large om}}\,}
\DeclareMathOperator{\shfEnd}{\mathscr{E}\textnormal{\kern -3pt {\calligra\large nd}}\,}
\DeclareMathOperator{\Ext}{Ext}
\DeclareMathOperator{\shfExt}{\mathscr{E}\textnormal{\kern -3pt {\calligra\large xt}}\,}
\DeclareMathOperator{\IF}{IF}
\DeclareMathOperator{\HF}{HF}
\newcommand{\blank}{{-}}
\newcommand{\la}{\langle}
\newcommand{\ra}{\rangle}
\newtheorem*{theorem*}{Theorem}
\newtheorem{theorem}{Theorem}[section]
\newtheorem{corollary}[theorem]{Corollary}               
\newtheorem{proposition}[theorem]{Proposition}
\theoremstyle{definition}
\newtheorem{definition}[theorem]{Definition}             
\newtheorem{example}[theorem]{Example}                   
\newtheorem{remark}[theorem]{Remark}                   
\title{Supersymmetric Topological Sigma Models and Doubling Spaces}
\author{Daniel M.\ Halmrast}
\begin{document}

%--- Abstract
\begin{abstract}

Witten's topological B-model on a Calabi-Yau background is known to reproduce,
in the open string sector, the derived category of coherent sheaves.
When the target space is a complex torus, the topological model enjoys
a non-geometric symmetry known as T-duality, which relates the theories
on the torus and dual torus backgrounds.
By considering the ``double field theory'' of Hull and Reid-Edwards on the
product of a torus and its dual, T-duality occurs as a geometric symmetry
of the target space.

Building on the methods of Y.\ Qin, we propose a method of analyzing the
topological B-model on a torus in the doubled geometry framework which
naturally incorporates certain rank-one D-branes, providing a different
perspective on the derived category of the torus. In certain cases, the
intersection theory of these lifted branes correctly computes the BRST
cohomology of the B-model, and hence the derived Hom-spaces of the
corresponding line bundles. 
\end{abstract}

\maketitle

\tableofcontents

%=== Introduction 
\section{Introduction}

During the second superstring revolution of the 90's, many intricate and
surprising relationships were found between the physics of string theory and
the mathematics of complex geometry. Witten's topological string theory
models \cite{witten1991mirror} furnished a deep connection between the
low-energy physics of certain carefully-engineered string theory and certain
invariants of the complex manifold on which it propagates.

The result was extended to include exotic objects called ``D-branes'',
which appear as boundary states in the open string theory. Although these
naturally appear as coherent sheaves on the target manifold, 
Sharpe \cite{Sharpe:1999qz} found that these D-branes may appear
as more general objects in the bounded derived category of
coherent sheaves. The equivalence of categories between the category of
topological D-branes and the bounded derived category of
coherent sheaves was constructed by Aspinwall and Lawrence
\cite{aspinwall2001derived}, and the predicted connection between the
low-energy states of the physical theory and the ext-groups appearing in the
derived category was later computed by Katz and Sharpe
\cite{katz2002d}.

The dictionary between the
topological nonlinear sigma model of \cite{witten1991mirror} (and its
categorization in the sense of \cite{Sharpe:1999qz, aspinwall2001derived}) and the complex geometry
of the target space is not one-to-one. Rather, the string theory carries additional
structure that the target space geometry does not, and interpreting this additional
structure in terms of the geometry leads to many deep insights into the derived
category, including the far-reaching homological mirror symmetry conjecture of
Kontsevich \cite{kontsevich1995homological} and the intricate structures of
Bridgeland stability conditions \cite{bridgeland2007stability},
Donaldson-Thomas invariants \cite{Kontsevich:2008fj}, and Joyce structures
\cite{Joyce:2008pc}.

One such structure on the string theory appears in nonlinear sigma models into
geometries which are torus fibrations. The principle of ``T-duality'' states
that the string theory on a torus fibration is identical, via field
redefinitions, to the string theory on the dual fibration. Using T-duality, it
is possible to construct a string theory which locally behaves like a nonlinear
sigma model into a torus fibration, but which admits no global geometric
description \cite{Hull:2004in}. The doubled formalism, appearing concretely in
\cite{Siegel:1993th}\cite{Siegel:1993xq}, and formalized in \cite{Hull:2009mi},
attempts to geometrize T-duality by considering an associated string theory on
the product of a torus and its dual. 

Recently, the topological A-model was examined using this framework
\cite{qin2020coisotropic}.
There, Y.\ Qin proposed a way to double not only the ambient geometry
but also the A-type D-branes on the torus. He found that the intersection
theory of the doubled D-branes is closely related to the Fukaya category
of the original torus.

In this paper, we propose an extension of the ideas of \cite{qin2020coisotropic}
to the topological B-model, and explore the interplay between doubled geometry
and generalized complex geometry.
Section \ref{chapter:physics} covers the physics background which motivates
the main construction. We review the basics of the worldsheet perspective on
string theory, and the interplay between the worldsheet perspective and
target space geometry. T-duality is introduced, and an example of
a string theory with a non-geometric background is examined in detail.
We also see how supersymmetry on the worldsheet constrains the geometry
of the target space.

Section \ref{chapter:math} covers the prerequisite mathematics for the main
construction. Of primary importance is the theory of connections on line bundles,
which we approach from the analytic viewpoint. The key tool is the theory of
Cheeger-Simons differential characters, which is a refinement of cohomology
that tracks holonomy of $U(1)$ connections.
The theory simplifies beautifully on the torus, and we analyze the
analytic description of line bundles on tori, and their cohomology.
Here we introduce the notion of $E$-flatness, a condition on the connection
of a complex line bundle. The space of $E$-flat connections naturally
forms a torsor over the dual torus, and has a particularly nice description
in terms of differential characters.
Finally, we examine the theory of generalized complex geometry in the
sense of \cite{gualtieri2004generalized}, and the connection between generalized
submanifolds and $U(1)$-bundles.

\subsubsection*{Main Results}
The main results lie in section \ref{chapter:main}. We construct the doubling
torus following the ideas of \cite{qin2020coisotropic}, and propose a new
definition of a lift $\mathbb{L}$ of $E$-flat rank-one space-filling D-branes in the
topological B-model. This lift is naturally expressed in terms of differential
characters, and we show that our definition of a lift agrees with
\cite{qin2020coisotropic}. These lifts are naturally holomorphic Lagrangian
subtori under the canonical structures on the doubling space, and we explicitly
compute their intersection properties.

A concrete link is then established between the generalized complex geometry of
the base torus $X$ and the (ordinary) complex geometry of the doubled torus 
$\mathbb{X}$.
\begin{theorem*}[Theorem~\ref{thm:doubledTangentIsGCTangent}]
        The tangent bundle of $\mathbb{X}$ is canonically isomorphic
        (under $\sigma$) to the pullback of the sum of tangent and cotangent bundles
        of either $X$ or $\hat{X}$. That is,
        \begin{equation}
            T\mathbb{X}\cong \pi^*(TX\oplus T^*X)\cong \hat{\pi}^*(T\hat{X}\oplus T^*\hat{X})
        \end{equation}
        as smooth vector bundles.
\end{theorem*}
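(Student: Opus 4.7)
The plan is to exploit the product structure $\mathbb{X}=X\times\hat{X}$ together with the canonical flatness of the tangent bundle of any torus. Writing $X=V/\Lambda$ and $\hat{X}=V^*/\Lambda^*$ with $\Lambda^*\subset V^*$ the dual lattice, the projections $\pi\colon\mathbb{X}\to X$ and $\hat{\pi}\colon\mathbb{X}\to\hat{X}$ are the two obvious ones. The first step is the standard product decomposition
\begin{equation}
T\mathbb{X}\cong \pi^*TX\oplus\hat{\pi}^*T\hat{X},
\end{equation}
which already displays $T\mathbb{X}$ as a rank-$2n$ bundle that looks formally like a generalized tangent bundle.

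The content is then a pair of canonical identifications coming from the translation-invariant trivializations of a torus's tangent and cotangent bundles. Explicitly, left-invariant vector fields give $TX\cong X\times V$ and $T\hat{X}\cong\hat{X}\times V^*$, and dually $T^*X\cong X\times V^*$ and $T^*\hat{X}\cong\hat{X}\times V$ (using $V^{**}\cong V$). Pulling back to $\mathbb{X}$, both $\hat{\pi}^*T\hat{X}$ and $\pi^*T^*X$ become the trivial bundle on $\mathbb{X}$ with fiber $V^*$, and likewise both $\pi^*TX$ and $\hat{\pi}^*T^*\hat{X}$ become the trivial bundle with fiber $V$. Assembling these identifications, mediated by the duality pairing $V\times V^*\to\R$, yields
\begin{equation}
T\mathbb{X}\cong \pi^*TX\oplus\hat{\pi}^*T\hat{X}\cong \pi^*(TX\oplus T^*X)\cong \hat{\pi}^*(T\hat{X}\oplus T^*\hat{X}),
\end{equation}
and $\sigma$ is then read off as the composite.

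The main obstacle I anticipate is not the bundle-theoretic content, which is essentially forced by the product and torus structure, but rather verifying that the $\sigma$ produced here is indeed the isomorphism named in the theorem statement. That means tracing through the earlier definitions of the doubling space and its natural structures to confirm that the identification constructed by the translation-invariant trivializations coincides with the $\sigma$ used throughout the main construction, and in particular that it is compatible with the natural pairing on $TX\oplus T^*X$ inherited from generalized geometry and with the complex/symplectic structures on $\mathbb{X}$ used to define the lift $\mathbb{L}$. The bundle isomorphism is essentially tautological; the force of the theorem lies in this compatibility check.
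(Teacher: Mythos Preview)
Your bundle-theoretic argument via translation-invariant trivializations is correct and yields the right isomorphism, but it routes around the point of the theorem rather than through it. In the paper, $\sigma$ is not the name of the isomorphism: it is the symplectic form $\sigma = c_1(\mc{P})$ on $\mathbb{X}$, introduced just before the theorem. The phrase ``canonically isomorphic (under $\sigma$)'' means the isomorphism is \emph{defined} by contraction with $\sigma$. The paper's proof observes that the fibers of $\pi$ and $\hat{\pi}$ are transverse Lagrangian foliations for $\sigma$, so contraction $\hat{v}\mapsto \iota_{\hat{v}}\sigma$ sends $\hat{\pi}^*T\hat{X}$ isomorphically onto $\pi^*T^*X$ (and symmetrically for the other projection). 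This gives the map $(v,\hat{v})\mapsto (v,\iota_{\hat{v}}\sigma)$ directly, with no compatibility check left over.

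Your approach and the paper's agree in content: since $\sigma=\sum_i e_i\wedge e_i^*$ in the invariant frame, contraction with $\sigma$ \emph{is} the duality pairing you invoke. But by building the isomorphism from trivializations first and deferring the identification with $\sigma$ to a separate verification, you turn a one-line argument into an argument plus an unresolved obligation. The paper's route is shorter precisely because it takes $\sigma$ as the mechanism rather than as a property to be checked after the fact; this also makes the later generalized-geometry computations (e.g.\ that the neutral metric lifts to $F\sigma$, or that $T\mathbb{L}(\mc{L})$ is the generalized tangent bundle) immediate, since everything is already expressed through $\iota_{\bullet}\sigma$.
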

We also establish that our definition of a lift $\mathbb{L}(\mc{L})$ 
of a line bundle $\mc{L}$ lifts the generalized tangent bundle defined by
the line bundle to the tangent bundle of the Lagrangian lift.
\begin{theorem*}[Theorem~\ref{prop:tangentBundleLift}] 
        The tangent bundle of $\mathbb{L}(\mc{L})$ is the subbundle of $T\mathbb{X}$
        defined by
        \begin{equation}
            T\mathbb{L}(\mc{L}) = \{ (v,\hat{v})\in TX\oplus T\hat{X}\ |\ 
            \iota_{\hat{v}}\sigma = \iota_v E\}
        \end{equation}.
        That is, under the isomorphism $T\mathbb{X}\cong \pi^*(TX\oplus T^*X)$ the
        tangent bundle of $\mathbb{L}(\mc{L})$ is isomorphic to the pullback of
        the generalized tangent bundle of $(X,E)$.
\end{theorem*}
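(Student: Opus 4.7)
The plan is to exploit the fact that $\mathbb{L}(\mc{L})$ has been constructed as a subtorus of $\mathbb{X}$, so its tangent bundle is translation-invariant and completely determined by a single linear subspace of $T_0\mathbb{X}\cong T_0X\oplus T_0\hat{X}$. First I would recall, from the construction in Section~\ref{chapter:main}, the explicit parametrization of $\mathbb{L}(\mc{L})$ as the image of a section of $\hat{\pi}\colon\mathbb{X}\to X$ (equivalently, of $\pi$), where the section is determined by the $E$-flat connection on $\mc{L}$ read through its associated differential character. Because $\mathbb{L}(\mc{L})$ is abelian and translation-invariant, computing the tangent space at the origin is sufficient to identify $T\mathbb{L}(\mc{L})$ globally.

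Next I would differentiate this section at the identity to read off $T_0\mathbb{L}(\mc{L})$ as the graph of a linear map $TX\to T\hat{X}$. The $E$-flatness condition is designed precisely so that the infinitesimal variation of the section in the $\hat{X}$-direction reproduces contraction with $E$; this is essentially the content of how $E$-flat connections are parametrized as a torsor over $\hat{X}$ in the earlier sections. Consequently the graph takes the form $\{(v,\hat{v}) : \hat{v}=\Phi(\iota_v E)\}$, where $\Phi\colon T^*X\to T\hat{X}$ is the canonical isomorphism induced by $\sigma$, i.e.\ the very isomorphism underlying Theorem~\ref{thm:doubledTangentIsGCTangent}. Rewriting $\Phi$ intrinsically as contraction with $\sigma$ produces the stated equation $\iota_{\hat{v}}\sigma = \iota_v E$.

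From there, translation-invariance propagates the identification over all of $\mathbb{L}(\mc{L})$, and an appeal to Theorem~\ref{thm:doubledTangentIsGCTangent} shows that the described subbundle of $T\mathbb{X}$ is exactly the pullback of the graph $\{(v,\iota_v E)\}\subset TX\oplus T^*X$, which is by definition the generalized tangent bundle of the generalized submanifold $(X,E)$.

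I expect the main obstacle to be the interface between the differential-character presentation of the $E$-flat connection (from which $\mathbb{L}(\mc{L})$ is built) and the tangent-linear data needed for $T\mathbb{L}(\mc{L})$. One must verify that the derivative of the defining section matches $\iota_v E$ on the nose, with no residual ambiguity coming from gauge choices or from the torsor structure on the space of $E$-flat connections; this likely requires unpacking the Cheeger--Simons description of the connection in local coordinates adapted to the product structure of $\mathbb{X}$. Once this verification is in place, the remainder is routine linear algebra on the fibers of $T\mathbb{X}$, and compatibility with the isomorphism $\sigma$ is automatic by construction.
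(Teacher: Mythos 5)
Your proposal follows essentially the same route as the paper: both describe $\mathbb{L}(\mc{L})$ as the graph of the (affine) map $x\mapsto S_{-E}\otimes t_{-x}^*\mc{L}$, observe that its linear part is $\phi_{S_{-E}}$ with analytic representation $v\mapsto H(v,\blank)$, differentiate, and identify the result with $v\mapsto\iota_vE$ under the $\sigma$-induced isomorphism $T\hat{X}\cong T^*X$. The verification you flag as the main obstacle is exactly the step the paper carries out by comparing with the curvature computation in Theorem~\ref{thm:EFlatIsHolomorphicIsomorphism}, so the argument is sound and matches the paper's.
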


The main connection to physics is found in \cref{SSS:relationPhysicsExt},
where we make explicit connections to
the derived category of the topological B-model. 
In particular, we establish agreement between a certain holomorphic refinement
of Floer intersection
theory on the doubling space and the derived $\Hom$ on the base.
\begin{theorem*}[Theorem~\ref{thm:ExtIsIntersectionEqualChern}]
        Let $\mc{L}_1$ and $\mc{L}_2$ be two holomorphic line bundles with
        $c_1(\mc{L}_1) = c_1(\mc{L}_2)$. Then,
        \begin{equation}
            \Hom_B(\mc{L}_1,\mc{L}_2) 
            = \HF^*_\mc{J}(\mathbb{L}(\mc{L}_1),\mathbb{L}(\mc{L}_2))
        \end{equation}
        where $\HF^*_\mc{J}$ is the $\mc{J}$-holomorphic part of $\HF^*$,
        $\Hom_B$ is the B-model open string spectrum,
        and $\mc{J} = \mc{J}_J$ is the complex structure on $\mathbb{X}$
        induced by the complex structure on $X$ (\cref{ex:liftComplexStructure}).
    \end{theorem*}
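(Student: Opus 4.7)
The plan is to reduce both sides to the cohomology of the flat line bundle $\mathcal{L} := \mathcal{L}_1^{-1}\otimes\mathcal{L}_2$, whose first Chern class vanishes by hypothesis. On the B-model side, the statements of \cite{Sharpe:1999qz, aspinwall2001derived, katz2002d} recalled in the introduction identify the open string spectrum $\Hom_B(\mathcal{L}_1,\mathcal{L}_2)$ with $\Ext^*_{\mathcal{O}_X}(\mathcal{L}_1,\mathcal{L}_2)$, which on a complex torus is computed by Dolbeault cohomology:
\begin{equation}
\Hom_B(\mathcal{L}_1,\mathcal{L}_2)\;\cong\;\bigoplus_p H^{0,p}_{\bar\partial}\bigl(X,\mathcal{L}\bigr).
\end{equation}
Since $c_1(\mathcal{L})=0$, this either vanishes (if $\mathcal{L}$ is a non-trivial flat line bundle) or equals the exterior algebra $\Lambda^*\overline{V}^*$ with $V$ the complex tangent space at the origin (if $\mathcal{L}$ is holomorphically trivial, i.e.\ sits in the same component of $\Pic^0(X)$ as the structure sheaf). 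The task is to match this behaviour on the Floer side.

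The first key step is to pin down the geometry of the two Lagrangian lifts. By Theorem~\ref{prop:tangentBundleLift}, the tangent bundle of $\mathbb{L}(\mathcal{L}_i)$ depends on $\mathcal{L}_i$ only through its generalized tangent bundle, which in turn depends only on the $E$-flat form $E_i$ attached to the connection. On a torus, the imaginary part of $E_i$ is a constant representative of $c_1(\mathcal{L}_i)$ (this is where the hypothesis enters), so $E_1-E_2$ is an exact real form and the two generalized tangent bundles coincide up to a $B$-field shift by an exact form. Consequently $\mathbb{L}(\mathcal{L}_1)$ and $\mathbb{L}(\mathcal{L}_2)$ are parallel Lagrangian subtori in $\mathbb{X}$: they either coincide or are related by a pure translation in the direction of $\hat{\pi}^*T\hat{X}$. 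I would make this translation explicit by tracking it through the differential-character description of the lift introduced in section~\ref{chapter:math}, using $E$-flatness to show that the translation parameter is precisely the class of $\mathcal{L}$ in the dual torus.

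Next I would compute the Floer complex. Because the two lifts are either equal or parallel translates, I would follow the standard Morse--Bott / perturbation scheme: pick a Morse function on the coincidence locus (if $\mathcal{L}$ is trivial) or verify emptiness of intersection after small Hamiltonian perturbation (if $\mathcal{L}$ is non-trivial). In the trivial case the Morse--Bott generators assemble into the de~Rham complex of the Lagrangian torus; restricting to the $\mathcal{J}$-holomorphic refinement $\HF^*_{\mathcal{J}}$ amounts to keeping only the $(0,*)$-type pieces with respect to the complex structure $\mathcal{J}=\mathcal{J}_J$ inherited from $X$, which by Theorem~\ref{thm:doubledTangentIsGCTangent} factors through the splitting $TX\oplus T^*X$. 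This yields $\Lambda^*\overline{V}^*$, matching the Dolbeault side. In the non-trivial case both sides are zero.

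The main obstacle is the step that identifies $\HF^*_{\mathcal{J}}$ with the holomorphic refinement of the Morse--Bott model, rather than with ordinary Floer cohomology. I expect this to amount to checking that the pseudo-holomorphic strips contributing to the Floer differential, when constrained to be $\mathcal{J}$-holomorphic (as opposed to $(\mathcal{J},-\mathcal{J})$-holomorphic), are constant for this class of parallel Lagrangians on the doubled torus; this uses that $\mathbb{L}(\mathcal{L}_i)$ are holomorphic Lagrangians with respect to $\mathcal{J}$, so any $\mathcal{J}$-holomorphic strip with boundary on them must be constant by a standard energy or maximum-principle argument. Once this rigidification of the differential is established, the remaining identification with $H^{0,*}_{\bar\partial}(X,\mathcal{L})$ is essentially a bookkeeping exercise using the tangent bundle decomposition of Theorem~\ref{thm:doubledTangentIsGCTangent}.
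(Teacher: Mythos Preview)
Your overall strategy matches the paper's: reduce to the flat bundle $\mc{L}=\mc{L}_1^{-1}\otimes\mc{L}_2$, split into the cases $\mc{L}$ trivial versus nontrivial, and in the trivial case identify both sides with $H^{0,*}(X,\oh_X)$. Two points deserve comment.

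First, your argument that the lifts are parallel translates is correct but circuitous. The paper gets this in one line from \cref{prop:liftsAreEquivariant}: since $\mc{L}_2=\mc{L}_0\otimes\mc{L}_1$ with $\mc{L}_0\in\Pic^0(X)$, the lift map being $\Pic^0(X)$-equivariant gives $\mathbb{L}(\mc{L}_2)=\mathbb{L}(\mc{L}_1)+[\mc{L}_0]$ directly (this is \cref{thm:liftsDoNotIntersect}). You do not need to pass through generalized tangent bundles, $B$-field shifts, or the differential-character description. Also note that under the hypothesis $c_1(\mc{L}_1)=c_1(\mc{L}_2)$ the invariant curvature forms satisfy $E_1=E_2$ on the nose (they are the unique translation-invariant representatives of the same class), not merely up to an exact form.

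Second, and more importantly, the ``main obstacle'' you identify is not an obstacle at all in the paper's setup. Look at the definition immediately preceding the theorem: the $\mc{J}$-holomorphic part $\HF^*_{\mc{J}}(\mathbb{L},\mathbb{L})$ is \emph{defined} to be the $(0,*)$-summand of $H^*(\mathbb{L},\C)$ under the Hodge decomposition induced by $\mc{J}$. There is no Floer differential to rigidify, no strip counting, no maximum principle---the refinement is imposed by fiat on the already-computed self-Floer group $\HF^*(\mathbb{L},\mathbb{L})\cong H^*(\mathbb{L},\C)$. Your proposed energy argument for constancy of $\mc{J}$-holomorphic strips is a reasonable thing to want in a more honest Floer-theoretic treatment, but it is not what is being asked for here; the paper's proof in the equal case is literally the two-line Hodge identification $H^{0,*}(\mathbb{L},\C)\cong H^{0,*}(X,\C)\cong H^*(X,\oh_X)$.
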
 

\subsubsection*{Acknowledgements}
This work is largely based off of my doctoral thesis
\cite{halmrast2024supersymmetric} which was completed at the University of
California, Santa Barbara. I would like to thank in particular my advisor,
Dave Morrison, for teaching me all about the relevant string theory and complex
geometry for this problem, and for the invaluable insight and support throughout.

%=== Chapter Physics 
\section{Physics Background}
\label{chapter:physics}
Despite the main construction of this paper being purely mathematical
in nature, the motivation of the construction stems mainly from notions
in mathematical physics, namely string theory.
We will, therefore, take a detour through the landscape of string theory and
the primary motivating constructions found therein. We focus on two
independent constructions: \textit{T-duality}, a symmetry of the string theory
that is distinctly non-geometrical, and \textit{supersymmetry},
which provides the link between the physics and derived categories.

We follow the excellent exposition of \cite{Johnson:2023onr} for this section.

\subsection{The Bosonic String}
\label{SSE:bosonicString}

In the worldsheet perspective on string theory, the fundamental space-time
is taken to be the string worldsheet, a two-dimensional (one spatial and one
temporal) smooth manifold $\Sigma$ whose boundary components are all
diffeomorphic to $S^1$. It is imagined that the boundary components are ``incoming''
and ``outgoing'' closed string states, and the field theory computes the physics
of the interaction described by the worldsheet topology.

%--- Free Bosonic String 
\subsubsection{The Free Bosonic String}

The first physical theory to consider is the theory of a free scalar boson on
the worldsheet, which we review now. Take $\Sigma = S^1\times \R$
coordinatized by $(\sigma,\tau)$, with $\sigma$ periodic of period $2\pi$. 
Also, fix a pseudo-Riemannian metric $\gamma_{ab}$ on $\Sigma$ for which
$\sigma$ is spatial and $\tau$ is temporal. 
Then, let $X:\Sigma\to \R$ be a scalar function on $\Sigma$, which we take to be
a dynamical object. These objects are governed by the Polyakov action, a local
functional of $\gamma$, $X$ and their derivatives, defined as\footnote{
    We take throughout this section the convention of Einstein summation, where repeated indices
are to be summed over.}
    \begin{equation}
        S[\gamma,X] = \frac{-1}{4\pi \alpha'}\int_\Sigma 
        \left( \sqrt{-\gamma}\gamma^{ab}\partial_a X\partial_bX \right)dA
    \end{equation}
where $\alpha'$ is a positive real number known as the ``coupling constant''. 
The action governs the entire quantum field theory, and if the action is left
unchanged by some group action on $X$ or $\gamma$, that group action is 
said to be a \textit{symmetry} of the system. 

In fact, this action enjoys many symmetries. 
\begin{itemize}
    \item The group $\Diff_0(\Sigma)$ of small diffeomorphisms
    of $\Sigma$ acts locally on $X$ and $\gamma$. Given a vector field $\xi^a$,
        \begin{equation}
            \begin{aligned}
                X &\mapsto X+\xi^a\partial_a X\\
                \gamma^{ab} &\mapsto \gamma^{ab} + \xi^c\partial_c\gamma^{ab} 
                - \partial_c\xi^a\gamma^{cb}-\partial_c\xi^b\gamma^{ac}\\
            \end{aligned}
        \end{equation}
    \item The group $C^{\infty}(\Sigma)$ acts on the metric by Weyl scaling.
        Given a function $\omega:\Sigma\to \R$,
            \begin{equation}
                \begin{aligned}
                    X &\mapsto X\\
                    \gamma_{ab} &\mapsto e^{2\omega}\gamma_{ab}
                \end{aligned}
            \end{equation}
    \item The translation group $\R$ of $\R$ acts
        on the field $X$ by the natural action. Namely, for $a\in \R$,
            \begin{equation}
                \begin{aligned}
                    X&\mapsto X+a\\
                    \gamma^{ab}&\mapsto\gamma^{ab}
                \end{aligned}
            \end{equation}
\end{itemize}

To find the equations of motion of $X$, and hence the classical physics of this
system, the variational equation $\delta S = 0$ is imposed. Direct computation
reveals that the fields $X$ and $\gamma$ satisfy
\begin{equation}
    \sqrt{-\gamma}\nabla^2 X = 0
\end{equation}.
Using the action of Weyl scaling and $\Diff_0(\Sigma)$, the metric $\gamma$
can always be brought into standard form
\begin{equation}
    \gamma = 
    \begin{bmatrix}
        -1 &0\\
        0  &1
    \end{bmatrix}
    e^{\phi}
\end{equation}
where $e^\phi$ is a global scaling function known as the conformal parameter. 
In this choice of metric, the equations of motion become
\begin{equation}
    \left( \partial_\sigma^2 - \partial_\tau^2 \right)X = 0
\end{equation}.
This is easily solved using characteristics, from which we see that $X$
decomposes into left-moving and right-moving modes
\begin{equation}
    X(\sigma,\tau) = X_L(\tau+\sigma) + X_R(\tau-\sigma)
\end{equation}
. Imposing periodicity $X(\sigma+2\pi,\tau) = X(\sigma,\tau)$ allows the
solutions to be expanded in a Fourier series. Setting $\sigma^+ = \tau+\sigma$ and
$\sigma^- = \tau-\sigma$,
\begin{equation}
    \label{eq:classicalBosonicSolnLR}
    \begin{aligned}
        X_R(\sigma^-) &= \frac{1}{2}X_0 + \alpha' P_R(\sigma^-)
        +i\left( \frac{\alpha'}{2} \right)^{\frac{1}{2}}\sum_{n\neq 0} a_n e^{-in(\sigma^-)}\\
        X_L(\sigma^+) &= \frac{1}{2}X_0 + \alpha' P_L(\sigma^+)
        +i\left( \frac{\alpha'}{2} \right)^{\frac{1}{2}}\sum_{n\neq 0}\frac{1}{n} \tilde{a}_n
        e^{in(\sigma^+)}\\
    \end{aligned}
\end{equation}
where $X_0,P_{L,R}$ are constants, and $a_n$ and
$\tilde{a}_n$ are the Fourier coefficients. To enforce the reality conditions
${X}^*_R = X_R$ and ${X}^*_L = X_L$, the Fourier coefficients must 
satisfy
    \begin{equation}
        \begin{aligned}
            a_{-n} &= (a_n)^*\\ 
            \tilde{a}_{-n} &= (\tilde{a}_n)^*
        \end{aligned}
    \end{equation}.
The complete solution is then
\begin{equation}
	\label{eq:classicalBosonicSolnTotal}
	X = X_L+X_R = X_0 + \alpha'(P_L+P_R)\tau + \alpha'(P_L-P_R)\sigma + \text{oscillators}
\end{equation}
and single-valuedness around $\sigma\mapsto\sigma + 2\pi$ enforces $P_L-P_R = 0$.

\subsubsection{Many Free Bosons and Maps to Target Space} %---

Now that the theory of a single free boson has been analyzed, the generalization
to many non-interacting bosons is straightforward. As before, $\Sigma = S^1\times\R$
is coordinatized by $(\sigma,\tau)$, and $\gamma_{ab}$ is a pseudo-Riemannian
metric on $\Sigma$. Now, fix a positive integer $d$ and let $\mu=0,\ldots,d-1$
be an index running from $0$ to $d-1$, and take $d$ scalar maps
\begin{equation}
    X^\mu:\Sigma\to \R
\end{equation}
as the bosonic fields on $\Sigma$. By adding their actions together, 
we obtain a new theory in which the fields have no interaction with each other:
\begin{equation}
    S\left[\gamma,X^\mu\right] 
    = \frac{-1}{4\pi\alpha'}\int_{\Sigma} \left( 
        \sum_{\mu}\sqrt{-\gamma}\gamma^{ab}\partial_a X^\mu\partial_b X^\mu
    \right)dA
\end{equation}.
The action enjoys all the same symmetries as the single free boson, with now
$d$ copies of the Euclidean group of $\R$:
\begin{itemize}
    \item The small diffeomorphism group $\Diff_0(\Sigma)$ acts locally as before.
    \item The Weyl group $C^{\infty}(\Sigma)$ acts on the metric as before.
    \item The product of the translation groups of each bosonic field
        $\R^d$ acts by translation. 
        Given $a^\mu\in\R^d$,
        \begin{equation}
            X^\mu\mapsto X^\mu + a^\mu
        \end{equation}
        still leaves the action invariant.
\end{itemize}
Although this action is the most obvious one to incorporate many bosonic fields,
the fact that it is not invariant under the full Euclidean group
of $\R^d$, but rather only the translation subgroup,
suggests an extension. 

Let $g_{\mu\nu} = \delta_{\mu\nu}$ be the identity matrix. We can insert this
into the action without modifying it
\begin{equation}\label{stringActionWithMetric}
    S\left[\gamma,X^\mu\right] 
    = \frac{-1}{4\pi\alpha'}\int_{\Sigma} \left( 
        \sum_{\mu}\sqrt{-\gamma}\gamma^{ab}\partial_a X^\mu\partial_b X^\nu g_{\mu\nu}
    \right)dA
\end{equation}.
This new action is exactly equivalent to the old action. Now, however,
we allow $g$ to transform under $E(d)$ in exactly the opposite way $X^\mu$ does,
resulting in a $E(d)$-invariant theory.
Specifically, for $\Lambda^\mu_\nu$ an $SO(d)$-transformation,
\begin{equation}
    \begin{aligned}
        X^\mu&\mapsto \Lambda^\mu_\nu X^\nu\\
        g_{\mu\nu} &\mapsto(\Lambda^{-1})_\mu^\rho(\Lambda^{-1})_\nu^\sigma g_{\rho\sigma}
    \end{aligned} 
\end{equation}.
This new $SO(d)$-action leaves the string action \eqref{stringActionWithMetric}
unchanged, so the symmetry group of this theory is enhanced to include all
of $E(d)$ instead of the subgroup of translations.

We emphasize that this model was built from worldsheet considerations,
and is only a theory of free scalar fields on a two-dimensional surface. From the
perspective of the worldsheet, the physics is that of $d$ independent, non-interacting,
chargeless distinguishable massless fields.
With our careful insertion of the identity matrix $g$ and extension of the
action of $SO(d)$, the theory now has a more ``geometric'' interpretation.

Let us now make the leap in identifying $X^\mu$ with coordinates of an 
immersion of $\Sigma$ into $\R^d$, thought of as a Riemannian manifold with
the standard flat metric. Fixing the standard basis for $\R^d$, define
\begin{equation}\label{eq:worldsheetImmersionEquation}
    \begin{aligned}
        \varphi &: \Sigma\to \R^d\\
        \varphi(\sigma,\tau) &= \left( X^1(\sigma,\tau),\ldots,X^d(\sigma,\tau) \right)
    \end{aligned} 
\end{equation}.
With this definition, the action \eqref{stringActionWithMetric} becomes
\begin{equation}\label{stringActionCoordinateFree}
    \begin{aligned}
        S[\gamma,\varphi] &= \frac{-1}{4\pi \alpha'}\int_\Sigma
        \left(\sqrt{-\gamma}\gamma^{ab}\partial_a\varphi^\mu \partial_b\varphi^\nu g_{\mu\nu}\right)\\
        &=\frac{-1}{4\pi \alpha'}\int_\Sigma
        \left(\sqrt{-\gamma}\gamma^{ab}(\varphi^*g)_{ab}\right)\\
    \end{aligned} 
\end{equation}
where $\varphi^*g$ is the pullback of the metric to the worldsheet.
This action is no longer dependant on the choice of orthonormal frame $X^\mu$,
but rather only on the map $\varphi$. Explicitly, due to the $E(d)$-invariance of the
action, switching coordinate systems to a new orthonormal coordinate system
\begin{equation}
    X^\mu\mapsto  \Lambda^\mu_\nu X^\nu + a^\mu
\end{equation}
for any $\Lambda\in SO(d)$ does not change the action. 
Equivalently, we were able to write the action in a coordinate-free manner
in \eqref{stringActionCoordinateFree} precisely due to this coordinate invariance.

With the action in the form of \eqref{stringActionCoordinateFree}, it is apparent
that the metric need not be the homogeneous flat metric on $\R^d$. Any choice
of metric $g$ on $\R^d$ (thought of as a Riemannian manifold) results in a 
well-defined action. 

\subsubsection{Descent to a Manifold} %---
\label{SSS:descentToManifold}

The previous construction reinterprets the theory of $d$ free bosons
as a theory of maps from $\Sigma$ into $\R^d$. The construction did not depend
on the global structure of $\R^d$, and was also local on $\Sigma$ as well.
Instead of treating the model as a global map into $\R^d$, let us instead
take everything to be done locally, and examine what happens.

Let $(X,g)$ be a Riemannian manifold, and 
\begin{equation}
    \begin{tikzcd}
        \coprod_\alpha U_\alpha\arrow[r, "\iota_\alpha"] & X
    \end{tikzcd} 
\end{equation}
an open covering of $X$ by coordinate charts. Define $U_{\alpha\beta}$ 
to be the intersection of $U_\alpha$ with $U_\beta$ in $U_\alpha$.
For any map $\varphi:\Sigma\to X$, define
\begin{equation}
    \varphi_{U_\alpha}:\varphi^{-1}(U_\alpha)\to U_\alpha
\end{equation}
to be the restriction of $\varphi$ to a coordinate chart.

On each $\varphi^{-1}(U_\alpha)$, $\varphi_{U_\alpha}$ maps into
an open subset of $\R^d$, and we can attempt to impose the familiar action of 
$\eqref{stringActionCoordinateFree}$. To keep things local, we instead
define the \textit{Lagrangian density} on this open set as follows.
Choose orthonormal coordinate functions $x^\mu$ on $U_\alpha$ and define
$\varphi^\mu := x^\mu\circ\varphi_{U_\alpha}^\mu$ to be the local $\mu$th coordinate
of $\varphi$. Then, define the local Lagrangian density to be the local worldsheet $2$-form
\begin{equation}
    \mc{L}_\alpha[\varphi_{U_\alpha}] = \sqrt{-\gamma}\gamma^{ab}\partial_a\varphi^{\mu}\partial_b\varphi^{\nu}g_{\mu\nu}dA
\end{equation}.
By nature of the $E(d)$-invariance of this functional, 
this two-form is independent of choice of coordinates. In fact, by $\Diff_0(\Sigma)$-invariance
this is also independent of choice of coordinates on $\Sigma$.

In particular, this means that on the intersections $U_{\alpha\beta}$ on
the target and $\varphi^{-1}(U_{\alpha\beta})$ on $\Sigma$ the worldsheet two-forms
$\mc{L}_{\alpha}$ and $\mc{L}_\beta$ agree.
Using this, we can glue the local densities into a global $2$-form $\mc{L}[\varphi]$
on all of $\Sigma$. Then, we can define the \textit{bosonic nonlinear sigma model}
action to be
\begin{equation}\label{eq:bosonicNLSMAction}
    S[\gamma,\varphi] = \frac{-1}{4\pi \alpha'}\int_\Sigma
    \sqrt{-\gamma}\gamma^{ab}(\varphi^*g)_{ab}dA
\end{equation}.

Let us reflect for a moment on this construction. The starting point was
a free field theory on the worldsheet $\Sigma$, where we thought of the $\Sigma$ as
space-time and the field as a particle on $\Sigma$. However, by careful
manipulation of the action, we found that a theory of $d$ distinguishable bosonic
particles on $\Sigma$ admits the Euclidean group $E(d)$ of $\R^d$ as a symmetry.
This symmetry allows us to reinterpret the worldsheet theory as instead a theory
governing immersions of $\Sigma$ into $\R^d$ via the action \eqref{stringActionCoordinateFree}.
Finally, using the symmetry of the Euclidean group and, in particular, the target
space coordinate-invariance of the action, the theory could be extended to
describe immersions of $\Sigma$ into arbitrary Riemannian manifolds.
From the worldsheet perspective, these are simply theories of $d$ distinguishable
bosonic particles propagating on $\Sigma$. However, from the target-space perspective
of the Riemannian manifold $X$, this is the theory of a propagating string
immersed in $X$.

This can be summarized in a general guiding principle: \textit{symmetries of
the worldsheet theory correspond to gluing data on the target space}. 

\begin{remark}
    The action \eqref{eq:bosonicNLSMAction} is not quite the most general
    one that could be taken. In particular, there is no demand that $g$ be
    a symmetric tensor, only that it is of type $(0,2)$. We could,
    in principle, replace $g$ with a sum $g+B$ where $B\in \mc{A}^2(X)$
    is a two-form on $X$. This term is called the \textit{B-field},
    and setting a nonzero value for the B-field has many interesting ramifications.
    
    For this paper, we will focus on the case $B=0$. However,
    even in this case the ability to shift the B-field to a nonzero value
    will play a role.
\end{remark}

\subsection{T-Duality of Strings on Tori}
    \label{SSE:tDuality}

    We have observed that the worldsheet theory of the string enjoys many
    symmetries. Some of these symmetries correspond to symmetries of the underlying
    worldsheet, like the $\Diff_0(\Sigma)$ and Weyl group symmetries as well
    as the worldsheet supersymmetries, while
    others were symmetries corresponding to the fields themselves, such as the
    $E(d)$-symmetry of $d$ scalar fields.
    We now explore a new symmetry enjoyed by both the bosonic string and the 
    superstring known as T-duality, which is of a different flavor.

    \subsubsection{Bosonic Strings on the Circle} %---
    
    Consider the bosonic sigma model defined in \cref{SSS:descentToManifold},
    and set the target to be the circle $S^1_R$ of radius $R$.
    This is the theory of a single bosonic field $X$ which is $2\pi R$-periodic
    in the sense that
    \begin{equation}
        \label{eq:bosonicIsPeriodicOnTarget}
        X(\sigma,\tau) = X(\sigma,\tau) + 2\pi R
    \end{equation}
    for all $(\sigma,\tau)\in\Sigma$. 

    Let us revisit the classical solution of \eqref{eq:classicalBosonicSolnLR}.
    There, we argued that $P_L=P_R$ since a shift of $\sigma\mapsto\sigma+2\pi$
    resulted in a shift of the field to
    \begin{equation}
        X(\sigma,\tau)\mapsto X(\sigma,\tau) + 2\pi\alpha'\left( P_L-P_R \right)
    \end{equation}.
    However, due to the condition \eqref{eq:bosonicIsPeriodicOnTarget},
    $P_L-P_R$ is no longer zero, but is instead a multiple of $\frac{R}{\alpha'}$.
    A solution with nonzero $P_L-P_R$ is called a \textit{winding mode}, and the
    integer $\frac{\alpha'(P_L-P_R)}{R}$ is the \textit{winding number}.
    The target space momentum $\alpha'(P_L+P_R)$ is also constrained since the
    target manifold is compact: $P_L+P_R$ must be an integral multiple of
    $1/R$.

    A computation of the mass spectrum \cite{Johnson:2023onr}*{Equation~4.15}
    reveals that a state with $P_L+P_R = \frac{n}{R}$ and 
    $P_L-P_R=\frac{2\pi w R}{\alpha'}$ has total mass
    \begin{equation}
        M^2 = \frac{n^2}{R^2} + \frac{w^2R^2}{\alpha'^2} +\text{oscillator energy}
    \end{equation}.
    As the limit $R\to\infty$ is taken, the momentum values become less
    separated, and approach a continuum as one would expect of a flat space solution.
    States of nonzero winding number become more and more massive, requiring more
    and more energy to create. In the limit these states freeze out,
    and the original solutions to the free boson are recovered.

    The limit $R\to 0$ is more interesting. In this limit, the momentum states
    are frozen out as the lowest nonzero momentum states gains more and more
    mass. It is now the winding states that approach a continuum. Effectively,
    a new dimension has appeared in which the ``winding states'' of the original
    circle theory become momentum states on the new dimension. Notice this is
    a purely ``stringy'' phenomenon; particles on $S^1$ can't wrap the circle,
    so although their momentum behaves the same as that of the string there
    is no winding state that appears. Only in the string theory do we see
    the effective new dimension appear.
    
    There is a symmetry here that relates these two modes. To see it,
    rewrite the solutions of \eqref{eq:classicalBosonicSolnLR} as
    \begin{equation}
        \label{eq:classicalTDualString}
        \begin{aligned}
            X_R(\sigma^-) &= \frac{1}{2}\left( X_0 - \hat{X}_0 \right) 
            + \alpha' P_R(\sigma^-)
            +\text{oscillators}\\
            X_L(\sigma^+) &= \frac{1}{2}\left( X_0+\hat{X}_0 \right)
            + \alpha' P_L(\sigma^+)
            +\text{oscillators}
        \end{aligned}
    \end{equation}
    where we have introduced the redundant coordinate $\hat{X}_0$. Now,
    consider the transformation
    \begin{equation}
        \label{eq:TDualTransformation}
        \begin{aligned}
            X_L&\mapsto X_L\\
            X_R&\mapsto -X_R
        \end{aligned}
    \end{equation}
    which sends our solution $X(\sigma,\tau)$ to
    \begin{equation}
        X\mapsto X_L-X_R = \frac{1}{2}\hat{X}_0 + \alpha'(P_L-P_R)\tau
        +`a'(P_L+P_R)\sigma
        +\text{oscillators}
    \end{equation}.
This solution is identical to the original solution
\eqref{eq:classicalBosonicSolnTotal} except
    $X_0$ has been replaced by $\hat{X}_0$, and $P_L+P_R$ has been replaced
    by $P_L-P_R$. Notice now that the momentum is quantized by values of
    $R/\alpha'$ instead of $1/R$. It is a remarkable fact that all aspects
    of the theory, including the quantum theory, also remain unchanged.
    \begin{theorem}
        [\protect{T-Duality on a Circle, \cite{Johnson:2023onr}*{Section~4.2}
        or \cite{Hori:2003ic}*{Section~11.2.2}}]
        \label{thm:TdualityCircle}
        The (quantum) nonlinear sigma model on a circle of radius $R$ is equivalent
        to the nonlinear sigma model on a circle of radius $R' = \alpha'/R$
        under the transformation \eqref{eq:TDualTransformation}.
    \end{theorem}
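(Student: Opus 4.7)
The plan is to establish the equivalence at two levels: classically (mapping solutions to solutions) and quantum mechanically (matching partition functions and operator algebras). Most of the classical work is already visible in equation \eqref{eq:classicalTDualString}: applying the transformation \eqref{eq:TDualTransformation} to the mode expansion \eqref{eq:classicalBosonicSolnLR} exchanges $P_L+P_R$ (quantized in units of $1/R$) with $P_L-P_R$ (quantized in units of $R/\alpha'$) and replaces $X_0$ by the auxiliary coordinate $\hat X_0$. First I would verify that the Polyakov action \eqref{stringActionWithMetric} is invariant under this sign flip; in conformal gauge the action decomposes into $(\partial_+ X_L)^2$ and $(\partial_- X_R)^2$ pieces, both manifestly insensitive to $X_R\mapsto -X_R$, while the oscillators transform as $a_n\mapsto -a_n$ consistently. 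This promotes the sign flip to a bijection between classical solutions of the $R$-theory and classical solutions of the $R'$-theory with $R'=\alpha'/R$.

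For the quantum equivalence I would take the worldsheet partition function on a genus-one surface of modular parameter $\tau$ as the primary invariant. The standard mode sum factors into an $R$-independent oscillator contribution times a Narain lattice theta series over the momentum and winding quantum numbers $(n,w)\in\Z^2$; the key algebraic check is that the exponent, built from the mass formula $M^2 = n^2/R^2 + w^2 R^2/\alpha'^2 + \text{oscillators}$, is invariant under the simultaneous interchange $R\leftrightarrow \alpha'/R$ and $n\leftrightarrow w$. A more conceptual and off-shell derivation proceeds via a parent (first-order) action: gauge the shift symmetry $X\mapsto X+a$ by introducing a worldsheet $U(1)$ connection $A$, and add a Lagrange multiplier $\tilde X$ coupled to the field strength of $A$. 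Integrating out $\tilde X$ constrains $A$ to be pure gauge and recovers the original sigma model at radius $R$, while Gaussian integration over $A$ produces the dual sigma model in the field $\tilde X$ at radius $\alpha'/R$.

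The main obstacle is the global topology rather than the local algebra. The Lagrange multiplier $\tilde X$ must be valued in the dual circle of radius $R'$, not in $\R$, and this periodicity only emerges from a careful treatment of large gauge transformations together with the quantization of holonomies of $A$ around nontrivial worldsheet cycles; equivalently, in the theta-series presentation one must show that the momentum and winding sublattices combine into a self-dual even lattice of signature $(1,1)$ on which $R\leftrightarrow \alpha'/R$ acts as an automorphism swapping the two sublattices. Once this topological point is settled, matching the remaining data is routine: a vertex operator $\exp(i p_L X_L + i p_R X_R)$ labelled by $(p_L,p_R)=(n/R+wR/\alpha',\ n/R-wR/\alpha')$ is carried by $X_R\mapsto -X_R$ precisely to the dual vertex operator labelled by the exchanged quantum numbers $(w,n)$, so the OPEs, correlators, and full CFT data agree.
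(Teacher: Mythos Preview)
The paper does not actually supply a proof of this theorem: it is stated with citations to \cite{Johnson:2023onr}*{Section~4.2} and \cite{Hori:2003ic}*{Section~11.2.2} and then used without further argument. Your proposal is therefore not competing with anything in the paper itself, but rather reconstructing the standard textbook argument from those references, and it does so correctly. The two-pronged strategy you outline---checking invariance of the conformal-gauge action and mode expansions under $X_R\mapsto -X_R$, then verifying that the genus-one partition function is invariant under $(R,n,w)\leftrightarrow(\alpha'/R,w,n)$ via the Narain lattice sum---is precisely the approach taken in the cited sources, and your alternative Buscher-style derivation (gauging the shift symmetry and integrating out the auxiliary connection) is the standard off-shell complement. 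You have also correctly identified the one genuinely nontrivial point: the periodicity of the dual coordinate $\tilde X$ is not visible locally and must be extracted from the quantization of holonomies and large gauge transformations, which is what forces $R'=\alpha'/R$ rather than leaving $\tilde X$ valued in $\R$.
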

    
    This symmetry is called \textit{T-duality}, and is a worldsheet symmetry
    of the action, formed in terms of target space quantities, with no
    interpretation as a symmetry of the target space.
    
    %---

    \subsubsection{T-duality for Tori} %---
    \label{SSS:TDualityTori}
    The symmetry of T-duality can be applied to any factor of target space that
    is a circle. That is, if the target space is of the form 
    $M = X\times (S^1_{R_n})^n$, T-duality can be applied to each of the $n$
    circle factors resulting in a theory on circles of radii $R_n' = \alpha'/R_n$.

    This case was originally worked out in \cite{Narain:1985jj}.
    Consider the theory on a torus $T^n = \R^n/2\pi\Lambda$
    (following the conventions of \cite{Johnson:2023onr}*{Section~4.5}).
    The winding modes are parameterized by $\Lambda$, whereas the momentum modes
    are quantized to live on the dual lattice $\Lambda^*\subseteq(\R^n)^*$.
    Choosing a basis $e^a$ for $\Lambda$ and set $e^{*a}$ as the dual basis
    for $\Lambda^*$, let $X^a$ be the corresponding coordinate function on
    the torus, and $P^a$ the dual coordinates on $(\R^n)^*/\Lambda^*$. 
    Then, there are classical solutions for any $\vec{n}=n^a\in\Lambda^*$ and
    $\vec{w}=w^a\in\Lambda$, similar to \eqref{eq:classicalTDualString}:
    \begin{equation}
        \begin{aligned}
            X_R^a(\sigma^-) &= \frac{1}{2}\left(X_0^a - \hat{X}_0^a\right)
            +\alpha' P_R^a(\sigma^-) +\text{oscillations}\\
            X_L^a(\sigma^+) &= \frac{1}{2}\left(X_0^a + \hat{X}_0^a\right)
            +\alpha' P_L^a(\sigma^+) +\text{oscillations}\\
        \end{aligned} 
    \end{equation}
    with
    \begin{equation}
        \begin{aligned}
            P_L^a+P_R^a = n^a\\
            P_L^a-P_R^a = w^a
        \end{aligned} 
    \end{equation}.
    T-duality acts by switching factors between the two lattices, which
    means it naturally acts on the enlarged space $\R^n\times(\R^n)^*$,
    which we equip with the neutral metric
    \begin{equation}
        G = 
        \begin{bmatrix}
            1_n &0\\
            0&-1_n
        \end{bmatrix} 
    \end{equation}
    which is positive-definite on $\R^n$ and negative definite on $(\R^n)^*$. 
    Our $P_L$ and $P_R$ naturally live in the lattice $\Lambda\times\Lambda^*$
    inside this space. Under the metric $G$ this lattice is self-dual and
    even. 
    
    The full symmetry group of $\R^n\times(\R^n)^*$ preserving the metric
    is $O(n,n)$ the indefinite orthogonal group, which acts transitively
    on the set of even self-dual lattices in $\R^n\times(\R^*)^n$. Since
    every nonlinear sigma model into a torus yields an even self-dual lattice
    of signature $(n,n)$ by this construction, the space of all torus theories
    is parameterized by $O(n,n)$.
    
    There is a large subgroup of $O(n,n)$, however, that leaves the theory
    invariant. The subgroup $O(n)\times O(n)$ acting on the $P_L$ and $P_R$
    factors separately leaves the whole theory invariant. Of more interest
    is the discrete subgroup $O(n,n;\Z)$ sending the self-dual lattice to itself.
    The T-duality transformations of \cref{thm:TdualityCircle} form the
    subgroup switching a generator of $\Lambda$ with its dual in $\Lambda^*$.
    There is also the subgroup $SL(n;\Z)$ which acts to preserve both $\Lambda$
    and $\Lambda^*$ separately, corresponding to the discrete group of
    symmetries of the target space torus.
    Finally, there is the subgroup of \textit{B-field transformations}
    which shift the background B-field by an integral form.

    \begin{remark}
        T-duality can be performed in the more general context of a torus
        fibration over a base. In this case, the fibers are dualized
        and the resulting theories remain isomorphic. The famous conjecture
        by Strominger, Yau, and Zaslow of SYZ mirror symmetry \cite{Strominger:1996it}
        roughly asserts that, allowing for torus fibrations with singular fibers,
        the phenomenon of mirror symmetry can be explained using T-duality
        in this way.
    \end{remark}
    
    %---

    \subsubsection{Geometric Realizations of T-duality: Double Field Theory}%---

    Let $X=T^n$ be an $n$-dimensional torus, and consider the nonlinear sigma
    model with target $X$ as in \cref{SSS:TDualityTori}.
    Our guiding principle is that \textit{symmetries of the worldsheet theory
    correspond to gluing data on the target space}. However, the new symmetry
    we found in \cref{SSS:TDualityTori} is distinctly non-geometric in flavor.
    That is, the symmetry group $O(n,n;\Z)$ does not act on the theory as any subgroup
    of $\Diff(\R^n)$, and cannot furnish gluing data in the usual way.

    One solution to this, originally due to
    \cite{Siegel:1993th}\cite{Siegel:1993xq} and
    formalized in \cite{Hull:2004in} \cite{Hull:2006va}
    is to consider a new type of geometry known as a ``T-fold''. Generally,
    a T-fold is a manifold in which gluing data can be taken to lie in
    the T-duality group as well as the standard diffeomorphism group. Formally,
    this means a T-fold is a collection of open sets with a torus fibration over
    them, and gluing data between the open sets which is allowed to lie
    in $O(n,n;\Z)$ as well.

    \begin{example}[\protect{c.f.\ \cite{Hull:2009sg}}]
        \label{ex:DFTTwists}
        As discussed in \cref{SSS:descentToManifold}, a well-defined
        worldsheet theory can be constructed locally and glued using
        symmetries of the Lagrangian. Consider a theory which is locally
        a nonlinear sigma model with target space a (trivial) $T^n$-fibration
        over the unit interval. The interval can be glued at the endpoints
        to form the base into an $S^1$, and now the fibers must be glued by
        a symmetry of the theory. As observed in \cref{SSS:TDualityTori},
        all symmetries of the theory occur as elements of $O(n,n)$.
        A general element of the Lie algebra $\mf{o}(n,n)$ can be written
        in block-diagonal form
        \begin{equation}
            N = 
            \begin{bmatrix}
                f &Q\\
                K &-f^T
            \end{bmatrix}
        \end{equation}
        and gluing data (corresponding to monodromy data around the base)
    is given by the $O(n,n)$ element $\exp(N)$ so long as $\exp(N)\in
    O(n,n;\Z)$. The three independent components of this matrix have geometric
    interpretations:
        \begin{itemize}
            \item The matrix $f\in \mf{sl}(n;\Z)$ corresponds to an element
                of the mapping class group of $T^n$, and thus induces a large
                diffeomorphism of $T^n$. The presence of $-f^T$ indicates that
                the induced diffeomorphism of the dual torus is simultaneously
                performed. In this case, the theory glues to a nonlinear sigma
                model with target the twisted $T^n$-bundle over $S^1$ defined
                by $\exp(f)$. 
            \item The matrix $K$ induces B-field transformations of the theory.
                Thus, the target space is the trivial $T^n$-bundle over
                the base $S^1$, but the B-field does not have a global description.
                Equivalently, the dual $T^n$-bundle cannot be globally defined.
            \item The matrix $Q$, called the \textit{T-fold flux}, induces
                a $T$-duality transformation on some of the circle factors.
                In this case, the target space geometry is a T-fold, and has no
                global description as a manifold.
        \end{itemize}
    \end{example}

    There is a geometric model for T-folds known as doubling spaces, and
    the corresponding string theory with T-fold target is known as
    \textit{double field theory}. 
    \begin{remark}
        In double field theory, redundant coordinates are added to the
        action which must be eliminated by hand using the \textit{section
        constraint}. This constraint does not arise as an equation of motion
        from the Lagrangian, and because of this the theory is much harder
        to quantize. Self-dual Yang-Mills theory also encounters a similar
        problem in imposing the self-duality conditions on the field strength
        tensor \cite{Belov:2006jd}. For this reason, we focus on the
        target space geometric aspects of the theory and not the worldsheet
        theory.
    \end{remark}

    Although the specifics of the worldsheet
    theory lie outside the scope of this paper, the geometric construction
    of the target space is the motivation for our main construction. With that
    in mind, we review a particular example of a doubling space built in \cite{Hull:2009sg}.

    \begin{example}
        Consider the three-dimensional (real) manifold which is a $T^2$-fibration
        over $S^1$ with monodromy given by the Dehn twist
        \begin{equation}\label{eq:nilfoldMonodromy}
            \begin{aligned}
                f &=
                \begin{bmatrix}
                    0&0\\
                    -m&0
                \end{bmatrix}
                \in \mf{sl}_2(\Z)\\
                e^f &=
                \begin{bmatrix}
                    1&0\\
                    -m&1
                \end{bmatrix}
                \in SL_2(\Z)
            \end{aligned}
        \end{equation}.
        In the language of \cref{ex:DFTTwists}, this corresponds to setting $f$
        as above with $K$ and $Q$ zero.
        
        Let us coordinatize this manifold in the following way. Let $x = x+1$
        be a periodic coordinate parameterizing the base $S^1$, and let
        $y=y+1,z=z+1$ be periodic coordinates on the $T^2$ fibers for which the circles
        $y=0,z=0$ form a basis for the fundamental group.
        Then, the monodromy action \eqref{eq:nilfoldMonodromy} acts as
        \begin{equation}\
            \begin{aligned}
                x &\mapsto x+1\\
                y&\mapsto y+mz\\
                z&\mapsto z
            \end{aligned}
        \end{equation}
        and thus the one-forms
        \begin{equation}\
            \begin{aligned}
                P^x &= dx\\
                P^y &= dy-mxdx\\
                P^z &= dz
            \end{aligned}
        \end{equation}
        are globally defined. 
        We then define the natural metric on this space as $g= P^xP^x+P^yP^y+P^zP^z$,
        which in matrix form is
        \begin{equation}\
            g =
            \begin{bmatrix}
                1&0&0\\
                0&1&-mx\\
                0&-mx&1+m^2x^2
            \end{bmatrix}
        \end{equation}.
        
        The key idea of double field theory, which is suggested on the worldsheet
        via the existence of $\hat{X}_0$ in \eqref{eq:classicalTDualString},
        is to consider the larger space which is a $T^2\times\hat{T}^2$-fibration
        over the base. That is, the fibers over the base $S^1$ are now diffeomorphic
        to $T^4$ with coordinates $(y,z,\tilde{y},\tilde{z})$ and the induced
        monodromy is, for $x\mapsto x+1$,
        \begin{equation}\
            \begin{bmatrix}
                y\\
                z\\
                \tilde{y}\\
                \tilde{z}\\
            \end{bmatrix}
            \mapsto
            \begin{bmatrix}
                1&m&0&0\\
                0&1&0&0\\
                0&0&1&0\\
                0&0&-m&1\\
            \end{bmatrix}
            \begin{bmatrix}
                y\\
                z\\
                \tilde{y}\\
                \tilde{z}\\
            \end{bmatrix}
        \end{equation}.
        The metric on the original $T^2$-fibration extends to a metric on
        the doubling space by setting the metric on the fibers to be
        \begin{equation}
            \label{eq:genMetricDFT}
            \mc{H} =
            \begin{bmatrix}
                g&0\\
                0&g^{-1}
            \end{bmatrix}
            =
            \begin{bmatrix}
                1& -mx&0&0\\
                -mx&1+m^2x^2 &0&0\\
                0&0&1+m^2x^2 &mx\\
                0&0&mx&1\\
            \end{bmatrix}
        \end{equation}.
        Alongside this metric there is a canonical constant metric $L$ which has
        signature $(2,2)$ on the fibers. In coordinates $(y,z,\tilde{y},\tilde{z})$:
        \begin{equation}
            L =
            \begin{bmatrix}
                0& 0&1&0\\
                0&0&0&1\\
                1&0&0&0\\
                0&1&0&0
            \end{bmatrix}
        \end{equation}.
        Notice that the original $T^2$ and the dual $T^2$ are both maximal isotropic
        submanifolds under $L$.
        
        This doubled manifold can also be viewed as the quotient of a five-dimensional
        Lie group by a discrete subgroup. Namely, consider the group
        $\mc{G}\subseteq GL_5(\R)$
        whose general element is of the form
        \begin{equation}\
            g(x,y,z,\tilde{y},\tilde{z}) =
            \begin{bmatrix}
                1&mx &0&0&y\\
                0&1&0&0&z\\
                0&0&1&0&\tilde{y}\\
                0&0&-mx&1&\tilde{z}\\
                0&0&0&0&1
            \end{bmatrix}
        \end{equation}
        with discrete subgroup given by its intersection with $GL_5(\Z)$, acting
        on the left. Explicitly the integral element
        $g(\alpha,\beta,\gamma,\tilde{\beta},\tilde{\gamma})$ acts on the coordinates
        as
        \begin{equation}
            \label{eq:globalMonodromyDFT}
            \begin{aligned}
                x&\mapsto x+\alpha\\
                y&\mapsto y+m\alpha z+\beta\\
                z&\mapsto z+\gamma\\
                \tilde{y}&\mapsto \tilde{y}+\tilde{\beta}\\
                \tilde{z}&\mapsto\tilde{z}-m\alpha\tilde{y}+\tilde{\gamma}
            \end{aligned}
        \end{equation}.

        To recover the original target space, a projection on the fibers must
        be specified. Thus, we define
        \begin{definition}
            A \textit{polarization} on a doubled space $\mathbb{X}$ is a projection
            operator
            \begin{equation}
                \begin{aligned}
                    \Pi&\in\End(T\mathbb{X})\\
                    \Pi^2 &= \Pi
                \end{aligned}
            \end{equation}
            onto a maximally isotropic subbundle of $T\mathbb{X}$. If
            the distribution defined by $\Pi$ is integrable, then the submanifold
            defined by $\Pi$ is said to be the \textit{physical space-time}.
        \end{definition}
        
        Selecting the projector $\Pi_G$ to project onto the $(y,z)$ torus recovers
        the original twisted $T^2$-fibration. However, alternate choices can be
        made. Consider the projector $\Pi_H$ defined by projection onto the
        $(\tilde{y},z)$ subspace. By the global description of the monodromy action
        in \eqref{eq:globalMonodromyDFT}, this projector is well-defined on the full
        $T^4$-bundle over $S^1$. Let us now consider rewriting everything in terms
        of the new splitting of the coordinates into $(\tilde{y},z)$ and $(y,\tilde{z})$.
        The metric \eqref{eq:genMetricDFT} in these coordinates is
        \begin{equation}
            \mc{H} =
            \begin{bmatrix}
                1+m^2x^2 &0&0&mx\\
                0& 1+m^2x^2 &-mx&0\\
                0&-mx&1&0\\
                mx&0&0&1
            \end{bmatrix}
        \end{equation}
        which is not of the form
        \begin{equation}
            \begin{bmatrix}
                g&0\\
                0&g^{-1}
            \end{bmatrix}
        \end{equation}
        and thus is not induced by a standard metric on the $T^2$-fibration.
        However, this metric is induced by a pair of a metric
        \begin{equation}
            g =
            \begin{bmatrix}
                1&0\\
                0&1
            \end{bmatrix}
        \end{equation}
        and a B-field
        \begin{equation}
            B = 
            \begin{bmatrix}
                0 &mx\\
                -mx &0
            \end{bmatrix}
        \end{equation}
        on the physical $T^2$ defined by the new projection. Such a target space
        is known as a \textit{$T^3$ with $H$-flux}, and is a nontrivial example
        of a target space with a B-field. In the language of \cref{ex:DFTTwists},
        this corresponds to a nonzero $K$ with zero $f$ and $Q$.
        
        Of interest to us, however, is the alternate choice of projector
        $\Pi_T$ defined by projection to the $(y,\tilde{z})$ subspace.
        Reference to the monodromy \eqref{eq:globalMonodromyDFT} shows that
        this projector is only locally defined, but does not glue to a global
        operator. Thus, this worldsheet theory, although globally defined on the
        worldsheet, lacks an interpretation as a nonlinear sigma model. Such a theory
        is sometimes called a \textit{nonlinear sigma model with a non-geometric
        background}.
        In the language of \cref{ex:DFTTwists}, this corresponds to a nonzero $Q$
        with zero $K$ and $f$.
    \end{example}

    The previous example illustrates that the family of worldsheet theories
    has models with no geometric interpretation as a nonlinear sigma model.
    However, by doubling the torus factors a geometric realization of certain
    non-geometric sectors can be realized. 
    We will formalize certain aspects of this construction mathematically
    in \cref{chapter:main}.

    %---

\subsection{Supersymmetry and the Superstring}
    \label{SSE:superstring}

\subsubsection{Supersymmetric Strings} %---
\label{SSS:supersymmetricStrings}

    Let $\Sigma = \R\times S^1$ again be the worldsheet, coordinatized by
    $(\tau,\sigma)$ with  $\sigma =\sigma + 2\pi$. Instead of considering
    the theory of a scalar function on $\Sigma$, as in \cref{SSE:bosonicString},
    we now add to the theory a complex spinor field. 
    Scalar functions (or, more generally, sections of bundles in the
    vector representations of $\Spin(1,1)$) are referred to as \textit{bosons},
    while sections of spinor bundles are referred to as \textit{fermions}.

    The starting point is the supersymmetric Lagrangian
        \begin{equation}
            S\left[ X^\mu,\psi_-^\mu,{\psi}_+^\mu \right] 
            = \frac{1}{4\pi}\int d^2\sigma \left\{ \frac{1}{\alpha'}\partial
                X^\mu\pbar X_\mu  + \psi_+^\mu \pbar\psi_{+\mu} +
            {\psi}_-^\mu\partial{\psi}_{-\mu}\right\}
        \end{equation}
    where we have used the symmetries of $\Diff_0(\Sigma)$ and the Weyl
    group to bring the metric $\gamma$ into standard form
    \begin{equation}
        \gamma = 
        \begin{bmatrix}
            1&0\\
            0&-1
        \end{bmatrix}
    \end{equation}.
    The $\psi_-^\mu$ are right-moving fermionic fields, and ${\psi}_+^\mu$ are
    their left-moving counterparts. 

    This Lagrangian enjoys all the symmetries of the bosonic string,
    as well as additional symmetries of the fermionic fields coming from
    the $\Spin(1,1)$-representation. However, there is a new type of symmetry
    this model enjoys which is qualitatively different than the others.
    Consider the symmetry given by
    \begin{equation}
        \begin{aligned}
            X^\mu&\mapsto X^\mu - \epsilon_-\psi_+^\mu + \epsilon_+\psi_-^\mu\\
            \psi_+^\mu&\mapsto\psi_+^\mu + 2i\bar{\epsilon}_-\partial X^\mu
                -2i\bar{\epsilon}_+\pbar X^\mu
        \end{aligned}
    \end{equation}
    where $\epsilon_{\pm}$ are sections of the inverse spinor bundles parameterizing 
    the transformation.
    This symmetry mixes bosons and fermions, and symmetries of this type
    are called \textit{supersymmetries}. This theory admitted one supersymmetry
    dimension for each chirality via $\epsilon_+$ and $\epsilon_-$, and so this
    model is said to be $\ms{N}=(1,1)$ supersymmetric.

    \begin{remark}
        Following the guiding principle of \cref{SSS:descentToManifold},
        one might expect these symmetries to correspond to a certain type
        of manifold for which the symmetries are gluing data. This perspective
        on supersymmetric sigma models leads to the \textit{superspace formalism},
        where the target space is a new type of geometric object called a
        \textit{supermanifold}. The details of this lie outside the scope
        of this paper, but can be found in e.g.\ \cite{Deligne:1999qp}.
    \end{remark}
    
    As before, this theory glues to a nonlinear sigma model into an arbitrary
    Riemannian manifold. It is shown in \cite{Zumino:1979et}
    and later generalized in \cite{Alvarez-Gaume:1981exv} that additional
    supersymmetries can be added with additional structure on the target manifold.
    The theory can be upgraded to an $\ms{N}=(2,2)$ theory, where 
    there are two linearly independent transformations of each chirality, provided
    there exists a $(1,1)$ tensor $J$ satisfying
    \begin{equation}
        \begin{aligned}
            J^2&=-1\\
            \text{Nij}(J) &= 0\\
        \end{aligned}
    \end{equation}
    where $\text{Nij}$ is the Nijenhuis tensor. This operator must also be
    parallel with respect to the metric. Such an operator defines an integrable
    almost complex structure on $X$, and the parallel condition enforces that
    the manifold is K\"ahler. 

\subsubsection{The Nonlinear Sigma Model} %---

Let $(X,g)$ now be a K\"ahler manifold. Denote by capital letters $I,J$, etc.\ 
indices for the underlying real coordinates on $X$, and by lowercase letters
$i,j$, etc.\ and $\bar{i},\bar{j}$, etc.\ indices for the holomorphic (resp.\ 
antiholomorphic) coordinates. Let $D$ be the spin connection on $\Sigma$, and
$R$ the Riemann tensor on $X$. Finally, fix square roots of the canonical bundle
on $\Sigma$ to serve as spinor bundles.
With this data, the supersymmetric nonlinear $\Sigma$-model on $X$ is given by
    \begin{equation}
        \begin{aligned}
            S\left[ \Sigma, f, \psi \right] &= \int_\Sigma \big(
                \frac{1}{2}(g_{IJ})\partial_zx^I\partial_zx^J\\
                &+ \frac{i}{2}g_{i\bar{i}}\psi_-^{\bar{i}} D_z\psi_-^i
                + \frac{i}{2}g_{i\bar{i}}\psi_+^{\bar{i}} D_z\psi_+^i
                \\
            &+(R_{i\bar{i}j\bar{j}}\psi_+^i\psi_+^{\bar{i}}\psi_-^j\psi_-^{\bar{j}})
                (idz\wedge d\bar{z})
            \big)
        \end{aligned}
    \end{equation}
    where $g_{IJ}$ is the Riemannian metric, and $g_{i\bar{i}}$ is the associated
    Hermitian metric.
The fields are taking values in
\begin{center}
    \begin{tabular}{|l|l|}
        \hline
        \textbf{Field} &\textbf{Bundle}\\
        \hline
        $\psi_+^i$ & $\sqrt{K}\otimes f^*(T_X^{(1,0)})$\\
        $\psi_+^{\bar{i}}$ & $\sqrt{K}\otimes f^*(T_X^{(0,1)})$\\
        $\psi_-^i$ & $\overline{\sqrt{K}}\otimes f^*(T_X^{(1,0)})$\\
        $\psi_-^{\bar{i}}$ & $\overline{\sqrt{K}}\otimes f^*(T_X^{(0,1)})$\\
        \hline
    \end{tabular}
\end{center}.
Here $f$ is generally a map from a Riemann surface $\Sigma$ in to $X$ with
components $x^I$.
Note that the first term is the familiar $f^*g$ term from the bosonic theory.
The last two terms ensure conformal invariance and $\ms{N}=(2,2)$ supersymmetry.

\subsubsection{Topological Twisting} %---
\label{SSS:topologicalTwisting}

In \cite{witten1991mirror}, Witten proposes a modification of the supersymmetric
nonlinear sigma model which has remarkably nice properties. These theories
are called the \textit{topological $A$ and $B$ models}, and they are formed by
tensoring the fermion bundles on the worldsheet with cleverly chosen line bundles.
We review the construction now.

The nonlinear sigma model can be twisted in two ways by tensoring the fermion
bundles with $\sqrt{K}$, its conjugate, and its dual, yielding the $A$ and $B$
model twisted string theories. The $A$ model twists the fermions to make them
take values in
\begin{center}
    \begin{tabular}{|l|l|}
        \hline
        \textbf{Field} &\textbf{Bundle}\\
        \hline
        $\psi_+^i$ & $f^*(T_X^{(1,0)})$\\
        $\psi_+^{\bar{i}}$ & $K\otimes f^*(T_X^{(0,1)})$\\
        $\psi_-^i$ & $\overline{K}\otimes f^*(T_X^{(1,0)})$\\
        $\psi_-^{\bar{i}}$ & $ f^*(T_X^{(0,1)})$\\
        \hline
    \end{tabular}
\end{center}
while the $B$ model takes values in
\begin{center}
    \begin{tabular}{|l|l|}
        \hline
        \textbf{Field} &\textbf{Bundle}\\
        \hline
        $\psi_+^i$ & $K\otimes f^*(T_X^{(1,0)})$\\
        $\psi_+^{\bar{i}}$ & $f^*(T_X^{(0,1)})$\\
        $\psi_-^i$ & $\overline{K}\otimes f^*(T_X^{(1,0)})$\\
        $\psi_-^{\bar{i}}$ & $ f^*(T_X^{(0,1)})$\\
        \hline
    \end{tabular}
\end{center}.
Either twist destroys half the supersymmetry, and the resulting twisted theory
only has two supercharges instead of four. 

We can examine the $B$ model in more detail: the field 
    \begin{equation}
            \eta^{\bar{i}} =
        \psi_+^{\bar{i}} + \psi_-^{\bar{i}}
    \end{equation}
is a section of $f^*T_X^{(0,1)}$,
the field
    \begin{equation}
        \rho^i = \psi^i_+ + \psi^i_-
    \end{equation}
is a section of $f^*\Omega_X$ (with $(1,0)$ and $(0,1)$ components split as above), and 
    \begin{equation}
        \theta_i = g_{i\bar{i}}\left(  \psi_+^{\bar{i}} - \psi_-^{\bar{i}}\right)
    \end{equation}
is a section of $f^*\Omega^{(1,0)}_X$. Taking products of these allows us to
associate to each $(0,q)$ form with values in $\bigwedge^pT^{(1,0)}_X$ a local
operator built from the fields. Namely, for $\theta\in
\Omega^q(X,\bigwedge^pT^{(1,0)}_X)$ given locally as
    \begin{equation}
        \theta = h_{\bi_1\dots\bi_q}^{j_1\dots j_p}
        d\bar{z}^{\bi_1}\wedge\dots\wedge d\bar{z}^{\bi_q}\otimes
        \partial_{j_1}\wedge\dots\wedge \partial_{j_p}
    \end{equation}
we can associate the operator $\oh_\theta$ defined locally as
    \begin{equation}
        \oh_\theta = h_{\bi_1\dots\bi_q}^{j_1\dots
        j_p}\eta^{\bi_1}\dots\eta^{\bi_q}\theta_{j_1}\dots\theta_{j_p}
    \end{equation}

The surviving supersymmetry $Q$ is given by
    \begin{equation}
        \label{eq:bmodelSymmetry}
        \begin{aligned}
            \delta x^i &= 0\\
            \delta x^{\bi} &= i\alpha\eta^{\bi}\\
            \delta \psi_+^i &= -\alpha\partial x^i\\
            \delta \psi_+^{\bi} &=
            -i\alpha\psi_-^{\bar{j}}\Gamma^{\bi}_{\bar{j}\bar{k}}\psi_+^{\bar{k}}\\
            \delta \psi^i_- &= -\alpha\pbar x^i\\
            \delta\psi_-^{\bi} &=
            -i\alpha\psi_{+}^{\bar{j}}\Gamma^{\bi}_{\bar{j}\bar{k}}\psi_-^{\bar{k}}
        \end{aligned}
    \end{equation}
where $\alpha$ parameterizes the deformation as a
section of $\sqrt{K}^{-1}$. To say that $Q$ is given by the
above means that $Q$ generates the deformation in the sense that
    \begin{equation}
        \delta W = -i\left\{ Q(\alpha), W \right\}
    \end{equation}
for any operator $W$.

The operator $Q$ satisfies $Q^2 = 0$ up to the equations of motion for the
fields,
and furthermore
    \begin{equation}
        \delta\oh_{\theta} = \left\{ Q,\oh_{\theta} \right\} = \oh_{\pbar \theta}
    \end{equation}.
    Thus, $Q$ acts on the space of fields, which are sections
    of $\Omega^q(X,\bigwedge^p T_X^{(1,0)})$, as the $\pbar$ operator.

In \cite{witten1991mirror}, Witten shows that the topologically twisted theories
localize, in the sense that the string states in the topological theory
are computed by $Q$-cohomology.
\begin{theorem}[\protect{\cite{witten1991mirror}*{Section~4.2}}]
    In the topological $B$-model, the string spectrum is given by the
    Dolbeault cohomology of $\Omega^q(X,\bigwedge^pT_X^{1,0})$.
\end{theorem}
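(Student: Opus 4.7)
The plan is to argue that physical states in the topologically twisted B-model are computed by the cohomology of the surviving BRST supercharge $Q$, and then to identify this $Q$-cohomology with the Dolbeault cohomology of $\Omega^q(X,\bigwedge^p T_X^{(1,0)})$ using the transformation rules already recorded in \eqref{eq:bmodelSymmetry}. The starting point is the general principle of cohomological field theory: after twisting, the subalgebra of $Q$-closed operators modulo $Q$-exact ones is a topological invariant, independent of worldsheet metric, and it is precisely this cohomology that computes the spectrum of physical states of the twisted theory. Because $Q^2=0$ holds on-shell, the cohomology is well-defined on the space of local operators modulo the equations of motion.

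First, I would classify the local operators. After the twist, the only fields with scalar (spin zero) character suitable for building point operators are $x^i$, $x^{\bi}$, $\eta^{\bi}=\psi_+^{\bi}+\psi_-^{\bi}$, and $\theta_i=g_{i\bi}(\psi_+^{\bi}-\psi_-^{\bi})$; the fields $\rho^i=\psi_+^i+\psi_-^i$ carry worldsheet spin and enter only through descent. Ghost-number counting then shows that a $Q$-closed scalar operator of bidegree $(p,q)$ has the form $\oh_\theta$ for some $\theta\in\Omega^q(X,\bigwedge^p T_X^{(1,0)})$. Operators involving worldsheet derivatives of the bosons, such as $\partial x^i$ and $\pbar x^i$, are rendered $Q$-exact by \eqref{eq:bmodelSymmetry}: one reads off $\{Q,\psi_+^i\}\propto\partial x^i$ and $\{Q,\psi_-^i\}\propto\pbar x^i$, so such derivative insertions drop out of $Q$-cohomology.

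Second, I would combine this classification with the already-established identity
\begin{equation}
    \{Q,\oh_\theta\}=\oh_{\pbar\theta}
\end{equation}
to identify the action of $Q$ on the space of operators $\{\oh_\theta\}$ with the action of $\pbar$ on $\Omega^q(X,\bigwedge^p T_X^{(1,0)})$. Consequently the space of $Q$-closed operators modulo $Q$-exact ones is exactly the Dolbeault cohomology $H^q_{\pbar}(X,\bigwedge^p T_X^{(1,0)})$. Combining this with the first step identifies the full string spectrum of the B-model with $\bigoplus_{p,q}H^q_{\pbar}(X,\bigwedge^p T_X^{(1,0)})$.

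The main obstacle is the completeness step: one must argue carefully that every local operator in the twisted theory is equivalent, modulo $Q$-exact terms and equations of motion, to one of the form $\oh_\theta$, so that nothing is missed by restricting attention to this class. This relies on the fact that the B-model localizes onto constant maps, which freezes derivatives of $x$ up to $Q$-exact terms, and on the twisted spin assignments which prevent $\rho^i$ from contributing scalar operators. Once this reduction is in place, the rest of the argument is the algebraic identification of $\{Q,\cdot\}$ with $\pbar$, which is essentially a direct consequence of \eqref{eq:bmodelSymmetry} and the definitions of $\eta^{\bi}$ and $\theta_i$.
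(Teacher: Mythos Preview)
The paper does not supply its own proof of this theorem: it is stated as a cited result from \cite{witten1991mirror}*{Section~4.2}, preceded only by the setup (the twisted fields, the operator map $\theta\mapsto\oh_\theta$, and the identity $\{Q,\oh_\theta\}=\oh_{\pbar\theta}$) and the sentence ``Witten shows that the topologically twisted theories localize, in the sense that the string states in the topological theory are computed by $Q$-cohomology.'' Your sketch is exactly the standard argument behind that citation, and it is consistent with the background the paper lays out; you correctly flag the completeness step (reducing all local operators to the $\oh_\theta$ class modulo $Q$-exact terms) as the nontrivial part, which is precisely the localization input the paper defers to Witten.
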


\subsubsection{Boundary Conditions for Open Strings} %---
\label{SSS:openStringBoundary}

Let us continue focusing on the twisted B-model.
When the worldsheet $\Sigma = \R\times [0,\pi]$ is the worldsheet of an open
string, additional terms can be added to the action which are integrals
of forms supported on the boundary $\partial\Sigma$. When the
target space $X$ supports a vector bundle $E$ with connection,
the \textit{Chan-Paton term} can be added to couple the ends of the string
to the vector bundle.
This amounts to adding a term to the action
    \begin{equation}
        \label{eq:boundaryActionChanPaton}
        S_{\partial\Sigma} = \oint_{\partial\Sigma}\left( f^*(A) -
        i\eta^{\bar{i}}F_{\bar{i}j}\rho^j \right)
    \end{equation}
where $A$ is the connection $1$-form for the vector bundle $E$, and $F$ is its
field strength.
The operators of the theory now take values in 
$\Omega^q(X,\bigwedge^pT^{(1,0)}_X)\otimes \shfEnd(E)$. However, 
constraints on the endpoint involving the field strength $F$ fix the values 
of $\theta$ in terms of $\eta$:
\begin{equation}
    \theta_j = F_{j\bar{k}}\eta^{\bar{k}}
\end{equation}
and so the surviving operators are simply $(0,q)$-forms on $X$ with values
in $\End(E)$.

This boundary term is invariant under $Q$ only for certain values of $A$.
By varying \eqref{eq:boundaryActionChanPaton} with the transformations
in \eqref{eq:bmodelSymmetry}, the change in the action is zero when $E$
is a holomorphic vector bundle and the connection is compatible 
with the holomorphic structure. The supersymmetry $Q$ acts on
operators by the dbar operator $\pbar_E$ on $E$, and the spectrum
is again computed using cohomology.

\begin{proposition}
    The string spectrum of the open string B-model with ambient vector
    bundle $E$ is given by bundle-valued Dolbeault cohomology
    of $\Omega^q(X)\otimes\shfEnd(E)$.
\end{proposition}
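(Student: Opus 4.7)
The plan is to adapt the localization argument that produced the closed-string B-model spectrum to the open sector with Chan-Paton boundary data. First I would extend the basic BRST localization: since $Q^2=0$ on shell and the action is $Q$-exact up to topological data, the open-string Hilbert space is computed by the cohomology of $Q$ acting on local boundary operators. This is the direct analogue of the closed-string statement that identifies the spectrum with the $Q$-cohomology of the $\oh_\theta$, only now the endpoints of the string are tethered to the bundle $E$ by the Chan-Paton term \eqref{eq:boundaryActionChanPaton}.

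Next I would identify the relevant space of boundary operators. The Chan-Paton coupling tensors boundary fields with $\shfEnd(E)$, so in principle one considers operators $\oh_\theta$ built from sections $\theta\in\Omega^q(X,\bigwedge^p T_X^{(1,0)})\otimes\shfEnd(E)$, contracted with products of $\eta^{\bar{i}}$ and $\theta_j$. As noted just above the statement, the variation of the boundary action imposes the constraint $\theta_j=F_{j\bar{k}}\eta^{\bar{k}}$, which algebraically solves all $\theta_j$ in terms of $\eta^{\bar{k}}$. Substituting this relation into $\oh_\theta$ collapses the space of independent boundary operators onto $(0,q)$-forms on $X$ valued in $\shfEnd(E)$, i.e.\ sections of $\Omega^q(X)\otimes\shfEnd(E)$.

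The core computation is to show that $Q$ descends to the twisted Dolbeault operator $\pbar_{\shfEnd(E)}$ on this reduced space. The starting input is the BRST-invariance of \eqref{eq:boundaryActionChanPaton}, which, as the preceding discussion records, requires that $E$ be holomorphic and that $A$ be the Chern connection for that holomorphic structure (in particular $F^{0,2}=0$). Using the supersymmetry transformations \eqref{eq:bmodelSymmetry}, I would compute $\{Q,\oh_\theta\}$ for a general $\theta\in\Omega^q(X)\otimes\shfEnd(E)$. The derivative terms acting on the coefficient functions reproduce $\pbar$ as in the closed-string calculation, while the connection pieces combine with the $\Gamma$-terms from $\delta\psi^{\bar i}_\pm$ and the Chan-Paton substitution $\theta_j=F_{j\bar k}\eta^{\bar k}$ to yield exactly the $(0,1)$-part of the induced connection on $\shfEnd(E)$. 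The outcome is the identity
\begin{equation}
    \{Q,\oh_\theta\} = \oh_{\pbar_{\shfEnd(E)}\theta},
\end{equation}
and the spectrum is therefore $\bigoplus_q H^q_{\pbar}(X,\shfEnd(E))$ by the Dolbeault theorem.

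The main obstacle will be the verification of the last identity: one must carefully track how the curvature-dependent pieces produced by $\delta\psi^{\bar i}_\pm$ and the boundary constraint on $\theta_j$ conspire to give precisely the induced $\pbar$-operator on $\shfEnd(E)$, with no stray contributions, and confirm that any potentially dangerous terms involve $F^{0,2}$ and hence vanish by the holomorphicity hypothesis on $E$. Once this cancellation is in place the conclusion is immediate.
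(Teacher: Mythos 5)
Your proposal follows essentially the same route as the paper: the paper's justification for this proposition is exactly the preceding discussion in \cref{SSS:openStringBoundary} — the Chan-Paton coupling \eqref{eq:boundaryActionChanPaton} tensors the boundary operators with $\shfEnd(E)$, the endpoint constraint $\theta_j = F_{j\bar{k}}\eta^{\bar{k}}$ collapses them to $(0,q)$-forms valued in $\shfEnd(E)$, $Q$-invariance of the boundary term forces $E$ to be holomorphic with compatible connection, and $Q$ then acts as $\pbar_E$ so localization identifies the spectrum with Dolbeault cohomology. Your write-up is in fact more explicit than the paper's (which states the result as a summary of standard physics arguments rather than giving a detailed verification of $\{Q,\oh_\theta\}=\oh_{\pbar_{\shfEnd(E)}\theta}$), but the logic is identical.
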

\begin{remark}
    In the topological A-model, a different supersymmetry is preserved. In
    order to preserve this supersymmetry, the endpoints no longer couple
    to holomorphic vector bundles but instead couple to \textit{coisotropic
    branes} \cite{Kapustin:2001ij}. The open string spectrum has not been fully
    defined in the A-model, but in the case of the endpoints coupling to
    Lagrangian submanifolds it is believed that the open string spectrum is
    computed via Lagrangian Intersection Floer Theory, defined in
    \cite{Fukaya2009Lagrangian}.
\end{remark}

In general, the open string can couple to many more objects than just holomorphic
vector bundles. By allowing the endpoints of a string to couple to
different objects on each end, the theory has the structure of a category
whose objects are boundary conditions strings can couple to, and whose $\Hom$-spaces
are given by the open string spectra with prescribed boundary conditions on each
end. These objects are called \textit{D-branes}.

In \cite{Ooguri:1996ck} a geometric description for D-branes preserving
the supersymmetry is outlined.
\begin{theorem}[\protect{\cite{Ooguri:1996ck}*{Section~2.3}}]
    The D-branes in a Calabi-Yau target space preserving B-type supersymmetry
    compatible with the B-model twist are exactly the holomorphic submanifolds
    supporting holomorphic vector bundles.
\end{theorem}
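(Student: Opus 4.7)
The plan is to analyze the boundary data that a D-brane imposes on the worldsheet fields and determine which such data is preserved by the surviving B-type supercharge $Q$. Since the excerpt has already treated the space-filling case (the brane is all of $X$ with vector bundle $E$), the new content is to allow the brane to have lower dimension and identify the consistency conditions. I parameterize a general D-brane by a pair $(Y,E)$ consisting of a real submanifold $Y\subseteq X$ together with a complex vector bundle $E\to Y$ equipped with a unitary connection $A$. The boundary conditions on the bosons split $f^*TX|_{\partial\Sigma}$ into Dirichlet directions (transverse to $Y$, with $\partial_\tau x$ set to zero) and Neumann directions (tangent to $Y$, with $\partial_\sigma x$ set to zero). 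The fermion boundary conditions are then specified by an orthogonal gluing map $R$ relating $\psi_+$ and $\psi_-$, which by standard worldsheet reasoning is $+1$ on Neumann directions and $-1$ on Dirichlet directions.

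Next, I impose that the B-type supersymmetry $Q$ of \eqref{eq:bmodelSymmetry} preserves these boundary conditions. The relevant variations include $\delta x^{\bar{i}} = i\alpha \eta^{\bar{i}}$ along with the induced variations of $\psi^i_\pm$. Applying $\delta$ to the bosonic boundary conditions and demanding consistency with the Dirichlet/Neumann split produces a single linear-algebra constraint: the gluing map $R$ must commute with the complex structure $J$. Because $R$ is diagonalized with eigenspaces $TY$ and $NY$, this forces $J(TY)=TY$, so $Y$ inherits an almost complex structure from $J$. Integrability of $J$ on $X$ then implies that the induced structure on $Y$ is also integrable, making $Y$ a holomorphic submanifold in the sense required.

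For the bundle data, I restrict the Chan-Paton coupling \eqref{eq:boundaryActionChanPaton} to $Y$ and compute $\delta S_{\partial\Sigma}$ under $Q$. After integrating by parts and using the bulk equations of motion (now restricted to $Y$ via the previous step), the surviving boundary terms vanish if and only if the $(0,2)$ part of the curvature $F$ of the connection on $E\to Y$ vanishes. By the Newlander--Nirenberg correspondence for complex vector bundles, $F^{(0,2)}=0$ is equivalent to $\pbar_E^{\,2}=0$, which endows $E$ with a holomorphic structure for which $A$ is the Chern connection of the induced Hermitian metric. Conversely, both conditions are sufficient: given a holomorphic $(Y,E)$ one reverses the above computations to check that $Q$ is indeed preserved.

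The main obstacle I expect is the boundary bookkeeping in the Chan-Paton variation. Under $Q$, the bulk fermion kinetic terms contribute boundary terms through integration by parts involving $\eta^{\bar{i}}$ and $\rho^i$ components; these must cancel exactly against $\delta S_{\partial\Sigma}$, and isolating the $F^{(0,2)}$ contribution requires carefully tracking which chirality combinations are free at $\partial\Sigma$ once the submanifold boundary conditions are imposed. A closely related subtlety is confirming that no further refinement of the data (a $B$-field shift, a gerbe twist, or an anomaly correction to the Chan-Paton bundle) is needed in the Calabi-Yau setting, so that the characterization $(Y,E) = $ (holomorphic submanifold, holomorphic bundle) is both necessary and sufficient with no hidden torsion or flux.
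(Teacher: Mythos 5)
The paper does not prove this statement: it is imported verbatim from \cite{Ooguri:1996ck}*{Section~2.3} as background, and the only related computation the paper carries out itself is the space-filling case (the variation of the Chan--Paton term \eqref{eq:boundaryActionChanPaton} under \eqref{eq:bmodelSymmetry}). Your reconstruction is essentially the argument of the cited reference: Dirichlet/Neumann split, a fermionic gluing map $R$ that must commute with $J$ for the B-type combination of supercharges to survive, hence $J(TY)=TY$ and $Y$ is a complex submanifold (an almost complex submanifold of an integrable complex manifold is automatically complex, so that step is sound); then $F^{(0,2)}=0$ from the boundary variation of the Chan--Paton coupling, which by Koszul--Malgrange gives the holomorphic structure on $E$. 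This is correct at the physics level of rigor at which the theorem is stated, and your flagged worries are the right ones: the cancellation between the integration-by-parts boundary terms of the bulk fermion kinetic term and $\delta S_{\partial\Sigma}$ is exactly where the bookkeeping lives, and the ``exactly'' in the statement holds only at this classical worldsheet level --- the paper's own remark immediately afterward concedes that the honest category of B-branes is larger (coherent sheaves, and ultimately the derived category), and in general there are Freed--Witten-type corrections you correctly note are absent in this leading-order analysis. No gap beyond what you have already identified.
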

\begin{remark}
    It is commonly accepted that D-branes should be parameterized by the
    \textit{coherent sheaves} on the target space, not just the vector bundles
    on submanifolds of the target. This is somewhat justified in
    \cite{aspinwall2001derived} where it is shown that certain deformations
    of the conformal field theory allow for D-branes to arise as cokernels
    of sheaf morphisms between the associated locally free sheaves.
    This leap from the category of holomorphic vector bundles to coherent
    sheaves is discussed in \cite{Aspinwall:2009isa}*{Section~5.3.3}.
\end{remark}

In \cite{katz2002d} the
B-model spectrum was computed between two such D-branes:
\begin{theorem}[\protect{\cite{katz2002d}}]
    \label{thm:katzSharpeExtGroups}
    Let $\mc{E}$ and $\mc{F}$ be two coherent sheaves on a Calabi-Yau manifold
    $X$. Then, the topological B-model open string spectrum from $\mc{E}$ to $\mc{F}$
    is given by the $\Ext$-groups
    \begin{equation}
        \Hom_B(\mc{E},\mc{F}) =\bigoplus^q \Ext^q(\mc{E},\mc{F})
    \end{equation}
\end{theorem}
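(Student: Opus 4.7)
The plan is to reduce the statement to the case of holomorphic vector bundles, which is already almost established by the preceding Proposition, and then to bootstrap to the case of coherent sheaves by passing through locally free resolutions and invoking the Aspinwall-Lawrence identification of B-branes with the derived category.

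First I would specialize to the case when $\mc{E}$ and $\mc{F}$ are both locally free. The previous Proposition computes the open string spectrum between a brane and itself as bundle-valued Dolbeault cohomology of $\shfEnd(E)$; the same derivation, tracking the two boundary components of the open worldsheet separately, replaces $\shfEnd(E)$ with $\shfHom(\mc{E}, \mc{F})$. Thus the spectrum is $\bigoplus_q H^q(X, \shfHom(\mc{E}, \mc{F}))$. Since $\mc{E}$ is locally free, the local Ext sheaves vanish, $\shfExt^p(\mc{E}, \mc{F}) = 0$ for $p>0$, and the local-to-global Ext spectral sequence
\begin{equation}
E^{p,q}_2 = H^p\bigl(X, \shfExt^q(\mc{E}, \mc{F})\bigr) \Rightarrow \Ext^{p+q}(\mc{E}, \mc{F})
\end{equation}
collapses, identifying the Dolbeault cohomology with $\Ext^q(\mc{E}, \mc{F})$. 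This gives the theorem whenever both sheaves are locally free.

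Second, I would extend to arbitrary coherent $\mc{E}$ and $\mc{F}$. Invoking the result of \cite{aspinwall2001derived} discussed in the remark above, every coherent sheaf arises physically as a bound state of locally free B-branes, so one may replace $\mc{E}$ and $\mc{F}$ by locally free resolutions $\mc{E}^\bullet \to \mc{E}$ and $\mc{F}^\bullet \to \mc{F}$. The open string spectrum between the complexes, read off the resolved worldsheet theory, is computed by the totalization of the Dolbeault double complex of $\shfHom(\mc{E}^\bullet, \mc{F}^\bullet)$. By a standard hypercohomology argument, this totalization computes $\Ext^q$ in the derived category, which is independent of the choice of resolution. Combining with Step 1 applied term-by-term yields $\Hom_B(\mc{E}, \mc{F}) = \bigoplus_q \Ext^q(\mc{E}, \mc{F})$.

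The principal obstacle is the second step: justifying that the B-model spectrum computed with resolved branes really agrees with the derived $\Ext$. The physical input is that BRST-trivial deformations relate the spectrum on $\mc{E}^\bullet$ to that on its cohomology $\mc{E}$, which is essentially the content of \cite{aspinwall2001derived}; taking this as a black box, the remainder is homological algebra. The Calabi-Yau hypothesis does not enter the computation of the spectrum itself but ensures that the B-model twist is genuinely a topological field theory (so that the localization used in the preceding Proposition is valid) and guarantees Serre duality on the target, which is needed for the spectrum to be finite-dimensional and graded symmetrically.
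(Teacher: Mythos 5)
The paper does not prove this statement at all: it is quoted verbatim from \cite{katz2002d} as imported physics input, so there is no internal argument to compare yours against. Judged on its own terms, your sketch is a reasonable reconstruction of the standard understanding, and your Step 1 is correct and complete for the case the present paper actually uses (space-filling rank-one branes, i.e.\ line bundles): for $\mc{E}$ locally free one has $\shfExt^p(\mc{E},\mc{F})=0$ for $p>0$, the local-to-global spectral sequence degenerates, and $H^q(X,\shfHom(\mc{E},\mc{F}))\cong\Ext^q(\mc{E},\mc{F})$ is pure homological algebra once the physics has delivered the Dolbeault complex.

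The gap is that your Step 2 black-boxes precisely the part that constitutes the actual theorem of \cite{katz2002d}. The hard case is not resolving a coherent sheaf by locally free ones on all of $X$; it is a brane wrapped on a proper submanifold $i:S\hookrightarrow X$ carrying a bundle $E$, where the naive boundary spectrum is $\bigoplus_{p,q}H^p(S,\Lambda^q N_{S/X}\otimes\shfHom(E,F))$ and this is only the $E_2$ page of a spectral sequence converging to $\Ext^{p+q}_X(i_*E,i_*F)$. The content of Katz--Sharpe is the physical demonstration that the BRST operator acquires corrections (from the curvature of the Chan--Paton bundle and the second fundamental form of the embedding) which realize the differentials of that spectral sequence, so that the true spectrum is the abutment rather than the $E_2$ term. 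Saying ``every coherent sheaf is a bound state of locally free branes, so pass to resolutions'' does not substitute for this: the Aspinwall--Lawrence tachyon-condensation argument you invoke establishes the derived-category structure of the brane category, but the identification of the resulting morphism spaces with the physically measured open-string spectrum on the original (unresolved) branes is exactly the nontrivial matching that \cite{katz2002d} performs. Your Calabi--Yau remark is essentially right ($c_1(X)=0$ is needed for the B-twist to be anomaly-free, and Serre duality gives the expected symmetry of the spectrum), but for the purposes of this paper the locally free case you prove in Step 1 is all that is ever used.
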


This was refined in \cite{aspinwall2001derived}, where the total category of such
D-branes was computed: 
\begin{theorem}[\protect{\cite{aspinwall2001derived}*{Section~2.7}}]
    The category of D-branes in the topological B-model on a Calabi-Yau
    manifold $X$ is equivalent to the derived category of coherent sheaves on $X$.
\end{theorem}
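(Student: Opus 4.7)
The plan is to promote the classical subcategory of D-branes consisting of holomorphic vector bundles to the full derived category $D^b(\mathrm{Coh}(X))$ by closing under tachyon condensation. The starting data is already favorable: by Ooguri--Oz--Yin the classical D-branes are holomorphic bundles on $X$, and by \cref{thm:katzSharpeExtGroups} the open-string spectrum between two such branes $\mc{E},\mc{F}$ is $\bigoplus_q \Ext^q(\mc{E},\mc{F})$, which is precisely the graded derived $\Hom$ restricted to locally free sheaves. So there is a fully faithful embedding of the subcategory of holomorphic vector bundles into both the category of D-branes and into $D^b(\mathrm{Coh}(X))$; the task reduces to producing a common closure.

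First I would fix the grading: the $U(1)_R$ charge on BRST cohomology gives an integer grading on open-string states, and I would identify the shift $[1]$ on the D-brane category with the shift of this grading, matching the homological shift on the sheaf side. Next I would set up tachyon condensation. Given a degree-one BRST-closed boundary operator $\phi \in \Ext^1(\mc{F},\mc{E})$, viewed as a map $\mc{F}\to \mc{E}[1]$, I would deform the Chan--Paton boundary action in \eqref{eq:boundaryActionChanPaton} by the corresponding worldsheet vertex operator and identify the resulting brane with $\Cone(\phi)$. Concretely one works with the super-bundle $\mc{E}\oplus \mc{F}[1]$ carrying an odd endomorphism built from $\phi$, and verifies (using $\{Q,\phi\}=0$ and the boundary variation computed in \cref{SSS:openStringBoundary}) that the deformed BRST differential is precisely $\pbar + \phi$, whose cohomology matches $\Ext^*$ computed on the cone.

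Third, I would use the standard fact that on a smooth projective variety every coherent sheaf admits a finite locally free resolution. Writing such a resolution as an iterated cone of morphisms between vector bundles realizes every object of $D^b(\mathrm{Coh}(X))$ as a D-brane obtained from the vector-bundle sub-category by repeated tachyon condensation. Finally I would verify compatibility: composition of boundary-changing operators should match Yoneda composition of Ext-classes, and the distinguished triangles on the D-brane side (coming from the universal triangle $\mc{F}\to \mc{E}\to \Cone(\phi)\to \mc{F}[1]$ in each condensation step) should match those of $D^b(\mathrm{Coh}(X))$. Both statements are already true at the vector-bundle level by Katz--Sharpe, and they extend to the closure by naturality.

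The main obstacle is the second step: justifying that the deformed worldsheet theory really has the BRST cohomology of $\Cone(\phi)$. A naive variation argument only shows that $\phi$ is marginal; to control the full cohomology one needs to work either non-perturbatively (summing the boundary interaction to all orders) or, following Aspinwall--Lawrence, to translate the problem into the $A_\infty$-structure on $\bigoplus_q \Ext^q$ coming from Massey products and invoke the fact that the derived category is the triangulated closure of vector bundles inside any $A_\infty$-category realizing these Ext groups. The Calabi--Yau hypothesis enters here through Serre duality, which ensures that the higher $A_\infty$-operations satisfy the cyclic symmetry needed for the closure to be well-behaved.
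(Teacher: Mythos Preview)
The paper does not prove this theorem: it is stated as a background result with a bare citation to \cite{aspinwall2001derived}*{Section~2.7}, and the text immediately moves on to the next section. There is therefore no ``paper's own proof'' to compare your proposal against.

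Your sketch is a reasonable outline of the standard Aspinwall--Lawrence argument (grading by $R$-charge, tachyon condensation realizing cones, locally free resolutions to generate all of $D^b(\mathrm{Coh}(X))$), and you correctly flag the delicate step, namely controlling the BRST cohomology of the condensed brane beyond the marginal-deformation level. But since the paper treats this as an imported result rather than something to be proved, no proof is expected here.
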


%=== Chapter Mathematics 
\section{Mathematics Background}
    \label{chapter:math}
    We now review the prerequisite mathematics for the main construction
    of the paper. We concern ourselves primarily with rank one,
    space filling B-type D-branes, as defined in \cref{SSS:openStringBoundary},
    and we set our target space to be a complex torus.
    In this chapter, we will develop the theory of line bundles with connections,
    focusing in particular on the construction of \textit{differential characters}
    by Cheeger and Simons, defined in \cite{simons1985differential}. We
    specialize these results to line bundles over complex tori, and review the
    computation of cohomology for these line bundles. Finally, we review the
    mathematics of \textit{generalized complex geometry}, developed in
    \cite{gualtieri2004generalized}.

    For this chapter, if $X$ is a projective complex manifold we denote by
    $\mc{A}^k$ the sheaf of smooth differential $k$-forms on $X$, and by
    $\mc{A}^{p,q}$ the subsheaves of $p,q$ forms given by the Hodge decomposition.
    In particular, $\mc{A}^k(X)$ is the module of global smooth $k$-forms.
    All forms are considered to take complex values, and we denote by $\mc{A}^k_\R$,
    etc.\ the corresponding subsheaf of real-valued forms.
    The sheaf of holomorphic functions is denoted $\oh_X$, and the sheaves of 
    holomorphic $k$-forms on $X$ are denoted by $\Omega^k$.
    If $\mc{L}$ is a line bundle over $X$, then denote by $\mc{A}^k(\mc{L})$ the
    sheaf of differential $k$-forms with values in $\mc{L}$, and by
    $\mc{A}^{p,q}(\mc{L})$ the sheaf of $(p,q)$ forms with values in $\mc{L}$.
    When necessary, the module of global sections of $\mc{A}^k(\mc{L})$
    will be denoted $\mc{A}^k(X, \mc{L})$.

\subsection{Connections on Holomorphic Line Bundles}
    For this section we follow \cite{huybrechts2005complex}.
    Let $X$ be a projective complex manifold and $\mc{L}\to X$ a complex
    line bundle. 

    \subsubsection{Connections and Structure on Line Bundles} % ---
    \begin{definition}
        A \textit{(smooth) connection} on $\mc{L}$ is a $\C$-linear sheaf morphism
            \begin{equation}
                \nabla:\mc{A}^0(\mc{L})\to\mc{A}^1(\mc{L})
            \end{equation} 
        satisfying the Leibniz rule
            \begin{equation}
                \label{eq:nablaLeibniz}
                \nabla(fs) = df\otimes s + f\otimes \nabla s
            \end{equation} 
        for $f$ a local smooth function on $X$ and $s$ a local smooth section of $L$.
    \end{definition}

    \begin{proposition}[\protect{\cite{huybrechts2005complex}*{Corollary~4.2.4}}]
        The set of all smooth connections on $\mc{L}$ naturally forms an affine space over
        $\mc{A}^1(X,\End(\mc{L}))\cong\mc{A}^1(X)$ the space of global
        one-forms with values in $\End(\mc{L})\cong C^\infty(X,\C)$ i.e. global
        complex one-forms.
    \end{proposition}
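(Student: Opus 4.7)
The plan is to verify the two axioms that characterize an affine space structure: the difference of two elements lies in the underlying vector space, and translation by a vector space element yields another element. Concretely, I will show that (i) given two connections $\nabla_1,\nabla_2$, the difference $\nabla_1-\nabla_2$ is $C^\infty$-linear and hence a tensor in $\mc{A}^1(X,\End(\mc{L}))$, and (ii) given a connection $\nabla$ and a one-form-valued endomorphism $A\in \mc{A}^1(X,\End(\mc{L}))$, the map $\nabla+A$ is again a connection. After that I just need to identify $\End(\mc{L})$ with the trivial line bundle of complex functions and note that the affine space is nonempty.

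For step (i), I would compute directly from the Leibniz rule \eqref{eq:nablaLeibniz}: for a local smooth function $f$ and local section $s$,
\begin{equation}
(\nabla_1-\nabla_2)(fs) = \bigl(df\otimes s+f\nabla_1 s\bigr)-\bigl(df\otimes s+f\nabla_2 s\bigr) = f\cdot(\nabla_1-\nabla_2)(s).
\end{equation}
Thus $\nabla_1-\nabla_2$ kills the derivation part of $\nabla_i$ and is $\mc{A}^0$-linear, so it extends to a morphism of sheaves of $\mc{A}^0$-modules $\mc{A}^0(\mc{L})\to \mc{A}^1(\mc{L})$, which by adjunction is the same datum as a global section of $\mc{A}^1\otimes \shfHom(\mc{L},\mc{L})=\mc{A}^1(\End(\mc{L}))$. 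For step (ii), given $A\in \mc{A}^1(X,\End(\mc{L}))$ and a connection $\nabla$, I compute
\begin{equation}
(\nabla+A)(fs) = df\otimes s+f\nabla s+fA(s) = df\otimes s+f(\nabla+A)(s),
\end{equation}
so $\nabla+A$ satisfies \eqref{eq:nablaLeibniz}, and $\C$-linearity is inherited from $\nabla$ and $A$.

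To finish, I would invoke the standard identification $\End(\mc{L})=\mc{L}\otimes \mc{L}^{*}$ with the trivial smooth complex line bundle $\underline{\C}$, under which sections of $\End(\mc{L})$ correspond to complex-valued smooth functions and $\mc{A}^{1}(X,\End(\mc{L}))\cong \mc{A}^{1}(X)$. The remaining point is nonemptiness of the affine space: cover $X$ by opens $U_\alpha$ trivializing $\mc{L}$, take the flat connection $d$ on each trivialization, and glue with a smooth partition of unity $\{\rho_\alpha\}$; the expression $\nabla = \sum_\alpha \rho_\alpha d_\alpha$ makes sense because any convex combination of connections is a connection (this follows from step (ii) applied to the differences, since the $\rho_\alpha$ sum to one).

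The argument is essentially formal; the only place one must be careful is that the combination in step (ii) is genuinely $\C$-linear (not merely $\R$-linear) and the tensor produced in step (i) really is globally defined rather than only locally — but both follow immediately because the Leibniz identity is imposed over $\mc{A}^0$ globally on $X$. I do not anticipate any substantive obstacle; the proof is a direct application of the Leibniz rule and the triviality of $\End$ of a line bundle.
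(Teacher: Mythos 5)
Your proof is correct and is the standard argument (difference of connections is tensorial, connection plus a tensor is a connection, existence via partition of unity, and $\End(\mc{L})\cong\mc{L}\otimes\mc{L}^*$ trivial for a line bundle); the paper itself offers no proof, deferring entirely to the cited Corollary 4.2.4 of Huybrechts, whose argument is the same as yours. No gaps.
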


    Recall that complex line bundles are simply smooth vector bundles with
    typical fiber a complex vector space, such that the transition functions
    can be chosen to be smooth functions with values in $\GL_1(\C)\cong \C^*$.
    In particular, a complex line bundle is given by the data of a $1$-cocycle
    $\psi\in H^1(X,\C^*)$.

    \begin{definition}
        \label{def:holomorphicStructure}
        A \textit{holomorphic structure} on a line bundle $\mc{L}$ is the data
        of a holomorphic $1$-cocycle $\psi\in H^1(X,\oh_X^*)$ which induces
        the defining cocycle of $\mc{L}$ in $H^1(X,\C^*)$.
    \end{definition}
    Alternatively, holomorphic line bundles are such that the projection map
    from the total space of $\mc{L}$ to the base space $X$ is a holomorphic map.

    Since the underlying manifold is complex, the hodge decomposition of forms
    allows us to decompose a connection $\nabla$ into its $(1,0)$ and $(0,1)$
    parts
        \begin{equation}
            \begin{aligned}
                \nabla &= \nabla^{1,0}+\nabla^{0,1}\\
                \nabla^{1,0} &= \Pi^{1,0}\circ\nabla:\mc{A}^0(\mc{L})\to \mc{A}^{1,0}(\mc{L})\\
                \nabla^{0,1} &= \Pi^{0,1}\circ\nabla:\mc{A}^0(\mc{L})\to \mc{A}^{0,1}(\mc{L})\\
            \end{aligned}
        \end{equation} 
    where $\Pi^{p,q}$ is the projector associated to the Hodge decomposition
        \begin{equation}
            \mc{A}^k(\mc{L}) = \bigoplus_{p+q=k}\mc{A}^{p,q}(\mc{L})
        \end{equation}.
    Although connections are in general non-canonical, the holomorphic structure
    on the base $X$ induces a natural operator on $\mc{L}$:
    \begin{proposition}
        The operator $\pbar:\mc{A}^{p,q}\to \mc{A}^{p,q+1}$ 
        extends naturally to a $\C$-linear operator 
            \begin{equation}
                \label{eq:dbarDef}
                \pbar_\mc{L}:\mc{A}^{p,q}(\mc{L})\to\mc{A}^{p,q+1}(\mc{L})
            \end{equation}.
        Moreover, this operator still satisfies the Leibniz identity:
        \begin{equation}
            \label{eq:dbarLeibniz}
            \pbar_\mc{L}(f\alpha) = \pbar(f)\wedge \alpha + f\pbar_\mc{L}(\alpha)
        \end{equation} 
        for all
        $f\in \mc{A}^0$ and $\alpha\in\mc{A}^{p,q}(\mc{L})$, and is square-zero
        \begin{equation}
            \label{eq:dbarSquareZero}
            \pbar_\mc{L}^2 = 0
        \end{equation}.
    \end{proposition}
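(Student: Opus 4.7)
The plan is to build $\bar{\partial}_{\mathcal{L}}$ locally using a holomorphic trivialization and then verify that the holomorphicity of the transition functions makes the construction independent of that choice. Concretely, let $\{U_\alpha\}$ be an open cover of $X$ trivializing $\mathcal{L}$ with holomorphic transition cocycle $\psi_{\alpha\beta} \in \mathcal{O}_X^*(U_\alpha \cap U_\beta)$, as supplied by \cref{def:holomorphicStructure}. A section $s \in \mathcal{A}^{p,q}(\mathcal{L})(U_\alpha)$ is represented by a $(p,q)$-form $s_\alpha \in \mathcal{A}^{p,q}(U_\alpha)$, with the compatibility $s_\alpha = \psi_{\alpha\beta} s_\beta$ on overlaps. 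Define $\bar{\partial}_{\mathcal{L}} s$ locally to be the $(p,q+1)$-form $\bar{\partial}(s_\alpha)$ in the trivialization over $U_\alpha$.

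First I would check well-definedness on overlaps. Applying $\bar{\partial}$ to the cocycle relation gives
\begin{equation}
    \bar{\partial}(s_\alpha) = \bar{\partial}(\psi_{\alpha\beta} s_\beta) = \bar{\partial}(\psi_{\alpha\beta}) \wedge s_\beta + \psi_{\alpha\beta} \bar{\partial}(s_\beta),
\end{equation}
and because $\psi_{\alpha\beta}$ is holomorphic, $\bar{\partial}(\psi_{\alpha\beta}) = 0$, so $\bar{\partial}(s_\alpha) = \psi_{\alpha\beta} \bar{\partial}(s_\beta)$. This is precisely the cocycle compatibility required for $\{\bar{\partial}(s_\alpha)\}$ to glue to a global section of $\mathcal{A}^{p,q+1}(\mathcal{L})$. $\mathbb{C}$-linearity is immediate from $\mathbb{C}$-linearity of the ordinary $\bar{\partial}$ in each trivialization.

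Next I would verify the Leibniz rule \eqref{eq:dbarLeibniz} and the square-zero condition \eqref{eq:dbarSquareZero}. Both reduce to the analogous identities for $\bar{\partial}$ on scalar-valued forms once we fix a local trivialization: if $f \in \mathcal{A}^0$ and $\alpha$ is represented by $\alpha_\beta$ on $U_\beta$, then $f\alpha$ is represented by $f\alpha_\beta$, so
\begin{equation}
    \bar{\partial}(f \alpha_\beta) = \bar{\partial}(f) \wedge \alpha_\beta + f\, \bar{\partial}(\alpha_\beta),
\end{equation}
which patches to the claimed formula. Similarly, $\bar{\partial}_{\mathcal{L}}^2 s$ is locally $\bar{\partial}^2 s_\alpha = 0$.

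The only real subtlety—and where care is needed—is the well-definedness step: one must insist on \emph{holomorphic} trivializations rather than arbitrary smooth ones, since a non-holomorphic change of frame would introduce a $\bar{\partial}\psi_{\alpha\beta}$ term that obstructs gluing. Once this is observed, the other properties are formal consequences of the corresponding properties of $\bar{\partial}$ on scalar forms, and no further analytic input is required.
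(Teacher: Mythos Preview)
The paper states this proposition without proof; it is quoted as background from Huybrechts. Your argument is the standard one and is correct: define $\bar\partial_{\mc L}$ in local holomorphic trivializations, use $\bar\partial\psi_{\alpha\beta}=0$ to see the local definitions glue, and then read off Leibniz and $\bar\partial_{\mc L}^2=0$ from the scalar identities. Your remark that holomorphic (rather than arbitrary smooth) frames are essential is exactly the point.
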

    A $\C$-linear operator satisfying the conditions \eqref{eq:dbarLeibniz}
    and \eqref{eq:dbarSquareZero} we call a \textit{dbar} operator on $\mc{L}$. 

    The dbar operators on $\mc{L}$ parameterize the holomorphic
    structures on the underling complex line bundle.
    \begin{theorem}[\protect{\cite{huybrechts2005complex}*{Theorem~2.6.26}}]
        The association of a holomorphic line bundle $\mc{L}$ with the canonical dbar
        operator $\pbar_\mc{L}$ is a bijective correspondence between the space of
        holomorphic structures on the underlying complex line bundle and the space
        of dbar operators on $\mc{L}$ satisfying the Leibniz rule and squaring to zero.
    \end{theorem}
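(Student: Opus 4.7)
The plan is to establish the bijection by exhibiting inverse constructions in each direction, where the content lies entirely in the reverse direction (producing a holomorphic structure from an abstract dbar operator).

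First, I would verify the forward direction. Given a holomorphic structure on $\mc{L}$, local holomorphic trivializations $s_\alpha$ exist with holomorphic transition functions $\psi_{\alpha\beta}$. Declaring $\pbar_\mc{L} s_\alpha := 0$ and extending by the Leibniz rule \eqref{eq:dbarLeibniz} gives a well-defined operator on $\mc{A}^{p,q}(\mc{L})$, since on overlaps $\pbar(\psi_{\alpha\beta} s_\beta) = \pbar \psi_{\alpha\beta} \cdot s_\beta + \psi_{\alpha\beta}\pbar_\mc{L} s_\beta = 0$ using holomorphicity of $\psi_{\alpha\beta}$. The Leibniz rule holds by construction and $\pbar_\mc{L}^2 = 0$ follows from $\pbar^2 = 0$ on the base applied in a local holomorphic frame.

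For the reverse direction, the central step is a local existence lemma: given any dbar operator $\pbar_\mc{L}$ satisfying \eqref{eq:dbarLeibniz} and \eqref{eq:dbarSquareZero}, and any point $x \in X$, there is a neighborhood $U$ and a nowhere-vanishing smooth section $s \in \mc{A}^0(U,\mc{L})$ with $\pbar_\mc{L} s = 0$. To prove this, I would start with an arbitrary nowhere-vanishing smooth local section $s_0$ on a small polydisc $U$; then $\pbar_\mc{L} s_0 = \alpha \otimes s_0$ for a unique $\alpha \in \mc{A}^{0,1}(U)$. The square-zero assumption forces
\begin{equation}
0 = \pbar_\mc{L}^2 s_0 = \pbar_\mc{L}(\alpha \otimes s_0) = \pbar \alpha \otimes s_0 - \alpha \wedge \alpha \otimes s_0 = \pbar\alpha \otimes s_0,
\end{equation}
so $\pbar\alpha = 0$. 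By the classical Dolbeault--Poincar\'e lemma on a polydisc, there exists $f \in \mc{A}^0(U)$ with $\pbar f = -\alpha$; setting $s := e^f s_0$ gives $\pbar_\mc{L} s = (\pbar f + \alpha) e^f \otimes s_0 = 0$. The hard part of the theorem is hidden here, and reduces to the $\pbar$-Poincar\'e lemma; in the higher-rank Koszul--Malgrange setting one would instead invoke Newlander--Nirenberg, but the line bundle case is genuinely easier because $\alpha\wedge\alpha$ vanishes for degree reasons.

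Having obtained local $\pbar_\mc{L}$-flat nowhere-vanishing sections $s_\alpha$ on a cover $\{U_\alpha\}$, I would define transition functions by $s_\alpha = \psi_{\alpha\beta} s_\beta$ on overlaps. Applying $\pbar_\mc{L}$ and using \eqref{eq:dbarLeibniz} gives $\pbar \psi_{\alpha\beta} \otimes s_\beta = 0$, hence $\psi_{\alpha\beta} \in \oh_X^*(U_{\alpha\beta})$, yielding a holomorphic $1$-cocycle refining the underlying complex cocycle, i.e.\ a holomorphic structure in the sense of \cref{def:holomorphicStructure}. Finally, I would verify that the two constructions are mutually inverse: starting from a holomorphic structure, the $\pbar_\mc{L}$-flat local sections are exactly the local holomorphic sections (uniqueness follows since any two flat nowhere-vanishing sections differ by a unit $u$ with $\pbar u = 0$), so the recovered cocycle agrees with the original up to holomorphic coboundary; conversely, starting from $\pbar_\mc{L}$, the canonical dbar operator built from the constructed holomorphic frames coincides with $\pbar_\mc{L}$ because both kill the $s_\alpha$ and both satisfy Leibniz, which determines them uniquely on $\mc{A}^0(\mc{L})$ and hence on all $\mc{A}^{p,q}(\mc{L})$.
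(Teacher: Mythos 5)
Your argument is correct, and it is essentially the standard proof of this statement: the paper itself supplies no proof (the theorem is imported verbatim from Huybrechts, Theorem~2.6.26), and your reduction of the reverse direction to the local $\pbar$-Poincar\'e lemma — using $\pbar_\mc{L}^2=0$ to get $\pbar\alpha=0$ and then gauging away $\alpha$ by $e^f$ — is exactly the argument given there in the rank-one case. Your remark that the line-bundle case avoids the Koszul--Malgrange/Newlander--Nirenberg machinery because $\alpha\wedge\alpha=0$ for degree reasons is the right way to see why this special case is elementary.
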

    
    \begin{definition}
        A connection $\nabla$ on $\mc{L}$ is \textit{compatible with the holomorphic
        structure} if
            \begin{equation}
                \nabla^{0,1} = \pbar_\mc{L}
            \end{equation}.
    \end{definition}

    \begin{remark}
        This definition should not be confused with that of a holomorphic (or
        algebraic) connection on $\mc{L}$. Indeed, a holomorphic connection on
        $\mc{L}$ is by definition a $\C$-linear sheaf homomorphism
            \begin{equation}
                D:\mc{L}\to\Omega^1_X\otimes \mc{L}
            \end{equation} 
        satisfying the holomorphic version of the Leibniz rule
            \begin{equation}
                D(fs) = \partial(f)\otimes s + fD(s)
            \end{equation}.
        This condition is far more restrictive than being compatible with
        the holomorphic structure. For example, only topologically
        trivial line bundles admit holomorphic connections, but all line bundles
        admit connections compatible with a holomorphic structure.
    \end{remark}
    
    \begin{proposition}[\protect{\cite{huybrechts2005complex}*{Corollary~4.2.13}}]
        The space of all connections on $\mc{L}$ compatible with the 
        holomorphic structure naturally forms an affine space over
    $\mc{A}^{1,0}(X,\End(\mc{L}))\cong \mc{A}^{1,0}(X))$. 
    \end{proposition}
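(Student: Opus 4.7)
The plan is to leverage the already-established fact that the space of all connections on $\mc{L}$ is an affine space over $\mc{A}^1(X,\End(\mc{L}))\cong \mc{A}^1(X)$, and then cut this down by the compatibility constraint $\nabla^{0,1}=\pbar_\mc{L}$. Since affine spaces are either empty or genuine torsors, my task breaks into (i) showing the set of compatible connections is nonempty, (ii) identifying the difference of two compatible connections with a $(1,0)$-form, and (iii) verifying that adding a $(1,0)$-form preserves compatibility.

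For nonemptiness, I would construct a compatible connection locally and patch. On a trivializing open set $U_\alpha$ for the holomorphic structure, the trivial connection $d$ on the trivial bundle satisfies $d^{0,1}=\pbar$, and under a holomorphic transition function $\psi_{\alpha\beta}\in\oh_X^*(U_{\alpha\beta})$ the resulting local forms glue to a globally well-defined connection precisely because the transition functions are holomorphic (so $\pbar\log\psi_{\alpha\beta}=0$); alternatively, pick any hermitian metric $h$ on $\mc{L}$ via a partition of unity and take its Chern connection, whose defining property is exactly $\nabla^{0,1}=\pbar_\mc{L}$.

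For the torsor structure, let $\nabla,\nabla'$ both be compatible. Their difference $\nabla-\nabla'$ is $\mc{A}^0$-linear by the Leibniz rule \eqref{eq:nablaLeibniz}, so it is an element of $\mc{A}^1(X,\End(\mc{L}))$, canonically identified with $\mc{A}^1(X)$ since $\mc{L}$ is a line bundle. Decomposing by type, the $(0,1)$-component of this difference is $\nabla^{0,1}-(\nabla')^{0,1}=\pbar_\mc{L}-\pbar_\mc{L}=0$, so the difference lies in $\mc{A}^{1,0}(X)$. Conversely, given $\alpha\in\mc{A}^{1,0}(X)$ and a compatible $\nabla$, the operator $\nabla+\alpha$ is still a connection, and its $(0,1)$-part equals $\nabla^{0,1}+\alpha^{0,1}=\pbar_\mc{L}+0=\pbar_\mc{L}$, so it is again compatible.

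The hardest step is really just the existence in (i); the torsor computation in (ii) and (iii) is a mechanical consequence of linearity of the Hodge projector $\Pi^{0,1}$ and the fact that the ambient space of connections is already affine. No deep input is needed beyond the identifications $\End(\mc{L})\cong C^\infty(X,\C)$ and the Hodge decomposition $\mc{A}^1=\mc{A}^{1,0}\oplus \mc{A}^{0,1}$, so the proof reduces to verifying these three bookkeeping steps in order.
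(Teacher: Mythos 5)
Your argument is correct and is essentially the standard proof (the paper itself only cites Huybrechts, Corollary~4.2.13, whose proof is exactly this torsor argument: existence plus the observation that the $(0,1)$-part of the difference of two compatible connections vanishes). One caveat on your first existence route: the local trivial connections $d$ do \emph{not} glue to a global connection just because the transition functions are holomorphic --- in the overlap they differ by $\partial\log\psi_{\alpha\beta}$, which is a generally nonzero $(1,0)$-form; holomorphy only guarantees that their $(0,1)$-parts agree (which is precisely why $\pbar_{\mc{L}}$ is globally defined). You therefore still need a partition of unity to average these local connections (a convex combination of connections with $(0,1)$-part $\pbar_{\mc{L}}$ again has $(0,1)$-part $\pbar_{\mc{L}}$), or simply use your second route via the Chern connection of any hermitian metric, which is fully correct and consistent with the paper's later Proposition~4.2.14 citation.
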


    The final structure to consider imposing on a line bundle is the following:
    \begin{definition}
        An \textit{hermitian structure} on a complex vector bundle $E$ is the data
        of a positive-definite hermitian form on each fiber of $E$, varying smoothly.
        In particular, an hermitian structure on a complex line bundle is the 
        data of a positive smooth function on the base.
    \end{definition}

    \begin{definition}
        A connection $\nabla$ on a complex line bundle $\mc{L}$ equipped with an hermitian
        structure $h$ is said to be \textit{compatible with the hermitian structure}
        if
        \begin{equation}
            d(h(s_1,s_2)) = h(\nabla s_1,s_2)+h(s_1,\nabla s_2)
        \end{equation}
        for all (local) sections $s_1,s_2$ of $\mc{L}$.
    \end{definition}
    
\begin{proposition}[\protect{\cite{huybrechts2005complex}*{Corollary~4.2.11}}]
        Let $(\mc{L},h)$ be a complex line bundle with an hermitian structure. The
        space of connections on $\mc{L}$ compatible with the hermitian structure
        naturally forms an affine space over $\mc{A}^{1}(X, \End(\mc{L},h))$
        where $\End(\mc{L},h)$ is the subsheaf of $\End(\mc{L})$ for which local sections
        $a$ satisfy
            \begin{equation}
                h(as_1,s_2)+h(s_1,as_2)=0
            \end{equation} 
        for all $s_1,s_2$ local sections of $\mc{L}$. 

        In particular, $\mc{A}^1(X,\End(\mc{L},h))\cong \mc{A}^1_\R(X,i\R))$.
    \end{proposition}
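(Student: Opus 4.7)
The plan is to establish three facts mirroring the affine space proof for general connections: that hermitian compatibility is preserved under adding any $\End(\mc{L}, h)$-valued $1$-form, that any difference of two compatible connections lies in this subspace, and that at least one compatible connection exists. The affine space structure on the set of compatible connections then follows by restricting the already-established affine structure on the space of all smooth connections.

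For closure and conversely, let $\nabla$ be compatible and $\alpha \in \mc{A}^1(X, \End(\mc{L}, h))$. The compatibility equation $d(h(s_1, s_2)) = h(\nabla s_1, s_2) + h(s_1, \nabla s_2)$ is affine-linear in $\nabla$, so the change under $\nabla \mapsto \nabla + \alpha$ is exactly $h(\alpha s_1, s_2) + h(s_1, \alpha s_2)$, which vanishes by the very definition of $\End(\mc{L}, h)$. Conversely, given two compatible connections $\nabla, \nabla'$, their difference $\alpha := \nabla' - \nabla$ lies in $\mc{A}^1(X, \End(\mc{L}))$ by the earlier proposition, and subtracting the two compatibility identities gives $h(\alpha s_1, s_2) + h(s_1, \alpha s_2) = 0$, so $\alpha \in \mc{A}^1(X, \End(\mc{L}, h))$ as required.

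For non-emptiness I would use a smooth partition of unity $\{\rho_\beta\}$ subordinate to a trivializing open cover $\{U_\beta\}$: after rescaling the trivializing section on each $U_\beta$ to have unit $h$-norm, the trivial connection (acting as $d$ in that trivialization) is manifestly compatible, and the convex combination $\nabla := \sum_\beta \rho_\beta \nabla^{(\beta)}$ is a globally defined compatible connection since compatibility is affine-linear in $\nabla$ and $\sum_\beta \rho_\beta = 1$. Finally, for the identification $\mc{A}^1(X, \End(\mc{L}, h)) \cong \mc{A}^1_\R(X, i\R)$, I would note that $\End(\mc{L})$ is canonically the trivial complex line bundle for a rank-one $\mc{L}$, with an endomorphism acting on each fiber by multiplication by a complex scalar $c$; the defining condition becomes $(c + \bar c)\, h(s_1, s_2) = 0$, and positivity of $h$ forces $c \in i\R$ pointwise, so $\End(\mc{L}, h)$ is canonically the trivial real line bundle $\underline{i\R}$. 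The main obstacle is really just the partition-of-unity step for existence; every other ingredient reduces to a one-line algebraic manipulation.
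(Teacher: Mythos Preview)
Your proof is correct and complete. The paper itself does not supply a proof of this proposition; it merely cites the result from Huybrechts and moves on. Your argument---checking closure under adding $\End(\mc{L},h)$-valued forms, showing that differences of compatible connections land in that subspace, constructing one compatible connection by patching local trivial connections (in unitary frames) via a partition of unity, and identifying $\End(\mc{L},h)$ with the trivial real line bundle $i\R$ through the skew-hermitian condition $c+\bar{c}=0$---is exactly the standard proof one finds in the cited reference.
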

    
\begin{theorem}[\protect{\cite{huybrechts2005complex}*{Proposition~4.2.14}}]
        For $(\mc{L},h)$ a complex line bundle with an hermitian structure and a
        choice $\pbar_\mc{L}$ of holomorphic structure on $\mc{L}$, there exists a
        unique connection $\nabla$ compatible with both the holomorphic and
        hermitian structures called the \textit{Chern connection} on $\mc{L}$.
    \end{theorem}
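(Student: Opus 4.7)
The plan is to prove uniqueness first and then existence, in both cases by working in a local holomorphic frame and reducing the compatibility conditions to an explicit formula for the connection one-form. Fix an open set $U\subseteq X$ and a local frame $e$ for $\mc{L}$ satisfying $\pbar_\mc{L} e = 0$; such frames exist by local triviality of the holomorphic structure. Any connection on $\mc{L}|_U$ is determined by a complex one-form $A\in\mc{A}^1(U)$ via $\nabla e = A\otimes e$, which we decompose as $A = A^{1,0}+A^{0,1}$.

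For uniqueness, compatibility with the holomorphic structure means $\nabla^{0,1}e = \pbar_\mc{L} e = 0$, forcing $A^{0,1} = 0$. Evaluating the hermitian compatibility on the pair $(e,e)$ then gives
\begin{equation}
    d\,h(e,e) = h(\nabla e,e)+h(e,\nabla e) = \bigl(A^{1,0}+\overline{A^{1,0}}\bigr)\,h(e,e),
\end{equation}
and taking $(1,0)$-components yields $A^{1,0} = h(e,e)^{-1}\partial h(e,e)$. Hence $\nabla$ is uniquely determined on $U$, and since local uniqueness implies global uniqueness, the global statement follows.

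For existence, I would define $\nabla$ locally by the formula $\nabla e = h(e,e)^{-1}\partial h(e,e)\otimes e$ for each local holomorphic frame, then verify that these local definitions patch together. On an overlap where $e' = ge$ for a holomorphic transition function $g$, one has $h(e',e') = |g|^2 h(e,e)$, and a direct computation using $\partial \overline{g} = 0$ yields
\begin{equation}
    h(e',e')^{-1}\partial h(e',e') = h(e,e)^{-1}\partial h(e,e) + g^{-1}\partial g.
\end{equation}
Since $g$ is holomorphic, $g^{-1}\partial g = g^{-1}dg$, which is precisely the transformation law for a connection one-form under change of frame. Hence the local pieces assemble into a global connection, and checking that it satisfies the two compatibility conditions is a short calculation that reverses the uniqueness argument.

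The main obstacle throughout is the gluing step: the formula $A = h^{-1}\partial h$ is not manifestly invariant under the $\C^*$-valued gauge freedom, and what makes it globally coherent is the holomorphy of the transition functions, which eliminates the potential antiholomorphic contribution from $\partial |g|^2$ and forces the remaining term to be of pure $(1,0)$-type matching the required change-of-frame rule.
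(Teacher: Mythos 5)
Your proof is correct and is essentially the standard argument (the one in the cited reference, Huybrechts Proposition 4.2.14): the paper itself does not reprove the statement but only records the resulting local formula $\nabla = d + \partial\log h$, which is exactly the expression $A^{1,0} = h(e,e)^{-1}\partial h(e,e)$ you derive, and your gluing check via the holomorphy of the transition functions is the right way to complete the existence half.
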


    If $h$ is the positive real function on $X$ defining
    the hermitian structure, then the Chern connection is given locally by
    \begin{equation}\label{chernConnectionEquation}
        \nabla = d + \partial\log h
    \end{equation}.

    Line bundles with hermitian structures have an alternative description
    in terms of their structure group $U(1)$.
    \begin{theorem}
        \label{thm:principalIsLineBundle}
        There is a one-to-one correspondence between principal $U(1)$-bundles
        over $X$ and line bundles with hermitian structures $(\mc{L},h)$. The forward
        association is given by taking the associated bundle to the standard
        representation of $U(1)$ on $\C$, with hermitian structure given
        by the standard hermitian form on $\C$. The backward direction is
        given by taking the unitary frame bundle of $\mc{L}$.
    \end{theorem}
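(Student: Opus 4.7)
The plan is to verify both constructions are well-defined and then exhibit natural isomorphisms showing they are mutually inverse. This is essentially the standard associated-bundle / frame-bundle correspondence, specialized to the inclusion $U(1) \subset \GL_1(\C)$ acting on $\C$ by scalar multiplication.

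For the forward direction, given a principal $U(1)$-bundle $\pi: P \to X$, I would form $\mc{L} := P \times_{U(1)} \C$, the quotient of $P \times \C$ by the relation $(p \cdot g, v) \sim (p, g \cdot v)$. This is a smooth complex line bundle because the $U(1)$-action on $\C$ is linear. Since the standard Hermitian form $\langle v, w \rangle := v \bar{w}$ on $\C$ is $U(1)$-invariant (as $|g| = 1$ for $g \in U(1)$), it descends to a smooth fiberwise Hermitian form $h$ on $\mc{L}$. Conversely, given $(\mc{L}, h)$ I would define $P := \{e \in \mc{L} : h(e, e) = 1\}$, the unit-norm subset. Each fiber $P_x$ is the unit circle in the one-dimensional complex vector space $\mc{L}_x$, on which $U(1)$ acts freely and transitively by scalar multiplication, since any two unit vectors in a complex line differ by a unique phase. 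Local sections come from normalizing local frames of $\mc{L}$: given a non-vanishing local section $s$, the normalized section $s / \sqrt{h(s,s)}$ trivializes $P$, so $P$ is a smooth principal $U(1)$-bundle.

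To show these constructions are mutually inverse, I would construct explicit natural isomorphisms. Starting from $P$, the unit frame bundle of $\mc{L} = P \times_{U(1)} \C$ consists of classes $[p, v]$ with $|v| = 1$, each of which has a unique representative of the form $[p, 1]$; the assignment $p \mapsto [p, 1]$ is then a smooth $U(1)$-equivariant diffeomorphism onto this subset. In the other direction, starting from $(\mc{L}, h)$ and forming the associated bundle of its unit frame bundle $P$, the map $[p, v] \mapsto v \cdot p$ (with $\cdot$ denoting scalar multiplication in the fiber of $\mc{L}$) is well-defined because $(g^{-1} v)(p \cdot g) = v \cdot p$ in $\mc{L}_{\pi(p)}$, and it is a fiberwise $\C$-linear isomorphism preserving the Hermitian structure by construction of $h$.

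The hard part will be organizational rather than conceptual: checking smoothness, $U(1)$-equivariance, and preservation of the Hermitian form at each stage in a compatible system of local trivializations. Each individual verification is routine, since the transition cocycles on both sides take values in $U(1) \subseteq \C^*$ and the two constructions intertwine these cocycles term-by-term, so the work is essentially bookkeeping.
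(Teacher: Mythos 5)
Your proposal is correct and follows exactly the constructions named in the theorem statement (associated bundle via the standard representation in one direction, unitary frame bundle in the other), which is all the paper itself offers --- it states this as a standard fact with no written proof. Your verification that the two constructions are mutually inverse, via $p \mapsto [p,1]$ and $[p,v] \mapsto v\cdot p$, is the standard argument and is sound.
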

    \begin{remark}
        There is a notion of a connection on a principal bundle which
        induces connections on associated bundles, and the correspondence
        of \cref{thm:principalIsLineBundle} extends to connections as well.
        In many ways the connections on principal bundles are more well-behaved
        than their vector bundle counterparts, but this lies outside the scope
        of this paper. We will see that the theory of differential
        characters will suffice.
    \end{remark} 

    \subsubsection{Curvature and Chern Classes}
    
    Let $\mc{L}$ be a line bundle over $X$ with connection $\nabla$.
    We have observed that $\nabla$ itself is not a one-form, but instead
    sits in an affine space over $\mc{A}^1(X)$. 
    The square of $\nabla$, however, behaves more nicely. Recall
    that $\nabla:\mc{A}^0(\mc{L})\to\mc{A}^1(\mc{L})$ can be extended to higher
    degrees by the Leibniz rule \eqref{eq:nablaLeibniz}.
    \begin{definition}
        The \textit{curvature} of $\nabla$, denoted $F_\nabla$,
        is the $\C$-linear map
        \begin{equation}
            F_\nabla := \nabla^2:\mc{A}^0(\mc{L})\to\mc{A}^2(\mc{L})
        \end{equation}.
    \end{definition} 
    \begin{proposition}[\protect{\cite{huybrechts2005complex}*{Lemma~4.3.2}}]
        The curvature is $\mc{A}^0$-linear, and thus can be thought 
        of as an element of $\mc{A}^2(X,\End(\mc{L}))\cong\mc{A}^2(X)$.
        Furthermore, $F_\nabla$ is $d$-closed.
    \end{proposition}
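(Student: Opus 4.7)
The plan is to verify $\mc{A}^0$-linearity by a direct computation, use the line-bundle-specific identification $\End(\mc{L})\cong\mc{A}^0$, and then establish closedness by passing to a local trivialization where the connection becomes $d + A$ for a local $1$-form $A$.

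First I would extend $\nabla$ to an operator $\mc{A}^k(\mc{L})\to\mc{A}^{k+1}(\mc{L})$ for all $k$ via the graded Leibniz rule $\nabla(\omega\otimes s) = d\omega\otimes s + (-1)^k \omega\wedge\nabla s$, which is forced by \eqref{eq:nablaLeibniz}. With this in hand, $\mc{A}^0$-linearity of $F_\nabla=\nabla^2$ is a one-line calculation: for $f\in\mc{A}^0$ and $s\in\mc{A}^0(\mc{L})$,
\begin{equation}
\nabla^2(fs) = \nabla(df\otimes s + f\nabla s) = d^2f\otimes s - df\wedge \nabla s + df\wedge\nabla s + f\nabla^2 s = f\nabla^2 s,
\end{equation}
using $d^2=0$ and the cancellation of the two $df\wedge\nabla s$ terms. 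Since $\nabla^2$ is additive and $\mc{A}^0$-linear, it defines a morphism of $\mc{A}^0$-modules $\mc{A}^0(\mc{L})\to\mc{A}^2(\mc{L})$, i.e.\ a global section of $\mc{A}^2(X,\End(\mc{L}))$. For a line bundle the endomorphism bundle is canonically trivial, with a local section acting by scalar multiplication on the fiber, so $\End(\mc{L})\cong C^\infty(X,\C)$ as sheaves of algebras, and under this identification $F_\nabla$ becomes an element of $\mc{A}^2(X)$.

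For $d$-closedness, I would work locally. Pick a nonvanishing local section $s$ of $\mc{L}$ on an open $U\subseteq X$; then there is a unique $A\in\mc{A}^1(U)$ with $\nabla s = A\otimes s$, and the Leibniz rule shows that under the trivialization $\mc{L}|_U\cong\mc{A}^0(U)$ induced by $s$, the connection is $\nabla = d + A$. Applying the extended $\nabla$ again,
\begin{equation}
\nabla^2 s = \nabla(A\otimes s) = dA\otimes s - A\wedge \nabla s = (dA - A\wedge A)\otimes s = dA\otimes s,
\end{equation}
since $A\wedge A = 0$ for a $1$-form (this is where the rank-one hypothesis enters; in higher rank one obtains the full Bianchi identity $dF = [A,F]$ with a nontrivial commutator term). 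Under the identification of the previous paragraph, $F_\nabla|_U = dA$, so $dF_\nabla|_U = d^2 A = 0$. Since closedness is a local condition and the global $F_\nabla$ restricts to the local one on each trivializing chart, $F_\nabla$ is $d$-closed on $X$.

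There is no real obstacle here; the only point requiring care is sign bookkeeping in the graded Leibniz extension of $\nabla$ to $\mc{L}$-valued forms, and the check that the local representative $dA$ is independent of the choice of trivialization (equivalently, glues to the intrinsic $\nabla^2$), which is guaranteed precisely by the $\mc{A}^0$-linearity established in the first step.
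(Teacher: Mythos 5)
Your proof is correct and follows essentially the same route as the paper, which cites Huybrechts and then records exactly your local computation ($\nabla = d+A$, $F_\nabla = dA$, $dF_\nabla = d^2A = 0$) in the remark immediately following the proposition. Your additional verification of $\mc{A}^0$-linearity via the graded Leibniz rule, and the observation that $A\wedge A=0$ is where rank one enters, are correct fillings-in of details the paper delegates to the reference.
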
 

    \begin{remark}[Local computations]
        In a small enough open set $U$, any connection is of the form
        \begin{equation}
            \nabla = d + A
        \end{equation}
        for some $A\in\mc{A}^1(U)$. Then,
        \begin{equation}
            F_\nabla = (d+A)^2 = dA
        \end{equation}.
        This makes it clear that 
        \begin{equation}
            dF_\nabla = d^2A = 0
        \end{equation}.
    \end{remark}

    Since $F_\nabla$ is $d$-closed, it determines a cohomology class
    in $H^2(X,\C)$. This cohomology class is independent of choice
    of connection on $\mc{L}$. Indeed, any two connections
    differ by a one-form:
    \begin{equation}
        \nabla_2-\nabla_1 = A\in\mc{A}^1(X)
    \end{equation}
    and thus the shift of the curvature
    \begin{equation}
        F_{\nabla_2} = F_{\nabla_1} + dA
    \end{equation}
    is by an exact form, which does not change the cohomology.
    Since every line bundle admits an hermitian structure and a complex
    structure, we see that the curvature $F_\nabla$ of any connection
    on $\mc{L}$ determines a cohomology class $\left[ F_\nabla \right]\in H^{1,1}(X,\R)$
    which only depends on the underlying complex line bundle. 
    We formalize this with the following:
    \begin{definition}
        \label{def:firstChern}
        The \textit{first Chern class} of $\mc{L}$, denoted $c_1(\mc{L})$,
        is the cohomology class
        \begin{equation}
            c_1(\mc{L}) = \frac{i}{2\pi}\left[ F_\nabla \right]
        \end{equation}
        where $F_\nabla$ is the curvature of any connection on $\mc{L}$.
    \end{definition} 
    \begin{definition}[Alternate, \protect{\cite{Griffiths1994principles}*{Page~141}}]
        The first Chern class is alternately described in the following way.
        Consider the exponential exact sequence
        \begin{equation}
            \begin{tikzcd}
                0\arrow[r] &\Z\arrow[r] &\oh_X\arrow[r, "\exp"] &\oh_X^*\arrow[r] &0
            \end{tikzcd} 
        \end{equation}
        given by the exponential map from the sheaf $\oh_X$ of holomorphic
        functions to the subsheaf $\oh_X^*$ of invertible holomorphic functions.
        The long exact sequence on homology induces a map
        \begin{equation}
            \delta_2: H^1(X,\oh_X^*)\to H^2(X,\Z)
        \end{equation}.
        Associating a holomorphic line bundle to its class in $H^1(X,\oh_X)$
        in the sense of \cref{def:holomorphicStructure} the map
        $\delta_2$ coincides with the first Chern class map.
        This explains the normalization factor of $\frac{i}{2\pi}$: the
        form $\frac{i}{2\pi}F_\nabla$ is an integral form.
    \end{definition} 
    
    %---

\subsection{Holonomy and Differential Characters}

    Let $(\mc{L},h)$ be a complex hermitian line bundle over $X$ a projective complex
    manifold, and let $\nabla$ be a connection on $\mc{L}$ compatible with $h$.
    We have already seen that the topological type of $\mc{L}$ is determined by the
    first Chern class as an element of $H^2(X,\Z)$, but a finer invariant is needed
    to determine the isomorphism class of $\mc{L}$ as a complex line bundle with
    $U(1)$-connection. The theory of differential cohomology, first
    appearing in \cite{simons1985differential},
    provides a natural framework for this classification, which we will review now.
    We will mainly follow \cite{Bar2014differential}.

    \subsubsection{Cheeger-Simons Differential Characters} %---

    The theory of Cheeger-Simons differential characters tracks the information
    contained in principal bundles with connection. In fact, the theory extends
    to the so-called principal $n$-bundles of higher geometry, but for present
    purposes we focus on its application to principal $U(1)$-bundles. 

    \begin{definition}
        \label{def:CheegerSimonsDiffCharacter}
        Let $X$ be a smooth manifold. Denote by $C_k(X), Z_k(X)$ and $B_k(X)$
        the abelian groups of $k$-chains, $k$-cycles, and $k$-boundaries.
        A map $f\in\Hom_\Z(Z_{k-1}(X),U(1))$ is said to be a
        \textit{Cheeger-Simons differential character} of degree $k$ if, for
        every $\Sigma\in C_k(X)$,
        \begin{equation}
            \label{curvatureExistence}
            f(\partial\Sigma) = \exp\left(2\pi i \int_\Sigma\omega\right)
        \end{equation}
        for some $\omega\in A^k(X)$.
        The form $\omega$ is uniquely determined by $f$, and is called the
        \textit{curvature} of $f$. We denote it by $\curv(f)$.

        The set of all Cheeger-Simons differential characters forms a 
        subgroup 
        \begin{equation}
            \hat{H}^k(X)\subseteq \Hom_\Z(Z_{k-1}(X),U(1))
        \end{equation}
        called the
        \textit{Cheeger-Simons differential cohomology group} of degree $k$.
    \end{definition}

    \begin{example}[$k=1$]
        In degree $1$, the Cheeger-Simons differential cohomology group
        $\hat{H}^1(X)$ is a subgroup of $\Hom_\Z(Z_0(X),U(1))$, which
        can be thought of as $\Hom(X,U(1))$, the group of (set-theoretic) maps
        from $X$ to $U(1)$. 

        Let $f\in \hat{H}^1(X)$ be such a map. Fixing a point $x_0\in X$
        and a point $x$ in a small neighborhood of $x_0$, let $\gamma$ be
        the straight line connecting $x_0$ to $x$. Then, since $f$ admits a
        curvature form \eqref{curvatureExistence}, 
        \begin{equation}
            f(x) = f(x_0)\exp\left(2\pi i \int_{\gamma}\curv(f)\right)
        \end{equation}
        showing that $f$ is, in fact, a smooth function.

        Conversely, if $f\in C^\infty(X,U(1))$ is a smooth $U(1)$-valued function
        on $X$, pick a local lift $\tilde{f}:X\to \R$ to the universal cover,
        so that locally
        \begin{equation}
            f(x) = \exp(2\pi i \tilde{f}(x))
        \end{equation}.
        Then, the one-form $d\tilde{f}$ is the curvature of $f$, showing $f\in\hat{H}^1(X)$.
    \end{example}

    \begin{example}[$k=2$]
        In degree $2$, a Cheeger-Simons differential character $f\in\hat{H}^2(X)$
        is a map
        \begin{equation}
            f:Z_1(X)\to U(1)
        \end{equation}
        satisfying \eqref{curvatureExistence}. This map determines a principal
        $U(1)$-bundle with connection, as outlined in \cite{Barrett:1991aj}.

        Conversely, suppose $P\to X$ is a principal $U(1)$-bundle with connection
        $\nabla$. Then, the holonomy mapping
        \begin{equation}
            \hol_\nabla:Z_1(X)\to U(1)
        \end{equation}
        defines a map $f_\nabla\in\Hom_\Z(Z_1(X),U(1))$. The curvature $F_\nabla$
        of the connection then functions as the curvature for $f$, up to normalization:
        \begin{equation}
            \curv(f) = \frac{i}{2\pi}F_\nabla
        \end{equation}.
    \end{example}
    
    The Cheeger-Simons differential cohomology group has four natural maps
    associated to it.
    \begin{itemize}
        \item The curvature map 
            \begin{equation}
                \curv: \hat{H}^k(X)\to\mc{A}^k(X)
            \end{equation}
            sending a character to its curvature form has already been defined.
            For any $f\in\hat{H}^k(X)$, $\curv(f)$ is closed and has integral
            periods, hence the image of $\curv$ lies in $\mc{A}^k_\Z(X)$. Thus
            we get a map
            \begin{equation}\label{DCcurvMap}
                \delta_1: \hat{H}^k(X)\to\mc{A}^k_\Z(X)
            \end{equation}.
            We call this map the \textit{curvature map}.
        \item Let $\chi\in H^{k-1}(X,\R/\Z)$, and identify it with a function
            \begin{equation}
                \chi:H_{k-1}(X,\Z)\to U(1)
            \end{equation}
            in the natural way. Then, $\chi$ in particular defines an element of
            $\hat{H}^k(X)$ with zero curvature. This association assembles into
            a group homomorphism
            \begin{equation}\label{DCflatInclusion}
                \iota_1: H^{k-1}(X,\R/\Z)\to\hat{H}^k(X)
            \end{equation}
            which we call the \textit{inclusion of flat characters}.
        \item Let $f\in\hat{H}^k(X)$ and pick a lift of $f$ to the universal
            cover $\R$ of $U(1)$:
            \begin{equation}
                \tilde{f}:Z_{k-1}(X)\to \R
            \end{equation}
            such that $f(\gamma) = \exp(2\pi i \tilde{f}(\gamma))$. 
            Now, consider the cochain
            \begin{equation}
                \begin{aligned}
                    \delta_2(f) &: C_{k}(X)\to \Z\\
                    \Sigma &\mapsto \int_\Sigma \curv(f) - \tilde{f}(\partial\Sigma)
                \end{aligned}
            \end{equation}.
            This is a cocycle, and hence defines a cohomology class
            $\delta_2(f)\in H^k(X,\Z)$. The notation is justified by noticing that
            the cohomology class does not depend on choice of lift. 
            This assembles into a well-defined group homomorphism
            \begin{equation}\label{DCchernMap}
                \delta_2:\hat{H}^k(X) \to H^k(X,\Z)
            \end{equation}
            which we call the \textit{Chern character map}.
        \item Let $\omega\in \mc{A}^{k-1}(X)$ be a $k-1$-form. Integration
            against cycles induces a map
            \begin{equation}
                \begin{aligned}
                    \int\omega&: Z_{k-1}(X)\to \R\\
                \end{aligned}
            \end{equation}
            which exponentiates to
            \begin{equation}
                \begin{aligned}
                    \iota_2(\omega)&: Z_{k-1}(X)\to U(1)\\
                    \gamma &\mapsto \exp\left( 2\pi i \int_\gamma \omega \right)
                \end{aligned}
            \end{equation}.
            By Stokes' theorem, this map satisfies
            \begin{equation}
                \iota_2(\omega)(\partial\Sigma) = \exp\left( 2\pi i \int_\Sigma d\omega \right)
            \end{equation}
            expressing $\iota_2(\omega)$ as a Cheeger-Simons differential character.

            If $\omega$ is closed and has integral periods, then $\iota_2(\omega)$
            is the trivial map. Thus, $\iota_2$ builds a well-defined map
            \begin{equation}\label{DCtopTrivialMap}
                \iota_2: \mc{A}^{k-1}(X)/\mc{A}^{k-1}_\Z(X) \to \hat{H}^k(X)
            \end{equation}
            which we call the \textit{topological trivialization map}.
    \end{itemize}

    These four maps allow the Cheeger-Simons differential cohomology group
    to fit into the differential character diagram:
    \begin{equation}
        \label{diffCharacterDiagram}
        % https://q.uiver.app/#q=WzAsMTUsWzEsMCwiMCJdLFsxLDIsIkhee2stMX0oXFxSKVxcaHNwYWNley0xZW19Il0sWzEsNCwiMCJdLFszLDIsIlxcaGF0e0h9XmsiXSxbMiwxLCJIXntrLTF9KFxcUi9cXFopXFxoc3BhY2V7LTNlbX0iXSxbMiwzLCJcXGZyYWN7QV57ay0xfX17QV57ay0xfV9cXFp9Il0sWzQsMSwiSF5rKFxcWikiXSxbNSwyLCJIXmsoXFxSKVxcaHNwYWNley0xZW19Il0sWzQsMywiQV5rX1xcWiJdLFs1LDQsIjAiXSxbNSwwLCIwIl0sWzAsMSwiXFxidWxsZXQiXSxbMCwzLCJcXGJ1bGxldCJdLFs2LDEsIlxcYnVsbGV0Il0sWzYsMywiXFxidWxsZXQiXSxbMSw0LCJyIl0sWzQsNiwiXFxoc3BhY2V7M2VtfVxcZGVsdGEiLDAseyJzaG9ydGVuIjp7InNvdXJjZSI6NjB9fV0sWzYsNywiaSJdLFsxLDUsImRlUiIsMl0sWzgsNywiZGVSIiwyXSxbNSw4LCJkIiwyXSxbMiw1XSxbNiwxMF0sWzAsNF0sWzgsOV0sWzQsMywiXFxpb3RhXzEiXSxbMyw4LCJcXGRlbHRhXzEiXSxbNSwzLCJcXGlvdGFfMiJdLFszLDYsIlxcZGVsdGFfMiJdLFsxMSwxXSxbMTIsMV0sWzcsMTNdLFs3LDE0XV0=
        \begin{tikzcd}[column sep=small]
            & 0 &&&& 0 \\
            H^{k-1}(\Z)\hspace{-1em} && {H^{k-1}(\R/\Z)\hspace{-2em}} && {H^k(\Z)} && H^k(\R/\Z) \\
            & {H^{k-1}(\R)\hspace{0em}} && {\hat{H}^k} && {H^k(\R)\hspace{-1em}} \\
            \mc{A}^{k-1}_\Z\hspace{-1em} && {\frac{\mc{A}^{k-1}}{\mc{A}^{k-1}_\Z}} && 
            {\mc{A}^k_\Z} && {\frac{\mc{A}^k}{\mc{A}^k_\Z}} \\
            & 0 &&&& 0
            \arrow[from=1-2, to=2-3]
            \arrow[from=2-1, to=3-2]
            \arrow["{\hspace{3em}\delta}", shorten <=32pt, from=2-3, to=2-5]
            \arrow["{\iota_1}", from=2-3, to=3-4]
            \arrow[from=2-5, to=1-6]
            \arrow["i", from=2-5, to=3-6]
            \arrow["r", from=3-2, to=2-3]
            \arrow["deR"', from=3-2, to=4-3]
            \arrow["{\delta_2}", from=3-4, to=2-5]
            \arrow["{\delta_1}", from=3-4, to=4-5]
            \arrow[from=3-6, to=2-7]
            \arrow[from=3-6, to=4-7]
            \arrow[from=4-1, to=3-2]
            \arrow["{\iota_2}", from=4-3, to=3-4]
            \arrow["d"', from=4-3, to=4-5]
            \arrow["deR"', from=4-5, to=3-6]
            \arrow[from=4-5, to=5-6]
            \arrow[from=5-2, to=4-3]
        \end{tikzcd}
    \end{equation}
    where each diagonal is an exact sequence. Here, $r,\delta,i$ form the
    long exact sequence in cohomology associated to the short exact sequence
    \begin{equation}
        \begin{tikzcd}
            0\arrow[r] &\Z\arrow[r] &\R\arrow[r] &\R/\Z\arrow[r] &0
        \end{tikzcd}
    \end{equation}, $d$ is the exterior derivative, and $deR$ is the induced map from
    the De Rham theorem.

    %---

    \subsubsection{ Hermitian Line Bundles and Differential Characters} %---
    
    The elements of $\hat{H}^2(X)$ are in bijective correspondence with
    isomorphism classes of principal $U(1)$-bundles over $X$ with a connection,
    and hence are in bijection with isomorphism classes of complex line bundles
    over $X$ with an hermitian structure and compatible connection. In this
    context, the four fundamental morphisms of a differential character are
    well-known.

    Let $(\mc{L},h)$ be a complex line bundle with an hermitian structure over $X$,
    and $\nabla$ an hermitian connection on $\mc{L}$.
    Then,
    \begin{itemize}
        \item $\delta_1(\mc{L},\nabla) = \curv(\nabla) = \frac{i}{2\pi}F_\nabla$ 
            is the curvature two-form of the connection.
        \item $\iota_1$ associates to a $U(1)$ representation of $\pi_1(X)$
            the corresponding flat line bundle.
        \item $\delta_2(\mc{L},\nabla) = c_1(\mc{L})$ is the first Chern character of
            $\mc{L}$.
        \item $\iota_2$ associates to the differential form $\omega$ the
            topologically trivial $U(1)$-bundle with connection $\nabla = d+\omega$.
    \end{itemize} 
    
    Let ${M}_{h,\nabla}$ denote the group (under tensor product) of line
    bundles with an hermitian structure, equipped with an hermitian connection.
    Then, let $\hol:{M}_{h,\nabla}\to \hat{H}^2(X)$ be the isomorphism given by
    taking the holonomy representation.

    \begin{proposition}\label{DCtoLineBundleIsAdditive}
        The isomorphism between the set of hermitian line bundles with
        connection and $\hat{H}^2(X)$ given by the holonomy mapping is a
        group homomorphism.
    \end{proposition}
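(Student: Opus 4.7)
The plan is to unpack both group structures and show directly that holonomy of the tensor product connection equals the product of the individual holonomies. The group structure on $M_{h,\nabla}$ is tensor product: given $(\mc{L}_1,h_1,\nabla_1)$ and $(\mc{L}_2,h_2,\nabla_2)$, the tensor product bundle $\mc{L}_1\otimes\mc{L}_2$ carries the tensor product hermitian structure $h_1\otimes h_2$ and the canonical tensor product connection $\nabla_{12}$ characterized on local sections by $\nabla_{12}(s_1\otimes s_2)=\nabla_1 s_1\otimes s_2+s_1\otimes\nabla_2 s_2$. The group structure on $\hat{H}^2(X)\subseteq\Hom_\Z(Z_1(X),U(1))$ is pointwise multiplication in $U(1)$. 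So the claim amounts to the equality $\hol_{\nabla_{12}}(\gamma)=\hol_{\nabla_1}(\gamma)\cdot\hol_{\nabla_2}(\gamma)$ for every $1$-cycle $\gamma\in Z_1(X)$.

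First I would reduce to the case of a single smooth loop $\gamma:S^1\to X$, since any $1$-cycle is a $\Z$-linear combination of such loops and both sides are additive in $\gamma$. The key geometric fact is that parallel transport on a tensor product bundle equals the tensor product of parallel transports: if $\tau_i(t):(\mc{L}_i)_{\gamma(0)}\to(\mc{L}_i)_{\gamma(t)}$ denotes the parallel transport along $\gamma$ determined by $\nabla_i$, then $\tau_1(t)\otimes\tau_2(t)$ is parallel for $\nabla_{12}$ by the Leibniz-type definition of $\nabla_{12}$, and by uniqueness of solutions to the first-order parallel transport ODE it must equal the parallel transport for $\nabla_{12}$. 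Taking $t=1$ and identifying the rank-one unitary isomorphisms with their scalar factors in $U(1)$ yields the desired multiplicativity.

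To make this rigorous I would work in local unitary trivializations over an open cover refining both $\mc{L}_1$ and $\mc{L}_2$, where $\nabla_i=d+A_i$ with $A_i$ a local imaginary-valued $1$-form and $\nabla_{12}=d+A_1+A_2$. For a loop contained in a single chart the holonomy computation reduces to $\exp(-\oint_\gamma(A_1+A_2))=\exp(-\oint_\gamma A_1)\cdot\exp(-\oint_\gamma A_2)$, which is the desired identity. For a general loop one subdivides into arcs lying in charts and uses that the transition functions of $\mc{L}_1\otimes\mc{L}_2$ are products of those of $\mc{L}_1$ and $\mc{L}_2$, so the concatenation of local parallel transports gives a product of the concatenations.

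The proof is largely mechanical, and the main subtlety is not a deep obstacle but a bookkeeping one: one must carefully check that the local formulas for $\nabla_{12}$ and the induced transition functions of $\mc{L}_1\otimes\mc{L}_2$ fit together so that the local holonomy computations patch consistently. A cleaner alternative is to bypass local patching entirely by invoking the holonomy characterization of $\hat{H}^2(X)$: since $\nabla_{12}$ is the unique hermitian connection on $\mc{L}_1\otimes\mc{L}_2$ with curvature $F_{\nabla_1}+F_{\nabla_2}$, and since the differential character $\hol_{\nabla_1}\cdot\hol_{\nabla_2}\in\hat{H}^2(X)$ visibly has curvature $\curv(\hol_{\nabla_1})+\curv(\hol_{\nabla_2})=\frac{i}{2\pi}(F_{\nabla_1}+F_{\nabla_2})$ and classifies a line bundle isomorphic to $\mc{L}_1\otimes\mc{L}_2$, the two must agree under the holonomy correspondence.
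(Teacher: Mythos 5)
Your main argument is correct and is essentially the paper's proof: in a local trivialization $\nabla_1=d+A_1$, $\nabla_2=d+A_2$ gives $\nabla_1\otimes\nabla_2=d+A_1+A_2$, so parallel transport and hence holonomy multiply; the paper is terser about patching across charts, which you spell out via the concatenation of arcs and the multiplicativity of transition functions.

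One caution about the ``cleaner alternative'' in your final paragraph: it does not work as stated. An element of $\hat{H}^2(X)$ is \emph{not} determined by its curvature together with the isomorphism class of the underlying line bundle. The simultaneous fiber of the curvature map $\delta_1$ and the Chern class map $\delta_2$ over a fixed pair is a torsor over the image of $H^1(X,\R)$ in $H^1(X,\R/\Z)$, reflecting the fact that a fixed bundle carries many mutually non-isomorphic hermitian connections with the same curvature (they differ by tensoring with a topologically trivial flat character). So that shortcut only identifies $\hol_{\nabla_1}\cdot\hol_{\nabla_2}$ with $\hol_{\nabla_1\otimes\nabla_2}$ up to such a flat twist, and the direct parallel-transport computation from your main argument is still needed to pin the twist down to be trivial.
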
 

    \begin{proof}
        The trivial line bundle with trivial connection $\oh_X$ has zero holonomy, 
        as it is the image of $0$ under $\iota_2$. Hence, $\hol(\oh_X) = 0$.
        Now, let $(\mc{L}_1,\nabla_1)$ and $(\mc{L}_2,\nabla_2)$ be hermitian line bundles
        with connections. Their tensor product is another hermitian line bundle
        with connection, and if $\nabla_1 = d + A_1$ and $\nabla_2 = d + A_2$
        are local trivializations, then
        \begin{equation}\label{tensorProductConnections}
            \nabla_1\otimes\nabla_2 = d + A_1 + A_2
        \end{equation}
        locally. 

        The parallel transport map is thus given locally as
        \begin{equation}
            \begin{aligned}
                P_{\nabla_1\otimes\nabla_2}(\gamma) 
                &= \exp\left(2\pi i \int_\gamma A_1+A_2\right)\\
                &= P_{\nabla_1}(\gamma)P_{\nabla_2}(\gamma)
            \end{aligned} 
        \end{equation} 
        showing that parallel transport along the tensor product is given locally by
        multiplication of the parallel transport maps.
        Since holonomy can be computed locally in this way, we find that
        the holonomy is multiplicative. That is,
        \begin{equation}\label{holonomyMultiplicative}
            \hol_{\nabla_1\otimes\nabla_2} = \hol_{\nabla_1}\hol_{\nabla_2}
        \end{equation}
        and hence, $\hol$ is a group homomorphism.
    \end{proof} 

    %---

\subsection{Complex Tori, Cohomology, and Line Bundles} 

This section follows the excellent exposition of \cite{lange2013complex} and
\cite{polishchuk2003abelian}, except for subsection
\ref{SSSLineBundlesAndDiffCharactersOnTori}, which the author could not
find a suitable reference for.

Much of the main results rely on certain nice properties of line bundles on
complex tori.
Let $V$ be an $n$-dimensional complex vector space and $\Lambda$ a rank $2n$
lattice in $V$. The lattice $\Lambda$ acts on $V$ by addition, and the resulting
quotient space $X = V/\Lambda$ is called a complex torus. Topologically,
$X\cong T^{2n}$ as smooth manifolds, but the complex structure on $V$ descends
to a complex structure on $X$ making it a $\dim_\C(X)=n$-dimensional complex
manifold. The addition rule on $V$ descends to the quotient, and makes $X$ an
complex abelian Lie group. For any $x\in X$, we denote by $t_x:X\to X$ the
translation map by $x$, so that $t_x(y)=x+y$ for all $y\in X$. 

\subsubsection{Cohomology of Complex Tori} %---

The quotient map $\pi:V\to X$ exhibits $V$ as the universal
cover of $X$, and hence $\pi_1(X) (\cong H_1(X,\Z))\cong \Lambda$ canonically.
Furthermore, by the universal coefficient theorem there is a natural isomorphism
$H^1(X,\Z)\cong \Hom(\Lambda,\Z)$. This determines the entire
integral cohomology by the Kunneth formula:
\begin{equation}\label{KunnethTorus}
    H^k(X,\Z) \cong \bigwedge^kH^1(X,\Z)
\end{equation}
where the isomorphism is given right-to-left by taking the cup product. 
The previous two results then imply
    \begin{equation}
        H^k(X,\Z)\cong \bigwedge^k\Hom(\Lambda,\Z)
    \end{equation}.
The universal coefficient formula also computes the complex cohomology groups
as
\begin{equation}\label{eq:kunnethTorusComplex}
        H^k(X,\C)\cong H^k(X,\Z)\otimes\C \cong \Alt^k_\R(V,\C)
    \end{equation}
where $\Alt^k_\R(V,\C)$ is the space of $\R$-multilinear $k$-forms
from $V$ to $\C$.

On Lie groups, there is a certain class of forms which behave well with respect
to the group operation, and will be central in the main construction.
\begin{definition}
    A differential form $\omega\in\mc{A}^k(X)$ is said to be 
    \textit{translation-invariant} if
    \begin{equation}
        t_x^*\omega = \omega
    \end{equation}
    for all $x\in X$. The submodule of all translation-invariant $k$-forms
    is denoted $\IF^k(X)$.
\end{definition} 
\begin{theorem}[\protect{\cite{lange2013complex}*{Proposition~1.3.5}}]\label{CohomologyIsIFTheorem}
    For each cohomology class in $H^k(X,\C)$, there is a unique translation-invariant
    differential $k$-form representing that class. Thus, the de Rham theorem
    furnishes an isomorphism
    \begin{equation}\label{eq:cohomologyIsIF}
        H^k(X,\C)\cong \IF^k(X)
    \end{equation}
    between the $k$-th complex cohomology group and the group $\IF^k(X)$ of
    translation-invariant $k$-forms.
\end{theorem}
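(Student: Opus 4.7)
The plan is to identify $\IF^k(X)$ with $\Alt^k_\R(V,\C)$ and then show that the inclusion $\IF^k(X) \hookrightarrow \mc{A}^k(X)$ followed by passage to de Rham cohomology is an isomorphism. Since $X = V/\Lambda$ and $V$ acts on itself by translation, the tangent bundle of $X$ is canonically trivialized as $TX \cong X \times V$. Under this trivialization a translation-invariant $k$-form is uniquely determined by its value at the origin, giving a linear isomorphism $\IF^k(X) \cong \Alt^k_\R(V,\C)$, and in particular $\dim_\C \IF^k(X) = \binom{2n}{k}$.

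Next I would verify that every translation-invariant form is automatically closed: in linear coordinates $x^1,\ldots,x^{2n}$ on $V$ (which descend to local coordinates on $X$), such an $\omega$ has constant coefficients $\omega = \sum_{|I|=k} c_I\, dx^{i_1}\wedge\cdots\wedge dx^{i_k}$, so $d\omega = 0$. This produces a well-defined linear map $\Phi: \IF^k(X) \to H^k(X,\C)$. For injectivity, if $\omega \in \IF^k(X)$ is exact then $\int_\Sigma \omega = 0$ for every $\Sigma \in Z_k(X)$; under the identification \eqref{KunnethTorus} of $H_k(X,\Z)$ with $\bigwedge^k \Lambda$, integration of the constant-coefficient $\omega$ against a wedge of lattice vectors $v_1 \wedge \cdots \wedge v_k$ reduces to the evaluation $\omega(v_1,\ldots,v_k)$, and since $\Lambda$ spans $V$ over $\R$ the pairing $\Alt^k_\R(V,\C) \times \bigwedge^k\Lambda \to \C$ is nondegenerate in the first slot. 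Hence $\omega = 0$ and $\Phi$ is injective.

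For surjectivity the cleanest route is a dimension count: \eqref{eq:kunnethTorusComplex} gives $\dim_\C H^k(X,\C) = \dim_\C \Alt^k_\R(V,\C) = \binom{2n}{k}$, matching $\dim_\C \IF^k(X)$, so the injection $\Phi$ is automatically an isomorphism. An alternative constructive route is to average a closed representative $\alpha$ over the Lie group, setting $\bar\alpha := \int_X t_x^*\alpha\, d\mu(x)$ with $\mu$ the normalized Haar measure on $X$, which is manifestly translation-invariant. The main obstacle in this second approach is showing rigorously that $[\bar\alpha] = [\alpha]$ in cohomology: one needs a chain homotopy between $t_x^*$ and the identity that varies smoothly in $x$, obtainable via Cartan's magic formula applied to the infinitesimal generators of the $X$-translation action, together with Fubini to commute integration in $x$ with the homotopy operator. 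The dimension-count argument sidesteps this technicality entirely, so it is the one I would use to conclude the proof.
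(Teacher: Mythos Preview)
Your proof is correct and follows essentially the same approach as the paper: both identify $\IF^k(X)$ with constant-coefficient forms in linear coordinates and invoke the K\"unneth description \eqref{KunnethTorus}--\eqref{eq:kunnethTorusComplex} to match dimensions with $H^k(X,\C)$. The paper does the matching by exhibiting the wedge products $dx_{i_1}\wedge\cdots\wedge dx_{i_k}$ as a simultaneous basis for both spaces, while you separate injectivity (via the integration pairing with $\bigwedge^k\Lambda$) from surjectivity (via the dimension count), but the underlying content is the same.
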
 

\begin{proof}
    The de Rham isomorphism identifies $H^k(X,\C)$ with the cohomology of
    complex-valued $k$-forms on $X$. If $\lambda_1,\ldots,\lambda_{2n}$
    is a basis for $\Lambda$ and $x_1,\ldots,x_{2n}$ are the corresponding real coordinates
    on $V$, the differentials $dx_i$ are all translation-invariant one-forms on $X$
    and form a basis for $H^1(X,\Z)$ (and hence a basis for $H^1(X,\C)$) dual
    to the chosen basis for $\Lambda\cong H_1(X,\Z)$. By taking wedge products, we
    find that a basis for $H^k(X,\C)$ is given by the $k$-fold wedge products of
    the translation-invariant one-forms $dx_i$. 
    Conversely, each translation-invariant $k$-form must be a real linear combination
    of $k$-fold wedge products of $dx_i$, and we find that each cohomology class
    in $H^k(X,\C)$ has a canonical representative given by a translation-invariant
    $k$-form. That is, there is an isomorphism
    \begin{equation}
        H^k(X,\C)\cong \IF^k(X)
    \end{equation}
    as desired.
\end{proof} 

\begin{remark}
    If the torus $X$ is equipped with the standard flat metric, then the
    translation-invariant forms are exactly the harmonic forms. Thus,
    \cref{CohomologyIsIFTheorem} is a special case of the Hodge theorem. 
\end{remark} 

% --- HODGE THEORY -------------------------------------------------------------
Since $V$--and hence $X$ as well--comes with a complex structure, the cohomology
of $X$ admits a Hodge decomposition. 

Set $\Omega = \Hom_\C(V,\C)$ the space of $\C$-linear maps from $V$ to $\C$
and set $\bar{\Omega} = \Hom_{\bar{\C}}(V,\C)$ the space of $\C$-antilinear
maps, so that $\Hom_\R(V,\C)\cong \Omega\oplus \bar{\Omega}$. This decomposition
induces a decomposition on the higher cohomology groups as
\begin{equation}
    H^k(X,\C)\cong \bigoplus_{p+q=k}\bigwedge^p\Omega\wedge\bigwedge^q\bar{\Omega}
\end{equation}
which coincides with the Hodge decomposition
\begin{equation}
    H^{q}(\Omega_X^p)\cong \bigwedge^p\Omega\wedge\bigwedge^q\bar{\Omega}
\end{equation}.

\subsubsection{Line Bundles on Complex Tori} %---

For this section, all line bundles are assumed to be holomorphic.
Let $\mc{L}$ be a line bundle on $X$. The pullback bundle
$\pi^*\mc{L}$ along the universal covering map is a line bundle on $V$ which is
necessarily the trivial line bundle $V\times \C$. A general fact about covering
spaces is the following:
\begin{proposition}[\protect{\cite{lange2013complex}*{Proposition~B.1}}] 
    Suppose $V$ is the universal cover of $X$ with covering map $\pi$.
    Then, there is a canonical isomorphism
    \begin{equation}
        H^1(\pi_1(X),H^0(\oh_{V}^*))\cong \ker(\pi^*:H^1(X,\oh_X^*)\to H^1(V,\oh_{V}^*))
    \end{equation}
    between the group cohomology of $\pi_1(X)$ in $H^0(\oh_{V}^*)$ and
    the kernel of the pullback map.
\end{proposition}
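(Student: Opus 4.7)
The plan is to prove this via the Cartan--Leray spectral sequence for the Galois cover $\pi:V\to X$ (with deck transformation group $\pi_1(X)$) applied to the sheaf $\oh_X^*$. Since $\pi^*\oh_X^* = \oh_V^*$ carries a natural $\pi_1(X)$-action by deck transformations, this spectral sequence takes the form
\begin{equation}
    E_2^{p,q} = H^p(\pi_1(X), H^q(V, \oh_V^*)) \Longrightarrow H^{p+q}(X, \oh_X^*),
\end{equation}
and the first step is to extract its five-term exact sequence in low degrees:
\begin{equation}
    0 \to H^1(\pi_1(X), H^0(\oh_V^*)) \to H^1(X, \oh_X^*) \xrightarrow{\alpha} H^1(V, \oh_V^*)^{\pi_1(X)} \to H^2(\pi_1(X), H^0(\oh_V^*)).
\end{equation}
This immediately identifies $H^1(\pi_1(X), H^0(\oh_V^*))$ with $\ker(\alpha)$. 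The second step is to observe that $\alpha$ factors through the pullback map $\pi^*:H^1(X, \oh_X^*)\to H^1(V, \oh_V^*)$, since any pulled-back class is automatically $\pi_1(X)$-invariant; hence $\ker(\alpha) = \ker(\pi^*)$, giving the claimed isomorphism.

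As an alternative and more concrete route, one can work directly with \v Cech cocycles. A class $[\mc{L}]\in H^1(X,\oh_X^*)$ lies in $\ker(\pi^*)$ precisely when $\pi^*\mc{L}$ is holomorphically trivial, so there exists a nowhere-vanishing global section $s\in H^0(V,\pi^*\mc{L})$. The deck transformation group acts on $\pi^*\mc{L}$ covering its action on $V$, and the failure of $s$ to be equivariant produces a map
\begin{equation}
    c_s:\pi_1(X)\to H^0(\oh_V^*),\qquad c_s(\gamma) = \frac{\gamma^*s}{s}.
\end{equation}
I would check that $c_s$ is a $1$-cocycle for the group cohomology of $\pi_1(X)$ with values in $H^0(\oh_V^*)$, that modifying the choice of trivialization $s\mapsto fs$ (for $f\in H^0(\oh_V^*)$) changes $c_s$ by the coboundary $\gamma\mapsto \gamma^*f/f$, and that the assignment $[\mc{L}]\mapsto [c_s]$ is additive. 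The inverse assigns to a cocycle $c$ the line bundle obtained by descending the trivial bundle $V\times\C$ along the twisted $\pi_1(X)$-action $\gamma\cdot(v,z) = (\gamma v, c(\gamma)(v) z)$.

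The main obstacle in the spectral-sequence approach is justifying the existence of the Cartan--Leray spectral sequence in this form (via the $\pi_1(X)$-equivariant sheaf structure on $\oh_V^*$ and the equivalence between $\pi_1(X)$-equivariant sheaves on $V$ and sheaves on $X$); in the cocycle approach, the main subtlety is carefully verifying that the descent construction is inverse to the trivialization-then-cocycle construction, and that both directions are well-defined independent of the choices made (trivialization $s$ on one side, cocycle representative on the other). Both obstacles are essentially bookkeeping, so I expect the proof to reduce to a clean cocycle-level calculation once the correspondence is set up.
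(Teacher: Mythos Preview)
Your proposal is correct. Note, however, that the paper does not give its own proof of this proposition: it is stated with a citation to \cite{lange2013complex}*{Proposition~B.1} and used as a black box. The cited reference proves it by precisely your second, concrete route---constructing the cocycle $c_s(\gamma)=\gamma^*s/s$ from a trivialization of $\pi^*\mc{L}$ and inverting via descent of the twisted action on $V\times\C$---so your \v{C}ech/cocycle argument matches the intended proof essentially verbatim.

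Your first route via the Cartan--Leray (equivalently Hochschild--Serre) spectral sequence is a genuinely different and more structural argument. It buys you the result in one line once the machinery is in place, and also explains where the higher obstructions live (the $H^2$ term in the five-term sequence), at the cost of invoking the equivalence between sheaves on $X$ and $\pi_1(X)$-equivariant sheaves on $V$. The cocycle approach is more elementary and self-contained, and has the advantage that the explicit formula $\gamma\cdot(v,z)=(\gamma v, c(\gamma)(v)z)$ is exactly what the paper uses immediately afterward to define factors of automorphy; so in context the concrete version is the more useful one to write out.
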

Elements of the group cohomology $H^1(\Lambda,H^0(\oh_V^*))$ are called
\textit{factors of automorphy} for the covering space, and from the above result
we see that every line bundle on $X$ is described by a factor of automorphy.

Explicitly, a $1$-cocycle from $\Lambda$ to $H^0(\oh_V^*)$ is a holomorphic map
$f:\Lambda\times V\to \C^*$ satisfying the cocycle condition
\begin{equation}
    f(\lambda+\mu,v) = f(\lambda,\mu+v)f(\mu,v)
\end{equation}.
This defines an action of $\Lambda$ on the trivial line bundle $V\times\C$ by
\begin{equation}
    \lambda\cdot(v,z) = (v+\lambda,f(\lambda,v)z)
\end{equation}
for all $\lambda\in\Lambda$. The quotient space $V\times\C/\Lambda$ is then a
line bundle over $X$. 

For what follows, let $f$ be a factor of automorphy describing the line bundle
$\mc{L}$ on $X$. Choose a logarithm
\begin{equation}
    \begin{aligned}
        g&:\Lambda\times V\to \C\\
        f &= \exp(2\pi i g)\\
    \end{aligned}
\end{equation}
for $f$ and consider the alternating two-form $E\in\Alt^2(V,\Z)$ given by
\begin{equation}
    E_L(\lambda,\mu) = g(\mu,\lambda+v) + g(\lambda,v)-g(\lambda,\mu+v)-g(\mu,v)
\end{equation}
for any choice of $v\in V$ (the definition is independent of $v$ as a result
of the cocycle condition on $f$).
\begin{theorem}[\protect{\cite{lange2013complex}*{Theorem~2.1.2}}]
    Under the canonical isomorphism
    \begin{equation}\label{SecondCohomologyTorusIdentification}
        H^2(X,\Z)\cong \bigwedge^2\Hom(\Lambda,\Z)
    \end{equation}
    the first Chern class $c_1(\mc{L})$ maps to the alternating form $E_L$.

    Conversely, every Hermitian form $H:V\times V\to \C$ whose imaginary part
    takes integral values on $\Lambda\times \Lambda$ induces an
    alternating form $E = \im(H)$ which is the first Chern class of a line bundle
    on $X$.
\end{theorem}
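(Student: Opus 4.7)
The plan is to handle the forward direction by explicitly tracking the first Chern class through the exponential exact sequence at the level of group cochains on $\Lambda$, and then to handle the converse by writing down an explicit Appell--Humbert type factor of automorphy.

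For the forward direction, I would first set up the identifications. Since $V$ is Stein (in fact $\C^n$), it has $H^q(V,\oh_V) = 0$ for $q \geq 1$, and it is the universal cover of $X$ with deck group $\Lambda$. A Leray-type argument (or the standard spectral sequence for a regular covering) then gives $H^k(X,\Z) \cong H^k(\Lambda,\Z)$ and $H^1(X,\oh_X^*) \cong H^1(\Lambda, H^0(\oh_V^*))$, compatibly with the connecting homomorphism $\delta_2$ of the exponential sequence. Under the first isomorphism, the standard identification $H^2(\Lambda,\Z) \cong \bigwedge^2\Hom(\Lambda,\Z)$ (available because $\Lambda$ is free abelian) is given by antisymmetrizing a $2$-cocycle: $c(\lambda,\mu) \mapsto c(\lambda,\mu) - c(\mu,\lambda)$.

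With this in place, the forward direction reduces to a direct computation at the cochain level. Starting from the factor of automorphy $f$ representing $\mc{L}$, choose the lift $g$ with $f = \exp(2\pi i g)$. The cocycle condition on $f$ implies that
\begin{equation}
c(\lambda,\mu,v) := g(\lambda,\mu+v) + g(\mu,v) - g(\lambda+\mu,v)
\end{equation}
is $\Z$-valued, and by continuity in $v$ it is actually independent of $v$; this $c$ is the $2$-cocycle representing $\delta_2([f]) = c_1(\mc{L})$ in $H^2(\Lambda,\Z)$. Antisymmetrizing $c$ and comparing with the formula
\begin{equation}
E_L(\lambda,\mu) = g(\mu,\lambda+v) + g(\lambda,v) - g(\lambda,\mu+v) - g(\mu,v)
\end{equation}
shows that $E_L(\lambda,\mu) = c(\mu,\lambda) - c(\lambda,\mu)$, so $E_L$ is (up to sign) the image of $c_1(\mc{L})$ under the isomorphism \eqref{SecondCohomologyTorusIdentification}.

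For the converse, given a Hermitian form $H$ with $E = \im H$ integer-valued on $\Lambda \times \Lambda$, I would explicitly construct a line bundle realizing it as its first Chern class via the Appell--Humbert recipe. One selects a semicharacter $\chi : \Lambda \to U(1)$ satisfying $\chi(\lambda+\mu) = \chi(\lambda)\chi(\mu)\exp(\pi i E(\lambda,\mu))$ (such a $\chi$ exists because $\Lambda$ is free abelian, so the $U(1)$-valued $2$-cocycle $\exp(\pi i E)$ is trivializable as the coboundary of some $\chi$; this may be arranged basis-by-basis on $\Lambda$), and then defines
\begin{equation}
f(\lambda,v) = \chi(\lambda)\exp\!\left(\pi H(v,\lambda) + \tfrac{\pi}{2}H(\lambda,\lambda)\right).
\end{equation}
A direct computation verifies the cocycle condition for $f$, so $f$ defines a holomorphic line bundle $\mc{L}_H$ on $X$. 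Applying the forward direction with this specific $g = \frac{1}{2\pi i}\log f$ recovers $E_L = E = \im H$, establishing that $c_1(\mc{L}_H)$ corresponds to $E$ under \eqref{SecondCohomologyTorusIdentification}.

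The main technical obstacle is the bookkeeping in the forward direction: one must make sure the identifications $H^k(X,\Z) \cong H^k(\Lambda,\Z)$ and $H^1(X,\oh_X^*) \cong H^1(\Lambda,H^0(\oh_V^*))$ are genuinely compatible with the connecting map of the exponential sequence on the nose, and that the conventions (left versus right actions of $\Lambda$ on $V$, sign of the antisymmetrization map, placement of factors of $2\pi i$) all align so that the alternating form produced is exactly $E_L$ and not, say, $-E_L$ or $2E_L$. The existence of the semicharacter $\chi$ in the converse is comparatively mild, as it can be constructed explicitly on generators of $\Lambda$.
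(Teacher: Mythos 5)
Your proposal is correct and follows essentially the standard argument of the cited reference (Lange--Birkenhake, Theorem~2.1.2 together with Appendix~B); the paper itself states this theorem as background without proof. Your cochain-level computation of the connecting homomorphism checks out: with $c(\lambda,\mu) = g(\lambda,\mu+v)+g(\mu,v)-g(\lambda+\mu,v)$ one indeed gets $c(\mu,\lambda)-c(\lambda,\mu) = E_L(\lambda,\mu)$ exactly as displayed, and the constancy-in-$v$ and semicharacter-existence arguments are sound. The only minor discrepancy is the sign in the exponent of the Appell--Humbert factor ($+\tfrac{\pi}{2}H(\lambda,\lambda)$ in your version versus $-\tfrac{\pi}{2}H(\lambda,\lambda)$ in the paper's displayed canonical factor); the $+$ sign is the standard convention and both satisfy the cocycle condition, so this does not affect the argument.
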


Using factors of automorphy all line bundles on $X$ can be characterized.
\begin{definition}
    \label{def:semicharacter}
    A \textit{semi-character} for a Hermitian form $H$ with imaginary part $E$
    taking integral values on $\Lambda$ is a map $\chi:\Lambda\to U(1)$ satisfying
        \begin{equation}
            \chi(\gamma + \mu) = \chi(\gamma)\chi(\mu)\exp(\pi i E(\gamma,\mu))
        \end{equation}.
\end{definition} 
A pair $(H,\chi)$, with $H$ a Hermitian form whose imaginary part takes integral
values on $\Lambda$ and $\chi$ a semi-character for $H$, defines a cocycle
$a_{H,\chi}\in H^1(\Lambda, H^0(\oh_V^*))$ via
    \begin{equation}
        a_{H,\chi}(\gamma, v) = \chi(\gamma)
        \exp(\pi H(v,\gamma) - \frac{\pi}{2}H(\gamma,\gamma))
    \end{equation}.
This in turn defines a line bundle $\mc{L}(H,\chi)$ associated to the pair.
The converse is also true, and this allows for a complete characterization
of $\Pic(X)$.
\begin{theorem}[\protect{Appel-Humbert Theorem, \cite{lange2013complex}*{Theorem~2.2.3}}]\label{appelHumbert}
    Let $P(\Lambda)$ be the group consisting of pairs $(H,\chi)$ of a Hermitian
    form $H$ whose imaginary part takes integral values on $\Lambda$ and a
    semi-character $\chi$, with group operation given by
    \begin{equation}
        (H_1,\chi_1)(H_2,\chi_2) = (H_1+H_2,\chi_1\chi_2)
    \end{equation}.
    Then, the association of $(H,\chi)$ with a line bundle defined above
    induces a group isomorphism between $P(\Lambda)$ and $\Pic(X)$. Furthermore,
    $c_1(\mc{L}(H,\chi)) = H$.
\end{theorem}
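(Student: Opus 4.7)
The plan is to prove the Appell-Humbert theorem in four stages: well-definedness of the map $\Phi\colon (H,\chi)\mapsto \mc{L}(H,\chi)$ and its homomorphism property, injectivity, surjectivity, and the Chern class identification. For the first stage I would directly verify the cocycle relation
\begin{equation}
a_{H,\chi}(\gamma+\mu,v)=a_{H,\chi}(\gamma,\mu+v)\,a_{H,\chi}(\mu,v),
\end{equation}
expanding both sides using Hermitian symmetry of $H$. The cross term $\exp(\pi H(\mu,\gamma))$ arising from $H(v+\mu,\gamma)-H(v,\gamma)=H(\mu,\gamma)$ combines with $\exp(-\pi H(\gamma,\mu))$ to produce $\exp(-2\pi i\im H(\gamma,\mu))$, which is cancelled exactly by the factor $\exp(\pi i E(\gamma,\mu))$ from the semi-character relation (\cref{def:semicharacter}) since $E=\im H$ is integral on $\Lambda$. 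Multiplicativity of $\Phi$ in $(H,\chi)$ is then immediate from the exponential form of $a_{H,\chi}$ together with the tensor-product formula for factors of automorphy.

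For injectivity, suppose $a_{H,\chi}$ is a coboundary: $a_{H,\chi}(\gamma,v)=h(v+\gamma)/h(v)$ for some holomorphic $h\colon V\to\C^*$. Since $V$ is simply connected I can write $h=\exp(2\pi i\phi)$ for an entire function $\phi$. Then $\phi(v+\gamma)-\phi(v)$ is affine in $v$ with leading term determined by $H$; differentiating twice in $v$ and comparing the constant-in-$v$ residues across the lattice forces the Hermitian form $H$ to vanish. The residual equation on $\chi$ then reads $\chi(\gamma)=\exp(2\pi i(\phi(v+\gamma)-\phi(v)))$ with the right-hand side constant in $v$, which forces $\chi\equiv 1$.

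For surjectivity, I would take any holomorphic line bundle $\mc{L}$ on $X$ and consider $c_1(\mc{L})\in H^2(X,\Z)$. Because $\pi^*\mc{L}$ is trivial on the contractible $V$, this class lies in the $(1,1)$ part of the Hodge decomposition; via the decomposition $H^2(X,\C)\cong\bigoplus_{p+q=2}\bigwedge^p\Omega\wedge\bigwedge^q\bar{\Omega}$ it is then the imaginary part of a unique Hermitian form $H$ on $V$ whose imaginary part is integral on $\Lambda$. The line bundle $\mc{L}\otimes \mc{L}(H,1_\Lambda)^{-1}$ has vanishing first Chern class, hence is topologically trivial, hence (by the flat portion of the differential character diagram \eqref{diffCharacterDiagram}) corresponds to a character $\chi_0\in \Hom(\Lambda,U(1))$. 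Twisting the constant semi-character $1_\Lambda$ by $\chi_0$ produces a semi-character $\chi$ for $H$ with $\mc{L}(H,\chi)\cong \mc{L}$.

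Finally, for the Chern class identification, I would substitute the logarithm $g(\gamma,v)=\log\chi(\gamma)+\pi H(v,\gamma)-\tfrac{\pi}{2}H(\gamma,\gamma)$ into the defining formula for $E_L$. The terms linear in $v$ cancel on alternation, the $H(\gamma,\gamma)$ and $\log\chi$ contributions drop out by antisymmetrization in $(\lambda,\mu)$, and the remaining lattice piece collapses to $H(\lambda,\mu)-H(\mu,\lambda)=2i\im H(\lambda,\mu)$, giving $E_L=\im H$ under the identification \eqref{SecondCohomologyTorusIdentification}. The main obstacle will be the surjectivity step, specifically ensuring that $c_1(\mc{L})$ really does arise as the imaginary part of a Hermitian form on $V$; this is essentially the Lefschetz $(1,1)$ theorem for complex tori and depends crucially on the explicit Hodge decomposition of $H^2(X,\C)$ established above.
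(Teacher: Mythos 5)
First, a point of order: the paper does not prove this theorem at all --- it is quoted verbatim from Lange--Birkenhake --- so there is no internal proof to compare against, and your proposal has to be measured against the standard argument. Your architecture (cocycle check and multiplicativity, injectivity by analyzing coboundaries, surjectivity via the N\'eron--Severi class plus the degree-zero part, then the Chern class computation) is the right one, and stages one, two and four are essentially sound sketches. Two small corrections there: the canonical factor as printed in the paper, with $-\frac{\pi}{2}H(\gamma,\gamma)$, does not actually satisfy the cocycle identity (the real parts fail to cancel); you need Lange--Birkenhake's sign $+\frac{\pi}{2}H(\gamma,\gamma)$. And your bookkeeping of the unitary factors is off: the quadratic cross term contributes $e^{\pi\Re H(\gamma,\mu)}$, the right-hand cross term contributes $e^{\pi H(\mu,\gamma)}$, their quotient is $e^{\pi i E(\gamma,\mu)}$, and it is this, multiplied by the semicharacter factor $e^{\pi i E(\gamma,\mu)}$, that gives $e^{2\pi i E}=1$; your version produces $e^{-\pi i E}=(-1)^{E}$, which is not $1$ in general.

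The genuine gaps are concentrated in the surjectivity step. (a) The reason $c_1(\mc{L})$ is of type $(1,1)$ is not that $\pi^*\mc{L}$ is trivial on $V$ --- every integral $2$-class pulls back to zero on the contractible $V$, whatever its Hodge type. The correct input is holomorphy of $\mc{L}$: via the exponential sequence, $c_1(\mc{L})$ dies in $H^2(X,\oh_X)\cong\bigwedge^2\bar{\Omega}$, equivalently $E(iv,iw)=E(v,w)$, and only then does $H(v,w)=E(iv,w)+iE(v,w)$ define a Hermitian form. (b) The constant map $1_\Lambda$ is not a semicharacter for $H$ unless $E$ takes only even values on $\Lambda\times\Lambda$ (see \cref{def:semicharacter}); for a principal polarization one already has $e^{\pi i E(\lambda_1,\mu_1)}=-1$. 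You must first establish that semicharacters exist for every admissible $H$, by the same symplectic-basis induction as in \cref{prop:existenceOfXi}. (c) The identification of topologically trivial holomorphic line bundles with $\Hom(\Lambda,U(1))$ cannot be read off from the flat diagonal of \eqref{diffCharacterDiagram}: that diagonal classifies flat unitary bundles with connection, and the assertion that every element of $\Pic^0(X)$ admits a compatible flat unitary structure is precisely the $H=0$ case of the theorem you are proving. It requires the independent Hodge-theoretic computation $\Pic^0(X)\cong H^1(X,\oh_X)/H^1(X,\Z)\cong\bar{\Omega}/\hat{\Lambda}$, which is how Lange--Birkenhake close the argument (five lemma applied to the two extensions of $NS(X)$ by $\Hom(\Lambda,U(1))$ and by $\Pic^0(X)$). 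With (a)--(c) repaired, your more hands-on injectivity argument via Liouville's theorem is a perfectly acceptable substitute for the five lemma.
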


In particular, each line bundle $\mc{L}$ on $X$ uniquely determines a pair $(H,\chi)$
in $P(\Lambda)$, and has a canonical choice of representative (the \textit{canonical
factor}) for its class in $H^1(\Lambda,H^0(\oh_V^*))$ given by $a_{H,\chi}$.

\begin{example}
    When $H$ is trivial ($H=0$), the semi-characters are group homomorphisms from
    $\Lambda$ to $U(1)$. Line bundles with $c_1(\mc{L})=0$ are topologically
    trivial, and form rank-one local systems on $X$. The (semi)-character $\chi$
    classifying $\mc{L}$ can then be thought of as a unitary representation
    of $\pi_1(X)\cong\Lambda$ which specifies the monodromy of this system.
\end{example}

This description of line bundles also behaves nicely with respect to the
group operation.
\begin{proposition}[\protect{\cite{lange2013complex}*{Lemma~2.3.2}}]
    If $v\in V$ is any lift of $x\in X$, then
    \begin{equation}
        t_x^*\mc{L}(H,\chi) = \mc{L}(H,\chi \exp(2\pi i \im(H(v,\blank)))
    \end{equation}.
\end{proposition}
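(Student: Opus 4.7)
The plan is to translate everything into the language of factors of automorphy, where pullback by a translation admits a transparent description, and then to exhibit the required isomorphism as an explicit coboundary. First, I would lift $t_x$ to the translation $t_v:V\to V$, $w\mapsto w+v$, and use the commutativity $\pi\circ t_v = t_x\circ\pi$ to identify $\pi^*(t_x^*\mc{L}(H,\chi))$ with the trivial bundle on $V$ equipped with the factor of automorphy
\begin{equation}
    a'(\gamma,w) := a_{H,\chi}(\gamma,w+v)
    = \chi(\gamma)\exp\bigl(\pi H(v,\gamma)\bigr)\exp\bigl(\pi H(w,\gamma)-\tfrac{\pi}{2}H(\gamma,\gamma)\bigr).
\end{equation}
Comparing this to the canonical Appel-Humbert factor $a_{H,\chi'}(\gamma,w)$ with $\chi'(\gamma):=\chi(\gamma)\exp(2\pi i\,\im H(v,\gamma))$ then reduces the claim to showing that the ratio $a'/a_{H,\chi'}$ is a coboundary in $H^1(\Lambda,H^0(\oh_V^*))$.

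The key algebraic identity is
\begin{equation}
    \pi H(v,\gamma) = 2\pi i\,\im H(v,\gamma) + \pi H(\gamma,v),
\end{equation}
which follows directly from $H(v,\gamma)-H(\gamma,v)=2i\,\im H(v,\gamma)$ since $H$ is Hermitian. The first term on the right is exactly what promotes $\chi$ to $\chi'$, while the second term yields
\begin{equation}
    \exp\bigl(\pi H(\gamma,v)\bigr) = \frac{h(w+\gamma)}{h(w)},
    \qquad h(w) := \exp\bigl(\pi H(w,v)\bigr).
\end{equation}
Because $H$ is $\C$-linear in its first argument, $h$ is a nowhere-vanishing holomorphic function on $V$, so $h(w+\gamma)/h(w)$ is a genuine $1$-coboundary. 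This exhibits the isomorphism between $\pi^*t_x^*\mc{L}(H,\chi)$ and $\pi^*\mc{L}(H,\chi')$ as $\Lambda$-equivariant line bundles on $V$, hence gives the desired isomorphism on $X$. The last bit of bookkeeping is to verify that $\chi'$ is itself a semi-character for $H$; this is automatic because $\gamma\mapsto\im H(v,\gamma)$ is additive, so the extra factor $\exp(2\pi i\,\im H(v,\gamma))$ multiplies cleanly across the defining relation $\chi(\gamma+\mu)=\chi(\gamma)\chi(\mu)\exp(\pi i E(\gamma,\mu))$ without disturbing the cocycle term.

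The main obstacle is not conceptual but a careful bookkeeping of conventions: the argument hinges on $H$ being $\C$-linear in its first slot, since otherwise $h(w)=\exp(\pi H(w,v))$ would fail to be holomorphic. A sign error in the identity $\pi H(v,\gamma) - 2\pi i\,\im H(v,\gamma) = \pi H(\gamma,v)$ would leave behind an antiholomorphic exponential that cannot be absorbed as a coboundary, so the entire proof rests on this one line of Hermitian arithmetic being executed with the correct convention from \cref{appelHumbert}.
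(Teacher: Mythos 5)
Your proof is correct. The paper does not prove this proposition itself---it simply cites \cite{lange2013complex}*{Lemma~2.3.2}---and your argument is essentially the standard one given there: pull back the canonical Appel--Humbert factor along the lifted translation $t_v$, use the Hermitian identity $\pi H(v,\gamma)-2\pi i\,\im H(v,\gamma)=\pi H(\gamma,v)$ to split off the character correction, and absorb the remaining factor $\exp(\pi H(\gamma,v))$ as the coboundary of the holomorphic function $h(w)=\exp(\pi H(w,v))$. The bookkeeping (the $\C$-linearity of $H$ in its first slot, and the verification that $\chi'$ is again a semi-character) is handled correctly.
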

This immediately yields the following well-known result:
\begin{theorem}[\protect{Theorem of the square, 
    \cite{lange2013complex}*{Theorem~2.3.3}}]
    \label{thm:square}
    \begin{equation}
        t_{x_1+x_2}^*\mc{L}\cong t_{x_1}^*\mc{L}\otimes t_{x_2}^*\mc{L}\otimes \mc{L}^{-1}
    \end{equation}
\end{theorem}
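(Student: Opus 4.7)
The plan is to reduce the statement to a direct computation in the group $P(\Lambda)$ of Appel-Humbert pairs, using the preceding proposition that describes $t_x^*\mc{L}$ explicitly.

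First I would apply the Appel-Humbert theorem (\cref{appelHumbert}) to write $\mc{L}=\mc{L}(H,\chi)$ for a unique pair $(H,\chi)\in P(\Lambda)$. Next I would choose lifts $v_1,v_2\in V$ of $x_1,x_2\in X$, so that $v_1+v_2$ is a lift of $x_1+x_2$. Applying the proposition preceding the theorem three times gives
\begin{equation}
    \begin{aligned}
        t_{x_1}^*\mc{L} &\cong \mc{L}(H,\chi\exp(2\pi i \im H(v_1,\blank))),\\
        t_{x_2}^*\mc{L} &\cong \mc{L}(H,\chi\exp(2\pi i \im H(v_2,\blank))),\\
        t_{x_1+x_2}^*\mc{L} &\cong \mc{L}(H,\chi\exp(2\pi i \im H(v_1+v_2,\blank))).
    \end{aligned}
\end{equation}

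Then I would identify $\mc{L}^{-1}$ with $\mc{L}(-H,\chi^{-1})$, which follows from the group law on $P(\Lambda)$ stated in \cref{appelHumbert} together with the triviality of $\mc{L}(0,1)$. Combining the three translates via the same group law yields
\begin{equation}
    t_{x_1}^*\mc{L}\otimes t_{x_2}^*\mc{L}\otimes\mc{L}^{-1}
    \cong \mc{L}\bigl(H+H-H,\ \chi\cdot\exp(2\pi i \im H(v_1,\blank))\cdot\exp(2\pi i \im H(v_2,\blank))\bigr).
\end{equation}
The Hermitian form $H$ is $\C$-linear (hence $\R$-linear) in its first slot, so $\im H(v_1,\blank)+\im H(v_2,\blank)=\im H(v_1+v_2,\blank)$, and the character agrees with that of $t_{x_1+x_2}^*\mc{L}$.

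There is essentially no obstacle: the content of the theorem is packaged inside the Appel-Humbert classification, so the proof reduces to bookkeeping in $P(\Lambda)$. The only point that deserves a sentence of justification is that the exponential factor $\exp(2\pi i \im H(v,\blank))$ is itself a genuine semi-character for the trivial Hermitian form (so the product remains a semi-character for $H$), and that the resulting pair is independent of the choice of lifts $v_i$ because different choices differ by elements of $\Lambda$, on which $\exp(2\pi i \im H(\blank,\blank))$ is trivial by the integrality hypothesis on $\im H$.
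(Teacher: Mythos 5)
Your proposal is correct and is exactly the argument the paper intends: the paper derives the theorem of the square "immediately" from the preceding proposition $t_x^*\mc{L}(H,\chi) = \mc{L}(H,\chi\exp(2\pi i\im H(v,\blank)))$ together with the group law on $P(\Lambda)$ from the Appel-Humbert theorem, which is precisely your bookkeeping computation. Your added remarks on lift-independence and on $\exp(2\pi i\im H(v,\blank))$ being a semi-character for the trivial form are the right details to check and are handled correctly.
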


\subsubsection{Topologically Trivial Line Bundles and the Dual Torus}

Recall that $\Pic^0(X)$ denotes the group of topologically trivial line
bundles on $X$, with group operation the tensor product.
According to the Appel-Humbert theorem (\cref{appelHumbert}), when $H=0$
there exists an isomorphism
\begin{equation}
    \Hom(\Lambda,U(1))\cong \Pic^0(X)
\end{equation}
sending a character $\chi$ to the line bundle $\mc{L}(0,\chi)$. This implies
that topologically $\Pic^0(X)$ is a torus. We can endow it with a natural
complex structure in the following way.

Recall as well that $\overline{\Omega} = \Hom_{\bar{\C}}(V,\C)$ is the vector
space of $\C$-antilinear maps from $V$ to $\C$. It is canonically identified
with the real dual $\hat{V} = \Hom_\R(V,\R)$ by
\begin{equation}
    \begin{aligned}
        \overline{\Omega}&\to \hat{V}\\
        f&\mapsto \Im(f)
    \end{aligned}
\end{equation}.
Thus, there is a canonical nondegenerate pairing
\begin{equation}
    \begin{aligned}
        \la \blank,\blank\ra: \overline{\Omega}\times V &\to \R\\
        \la \varphi, v\ra &= \Im(\phi(v))
    \end{aligned}
\end{equation}
which induces a map
\begin{equation}\label{eq:expCoveringToDualTorus}
    \begin{aligned}
        \exp:\overline{\Omega}&\to \Hom(\Lambda,U(1))\\
            \varphi&\mapsto \exp\left(2\pi i \la \varphi,\blank\ra\right)
    \end{aligned}
\end{equation}
whose kernel $\ker(\exp) = \hat{\Lambda}$ can be identified with the dual
lattice of $\Lambda$ under $\la\blank,\blank\ra$.
\begin{definition}
    The \textit{dual torus} to $X$, denoted $\hat{X}$, is the complex torus
    \begin{equation}
        \hat{X} := \overline{\Omega}/\hat{\Lambda}
    \end{equation}.
\end{definition}

The dual torus can be constructed in various other ways as well.
\begin{proposition}[\protect{\cite{lange2013complex}*{Theorem~2.4.1}}]
    The exponential map \eqref{eq:expCoveringToDualTorus} furnishes an
    isomorphism
    \begin{equation}
        \exp:\hat{X}\to \Hom(\Lambda,U(1))\cong\Pic^0(X)
    \end{equation}.
    Explicitly, a $\C$-antilinear map $\varphi\in\Hom_{\bar{\C}}(V,\C)$
    is mapped to the character $\chi_{\varphi}$ defined by
    \begin{equation}
        \lambda\mapsto \exp\left( 2\pi i \la \varphi,\lambda\ra \right)
    \end{equation}
    which, by the Appel-Humbert theorem, defines the unique line bundle
    $\mc{L}(0,\chi_{\varphi})\in\Pic^0(X)$ with monodromy given by $\chi_{\varphi}$.
\end{proposition}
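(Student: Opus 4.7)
The plan is to prove the isomorphism in three stages: first analyze the kernel of the exponential map on $\overline{\Omega}$ to obtain a well-defined injection $\hat{X} \hookrightarrow \Hom(\Lambda, U(1))$, then establish surjectivity by lifting characters, and finally compose with the Appell-Humbert identification $\Hom(\Lambda, U(1)) \cong \Pic^0(X)$ to obtain the stated description.

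First I would observe that the map \eqref{eq:expCoveringToDualTorus} is tautologically a group homomorphism, since the pairing $\la\blank,\blank\ra$ is $\R$-bilinear and $\exp: \R \to U(1)$ is a group homomorphism. By the definition of $\hat{\Lambda}$ as the kernel, the exponential descends to an injective homomorphism $\exp: \hat{X} \to \Hom(\Lambda, U(1))$. For surjectivity, let $\chi: \Lambda \to U(1)$ be any character. Since $\Lambda$ is a free abelian group, $\chi$ admits a lift to a group homomorphism $\tilde{\chi}: \Lambda \to \R$ with $\chi = \exp(2\pi i \tilde{\chi})$, which extends uniquely to an $\R$-linear map $\tilde{\chi}_\R: V \to \R$ since $\Lambda \otimes_\Z \R = V$. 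Using the canonical identification $\overline{\Omega} \cong \hat{V}$ via the imaginary part, we obtain a unique $\varphi \in \overline{\Omega}$ with $\Im(\varphi(v)) = \tilde{\chi}_\R(v)$, and then $\exp(\varphi) = \chi$ by construction. This establishes the bijection $\hat{X} \cong \Hom(\Lambda, U(1))$.

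Next I would invoke the Appell-Humbert theorem (\cref{appelHumbert}) in the case $H = 0$. When the Hermitian form vanishes, the defining relation for a semi-character in \cref{def:semicharacter} reduces to $\chi(\gamma+\mu) = \chi(\gamma)\chi(\mu)$, so semi-characters for $H=0$ are precisely the group homomorphisms $\Lambda \to U(1)$. The Appell-Humbert theorem then gives a group isomorphism between $\Hom(\Lambda, U(1))$ and the subgroup of $\Pic(X)$ consisting of line bundles with trivial first Chern class, which is by definition $\Pic^0(X)$. Composing this with the isomorphism of the previous paragraph gives the desired isomorphism $\hat{X} \cong \Pic^0(X)$, and the explicit formula for $\chi_\varphi$ and the corresponding line bundle $\mc{L}(0, \chi_\varphi)$ follows by tracing through the definitions.

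The only subtle step is the surjectivity argument: one must be careful that the $\R$-linear extension of a $\Z$-linear map $\tilde{\chi}: \Lambda \to \R$ to $V$ genuinely gives an element of $\overline{\Omega}$ under the identification $\overline{\Omega} \cong \hat{V}$, but this is automatic since \emph{any} $\R$-linear functional on $V$ arises as the imaginary part of a unique $\C$-antilinear functional (the identification is a real-linear isomorphism of vector spaces). Consequently the proof is essentially a bookkeeping exercise assembling the Appell-Humbert theorem with the elementary duality between $\overline{\Omega}/\hat{\Lambda}$ and $\Hom(\Lambda, U(1))$.
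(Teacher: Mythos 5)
The paper states this proposition as a quoted result from Lange--Birkenhake (Theorem~2.4.1) and gives no proof of its own, so there is nothing internal to compare against; your argument is the standard one and it is correct. The three steps --- injectivity from the definition of $\hat{\Lambda}$ as the kernel, surjectivity by lifting a character through the freeness of $\Lambda$ and the real-linear identification $\overline{\Omega}\cong\hat{V}$ via $\varphi\mapsto\Im(\varphi)$, and the $H=0$ case of Appell--Humbert --- are exactly what is needed, and your flagged ``subtle step'' (that every $\R$-linear functional on $V$ is the imaginary part of a unique $\C$-antilinear functional, namely $\varphi(v)=-f(iv)+if(v)$) is handled correctly.
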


\begin{corollary}
    There is a natural identification
    \begin{equation}
        H^1(X,U(1))\cong \hat{X}
    \end{equation}
    from the natural isomorphism
    \begin{equation}
        H^1(X,U(1))\cong \Hom(H_1(X,\Z),U(1))=\Hom(\Lambda,U(1))
    \end{equation}
\end{corollary}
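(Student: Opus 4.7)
The plan is to assemble the stated isomorphism from three elementary pieces, each already available in the preceding text or from standard algebraic topology, and then invoke the preceding proposition to identify the result with $\hat{X}$.

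First, I would apply the universal coefficient theorem for cohomology with coefficients in $U(1)$. Since $U(1)$ is a divisible abelian group, it is injective as a $\Z$-module, so $\Ext^1_{\Z}(H_0(X,\Z),U(1))=0$ and the universal coefficient sequence collapses to
\begin{equation}
    H^1(X,U(1))\;\cong\;\Hom_{\Z}(H_1(X,\Z),U(1)).
\end{equation}
Next, I would use the description of the fundamental group of the torus: since $\pi:V\to X=V/\Lambda$ is the universal cover and $V$ is contractible, $\pi_1(X)\cong \Lambda$, and this group is already abelian, so Hurewicz gives $H_1(X,\Z)\cong \Lambda$ canonically. Combining the two identifications yields exactly the displayed intermediate isomorphism
\begin{equation}
    H^1(X,U(1))\;\cong\;\Hom(\Lambda,U(1)).
\end{equation}

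Finally, I would invoke the preceding proposition, which states that the exponential map \eqref{eq:expCoveringToDualTorus} descends to an isomorphism $\exp:\hat{X}\to \Hom(\Lambda,U(1))$. Composing with the isomorphism above produces the desired identification $H^1(X,U(1))\cong \hat{X}$. To make naturality transparent, I would note that each isomorphism in the chain is functorial in $X$: the universal coefficient splitting is natural in $X$, the Hurewicz isomorphism is natural, and the exponential identification is canonical once the real-dual pairing $\la\blank,\blank\ra$ is fixed.

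There is no substantive obstacle to overcome here; the only point that needs mild attention is checking that the three canonical isomorphisms compose to the map one would write down directly, namely sending a class $[\chi]\in H^1(X,U(1))$ (represented by a monodromy character $\chi:\Lambda\to U(1)$) to the unique preimage in $\hat{X}=\overline{\Omega}/\hat{\Lambda}$ under $\exp$. Once this compatibility is recorded, the corollary follows by concatenation.
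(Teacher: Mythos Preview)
Your proposal is correct and follows exactly the route the paper intends: the paper gives no separate proof of this corollary, treating it as immediate from the preceding proposition (the isomorphism $\hat{X}\cong\Hom(\Lambda,U(1))$ via the exponential map) together with the identification $H^1(X,U(1))\cong\Hom(H_1(X,\Z),U(1))=\Hom(\Lambda,U(1))$ already displayed in the statement. You have simply supplied the standard justification for that displayed isomorphism (universal coefficients plus injectivity of the divisible group $U(1)$, and the earlier identification $H_1(X,\Z)\cong\Lambda$), which is precisely what the paper leaves implicit.
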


\begin{proposition}[\protect{\cite{lange2013complex}*{Theorem~2.5.1}}]
    The dual torus serves as a fine moduli space for topologically trivial
    line bundles on $X$. In particular, there exists a universal bundle $\mc{P}$
    over $X\times\hat{X}$ whose restriction to a fiber 
    $\mc{P}|_{X\times\left[ \hat{x} \right]}$ is isomorphic to the line
    bundle defined by $\hat{x}$.
    Thus, there is a natural identification
    \begin{equation}
        \hat{X} \cong \Pic^0(X)
    \end{equation}
\end{proposition}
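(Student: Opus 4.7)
The plan is to construct the Poincar\'e bundle $\mc{P}$ explicitly as a descent of the trivial bundle on the universal cover $V \oplus \overline{\Omega}$ via a factor of automorphy, then verify both the fiberwise isomorphism and the universal property using the Appel-Humbert theorem (\cref{appelHumbert}).

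First, I would observe that $X \times \hat{X} = (V \oplus \overline{\Omega})/(\Lambda \oplus \hat{\Lambda})$ as complex tori, so that any line bundle on the product is described by a factor of automorphy for the $(\Lambda \oplus \hat{\Lambda})$-action on $V \oplus \overline{\Omega}$. Using the nondegenerate pairing $\la \varphi, v\ra = \Im(\varphi(v))$, I would propose the factor
\begin{equation}
    a\bigl((\lambda, \hat{\lambda}), (v, \varphi)\bigr) = \exp\bigl(2\pi i \la \varphi, \lambda\ra\bigr).
\end{equation}
A direct computation using the fact that $\la\hat{\lambda}_1, \lambda_2\ra \in \Z$ for $\hat{\lambda}_1 \in \hat{\Lambda}$ and $\lambda_2 \in \Lambda$ verifies the $1$-cocycle condition, so this descends to a well-defined holomorphic line bundle $\mc{P}$ on $X \times \hat{X}$.

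Next, I would verify the fiberwise claim. Fixing $\varphi_0 \in \overline{\Omega}$ lifting $\hat{x} \in \hat{X}$ and restricting the cocycle to $V \times \{\varphi_0\}$ reduces it to $a(\lambda, v) = \exp(2\pi i \la \varphi_0, \lambda\ra)$, which is precisely the canonical factor of automorphy for $\mc{L}(0, \chi_{\varphi_0})$ appearing in the Appel-Humbert theorem. Hence $\mc{P}|_{X \times \{\hat{x}\}}$ is isomorphic to the line bundle associated to $\hat{x}$ under the identification $\hat{X} \cong \Pic^0(X)$ established in the previous proposition.

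For the universal (fineness) property, I would show that any holomorphic family $\mc{F}$ of topologically trivial line bundles on $X$ parameterized by a complex manifold $T$ is of the form $f^*\mc{P} \otimes p_T^*\mc{M}$ for a unique holomorphic map $f : T \to \hat{X}$ and some auxiliary line bundle $\mc{M}$ on $T$. The map $f$ is defined set-theoretically as $t \mapsto [\mc{F}|_{X\times\{t\}}]$, with holomorphicity verified by expressing the semi-character data locally in terms of the Appel-Humbert presentation of each fiber. The main obstacle is showing that $\mc{F}$ and $f^*\mc{P}$ agree up to pullback from $T$: this requires a Seesaw-type argument establishing that any line bundle on $X \times T$ whose restriction to each fiber $X \times \{t\}$ is trivial must itself be pulled back from $T$, which in turn relies on the vanishing $H^0(X, \oh_X) = \C$ together with the cohomology-and-base-change theorem. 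Getting the normalization of the factor of automorphy so that the fiberwise restriction matches the canonical Appel-Humbert representative exactly (rather than only up to isomorphism) is the subtle point, and may require a half-integer symmetrization term $\exp(\pi i \la \hat{\lambda}, \lambda\ra)$ depending on the sign conventions chosen.
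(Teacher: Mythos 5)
The paper itself gives no proof of this proposition --- it is quoted from \cite{lange2013complex}*{Theorem~2.5.1} --- so I am comparing your argument against the standard construction there, which your outline follows: build $\mc{P}$ by descent from the trivial bundle on $V\oplus\overline{\Omega}$ via a factor of automorphy, identify the fiberwise restriction through the Appel-Humbert theorem, and get fineness from a seesaw argument plus cohomology and base change. That skeleton is right, and the last two steps are fine in outline.

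The genuine gap is in the first step: the factor you write down, $a\bigl((\lambda,\hat{\lambda}),(v,\varphi)\bigr)=\exp\bigl(2\pi i\la\varphi,\lambda\ra\bigr)$ with $\la\varphi,\lambda\ra=\Im(\varphi(\lambda))$, does satisfy the cocycle condition (your computation using $\la\hat{\lambda},\lambda\ra\in\Z$ is correct), but it is \emph{not holomorphic} in $\varphi$. It is unimodular and nonconstant, and a nonconstant holomorphic function cannot have constant modulus; equivalently, $\Im(\varphi(\lambda))$ is a real-valued $\R$-linear function of $\varphi$, so its exponential is only smooth. A factor of automorphy must take values in $H^0(\oh^*_{V\oplus\overline{\Omega}})$, so your cocycle produces $\mc{P}$ only as a smooth line bundle, which is not enough for a fine moduli space of \emph{holomorphic} bundles. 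The repair is not the half-integer symmetrization of the semi-character that you flag at the end, but rather using the canonical Appel-Humbert factor on the product: take the hermitian form $H\bigl((v_1,\varphi_1),(v_2,\varphi_2)\bigr)=\overline{\varphi_2(v_1)}+\varphi_1(v_2)$ on $V\oplus\overline{\Omega}$ with semi-character $\chi(\lambda,\hat{\lambda})=\exp\bigl(\pi i\,\Im\hat{\lambda}(\lambda)\bigr)$, and set $\mc{P}=\mc{L}(H,\chi)$. Its canonical factor restricted to $V\times\{\varphi_0\}$ is $\lambda\mapsto\exp(\pi\varphi_0(\lambda))$, which differs from the unitary factor $\exp\bigl(2\pi i\,\Im\varphi_0(\lambda)\bigr)$ of $\mc{L}(0,\chi_{\varphi_0})$ by the coboundary of the holomorphic function $v\mapsto\exp\bigl(\pi\overline{\varphi_0(v)}\bigr)$ (note $\overline{\varphi_0(\cdot)}$ is $\C$-linear), so the fiberwise identification you want still holds --- but only after this correction, and only up to isomorphism rather than on the nose.
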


\subsubsection{Cohomology of Line Bundles}
\label{SSS:cohomologyOfLineBundles}

The Appel-Humbert theorem gives an explicit description of $\Pic(X)$ as a
collection of $\Pic^0(X)$-torsors indexed by the Chern class.
We can identify the first Chern class of a holomorphic line bundle $\mc{L}$ as 
the imaginary part of an Hermitian form $H$ on $V$. Conversely, every Hermitian
form whose imaginary part takes integral values on $\Lambda$ serves as the
first Chern class for some line bundle. Thus, a line bundle is uniquely specified
by $H$ and a semi-character $\chi$ for $H$.
The linear algebra of $H$ is closely related to the cohomology of $\mc{L}$,
which we examine now, following \cite{lange2013complex}*{Section~2-3}.

\begin{definition}
    For any line bundle $\mc{L} = (H,\chi)$ on $X$, define the map $\phi_{\mc{L}}$
    to be
    \begin{equation}\label{eq:phiLDefn}
        \begin{aligned}
            \phi_{\mc{L}}&:X\to \hat{X}\\
            x&\mapsto t_x^*\mc{L}\otimes\mc{L}^{-1}
        \end{aligned}
    \end{equation}
    which, by the theorem of the square, is a homomorphism. 
\end{definition}

In terms of covering spaces, $\phi$ has an analytic description.
\begin{proposition}[\protect{\cite{lange2013complex}*{Lemma~2.4.5}}]
    The map
    \begin{equation}
        \begin{aligned}
            \phi_H: V&\to\overline{\Omega}\\
            v&\mapsto H(v,\blank)
        \end{aligned} 
    \end{equation}
    is an analytic representation of $\phi_{\mc{L}}$ in the sense
    that $\phi_H$ descends under the quotient to $\phi_{\mc{L}}:X\to\hat{X}$.
\end{proposition}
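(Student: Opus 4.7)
The plan is to verify descent (that $\phi_H$ really is well-defined modulo the relevant lattices and lands in $\overline{\Omega}$), and then compute the image of a lift directly, comparing against the analytic formula for $t_x^*\mc{L}\otimes\mc{L}^{-1}$ supplied by the theorem of the square and the Appel-Humbert description of $\Pic^0(X)$.

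First I would check that $\phi_H$ takes values in $\overline{\Omega}$. Since $H$ is Hermitian, the assignment $w\mapsto H(v,w)$ is $\C$-antilinear in $w$, so $H(v,\blank)\in\Hom_{\bar{\C}}(V,\C)=\overline{\Omega}$. Moreover $v\mapsto H(v,\blank)$ is $\R$-linear (in fact $\C$-linear in the first slot), so $\phi_H$ is a linear map of real vector spaces, hence analytic. Next I would check that $\phi_H$ carries the lattice $\Lambda$ into the dual lattice $\hat{\Lambda}\subseteq\overline{\Omega}$. Recall from the definition of $\hat{\Lambda}$ that an element $\varphi\in\overline{\Omega}$ lies in $\hat{\Lambda}$ iff $\la\varphi,\mu\ra=\Im(\varphi(\mu))\in\Z$ for every $\mu\in\Lambda$. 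For $\varphi=\phi_H(\lambda)=H(\lambda,\blank)$ this amounts to $\Im H(\lambda,\mu)\in\Z$ for all $\lambda,\mu\in\Lambda$, which is precisely the integrality hypothesis on $H=c_1(\mc{L})$. Consequently $\phi_H$ descends to a well-defined homomorphism $\bar\phi_H:X\to\hat{X}$.

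It remains to identify $\bar\phi_H$ with $\phi_{\mc{L}}$. For $x\in X$ pick any lift $v\in V$. By the proposition preceding the theorem of the square,
\begin{equation}
    t_x^*\mc{L}(H,\chi)\cong\mc{L}\bigl(H,\ \chi\exp(2\pi i\,\Im H(v,\blank))\bigr),
\end{equation}
so tensoring with $\mc{L}^{-1}=\mc{L}(-H,\chi^{-1})$ and using the group law in the Appel-Humbert theorem gives
\begin{equation}
    \phi_{\mc{L}}(x)=t_x^*\mc{L}\otimes\mc{L}^{-1}\cong\mc{L}\bigl(0,\ \exp(2\pi i\,\Im H(v,\blank))\bigr).
\end{equation}
On the other hand, under the identification $\hat{X}\cong\Pic^0(X)$ coming from \eqref{eq:expCoveringToDualTorus}, the class of $\varphi\in\overline{\Omega}$ corresponds to $\mc{L}(0,\exp(2\pi i\la\varphi,\blank\ra))$. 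Taking $\varphi=\phi_H(v)=H(v,\blank)$ yields $\la\varphi,\mu\ra=\Im H(v,\mu)$, and thus $\bar\phi_H(x)$ matches $\phi_{\mc{L}}(x)$ termwise.

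The only genuinely nontrivial content is bookkeeping: checking that the Hermitian/antilinearity conventions line up so that $H(v,\blank)$ really does land in $\overline{\Omega}$ with the same convention used to identify $\overline{\Omega}/\hat{\Lambda}$ with $\Pic^0(X)$. Once these conventions are aligned, the proof is a direct comparison of the two expressions for the resulting character on $\Lambda$. There is no substantive obstacle, since the key analytic input, namely the formula for $t_x^*\mc{L}$ in terms of semi-characters, has already been recorded.
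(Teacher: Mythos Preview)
Your proof is correct and complete. The paper does not supply its own proof of this proposition; it simply cites \cite{lange2013complex}*{Lemma~2.4.5} and moves on, so there is nothing to compare against beyond noting that your argument is the standard one found in that reference: verify that $H(v,\blank)$ is $\C$-antilinear in the second slot, check that $\phi_H(\Lambda)\subseteq\hat{\Lambda}$ via the integrality of $\Im H$, and then match characters using the translation formula $t_x^*\mc{L}(H,\chi)\cong\mc{L}(H,\chi\exp(2\pi i\,\Im H(v,\blank)))$.
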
 
Using this analytic description, some basic properties of $\phi_{\mc{L}}$
are derived.
\begin{proposition}[\protect{\cite{lange2013complex}*{Corollary~2.4.6}}]
    The following holds for all $\mc{L}=(H,\chi)$:
    \begin{itemize}
        \item The map $\phi_{\mc{L}}$ only depends on $H$.
        \item The association $\mc{L}\mapsto \phi_{\mc{L}}$ is additive:
            \begin{equation}
                \phi_{\mc{L}_1\otimes\mc{L}_2} = \phi_{\mc{L}_1} + \phi_{\mc{L}_2}
            \end{equation}
    \end{itemize}
\end{proposition}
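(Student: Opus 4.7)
The plan is to reduce both statements to elementary properties of the analytic representation $\phi_H(v)=H(v,\blank)$, which was already established in the preceding proposition to descend to $\phi_{\mc{L}}$. Since $\phi_{\mc{L}}$ is uniquely determined (as a map of tori) by its analytic lift $\phi_H$, any property of $\phi_H$ that is manifest at the level of $V \to \overline{\Omega}$ transfers directly to $\phi_{\mc{L}}$.

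For the first bullet, I would simply observe that the formula $\phi_H(v) = H(v,\blank)$ references only the Hermitian form $H$ and makes no reference to the semi-character $\chi$. Hence two line bundles $(H,\chi_1)$ and $(H,\chi_2)$ sharing the same $H$ produce the same analytic representation, and therefore the same homomorphism $X \to \hat{X}$ after descent. No further computation is needed here.

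For the second bullet, the main input is the Appel-Humbert theorem (Theorem~\ref{appelHumbert}), which asserts that the group operation on $P(\Lambda)$ is given by $(H_1,\chi_1)(H_2,\chi_2) = (H_1+H_2,\chi_1\chi_2)$, and that this matches the tensor product on $\Pic(X)$. Thus if $\mc{L}_i = \mc{L}(H_i,\chi_i)$, then $\mc{L}_1\otimes\mc{L}_2 = \mc{L}(H_1+H_2,\chi_1\chi_2)$, so the associated Hermitian form of the tensor product is $H_1+H_2$. The map $H \mapsto \phi_H$ is visibly $\R$-linear in $H$, so
\begin{equation}
    \phi_{H_1+H_2}(v) = (H_1+H_2)(v,\blank) = H_1(v,\blank) + H_2(v,\blank) = \phi_{H_1}(v) + \phi_{H_2}(v).
\end{equation}
Descending to the quotient $X \to \hat{X}$ preserves this additivity (since the quotient maps are group homomorphisms and $\hat{X}$ is an abelian group), yielding $\phi_{\mc{L}_1\otimes\mc{L}_2} = \phi_{\mc{L}_1}+\phi_{\mc{L}_2}$.

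There is no real obstacle in this proof; both statements are formal consequences of the analytic representation together with the Appel-Humbert classification. If anything, the only subtle point worth spelling out is that the additivity at the level of $V \to \overline{\Omega}$ is compatible with the group operation on $\hat{X} = \overline{\Omega}/\hat{\Lambda}$, which is immediate because the lattice $\hat{\Lambda}$ is itself a subgroup and the projection $\overline{\Omega} \to \hat{X}$ is a group homomorphism.
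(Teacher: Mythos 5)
Your proof is correct. Note that the paper itself supplies no argument here---the proposition is quoted directly from Lange--Birkenhake (Corollary~2.4.6)---so there is no in-paper proof to compare against; your reduction to the analytic representation $\phi_H(v)=H(v,\blank)$ is exactly the standard route, and both bullets do follow formally once one grants the preceding lemma that $\phi_H$ descends to $\phi_{\mc{L}}$ and the Appel--Humbert description of the tensor product. One remark: the additivity statement admits an even more elementary proof that bypasses Appel--Humbert entirely, working directly from the definition $\phi_{\mc{L}}(x)=t_x^*\mc{L}\otimes\mc{L}^{-1}$ and the fact that pullback commutes with tensor product:
\begin{equation}
    \phi_{\mc{L}_1\otimes\mc{L}_2}(x)
    = t_x^*(\mc{L}_1\otimes\mc{L}_2)\otimes(\mc{L}_1\otimes\mc{L}_2)^{-1}
    = \bigl(t_x^*\mc{L}_1\otimes\mc{L}_1^{-1}\bigr)\otimes\bigl(t_x^*\mc{L}_2\otimes\mc{L}_2^{-1}\bigr)
    = \phi_{\mc{L}_1}(x)\otimes\phi_{\mc{L}_2}(x),
\end{equation}
and similarly the first bullet follows from the displayed formula $t_x^*\mc{L}(H,\chi)=\mc{L}(H,\chi\exp(2\pi i\im H(v,\blank)))$, which shows $\phi_{\mc{L}}(x)=\mc{L}(0,\exp(2\pi i\im H(v,\blank)))$ manifestly independent of $\chi$. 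Your version buys a cleaner link to the linear-algebraic picture on the universal cover (useful later when the paper differentiates $\phi_{S_{-E}}$); the direct version buys independence from the classification theorem. Either is acceptable.
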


The kernel of $\phi_{\mc{L}}$ is important and can be computed readily.
Define
\begin{equation}\label{eq:lambdaLDefn}
    \Lambda(\mc{L}) = \{v\in V\ |\ E_H(v,\Lambda)\subset\Z\}
\end{equation}
the set of vectors which pair integrally with $\Lambda$ under $E_H:=\Im(H)$. Then
\begin{equation}
    K(\mc{L}) :=\ker(\phi_{\mc{L}}) = \Lambda(\mc{L})/\Lambda
\end{equation}. By elementary linear algebra, if $E_H$ is nondegenerate
\begin{equation}\label{eq:cardinalityOfKernelPhiL}
    \deg(\phi_{\mc{L}}) = |K(\mc{L})| = \det(E_H)
\end{equation},
and if $\det(E_H)=0$ then the kernel has positive dimension.

By the elementary divisor theorem, there exists a symplectic basis of $\Lambda$
with respect to $E_H$, in the sense that there is a basis
$\lambda_1,\ldots,\lambda_g,\mu_1,\ldots,\mu_g$ for which $E_H$ takes form
\begin{equation}
    \begin{bmatrix}
        0 & D\\
        -D &0
    \end{bmatrix}
\end{equation}
where $D=\text{diag}(d_1,\ldots,d_g)$ is a diagonal matrix with positive
integer entries $d_i$ satisfying $d_i|d_{i+1}$. The integers $d_i$ are uniquely
determined by $E_H$, and are called the \textit{elementary divisors} of $E_H$.
If all $d_i>0$, then $E_H$ is nondegenerate. 
\begin{definition}\label{def:pfaffian}
    The \textit{Pfaffian} of $E_H$, denoted $\Pf(E_H)$, is
    \begin{equation}\label{eq:pfaffianDefn}
        \Pf(E_H) = \det(D)
    \end{equation}
    and if $D$ contains zeroes on the diagonal, the \textit{reduced Pfaffian},
    denoted $\Pfr(E_H)$,
    is the product of the nonzero entries of $D$, unless $D=0$ in which case $\Pfr(D):=1$.
\end{definition}

Through some lengthy computations detailed in \cite{lange2013complex}*{Section~3},
the following cohomological results are arrived at:
\begin{theorem}[\protect{\cite{lange2013complex}*{Corollary~3.2.8}}]
    Suppose $H$ is positive-definite. Then,
    \begin{equation}
        h^0(\mc{L}) = \Pf(E_H)
    \end{equation}
\end{theorem}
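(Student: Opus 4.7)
The plan is to identify global holomorphic sections of $\mc{L}$ with classical theta functions for the canonical factor of automorphy, and then count them by Fourier analysis along a Lagrangian splitting of $\Lambda$.

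First I would use the Appell--Humbert description (\cref{appelHumbert}) to realize $\mc{L} = \mc{L}(H,\chi)$ as the quotient of the trivial bundle $V \times \C$ by the action determined by the canonical factor
\begin{equation}
    a_{H,\chi}(\gamma,v) = \chi(\gamma)\exp\bigl(\pi H(v,\gamma) - \tfrac{\pi}{2}H(\gamma,\gamma)\bigr).
\end{equation}
A global holomorphic section of $\mc{L}$ then corresponds uniquely to an entire holomorphic function $\theta:V \to \C$ satisfying the theta functional equation $\theta(v+\gamma) = a_{H,\chi}(\gamma,v)\,\theta(v)$ for all $\gamma\in\Lambda$. So the content of the theorem becomes: the space of such theta functions has dimension $\Pf(E_H)$.

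Next I would invoke the elementary divisor theorem to pick a symplectic basis $\lambda_1,\dots,\lambda_g,\mu_1,\dots,\mu_g$ of $\Lambda$ in which $E_H$ has the block form with diagonal $D = \diag(d_1,\dots,d_g)$. Because $H$ is positive definite and nondegenerate, the real subspace $\Lambda_1 \otimes \R \subset V$ spanned by the $\lambda_i$ is totally real, so $V = (\Lambda_1 \otimes \R) \oplus i(\Lambda_1 \otimes \R)$ as a real vector space. I would then make the standard trivialization: multiply $\theta$ by a Gaussian correction $\exp(-\tfrac{\pi}{2}H(v,v))$ or, equivalently, pass to the ``classical'' factor obtained by restricting $H$ so that $H$ vanishes on $\Lambda_1\otimes\R$. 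This reduces the functional equation along $\Lambda_1$ to periodicity $\theta(v+\lambda_i) = \chi(\lambda_i)\theta(v)$, permitting a Fourier expansion of $\theta$ on the quotient by $\Lambda_1$.

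Then I would apply the functional equation for the second batch of generators $\mu_j$, which, in the complex coordinates adapted to $\Lambda_1$, produces a recursion on the Fourier coefficients of the form $c_{n+d_j e_j} = (\text{explicit nonzero factor})\cdot c_n$. Convergence of the resulting series is guaranteed by positive-definiteness of $H$, which ensures the Gaussian decay of the coefficient factors; this is the step where positive-definiteness is essential and where the analytic estimates must be done carefully. The recursion shows that the space of solutions is parameterized by a set of initial coefficients indexed by the quotient lattice $\Z^g / D\Z^g$, and this quotient has cardinality $d_1\cdots d_g = \det(D) = \Pf(E_H)$ by \cref{def:pfaffian}.

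The main obstacle is verifying that the recursions imposed by the $\mu_j$ are mutually consistent (so that the initial coefficients really do extend to a well-defined theta function) and that every admissible choice of initial data yields a convergent series. Consistency follows because the canonical factor of automorphy is a genuine cocycle, so any two ways of relating $c_n$ to $c_{n+Dk}$ must agree. Convergence follows from the positive-definiteness of $H$ combined with the exponential $\exp(-\tfrac{\pi}{2}H(\gamma,\gamma))$ appearing in the canonical factor. Once both are checked, the dimension count gives $h^0(\mc{L}) = \Pf(E_H)$.
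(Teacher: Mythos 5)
The paper offers no proof of this statement: it is imported verbatim from Lange--Birkenhake (Corollary~3.2.8), with the text explicitly deferring to the ``lengthy computations'' of that reference. Your sketch is, in essence, that reference's proof: identify $H^0(X,\mc{L})$ with theta functions for the canonical factor of automorphy, split $\Lambda$ along a symplectic basis for $E_H$, normalize to the classical factor so the functional equation becomes honest (quasi-)periodicity along half the lattice, Fourier-expand, and observe that the functional equations for the remaining generators propagate the coefficients from a fundamental domain for $\Z^g/D\Z^g$, whose cardinality is $d_1\cdots d_g=\det D=\Pf(E_H)$, with convergence of the resulting series guaranteed by positive-definiteness of $H$. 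One technical correction to your normalization step: you cannot multiply $\theta$ by $\exp\bigl(-\tfrac{\pi}{2}H(v,v)\bigr)$, because $H$ is Hermitian and this factor is not holomorphic, so the product would leave the space of holomorphic theta functions. The correct device is to multiply by $\exp\bigl(\tfrac{\pi}{2}B(v,v)\bigr)$, where $B$ is the symmetric $\C$-bilinear form agreeing with $H$ on one of the two real isotropic subspaces of the decomposition; equivalently, one replaces $H$ by $H-B$ in the factor of automorphy. Your second formulation (``pass to the classical factor'') is the right one. With that fix, and with the consistency-of-recursions and convergence checks you already flag carried out, the plan is the standard and correct argument.
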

\begin{theorem}[\protect{\cite{lange2013complex}*{Theorem~3.3.3}}]
    Suppose $H$ is positive-semidefinite. Denote by $K(\mc{L})_0$ the connected
    component of the identity in $K(\mc{L})$. Then,
    \begin{equation}
        h^0(\mc{L}) = 
        \begin{cases}
            \Pfr(E_H), &\mc{L}|_{K(\mc{L})_0}\text{ is trivial}\\
            0, &\text{else}
        \end{cases}
    \end{equation}
\end{theorem}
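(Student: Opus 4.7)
The plan is to reduce to the positive-definite case already established by quotienting out the null directions of $H$. Via the Appell--Humbert theorem, a section of $\mc{L}=\mc{L}(H,\chi)$ is represented by a holomorphic function $\theta:V\to\C$ satisfying
\begin{equation}
\theta(v+\lambda)=\chi(\lambda)\exp\!\left(\pi H(v,\lambda)+\tfrac{\pi}{2}H(\lambda,\lambda)\right)\theta(v),\qquad \lambda\in\Lambda,
\end{equation}
and the functional equation simplifies drastically along the null space $V_0:=\{v\in V:H(v,\,\cdot\,)=0\}$. Since $H$ is Hermitian and positive-semidefinite, a short computation shows $V_0$ is a complex subspace, equal to the null space of the alternating form $E_H$.

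First I would apply the elementary divisor theorem to $E_H$ to obtain a symplectic basis $\lambda_1,\dots,\lambda_n,\mu_1,\dots,\mu_n$ of $\Lambda$ in which $E_H$ becomes block-diagonal with elementary divisors $d_1=\cdots=d_k=0<d_{k+1}\leq\cdots\leq d_n$. Dimension counting identifies $V_0$ with the real span of $\lambda_1,\dots,\lambda_k,\mu_1,\dots,\mu_k$, so $\Lambda_0:=V_0\cap\Lambda$ is a full-rank lattice in $V_0$ and $K(\mc{L})_0=V_0/\Lambda_0$ is a $k$-dimensional complex subtorus of $X$. Because $H$ vanishes on $V_0$, the restriction $\mc{L}|_{K(\mc{L})_0}$ is the topologically trivial line bundle determined by the character $\chi|_{\Lambda_0}$, and is trivial if and only if $\chi|_{\Lambda_0}=1$.

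Next I would run a Liouville argument along $V_0$: for fixed $v\in V$, the function $w\mapsto\theta(v+w)$ is holomorphic on $V_0$, and for $\lambda\in\Lambda_0$ one gets $\theta(v+w+\lambda)=\chi(\lambda)\theta(v+w)$ (the exponential factor trivializes since $H$ kills $V_0$). Thus $|\theta(v+\,\cdot\,)|$ is $\Lambda_0$-periodic, hence bounded on the compact torus $V_0/\Lambda_0$, and hence constant on $V_0$. If $\chi|_{\Lambda_0}\neq 1$, compatibility of a constant with multiplication by a nontrivial character forces this constant to be $0$, so $\theta\equiv 0$. This disposes of the ``nontrivial restriction'' branch of the statement.

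In the remaining case $\chi|_{\Lambda_0}=1$, every section descends to a holomorphic function on the quotient vector space $V':=V/V_0$ invariant under $\Lambda':=\Lambda/\Lambda_0$, and the pair $(H,\chi)$ descends to a pair $(\bar H,\bar\chi)$ on the quotient torus $\bar X:=V'/\Lambda'\cong X/K(\mc{L})_0$ with $\bar H$ now positive-\emph{definite}. The resulting bijection $H^0(X,\mc{L})\cong H^0(\bar X,\mc{L}(\bar H,\bar\chi))$ combined with the positive-definite theorem gives $h^0(\mc{L})=\Pf(E_{\bar H})=d_{k+1}\cdots d_n=\Pfr(E_H)$, since the elementary divisors of $\bar E_{\bar H}$ are precisely the nonzero $d_i$. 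The main technical obstacle is making the descent genuinely rigorous at the level of Appell--Humbert data: one has to verify that $\bar\chi$ is a bona fide semi-character for $\bar H$ on $\Lambda'$ (using $\chi|_{\Lambda_0}=1$) and that the Appell--Humbert bundle of $(\bar H,\bar\chi)$ is canonically the categorical quotient of $\mc{L}$ by the action of $K(\mc{L})_0$, so that pullback along $X\to\bar X$ really identifies theta functions upstairs with theta functions downstairs.
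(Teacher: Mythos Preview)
The paper does not supply a proof of this statement; it is quoted from \cite{lange2013complex}*{Theorem~3.3.3} along with the neighbouring cohomology results, with the remark that they follow from ``lengthy computations detailed in \cite{lange2013complex}*{Section~3}''. So there is no in-paper argument to compare against.

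That said, your outline is correct and is essentially the standard argument in the cited reference: identify the complex null space $V_0$ of $H$ with the radical of $E_H$ via the elementary divisor decomposition, so that $K(\mc{L})_0=V_0/(V_0\cap\Lambda)$ is a subtorus; use Liouville along $V_0$ to see that theta functions are constant on $V_0$-cosets, which forces $h^0=0$ unless $\chi|_{\Lambda_0}=1$; and in the remaining case descend $(H,\chi)$ to $(\bar H,\bar\chi)$ on $X/K(\mc{L})_0$ with $\bar H$ positive-definite, then invoke the Pfaffian formula already stated as \cite{lange2013complex}*{Corollary~3.2.8}. The technical point you flag---that the Appell--Humbert data genuinely descend and that pullback identifies $H^0(X,\mc{L})$ with $H^0(\bar X,\mc{L}(\bar H,\bar\chi))$---is exactly the content of the descent lemma in Lange--Birkenhake (their Lemma~3.3.2), so your identification of where the work lies is on target. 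One small caution: the fact that the real span of $\lambda_1,\dots,\lambda_k,\mu_1,\dots,\mu_k$ is a \emph{complex} subspace is not automatic from the symplectic basis alone; it uses positive-semidefiniteness of $H$ (so that the radical of $E_H$ coincides with the complex null space of $H$), and you should make that step explicit.
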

\begin{theorem}[\protect{\cite{lange2013complex}*{Theorem~3.5.5}}]
    \label{thm:cohomologyOfLineBundles}
    Let $H$ have $r$ positive and $s$ negative eigenvalues. Then,
    \begin{equation}
        h^q(\mc{L}) =
        \begin{cases}
            {g-r-s \choose q-s}\Pfr(E_H), 
            &s\leq q\leq g-r\text{ and }\mc{L}|_{K(\mc{L})_0}\text{ is trivial}\\
            0, &\text{else}
        \end{cases}
    \end{equation}
\end{theorem}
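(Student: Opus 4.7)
The plan proceeds in two stages: first, reduce to the case where $H$ is non-degenerate by quotienting by the kernel subtorus, and second, handle the non-degenerate mixed-signature case using Serre duality together with the positive-semidefinite theorem above. The binomial coefficient $\binom{g-r-s}{q-s}$ will arise naturally from a Leray spectral sequence in the first stage, and the shift by $s$ will emerge from the signature of the induced form in the second.

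For the reduction, let $V_0 \subseteq V$ denote the kernel (radical) of $H$, a complex subspace of complex dimension $g-r-s$. The subtorus $K(\mc{L})_0 = V_0/(\Lambda \cap V_0)$ is the identity component of $\ker(\phi_\mc{L}) = \Lambda(\mc{L})/\Lambda$ from \eqref{eq:lambdaLDefn}, and the Appell--Humbert description (\cref{appelHumbert}) identifies $\mc{L}|_{K(\mc{L})_0}$ with the flat line bundle determined by the restricted semicharacter $\chi|_{\Lambda \cap V_0}$. If this restriction is non-trivial, then because $H(v_0,\blank)=0$ for $v_0 \in V_0$, \cref{thm:square} implies that the restriction of $\mc{L}$ to every fiber of $\pi\colon X \to \bar{X} := X/K(\mc{L})_0$ is isomorphic (after translation) to $\mc{L}|_{K(\mc{L})_0}$, a non-trivial flat bundle on a subtorus, which has vanishing cohomology in every degree; hence $R^i\pi_*\mc{L} = 0$ and $h^q(\mc{L}) = 0$ by Leray. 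If instead the restriction is trivial, $\mc{L}$ descends to a bundle $\bar{\mc{L}}$ on $\bar{X}$ with $\mc{L} \cong \pi^*\bar{\mc{L}}$, and the induced Hermitian form $\bar{H}$ on $\bar{V} = V/V_0$ is non-degenerate of signature $(r,s)$. The projection formula $R^i\pi_*\mc{L} \cong \bar{\mc{L}} \otimes R^i\pi_*\oh_X$, combined with the Hodge decomposition for the subtorus $K(\mc{L})_0$ (which gives $h^i(K(\mc{L})_0,\oh) = \binom{g-r-s}{i}$), collapses the Leray spectral sequence to
\begin{equation}
    h^q(\mc{L}) = \sum_{i+j=q} \binom{g-r-s}{i}\, h^j(\bar{X}, \bar{\mc{L}}),
\end{equation}
so it suffices to show $h^j(\bar{X}, \bar{\mc{L}}) = \Pf(E_{\bar{H}})$ for $j = s$ and zero otherwise, noting that $\Pf(E_{\bar{H}}) = \Pfr(E_H)$ since $\Pfr$ discards exactly the zero elementary divisors killed by passing to the quotient.

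For the non-degenerate stage, assume $\bar{H}$ is non-degenerate of signature $(r,s)$ with $r+s = \dim_\C \bar{X}$. Since the canonical bundle of a torus is trivial, Serre duality gives $h^j(\bar{\mc{L}}) = h^{r+s-j}(\bar{\mc{L}}^{-1})$, where the Hermitian form of $\bar{\mc{L}}^{-1}$ is $-\bar{H}$, of signature $(s,r)$. The case $s=0$ is the positive-definite theorem, giving $h^0 = \Pf(E_{\bar{H}})$ and zero elsewhere; Serre duality immediately handles the case $r=0$. For mixed signatures, I would perform an explicit Dolbeault computation: pick a symplectic basis of $\Lambda$ adapted to $E_{\bar{H}}$, trivialize $\bar{\mc{L}}$ on the universal cover via the canonical factor of automorphy $a_{H,\chi}$, and diagonalize the twisted $\pbar$-Laplacian along the positive and negative eigenspaces of $\bar{H}$. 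The harmonic forms should then concentrate in bidegree $(0,s)$ with total dimension $\Pf(E_{\bar{H}})$, realized concretely as theta series twisted by a $(0,s)$-form valued in the negative eigenspace.

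The hardest step is this last Fourier-theoretic count in the mixed-signature case. Serre duality and Hirzebruch--Riemann--Roch together pin down the Euler characteristic and impose a symmetry $h^j \leftrightarrow h^{r+s-j}$, but they do not force concentration in the single degree $j = s$. The essential content is the construction of $\Pfr(E_H)$ explicit harmonic representatives and the verification --- via Fourier analysis on the universal cover, generalizing the classical dimension count for theta functions to accommodate negative directions --- that these exhaust the kernel of the twisted $\pbar$-Laplacian.
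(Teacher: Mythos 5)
The paper itself offers no proof of this statement --- it is quoted verbatim from Lange--Birkenhake (Theorem 3.5.5 of \cite{lange2013complex}) --- so the comparison is really with the textbook argument, and your outline follows that argument's architecture faithfully. Your reduction stage is correct and essentially complete: the radical $V_0$ of $H$ coincides with the real radical of $E_H$ (since $E_H(iv,iw)=E_H(v,w)$), so $K(\mc{L})_0=V_0/(\Lambda\cap V_0)$; the semicharacter descends to $\bar\Lambda$ precisely because $E_H(\lambda,\mu)=0$ for $\mu\in\Lambda\cap V_0$; and in the non-trivial-restriction case your use of the theorem of the square works because $\phi_{\mc{L}}(x)\vert_{K(\mc{L})_0}$ is itself trivial (again $E_H(x,V_0)=0$), so every fiber carries the \emph{same} non-trivial flat bundle and $R^i\pi_*\mc{L}=0$. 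Two points deserve a sentence each that you elide: the degeneration of the Leray spectral sequence (which holds because $R^i\pi_*\oh_X\cong\bigwedge^i\overline{V_0}^*\otimes\oh_{\bar X}$ is a trivial bundle and the differentials are $\oh_{\bar X}$-linear maps between direct sums of copies of $\bar{\mc{L}}$ in distinct degrees --- one still has to argue they vanish), and the identification $\Pf(E_{\bar H})=\Pfr(E_H)$, which is correct since the nonzero elementary divisors survive the quotient unchanged.

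The genuine gap is where you say it is: the non-degenerate mixed-signature case, i.e.\ Mumford's Index Theorem, which asserts both the vanishing of $h^j(\bar{\mc{L}})$ for $j\neq s$ and the count $h^s=\Pf(E_{\bar H})$. Serre duality and Riemann--Roch give only the Euler characteristic and the symmetry $h^j\leftrightarrow h^{r+s-j}$, as you note, and "the harmonic forms should then concentrate in bidegree $(0,s)$" is an assertion of the theorem's content, not a proof of it. The actual argument requires splitting $V=V_+\oplus V_-$ into the positive and negative eigenspaces of $\bar H$, choosing a positive-definite reference metric adapted to this splitting, and showing that the twisted $\pbar$-Laplacian forces harmonic $(0,q)$-forms to involve exactly the antiholomorphic directions along $V_-$; the dimension count then reduces to the classical theta-function count for the positive-definite form $\bar H\vert_{V_+}\oplus(-\bar H\vert_{V_-})$. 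Without carrying out this Fourier-analytic step the central case of the theorem (e.g.\ already $g=1$, $s=1$: $h^1(\mc{L})=\deg$ for a negative line bundle on an elliptic curve, concentrated in degree $1$) is not established. Since the result is standard and the paper only cites it, the cleanest fix is to do likewise and cite \cite{lange2013complex}*{Section~3.4--3.5} for the index theorem rather than reprove it.
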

\begin{example}
    \label{rk:cohomologyofStructure}
    In the case $H=0$, $r=s=0$ and $K(\mc{L})_0 = X$. Then, all cohomology
    groups of $\mc{L}$ vanish unless $\mc{L}$ is trivial on all of $X$, i.e.
    $\mc{L}\cong \oh_X$. Then, 
    \begin{equation}
        h^q(\oh_X)=h^{0,q}(X)={g \choose q} 
    \end{equation} 
    as expected.
\end{example} 

\begin{example}
    \label{ex:cohomologyOfAmple}
    In the case of $r+s=g$, the only nontrivial cohomology group is
    $q=s$, which has dimension $\Pf(E_H)$. In other words,
    if $\mc{L}$ is a line bundle whose corresponding Hermitian form is
    nondegenerate, then
    \begin{equation}
        H^q(X,\mc{L}) =
        \begin{cases}
            \Pf(E_H) &q=s\\
            0 &\text{else}
        \end{cases} 
    \end{equation}.
\end{example} 

\subsubsection{Line Bundles and Differential Characters on Tori} %---
\label{SSSLineBundlesAndDiffCharactersOnTori}

This subsection is believed to be well-known, but the author could not find
a citation for it. 

Let $\mc{L} = (H,\chi)$ be a holomorphic line bundle with Appel-Humbert data
given by $(H,\chi)$, and let $\nabla$ be a connection on
$\mc{L}$. Denote by $E_H = \im(H)\in H^2(X,\Z)$ the corresponding Chern class.
\begin{definition}
    Fix a cohomology class $\omega\in H^2(X,\Z)$. A line bundle with connection
    $(\mc{L},\nabla)$ is said to be \textit{$\omega$-flat} if $c_1(\mc{L})= \omega$ and
    \begin{equation}\label{EFlatDefn}
        F_\nabla := \nabla^2 \in IF^2(X)
    \end{equation}
    where $\IF^2(X)$ is the space of translation-invariant forms on $X$.
\end{definition} 
\begin{remark}
    In this notation, an $E_H$-flat connection is an invariant connection
    on a line bundle $\mc{L}$ with associated hermitian form $H$ and Chern
    class $c_1(\mc{L})=E_H$.
\end{remark}

\begin{proposition}
    \label{prop:flatConnectionsAreTorsor}
    The space of $E_H$-flat connections on the complex bundle $\mc{L}$ forms in
    a natural way a torsor over $\Pic^0(X)$. In particular,
    there is a unique $E_H$-flat connection on $\mc{L}$ which is compatible
    with the holomorphic structure.
\end{proposition}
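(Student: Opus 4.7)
The plan is to exhibit a canonical $E_H$-flat connection as a basepoint, use the uniqueness of translation-invariant representatives of cohomology classes to force differences of $E_H$-flat connections to be closed $1$-forms, and then identify these modulo gauge with $\Pic^0(X)$ via the Appel-Humbert theorem. For the basepoint, I would take the Chern connection $\nabla_0$ of the canonical Appel-Humbert hermitian metric on $\mc{L} = \mc{L}(H, \chi)$. On the universal cover this metric reads $h(v,z) = |z|^2 \exp(-\pi H(v,v))$, giving locally $\nabla_0 = d - \pi \partial H(v,v)$ via \eqref{chernConnectionEquation}. A short direct computation yields $F_{\nabla_0} = \pi\, \partial \bar{\partial} H(v,v)$, which has constant coefficients in any linear trivialization of $V$, so it descends to a translation-invariant $(1,1)$-form on $X$. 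Hence $\nabla_0$ is $E_H$-flat, and it is manifestly compatible with both the holomorphic and hermitian structures on $\mc{L}$.

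For the torsor structure, I would argue as follows. Any hermitian $E_H$-flat connection has the form $\nabla = \nabla_0 + A$ for some $A \in \mc{A}^1(X, i\R)$, with curvature $F_{\nabla_0} + dA$. Demanding $F_\nabla \in \IF^2(X)$ forces $dA$ to be both translation-invariant and exact, which by \cref{CohomologyIsIFTheorem} forces $dA = 0$, so $A$ is closed. Two such $A, A'$ yield gauge-equivalent connections on $\mc{L}$ precisely when $A - A' = d\log g$ for some $g \in C^\infty(X, U(1))$. Therefore the gauge-equivalence classes of hermitian $E_H$-flat connections on $\mc{L}$ land in
\begin{equation*}
Z^1(X, i\R)\,/\,d\log C^\infty(X, U(1)) \;\cong\; H^1(X, \R/\Z) \;\cong\; \Hom(\Lambda, U(1)),
\end{equation*}
which by \cref{appelHumbert} is canonically isomorphic to $\Pic^0(X)$. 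The natural torsor action is implemented by tensoring with a flat hermitian line bundle equipped with its canonical flat connection, and its compatibility with the above identification follows from the multiplicativity of holonomy proved in \cref{DCtoLineBundleIsAdditive}.

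For the uniqueness claim, I would decompose any $E_H$-flat $\nabla$ compatible with the holomorphic structure as $\nabla = \nabla_0 + A$; compatibility with $\pbar_\mc{L}$ forces $A^{0,1} = 0$, so $A$ is closed of type $(1,0)$. Such an $A$ is holomorphic on the complex torus and therefore translation-invariant by \cref{CohomologyIsIFTheorem}. The further constraint $A \in \mc{A}^1(X, i\R)$ coming from hermitian compatibility, combined with type $(1,0)$, forces $A = 0$, so $\nabla = \nabla_0$. This singles out the Chern connection as the unique preferred basepoint of the $\Pic^0(X)$-torsor.

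The hard part will be establishing the equivariance in the third step: verifying that the abstract isomorphism $H^1(X, \R/\Z) \cong \Pic^0(X)$ intertwines the $1$-form shift of a connection with the tensor product against the corresponding flat line bundle. This amounts to unpacking the Appel-Humbert cocycle for $(0,\chi'')$, matching its canonical flat connection's holonomy against the exponentiated period pairing, and applying \cref{DCtoLineBundleIsAdditive} so that both group structures agree on the nose rather than merely up to isomorphism.
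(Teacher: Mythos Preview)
Your proposal is correct and in fact more complete than the paper's own argument, but it proceeds by a genuinely different route.

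The paper's proof is purely external: it defines the $\Pic^0(X)$-action by tensoring with flat hermitian line bundles, checks that curvature and Chern class are preserved, and then verifies freeness and transitivity by forming $\mc{L}_1\otimes\mc{L}_2^{-1}$ and observing it is topologically trivial with flat connection. Existence of an $E_H$-flat connection is handled by a bare local argument (solve $dA=F_H$), and the ``in particular'' clause about the unique holomorphic-compatible connection is actually \emph{not} proved in the paper's proof body at all; it is effectively deferred to the next theorem.

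Your approach is internal and affine: you fix the Chern connection of the Appel--Humbert metric as an explicit basepoint, use the uniqueness of translation-invariant representatives to force $dA=0$, and then identify closed imaginary $1$-forms modulo gauge with $H^1(X,\R/\Z)\cong\Pic^0(X)$. This buys you two things the paper's proof does not: an explicit canonical basepoint, and an actual proof of the uniqueness clause via the type/reality argument on $A$. Conversely, the paper's approach avoids your ``hard part'' entirely, since by \emph{defining} the action as tensor product there is nothing to intertwine. Two small remarks: your invocation of \cref{CohomologyIsIFTheorem} for ``holomorphic $1$-forms are translation-invariant'' is slightly off-label (that theorem is about de~Rham representatives; what you need is the Hodge identification $H^0(\Omega^1_X)\cong\Omega$ stated just after it), and your uniqueness argument silently imports hermitian compatibility to get $A\in\mc{A}^1(X,i\R)$, which is not literally in the statement but is the intended reading given the ambient differential-character context.
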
 
\begin{proof}

    Recall that $H:V\times V\to \C$ is an hermitian form on $V$ whose imaginary
    part $E_H :=\im(H)$ is the first Chern class of $\mc{L}$ under the identification
    \begin{equation}
        H^2(X,\Z)\cong\bigwedge^2\Hom(\Lambda,\Z)
    \end{equation}
    from the Kunneth formula \eqref{KunnethTorus}. 
    By \cref{CohomologyIsIFTheorem}, there is a unique translation-invariant
    form $F_H\in IF^2(X)$ whose cohomology class coincides with $E_H$.
    A connection with this curvature can be written locally as
    \begin{equation}
        \begin{aligned}
            \nabla &= d + A\\
            dA &= F_H
        \end{aligned} 
    \end{equation}
    which can be solved for $A$. Thus, $E_H$-flat connections exist.

    For $E\in H^2(X,\Z)$, associate to it the unique translation-invariant form
    $E\in IF^2(X)$ whose cohomology is $E$. Then, the space of $E$-flat connections
    is precisely the preimage of $E$ under the curvature map.
    The group $H^1(X,U(1))$
    acts on this preimage in the following way: for each character $\chi\in
    H^1(X,U(1))$, associate to it the unique line bundle
    $(\mc{L}_\chi,\nabla_\chi)$ with flat connection whose monodromy is given
    by $\chi$. Then, the action of $\chi$ on $(\mc{L},\nabla)$ is given by
    tensor product with $\mc{L}_\chi$.
    This is, indeed, well-defined:
    \begin{equation}
        \begin{aligned}
            c_1(\mc{L}_\chi\otimes\mc{L}) &= c_1(\mc{L}_\chi)+c_1(\mc{L})
            =c_1(\mc{L}) = E\\
            F_{\nabla_\chi\otimes\nabla} &= F_{\nabla_\chi} + F_{\nabla}
            =F_\nabla = E
        \end{aligned} 
    \end{equation}.

    The action is also free and transitive: if $(\mc{L}_1,\nabla_1)$ and
    $(\mc{L}_2,\nabla_2)$ are two line bundles with $E$-flat connections, then
    $\mc{L}_3 = \mc{L}_1\otimes\mc{L}_2^{-1}$ is a trivial line bundle with flat connection
    $\nabla_3 = \nabla_1\otimes (-\nabla_2)$,
    and 
    \begin{equation}
        \begin{aligned}
            \mc{L}_3\otimes\mc{L}_2 &= \mc{L}_1\otimes\mc{L}_2^{-1}\otimes\mc{L}_2\\
                                    &=\mc{L}_1\\
            \nabla_3\otimes\nabla_2 &= d + A_1 +A_2 - A_2 &\text{locally}\\
                                    &= \nabla_1
        \end{aligned} 
    \end{equation}
    showing that the action of $H^1(X,U(1))$ is transitive. A similar argument
    shows the action is also free.

    Hence, the space of $E$-flat connections is a torsor over $H^1(X,U(1))$ as
    desired.
\end{proof} 

When $E=0$, an $E$-flat connection is simply a flat connection. The
Riemann-Hilbert correspondence then allows us to identify the space of
flat connections with the monodromy representations of $\pi_1(X)$. 
This identification extends to $E$-flat connections for arbitrary integral
classes.
\begin{theorem}
    \label{thm:EFlatIsHolomorphicIsomorphism}
    The space of $E$-flat connections is canonically isomorphic to
    $\Pic^{E}(X) := c_1^{-1}(E)$. 
\end{theorem}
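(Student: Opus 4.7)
The plan is to construct mutually inverse maps between $\Pic^E(X)$ and the set of isomorphism classes of $E$-flat connections, and then argue well-definedness using the torsor structures on both sides.

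First I would define the forward map
\begin{equation}
    \Phi: \Pic^E(X) \to \{E\text{-flat connections}\}
\end{equation}
as follows. Given a holomorphic line bundle $\mc{L}$ with $c_1(\mc{L})=E$, equip the underlying smooth bundle with any translation-invariant hermitian metric (for example, the one induced by $H$ via the standard construction, normalized so the metric is positive). The corresponding Chern connection $\nabla_\mc{L}$ is, by \cref{prop:flatConnectionsAreTorsor}, the unique $E$-flat connection compatible with the holomorphic structure. Define $\Phi(\mc{L}) := (\mc{L}, \nabla_\mc{L})$. The uniqueness statement in the proposition ensures $\Phi$ is independent of the auxiliary hermitian metric.

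Next I would define the backward map $\Psi$. Given an $E$-flat pair $(\mc{L}, \nabla)$, split $\nabla = \nabla^{1,0} + \nabla^{0,1}$ via the Hodge decomposition. The operator $\nabla^{0,1}$ automatically satisfies the Leibniz rule, so it only remains to check $(\nabla^{0,1})^2 = 0$, i.e., that the $(0,2)$ part of $F_\nabla$ vanishes. This is the crucial point: because $F_\nabla$ is translation-invariant and represents $E = \im(H)$ for an hermitian form $H$, and because the imaginary part of an hermitian form on $V$ is $J$-invariant as a real alternating $2$-form, $F_\nabla$ lies entirely in $\mc{A}^{1,1}(X)$. Therefore $\nabla^{0,1}$ is a dbar operator and defines a holomorphic structure on $\mc{L}$; set $\Psi(\mc{L},\nabla)$ to be this holomorphic line bundle.

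Finally I would verify $\Phi \circ \Psi = \mathrm{id}$ and $\Psi \circ \Phi = \mathrm{id}$. One direction follows from the uniqueness clause of \cref{prop:flatConnectionsAreTorsor}: if we start with $(\mc{L},\nabla)$, then $\Psi$ produces the holomorphic structure $\pbar_\mc{L} = \nabla^{0,1}$, and $\Phi$ then returns the unique $E$-flat connection compatible with this holomorphic structure, which is $\nabla$ itself. The other direction is tautological from the construction. As a consistency check, both $\Pic^E(X)$ and the space of $E$-flat connections are torsors over $\Pic^0(X)$ (the former by the Appel--Humbert theorem, the latter by \cref{prop:flatConnectionsAreTorsor}), and the maps $\Phi,\Psi$ are manifestly $\Pic^0(X)$-equivariant under tensor product with flat line bundles, so the bijection respects the canonical torsor structures.

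The main obstacle is the type check in the construction of $\Psi$: one must confirm that the translation-invariant representative of $E \in H^2(X,\Z)$ is necessarily of type $(1,1)$. This is where the hypothesis that $E$ arises as $\im(H)$ for an hermitian form is essential; without it, $\nabla^{0,1}$ would not square to zero and there would be no induced holomorphic structure. Everything else is a straightforward assembly of the uniqueness and torsor results already established.
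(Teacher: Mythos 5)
Your proof is correct, and your forward map coincides with the paper's: the Chern connection of the descended hermitian metric $h(v)=\exp(-\pi H(v,v))$, whose curvature $\pi H(dv,dv)$ is the invariant representative of $E$. Where you genuinely diverge is in how bijectivity is established. The paper never constructs an inverse: it notes that both sides are $\Pic^0(X)$-torsors (Appel--Humbert on one side, \cref{prop:flatConnectionsAreTorsor} on the other), checks that the forward map is $\Pic^0(X)$-equivariant --- tensoring with $(0,\chi_0)$ leaves the hermitian form, hence the local connection form, unchanged while multiplying the holonomy by $\chi_0$ --- and invokes the fact that an equivariant map of torsors is an isomorphism. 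You instead build the explicit inverse $\nabla\mapsto\nabla^{0,1}$, whose viability rests on exactly the point you isolate: the invariant representative of $E=\im(H)$ is of type $(1,1)$, so $F_\nabla^{0,2}=0$ and $\nabla^{0,1}$ is an integrable dbar operator. Your route is more constructive and makes visible where the holomorphic structure sits on the connection side; the paper's is shorter and, notably, does not need the uniqueness clause of \cref{prop:flatConnectionsAreTorsor} at all. Two caveats on your version: first, ``any translation-invariant hermitian metric'' should be read as the specific descended metric above (the bundle itself is not translation-invariant, and one still needs the curvature computation to know that this particular Chern connection is $E$-flat --- the abstract existence-and-uniqueness clause does not by itself identify the connection you have produced); second, the uniqueness you invoke for $\Phi\circ\Psi=\mathrm{id}$ is only valid among unitary connections, since adding an invariant holomorphic $(1,0)$-form to $\nabla$ preserves both $F_\nabla$ and $\nabla^{0,1}$. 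This is harmless here because the space of $E$-flat connections is defined inside $\hat{H}^2(X)$, i.e.\ in the unitary category, but it is worth saying explicitly since your argument leans on that clause where the paper's does not.
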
 
\begin{proof}
    By \cite{polishchuk2003abelian}*{Proposition~1.5},
    every holomorphic line bundle $\mc{L}=(H,\chi)$ on $X$ comes equipped with
    a natural hermitian metric defined on the covering space $V$ as
    \begin{equation}\label{eFlatProof:HermitianForm}
        h(v) = \exp(-\pi H(v,v))
    \end{equation}
    and the Chern connection of this hermitian structure is $E_H$-flat.

    Let us follow this argument more closely. Recall that $V\to X$ is the
    universal cover of $X$, and $\mc{L}$ is the quotient of the trivial bundle
    on $V$ by the action of $\Lambda=H_1(X,\Z)$ defined by
    \begin{equation}\label{eFlatProof:appelHumbertQuotientEqn}
        \lambda\cdot (v,z) = (v+\lambda, \chi(\lambda)
        \exp\left( \pi H(v,\lambda) - \frac{\pi}{2}H(\lambda,\lambda) \right) z)
    \end{equation}.
    By \eqref{chernConnectionEquation} the connection is explicitly
    \begin{equation}\label{eFlatProof:connection}
        \nabla = d -\pi H(dv,v)
    \end{equation}
    where $H(dv,v)$ denotes (following the notation of \cite{polishchuk2003abelian})
    the one-form defined as follows: choose linear coordinates
    on $V$ so that $H$ is diagonalized: $H(v,w) = \sum_i v_i\bar{w}_i$. Then
    for $v = (z_1,\ldots,z_n)$, $H(dv,v) = \sum_i \bar{z}_idz_i$.

    The curvature of this connection is then readily computed:
    \begin{equation}\label{eFlatProof:curvature}
        F_\nabla = \pi H(dv,dv)
    \end{equation}
    and, noting that $H(dv,dv)$ is purely imaginary,
    this is equal to $\pi i E_H(dv,dv) = \pi i \sum_i dz_i\wedge d\bar{z}_i$ as
    an invariant two-form.

    Thus, the association of a holomorphic line bundle $\mc{L} = (H,\chi)$
    to the Chern connection associated to the canonical hermitian form
    \eqref{eFlatProof:HermitianForm} yields a map
    \begin{equation}
        \varphi:\Pic^E(X)\to \curv^{-1}(E_H)\subseteq \hat{H}(X)
    \end{equation}
    whose image lies in the space of $E_H$-flat connections, as
    \eqref{eFlatProof:curvature} shows.

    Under the natural action of $\Pic^0(X)$ on the space of $E_H$-flat connections,
    defined in \cref{prop:flatConnectionsAreTorsor}, this association is
    equivariant. Indeed, for $\mc{L}_0=(0,\chi_0)\in\Pic^0(X)$ and
    $\mc{L} = (H,\chi)\in \Pic^E(X)$,
    \begin{equation}
        \mc{L}_0\cdot\mc{L} = \mc{L}_0\otimes\mc{L} = (H,\chi_0\chi)
    \end{equation}
    and this action does not change the hermitian form
    \eqref{eFlatProof:HermitianForm} on the covering space. 
    Thus, $\varphi(\mc{L}_0\cdot\mc{L})$ is computed by multiplying the holonomy
    by $\chi_0$, which implies
    \begin{equation}
        \varphi(\mc{L}_0\cdot\mc{L}) = \mc{L}_0\cdot\varphi(\mc{L})
    \end{equation}
    as desired.
    Since a $G$-equivariant map between $G$-torsors is an isomorphism, the
    assertion is proved.
\end{proof} 

Differential characters on tori admit a particularly nice description in
terms of semi-characters, using the top-left to bottom-right exact sequence
of \eqref{diffCharacterDiagram}.
\begin{theorem}
    \label{thm:structureDConTori}
    Fix a basis $\gamma_i$ for $\Lambda$, and regard elements of $\Lambda$ as
    elements of $Z_1(X)$ by associating to any primitive $\lambda\in\Lambda$
    the linear subtorus along the vector $\lambda$. Fix as well a closed two-form
    $\omega$ on $X$ with integral periods.
    For any choice $z_i\in U(1)$ of values in $U(1)$ for each $\gamma_i$,
    there is a unique differential character $f\in \hat{H}^2(X)$ with
    \begin{equation}
        \curv(f) = \omega
    \end{equation}
    and 
    \begin{equation}
        f(\gamma_i) = z_i
    \end{equation}
    for each $i$.
\end{theorem}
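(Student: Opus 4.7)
The plan is to deduce the statement from the exact sequence
\begin{equation}
    0 \to H^{1}(X,\R/\Z) \xrightarrow{\iota_1} \hat{H}^{2}(X) \xrightarrow{\delta_1} \mc{A}^{2}_\Z(X) \to 0
\end{equation}
appearing as a diagonal of the differential character diagram \eqref{diffCharacterDiagram}, combined with the identification $H^{1}(X,\R/\Z) \cong \Hom(H_1(X,\Z),U(1)) \cong \Hom(\Lambda,U(1))$ afforded by the universal coefficient theorem (using that $H_0(X,\Z)=\Z$ is free). The role of the chosen basis $\gamma_i$ of $\Lambda$ is to identify a character on $\Lambda$ with a tuple of values in $U(1)$.

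For existence, I would first apply surjectivity of $\delta_1$ to pick some preimage $f_0\in\hat{H}^2(X)$ with $\curv(f_0)=\omega$, and record the values $w_i := f_0(\gamma_i)\in U(1)$ of its holonomy along the distinguished 1-cycles. The desired correction is a \emph{flat} character $g\in H^1(X,\R/\Z)$ whose holonomy along $\gamma_i$ equals $z_iw_i^{-1}$. Under the identification $H^1(X,\R/\Z)\cong \Hom(\Lambda,U(1))$, such a $g$ exists and is uniquely determined by prescribing its values on the basis $\gamma_i$. Setting $f := f_0 \cdot \iota_1(g)$ then yields a differential character with $\curv(f)=\curv(f_0)=\omega$ (since $\iota_1(g)$ is flat) and $f(\gamma_i) = w_i \cdot (z_iw_i^{-1}) = z_i$, where I am using the group structure on $\hat{H}^2(X)$ and that holonomy of a tensor product is the product of holonomies (c.f.\ the multiplicativity argument of \cref{DCtoLineBundleIsAdditive}).

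For uniqueness, suppose $f_1,f_2$ both satisfy the conclusion. Then $h:=f_1\cdot f_2^{-1}\in\hat{H}^2(X)$ has vanishing curvature, so by exactness $h=\iota_1(g')$ for some $g'\in H^1(X,\R/\Z)$. By hypothesis $h(\gamma_i) = z_iz_i^{-1}=1$ for every $i$, and since $\iota_1$ sends $g'$ to the character whose holonomy along a 1-cycle computes the value of $g'$ on that homology class, the corresponding character in $\Hom(\Lambda,U(1))$ vanishes on every generator and is therefore trivial; hence $f_1=f_2$.

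The main technical obstacle, in my view, is verifying the compatibility claim used implicitly in uniqueness: that under $\iota_1:H^1(X,\R/\Z)\hookrightarrow\hat{H}^2(X)$ the holonomy of $\iota_1(g')$ along the linear subtorus representing a primitive $\lambda\in\Lambda$ is exactly the evaluation of $g'$ on the class of $\lambda$ in $H_1(X,\Z)$. This amounts to unwinding the definition of $\iota_1$ in \eqref{DCflatInclusion} and checking that the chosen 1-cycle representatives (linear subtori) pair correctly with cohomology classes in $H^1(X,\R/\Z)$; on the torus this is straightforward because these cycles are precisely the images of the basis vectors $\gamma_i$ under $\pi:V\to X$, so the pairing reduces to the canonical pairing between $\Lambda=H_1(X,\Z)$ and $\Hom(\Lambda,U(1))=H^1(X,\R/\Z)$.
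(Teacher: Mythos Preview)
Your proof is correct and follows essentially the same approach as the paper: both use surjectivity of the curvature map $\delta_1$ to produce an initial character with the right curvature, then correct its holonomies on the basis cycles by tensoring with the unique flat character in $H^1(X,\R/\Z)\cong\Hom(\Lambda,U(1))$, and deduce uniqueness from the fact that a flat character is determined by its values on a basis of $\Lambda$. Your added remark about the compatibility of $\iota_1$ with evaluation on linear cycles is a reasonable point of care, and indeed is immediate from the definition \eqref{DCflatInclusion}.
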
 
\begin{proof}
    Pick any $g\in\hat{H}^2(X)$ with $\curv(g)=\omega$. Such a $g$ exists
    by surjectivity of $\delta_1$ in \eqref{diffCharacterDiagram}.
    Then, the regular action of $H^1(X,U(1))\cong\Pic^0(X)$ on $\hat{H}^2(X)$
    leaves the fibers of $\delta_1$ invariant. In particular, let $\chi\in H^1(X,U(1))$
    be the character taking values on $\gamma_i$ as
    \begin{equation}
        \chi(\gamma_i) = \frac{z_i}{g(\gamma_i)}
    \end{equation}.
    Then, the differential character $\chi+g$ satisfies
    \begin{equation}
        \curv(\chi+g) = \curv(g) = \omega
    \end{equation}
    and
    \begin{equation}
        (\chi+g)(\gamma_i) = \chi(\gamma_i)g(\gamma_i) = z_i
    \end{equation}
    as desired.
    
    Clearly such a differential character is unique. If $f_1,f_2$ are such
    that
    \begin{equation}
        \curv(f_1) = \curv(f_2)
    \end{equation}
    and
    \begin{equation}
        f_1(\gamma_i)=f_2(\gamma_i)
    \end{equation}
    then their difference satisfies
    \begin{equation}
        \curv(f_1-f_2) = 0
    \end{equation}
    and
    \begin{equation}
        (f_1-f_2)(\gamma_i) = 1
    \end{equation}.
    But since $f_1-f_2$ is flat, it is determined by the value on basis elements.
    Thus, $f_1-f_2 = 0$ and $f_1=f_2$ as desired.
\end{proof}

\subsection{Generalized Complex Geometry} %---
    
In \cite{gualtieri2004generalized}, Gualtieri builds the theory of \textit{
generalized complex geometry}, which unifies complex geometry and
symplectic geometry in the broad context of A and B type supersymmetry
We follow Gualtieri's exposition for subsections \ref{SSS:basicConstructions}
and \ref{SSS:generalizedMetrics}. 

\subsubsection{Basic Constructions}
\label{SSS:basicConstructions}

Let $X$ be a smooth manifold of dimension $n$. Generalized complex geometry
takes place on the bundle $TX\oplus T^*X$ over $X$. On this bundle, there is a
canonical neutral (signature $(n,n)$) metric defined by evaluation:
\begin{equation}
    \la X+\xi,Y+\eta\ra = \frac{1}{2}(\xi(Y) + \eta(X))
\end{equation}
and the musical isomorphism from this metric instantiates the canonical isomorphism
\begin{equation}
    TX\oplus T^*X \cong T^*X\oplus TX
\end{equation}.

\begin{definition}[\protect{\cite{gualtieri2004generalized}*{Definition~4.14}}]
    \label{def:almostGCStructure}
    An \textit{almost generalized complex structure} on $X$ is an endomorphism
    $\mc{J}\in\End(TX\oplus T^*X)$ satisfying
    \begin{equation}
        \label{eq:almostGCSquaresTo-1}
        \mc{J}^2 = -1
    \end{equation}
    and
    \begin{equation}\label{eq:almostGCIsOrthogonal}
        \mc{J}^* = -\mc{J}
    \end{equation}
    where $\mc{J}^*$ is the adjoint of $\mc{J}$ with respect to the canonical metric.
\end{definition}
The second condition is equivalent to $\mc{J}$ being orthogonal with respect to
the canonical metric, since
\begin{equation}
    \mc{J}^*\mc{J} = (-\mc{J})\mc{J} = -(-1) = 1
\end{equation}.

\begin{example}
    \label{ex:complexStructureGC}
    Recall that a complex structure on $TX$ is an endomorphism $J$ of $TX$ satisfying
    $J^2=-1$.
    Any complex structure on $X$ is an almost generalized complex structure: take
    \begin{equation}
        \mc{J}_J = 
        \begin{bmatrix}
            -J&0\\
            0&J^*
        \end{bmatrix}
    \end{equation}
\end{example}
\begin{example}
    \label{ex:symplecticStructureGC}
    Any symplectic structure on $X$ is an almost generalized complex structure: if
    $\omega$ is the symplectic form, then take
    \begin{equation}
        \mc{J}_\omega = 
        \begin{bmatrix}
            0&-\omega^{-1}\\
            \omega&0
        \end{bmatrix}
    \end{equation}
\end{example}

The adjective \textit{almost} suggests that there is an additional integrability
condition that should be imposed. In the case of \cref{ex:complexStructureGC}
integrability of a complex structure corresponds to Frobenius integrability
of the $+i$-eigenspace $T^{1,0}X$ of $X$, i.e.\ 
$\left[ T^{1,0}X,T^{1,0}X\right] \subseteq T^{1,0}X$. In the case of
\cref{ex:symplecticStructureGC} the ``integrability'' condition is $d\omega = 0$.
Both of these are special cases of a general notion of integrability with respect
to a special bracket operation on $TX\oplus T^*X$.
\begin{definition}[\protect{\cite{gualtieri2004generalized}*{Section~3.2}}]
    The \textit{Courant bracket} on $TX\oplus T^*X$ is the skew-symmetric bracket
    defined by
    \begin{equation}
        \left[ X+\xi, Y+\eta \right]_C = \left[ X,Y \right]+\mc{L}_X\eta-\mc{L}_Y\xi
        -\frac{1}{2}d\left( \iota_X\eta-\iota_Y\xi \right)
    \end{equation}
    where $\mc{L}_X$ is the Lie derivative along $X$, and $\iota_X$ is contraction
    with $X$.
\end{definition}

\begin{definition}[\protect{\cite{gualtieri2004generalized}*{Definition~4.18}}]
    An almost generalized complex structure $\mc{J}$ is said to be
    \textit{integrable}, or a \textit{generalized complex structure} if the
    $+i$-eigenbundle of $\mc{J}$ is involutive under the Courant bracket.
\end{definition}

\begin{proposition}[\protect{\cite{gualtieri2004generalized}*{Example~4.20-4.21}}]
    An almost generalized complex structure induced by an almost complex structure,
    as in \cref{ex:complexStructureGC}, is integrable if and only if the
    complex structure is integrable.

    Similarly, an almost generalized complex structure induced by a nondegenerate
    two form, as in \cref{ex:symplecticStructureGC}, is integrable if and only
    if the two-form is closed.
\end{proposition}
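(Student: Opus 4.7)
For both claims the strategy is the same: identify the $+i$-eigenbundle $L \subset (TX \oplus T^*X) \otimes \C$ of $\mc{J}$ explicitly, then test whether the Courant bracket of two sections of $L$ lies again in $L$, reducing the involutivity condition to the classical integrability condition.

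For the complex-structure case, the block form $\mc{J}_J = \mathrm{diag}(-J, J^*)$ shows immediately that the $+i$-eigenbundle is $L = T^{0,1}X \oplus (T^{1,0}X)^*$, where $(T^{1,0}X)^*$ denotes the annihilator of $T^{0,1}X$ in $T^*X \otimes \C$. Given sections $X + \xi$ and $Y + \eta$ of $L$, the pairings $\iota_X \eta$ and $\iota_Y \xi$ vanish because a $(1,0)$-form annihilates a $(0,1)$-vector, so the Courant bracket collapses to
\begin{equation*}
    [X+\xi, Y+\eta]_C = [X,Y] + \mc{L}_X \eta - \mc{L}_Y \xi.
\end{equation*}
I would first extract the vector component $[X,Y]$ and observe that its lying in $T^{0,1}X$ for arbitrary $(0,1)$-sections $X, Y$ is exactly the Newlander-Nirenberg criterion for integrability of $J$. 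Taking $\xi = \eta = 0$ gives the converse. For the form component, I would use Cartan's formula $\mc{L}_X \eta = \iota_X d\eta + d(\iota_X \eta) = \iota_X d\eta$ and the fact that, when $J$ is integrable, $d$ sends $(1,0)$-forms into $\Omega^{2,0} \oplus \Omega^{1,1}$; contraction with a $(0,1)$-vector then lands in $(T^{1,0}X)^*$, closing the argument.

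For the symplectic case, solving $\mc{J}_\omega(X + \xi) = i(X + \xi)$ in block form gives $\xi = -i\,\iota_X \omega$, so $L = \{X - i\,\iota_X \omega : X \in T_\C X\}$ is the graph of $-i\,\omega^\flat$. The plan is to compute
\begin{equation*}
    [X - i\,\iota_X \omega,\ Y - i\,\iota_Y \omega]_C
\end{equation*}
and use the Cartan identities $\mc{L}_X \iota_Y - \iota_Y \mc{L}_X = \iota_{[X,Y]}$ and $\mc{L}_X \omega = \iota_X d\omega + d\iota_X \omega$ to rewrite the one-form part. The vector part is $[X,Y]$, so membership in $L$ requires the one-form part to equal $-i\,\iota_{[X,Y]}\omega$. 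After the Cartan-calculus bookkeeping, the discrepancy between the computed one-form part and $-i\,\iota_{[X,Y]}\omega$ reduces to a term proportional to $\iota_X \iota_Y d\omega$, and involutivity of $L$ for all $X, Y$ is therefore equivalent to $d\omega = 0$.

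The main technical obstacle in both cases is to arrange the Cartan-calculus manipulation cleanly enough that a single computation simultaneously exhibits the obstruction to involutivity and identifies it with the classical obstruction (Nijenhuis tensor of $J$, respectively $d\omega$). I would organize the writeup so that the forward and reverse implications in each case are read off from the same formula, rather than being argued separately.
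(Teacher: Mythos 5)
The paper states this proposition as an imported result from Gualtieri (Examples 4.20--4.21) and gives no proof of its own, so there is nothing internal to compare against. Your argument is correct and is essentially the standard proof from the cited source: identify the $+i$-eigenbundle ($T^{0,1}X$ plus the $(1,0)$-forms in the complex case, the graph of $-i\,\iota_{(\cdot)}\omega$ in the symplectic case) and reduce Courant involutivity to the Newlander--Nirenberg condition, respectively to the vanishing of $\iota_X\iota_Y d\omega$ for all $X,Y$, which forces $d\omega=0$.
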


\subsubsection{Generalized Metrics}
\label{SSS:generalizedMetrics}
\begin{definition}[\protect{\cite{gualtieri2004generalized}*{Section~6.1}}]
    \label{def:generalizedMetric}
    A \textit{generalized metric} on $X$ is a positive-definite metric $G$ on
    $TX\oplus T^*X$ compatible with the natural inner product. That is,
    $G^2=1$.
\end{definition}
 Equivalently, a
generalized metric is the data of a middle-dimensional subbundle $C_+\subseteq
TX\oplus T^*X$ which is positive-definite with respect to the natural inner
product. Define $C_-$ to be the orthogonal complement of $C^+$, and we can
define $G$ to be
    \begin{equation}
        G=\la ,\ra|_{C_+} - \la ,\ra|_{C_-}
    \end{equation}.

A generalized metric is said to be compatible with a generalized complex
structure $\mc{J}$ if the eigenspaces $C_{\pm}$ are preserved by $\mc{J}$. That
is, if $\mc{J}$ is orthogonal with respect to $G$. In this case, since $G$ and
$\mc{J}$ commute, the product $G\mc{J} = \mc{J}G$ defines a second almost generalized
complex structure denoted $\genK$.
\begin{definition}
    \label{def:generalizedKahler}
    A pair $(G,\mc{J})$ of a generalized metric and an almost generalized
    complex structure $\mc{J}$ compatible with $G$ is said to be a
    \textit{generalized K\"ahler structure} on $X$ if both $\mc{J}$ and $\genK$
    are integrable.

    In such a case, the endomorphism $\genK$ is called the \textit{generalized
    K\"ahler form}.
\end{definition}

These definitions are motivated by the following illuminating example.
\begin{example}[\protect{\cite{gualtieri2004generalized}*{Example~6.4}}]
    Let $(X,g,J,\omega)$ be a K\"ahler manifold. Then, $J$ and $\omega$
    define generalized complex structures $\mc{J}_J$ and $\mc{J}_{\omega}$.
    Their product defines a positive-definite metric $G$
    which, due to the K\"ahler condition, can be written in block matrix form
    as
    \begin{equation}
        G = 
        \begin{bmatrix}
            0 &g^{-1}\\
            g &0
        \end{bmatrix}
    \end{equation}
    which is the metric induced by $g$.
    The pair $(G,\mc{J}_J)$ then defines a generalized K\"ahler structure.
\end{example}

\subsubsection{Submanifolds and Generalized Tangent Spaces}
\label{SSS:submanifoldsAndGCTangent}

Let $M$ be a submanifold of $X$. Just as $TM$ is a subbundle of $TX$,
there is a natural subbundle of $TX\oplus T^*X$ associated to $M$.
The first choice for the generalized tangent space of $M$
would be the subbundle of $(TX\oplus T^*X)|_M$:
\begin{equation}
    \mc{T}_0M = TM\oplus \Ann(TM)
\end{equation}
which is a maximal isotropic subbundle of $TX\oplus T^*X$.
However, this definition is not stable under the symmetries of
$TX\oplus T^*X$ (as discussed in \cite{gualtieri2004generalized}*{Section~7.1}).
The correct definition is
\begin{definition}
    Let $M$ be a submanifold of $X$ and $F$ a closed two-form on $M$. Then,
    the \textit{generalized tangent bundle} of $(M,F)$, denoted $\mc{T}^FM$
    (or $\mc{T}M$ if the context is clear),
    is the subbundle of $(TX\oplus T^*X)|_M$ given by
    \begin{equation}
        \mc{T}^FM = \{ X+\xi\ |\ X\in TM, \xi|_M = \iota_X F\}
    \end{equation}.
\end{definition}

As in the case of complex structures, the generalized tangent bundle identifies
if a submanifold is a generalized complex submanifold:
\begin{definition}
    Let $M$ be a submanifold of $X$ along with a closed two-form $F$ on $M$,
    and let $\mc{J}$ be a generalized complex structure on $X$.
    The pair $(M,F)$ is said to be a \textit{generalized complex submanifold}
    if $\mc{T}^FM$ is stable under $\mc{J}$.
\end{definition}

\begin{example}
    \label{ex:GCsubmanifoldComplex}
    Suppose $J$ is a complex structure on $X$ and $\mc{J}$ is the induced
    generalized complex structure, as in \cref{ex:complexStructureGC}.
    Then, a pair $(M,F)$ is a generalized complex submanifold
    if and only if $M$ is a holomorphic submanifold of $X$ and
    $F$ is purely of type $(1,1)$.

    If $F$ has integral periods, then it is the curvature of a unitary connection
    on $M$ which induces an hermitian holomorphic line bundle for which 
    $F$ is the curvature of the Chern connection.
\end{example}

\begin{example}
    \label{ex:GCsubmanifoldSymplectic}
    Suppose $\omega$ is a symplectic form on $X$ and $\mc{J}$ is the induced
    generalized complex structure, as in \cref{ex:symplecticStructureGC}.
    Then, a pair $(M,F)$ is a generalized complex submanifold
    if and only if either
    \begin{itemize}
        \item $M$ is Lagrangian and $F=0$, or
        \item $M$ is coisotropic and $F$ satisfies the conditions of a rank-one
            coisotropic brane in the sense of \cite{Kapustin:2001ij}.
    \end{itemize}
\end{example}

%=== Chapter Main Construction ---

\section{Main Construction}
\label{chapter:main}

The ideas of double field theory suggest that it is possible to examine
D-branes on a torus by lifting to a double-dimensional space on which the
symmetry group is enlarged to include $T$-duality transformations. 
This program was carried out in the topological $A$-model in
\cite{qin2020coisotropic}, where it was discovered that the Floer cohomology,
appropriately modified, computed the $A$-model open string spectra.

We propose a similar construction for the topological $B$-model, using Y.\
Qin's construction of the doubling space. The lift of a space-filling rank one
D-brane in the $B$-model is defined, and the link between
the generalized complex geometry of the base and the
geometry of the lift is established. We show that, in simple cases,
intersection theory computes the correct open string spectra.

\subsection{The Doubling Torus}

Let $V$ be an $n$-dimensional complex vector space, $\Lambda$ a
lattice in $V$, and $X=V/\Lambda$ the corresponding complex torus. We denote by
$\hat{X} := \Pic^0(X)$ the dual torus. 
The complex structure on $X$ will be denoted by $J\in\End(TX), J^2=-1$, and
the dual complex structure on $\hat{X}$ will be denoted by $-J^*$. 
The product $X\times\hat{X}$ supports
the \textit{Poincar\'e bundle} $\mc{P}$ which expresses $\hat{X}$ as the moduli
space of topologically trivial line bundles on $X$.

For the topological $B$-model on $X$, we propose the following definition
of the doubling space of $X$:
\begin{definition}
    \label{def:doublingTorus}
    The \textit{doubling torus}%
    \footnote{Also referred to by some as the \textit{doubling space} or
    \textit{doubled torus}.}
    of $X$, denoted $\mathbb{X}$, is the 
    complex manifold
    \begin{equation}\label{eq:doublingTorus}
        \mathbb{X} = X\times\hat{X}
    \end{equation}
    with the product complex structure,
    equipped with a closed nondegenerate two-form $c_1(\mc{P})$, the first
    Chern class of the Poincar\'e bundle. 
\end{definition}

\begin{remark}
    In \cite{qin2020coisotropic}*{Definition~3.1}, Y.\ Qin considers a symplectic
    torus $(X,\omega)$ instead of a complex torus, and equips the doubling
    space with the natural symplectic form $\omega\oplus -\omega^{-1}$. This
    reflects the fact that the topological $A$-model is defined on symplectic
    manifolds whereas the topological $B$-model is defined on complex manifolds.
\end{remark}

The two-form on the doubling torus should be thought of as measuring
the product structure of $X\times\hat{X}$. Indeed, if $e_i$ are a basis for
$H^1(X,\Z)$ with dual basis $e_i^*$ for $H^1(\hat{X},\Z)$, then
\begin{equation}\label{eq:poincareBundleInCoords}
    c_1(\mc{P}) = \sum_i e_i\wedge e_i^*
\end{equation}. This is alternatively thought of as the
fundamental two-form defining a para-Hermitian structure on $\mathbb{X}$.

\subsection{Lifting Generalized Complex Structures}
    
    As noted in \cref{def:doublingTorus}, the doubling torus comes with a natural
    complex structure inherited from the complex structure on $X$ and the
    dual complex structure on $\hat{X}$. Furthermore, a symplectic structure
    on $X$ induces an inverse symplectic structure on $\hat{X}$, and in this
    case the doubling torus inherits a symplectic structure as well.
    As Y.\ Qin observes (\cite{qin2020coisotropic}*{Remark~3.2}), this
    doubled symplectic form functions as a different complex structure on the
    doubling space. This is reminiscent of the constructions of generalized
    complex geometry, and we make that connection explicit now.

    \subsubsection{The Tangent Bundle of the doubling torus}
    
    Let $\mathbb{X}$ be the doubling torus of $X$, equipped with projections
    \begin{equation}
        \begin{tikzcd}
            &\mathbb{X}\arrow[ld, "\pi"']\arrow[rd, "\hat{\pi}"]\\
            X &&\hat{X}
        \end{tikzcd}
    \end{equation}
    and symplectic form
    \begin{equation}
        \sigma = c_1(\mc{P})
    \end{equation}.
    From the local description of $\sigma$ in \eqref{eq:poincareBundleInCoords},
    it is clear that the fibers of $\pi$ and $\hat{\pi}$ are transverse Lagrangian
    foliations of $\hat{X}$. 

    We now make clear the connection to generalized complex geometry with the
    following theorem.
    \begin{theorem}
        \label{thm:doubledTangentIsGCTangent}
        The tangent bundle of $\mathbb{X}$ is canonically isomorphic
        (under $\sigma$) to the pullback of the sum of tangent and cotangent bundles
        of either $X$ or $\hat{X}$. That is,
        \begin{equation}\label{eq:doubledTangentIsGCTangent}
            T\mathbb{X}\cong \pi^*(TX\oplus T^*X)\cong \hat{\pi}^*(T\hat{X}\oplus T^*\hat{X})
        \end{equation}
        as smooth vector bundles.
    \end{theorem}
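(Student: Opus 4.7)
The plan is to combine the canonical product splitting of $T\mathbb{X}$ with the duality between the two Lagrangian factors induced by the symplectic form $\sigma$. First, since $\mathbb{X} = X \times \hat{X}$ is a product of smooth manifolds, the tangent bundle splits canonically as
\begin{equation}
    T\mathbb{X} = \pi^*TX \oplus \hat{\pi}^*T\hat{X},
\end{equation}
where the two summands are precisely the vertical distributions of $\hat{\pi}$ and $\pi$ respectively. This part of the statement requires no work beyond recognizing the product structure.

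Next, I would use the symplectic form $\sigma = c_1(\mc{P})$ to identify $\hat{\pi}^*T\hat{X}$ with $\pi^*T^*X$. From the local expression $\sigma = \sum_i e_i \wedge e_i^*$ given in \eqref{eq:poincareBundleInCoords}, each of $\pi^*TX$ and $\hat{\pi}^*T\hat{X}$ is isotropic for $\sigma$. Since together they exhaust $T\mathbb{X}$ and each has half the total dimension, both are Lagrangian, and nondegeneracy of $\sigma$ forces the induced pairing $\pi^*TX \otimes \hat{\pi}^*T\hat{X} \to \mc{A}^0(\mathbb{X})$ to be nondegenerate. The musical map of this pairing then yields a canonical bundle isomorphism
\begin{equation}
    \hat{\pi}^*T\hat{X} \xrightarrow{\ \sim\ } (\pi^*TX)^* = \pi^*T^*X, \qquad \hat{v} \mapsto \sigma(\blank,\hat{v}).
\end{equation}
Combining this with the product splitting yields $T\mathbb{X} \cong \pi^*(TX \oplus T^*X)$ as smooth vector bundles. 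The symmetric statement $T\mathbb{X} \cong \hat{\pi}^*(T\hat{X} \oplus T^*\hat{X})$ is obtained by swapping the roles of $\pi$ and $\hat{\pi}$ throughout the argument.

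The argument is essentially routine bilinear algebra once the setup is in place, and I do not foresee a serious obstacle. The main point worth verifying is that the local expression \eqref{eq:poincareBundleInCoords} represents $\sigma$ globally as a translation-invariant two-form, so that the pairing is well-defined globally; this follows from the fact that $\sigma$ is defined as $c_1(\mc{P})$ for the Poincar\'e bundle, and by \Cref{CohomologyIsIFTheorem} has a unique translation-invariant representative, which on the product $X\times\hat{X}$ is exactly the pullback of the pairing between $H^1(X,\Z)$ and $H^1(\hat{X},\Z)$ recorded in \eqref{eq:poincareBundleInCoords}.
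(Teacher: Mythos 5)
Your proposal is correct and follows essentially the same route as the paper: split $T\mathbb{X}$ using the product structure, observe that the two factors are transverse Lagrangian foliations for $\sigma$, and use contraction with $\sigma$ to identify $\hat{\pi}^*T\hat{X}$ with $\pi^*T^*X$ (and symmetrically for $\hat{X}$). The extra care you take in checking nondegeneracy of the induced pairing and the global translation-invariance of $\sigma$ is a harmless elaboration of the same argument.
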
 

    \begin{proof}
        Since $\mathbb{X}=X\times\hat{X}$ is globally a product
        manifold, its tangent bundle splits
        \begin{equation}\label{eq:tangentBundleDoubledSplits}
            T\mathbb{X}\cong \pi^*TX\oplus \hat{\pi}^*T\hat{X}
        \end{equation}
        which are, from a different perspective, the tangent bundles of the two
        Lagrangian foliations of $\mathbb{X}$.
        Since both foliations are Lagrangian, contraction with the symplectic
        form $\sigma$ induces an isomorphism between the normal bundles and the
        cotangent bundles. 
        Applying this to either foliation results in the isomorphisms
        \begin{equation}\label{eq:doubledTangentSplitsTangentCotangent}
            \begin{aligned}
                \pi^*TX\oplus\hat{\pi}^*T\hat{X}
                &\cong \pi^*TX\oplus\pi^*T^*X\\
                (v,\hat{v})&\mapsto (v,\iota_{\hat{v}}\sigma)
            \end{aligned} 
        \end{equation}
        and
        \begin{equation}
            \begin{aligned}
                \pi^*TX\oplus\hat{\pi}^*T\hat{X}
                &\cong \hat{\pi}^*T^*\hat{X}\oplus \hat{\pi}^*T\hat{X}\\
                (v,\hat{v})&\mapsto (\iota_v\sigma, \hat{v})
            \end{aligned} 
        \end{equation}.
        In both cases, we have implicitly used the isomorphism
        \begin{equation}
            (\pi^* E)^* \cong \pi^* (E^*)
        \end{equation}
        between the dual of the pullback and the pullback of the dual.
    \end{proof}

    \subsubsection{Lifting Generalized Complex Structures}
    \label{SSS:liftingGC}
    The identification in \cref{thm:doubledTangentIsGCTangent}
    allows for generalized complex geometry on $X$ to be transported to 
    ordinary geometry on $\mathbb{X}$. Many of the natural constructions
    of generalized geometry extend in this way, which we examine now.

    \begin{definition}
        The \textit{foliation operator}, denoted by $F$, is the endomorphism
        of $T\mathbb{X}\cong \pi^*TX\oplus\hat{\pi}^*\hat{X}$ defined
        as taking the value $+1$ on $\pi^*TX$ and $-1$ on $\hat{\pi}^*T\hat{X}$.
    \end{definition}
    The foliation operator is alternatively defined by the pair of projectors
    $\Pi,\hat{\Pi}$ onto the two factors of $T\mathbb{X}$ by
    \begin{equation}
        F = \Pi-\hat{\Pi}
    \end{equation}
    and the association goes in reverse. Any operator $F$ with $F^2=1$
    defines projectors by
    \begin{equation}
        \begin{aligned}
            \Pi         &= \frac{1}{2}\left( 1 + F \right)\\
            \hat{\Pi}   &= \frac{1}{2}\left( 1 - F \right)
        \end{aligned}
    \end{equation}.
    These define transverse foliations on $\mathbb{X}$ if the corresponding
    distributions are Frobenius integrable. 

    Observe that for any endomorphism
    $\oh\in \End(T\mathbb{X})$ with block-matrix representation
    \begin{equation}
        \oh = 
        \begin{bmatrix}
            A &B\\
            C &D
        \end{bmatrix} 
    \end{equation}
    composition with $F$ yields
    \begin{equation}
        F\oh = 
        \begin{bmatrix}
            A  &B\\
            -C &-D
        \end{bmatrix} 
    \end{equation}.
    In particular, the symplectic form $\sigma$ takes block-diagonal form
    \begin{equation}
        \sigma =
        \begin{bmatrix}
            0 & \sigma|_X\\
            -\sigma|_X &0
        \end{bmatrix} 
    \end{equation}
    and post-composition with $F$ yields the symmetric form
    \begin{equation}
        F\sigma =
        \begin{bmatrix}
            0 &\sigma|_X\\
            \sigma|_X &0
        \end{bmatrix} 
    \end{equation}. 
    Recall as well that the canonical neutral metric on $TX\oplus T^*X$ is given by
    the evaluation map
    \begin{equation}
        \begin{aligned}
            \la \blank,\blank\ra: TX\oplus T^*X&\to\R\\
            \la X,\omega\ra &= \omega(X)
        \end{aligned} 
    \end{equation}.
    \begin{proposition}
        The canonical neutral metric on $TX\oplus T^*X$ lifts to the metric defined
        by $F\sigma$. 
    \end{proposition}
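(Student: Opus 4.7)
The plan is to verify the claim by direct computation using the explicit isomorphism from the proof of Theorem~\ref{thm:doubledTangentIsGCTangent}, which sends $(v,\hat v)\in\pi^*TX\oplus\hat\pi^*T\hat X$ to $(v,\iota_{\hat v}\sigma)\in\pi^*(TX\oplus T^*X)$. Given two tangent vectors $(v_1,\hat v_1),(v_2,\hat v_2)\in T\mathbb{X}$, I would pull back the neutral metric and evaluate
\[
\bigl\langle (v_1,\iota_{\hat v_1}\sigma),(v_2,\iota_{\hat v_2}\sigma)\bigr\rangle
=\tfrac12\bigl((\iota_{\hat v_1}\sigma)(v_2)+(\iota_{\hat v_2}\sigma)(v_1)\bigr)
=\tfrac12\bigl(\sigma(\hat v_1,v_2)+\sigma(\hat v_2,v_1)\bigr),
\]
where the first equality is the defining formula $\langle X+\xi,Y+\eta\rangle=\tfrac12(\xi(Y)+\eta(X))$ and the second unwinds the interior product.

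Next, I would compute $F\sigma$ directly as a bilinear form on $T\mathbb{X}$. Since $F$ acts as $+1$ on $\pi^*TX$ and $-1$ on $\hat\pi^*T\hat X$, and since $\sigma$ pairs $TX$ only with $T\hat X$ (vanishing on each summand individually, as is visible from the local description $\sigma=\sum_i e_i\wedge e_i^*$ in \eqref{eq:poincareBundleInCoords}), expanding $\sigma(F\blank,\blank)$ on the pair $(v_1,\hat v_1),(v_2,\hat v_2)$ keeps only the off-diagonal cross terms, which recover $\sigma(\hat v_1,v_2)+\sigma(\hat v_2,v_1)$ up to the conventional normalization. The symmetry of $F\sigma$, implicit in the block presentation in the excerpt, then follows from this symmetric expression combined with the antisymmetry of $\sigma$; equivalently, precomposing an antisymmetric form with an involution whose $\pm 1$ eigenspaces are each isotropic exchanges off-diagonal parity.

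The argument is essentially a short bookkeeping computation: the content is that the isomorphism $T\mathbb{X}\cong\pi^*(TX\oplus T^*X)$ of Theorem~\ref{thm:doubledTangentIsGCTangent} is itself built from $\sigma$, so its compatibility with the evaluation pairing defining the canonical neutral metric is nearly automatic. The main obstacle is therefore not mathematical but notational: carefully tracking signs arising in $\iota_{\hat v}\sigma$, the factor of $\tfrac12$ in the canonical metric, and the interpretation of $F\sigma$ as a bilinear form (i.e.\ $\sigma(F\blank,\blank)$ versus $\sigma(\blank,F\blank)$, which differ only by an overall sign in view of the antisymmetry of $\sigma$ and $F^2=1$). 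Once the conventions are aligned, the equality is an identity on basis vectors, and the analogous check using the alternative isomorphism $(v,\hat v)\mapsto(\iota_v\sigma,\hat v)$ gives the same statement with the roles of $X$ and $\hat X$ swapped, which also furnishes a consistency check.
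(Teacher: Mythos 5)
Your proposal is correct and follows essentially the same route as the paper: a direct expansion of $F\sigma$ on $(v+\hat v, w+\hat w)$ using bilinearity and the isotropy of the two foliations, keeping only the cross terms $\sigma(\hat v,w)+\sigma(\hat w,v)$ and matching them to the evaluation pairing under the isomorphism of Theorem~\ref{thm:doubledTangentIsGCTangent}. If anything, you are more careful than the paper about the factor of $\tfrac12$ in the canonical metric and the sign convention in $F$ (the paper's displayed computation applies $F$ with the opposite sign on $\pi^*TX$ from its own definition), but these only affect overall normalization.
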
 
    \begin{proof}
        By direct computation,
        \begin{equation}
            \begin{aligned}
                F\sigma(v+\hat{v})(w+\hat{w})
                &= \sigma(-v+\hat{v},w+\hat{w})\\
                &=\sigma(\hat{v},w) + \sigma(\hat{w},v)\\
                &=((\iota_{\hat{v}}\sigma)(w) + (\iota_{\hat{w}}\sigma)(v))
            \end{aligned} 
        \end{equation}
        which is, by virtue of the isomorphism \eqref{eq:doubledTangentSplitsTangentCotangent}
        exactly the pairing given by the evaluation map.
    \end{proof} 
    \begin{remark}
        Coordinatize $\mathbb{X}$ via the coordinates defined in
        \eqref{eq:poincareBundleInCoords}. In these coordinates, the
        canonical neutral metric is thus
        \begin{equation}\label{eq:canonicalMetricInCoords}
            F\sigma = \sum_i e_i\otimes e_i^*
        \end{equation}.
    \end{remark}
    \begin{remark}
        The musical isomorphisms between $T\mathbb{X}$ and $T^*\mathbb{X}$
        given by the neutral metric express the natural isomorphism
        \begin{equation}
            \pi^*\left( TX\oplus T^*X \right)
            \cong \pi^*\left( T^*X\oplus TX \right)
        \end{equation}.
    \end{remark}

    \begin{proposition}
        \label{prop:liftingGCStructures}
        Let $\mc{J}\in\End(TX\oplus T^*X)$ be an almost generalized complex structure
        on $X$. Then, $\mc{J}$ lifts to an almost complex structure $\pi^*\mc{J}$ on
        $\mathbb{X}$.
    \end{proposition}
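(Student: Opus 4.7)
The plan is to invoke Theorem~\ref{thm:doubledTangentIsGCTangent} to transport $\mc{J}$ across the canonical isomorphism $\Phi:T\mathbb{X}\xrightarrow{\sim}\pi^*(TX\oplus T^*X)$, and then observe that the only axiom for an almost complex structure is that its square equals $-\mathrm{id}$, a property preserved by pullback and conjugation. Concretely, I would first set $\pi^*\mc{J}$ to mean the pullback of $\mc{J}$ as an endomorphism of $\pi^*(TX\oplus T^*X)$ in the obvious sense (apply $\mc{J}$ fiberwise at $x=\pi(x,\hat{x})$), and then use $\Phi$ to reinterpret this as an endomorphism of $T\mathbb{X}$. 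Recall that $\Phi$ is given fiberwise by $(v,\hat{v})\mapsto(v,\iota_{\hat{v}}\sigma)$, and hence is a genuine isomorphism of smooth vector bundles since $\sigma$ is nondegenerate.

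Next I would carry out the verification $(\pi^*\mc{J})^2=-\mathrm{id}$. Because $\Phi$ is an isomorphism of vector bundles, the induced endomorphism $\Phi^{-1}\circ(\pi^*\mc{J})\circ\Phi$ satisfies
\begin{equation}
    \bigl(\Phi^{-1}\circ(\pi^*\mc{J})\circ\Phi\bigr)^2
    =\Phi^{-1}\circ(\pi^*\mc{J})^2\circ\Phi
    =\Phi^{-1}\circ\pi^*(\mc{J}^2)\circ\Phi
    =-\mathrm{id}_{T\mathbb{X}},
\end{equation}
where the last equality uses the assumption $\mc{J}^2=-\mathrm{id}_{TX\oplus T^*X}$ from \cref{def:almostGCStructure}. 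Since an almost complex structure on $\mathbb{X}$ is exactly an endomorphism of $T\mathbb{X}$ squaring to $-\mathrm{id}$, this completes the argument.

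There is essentially no obstacle in this statement: the entire content is packaged into the identification of Theorem~\ref{thm:doubledTangentIsGCTangent}, and what remains is a formal consequence of the fact that conjugation by an isomorphism commutes with taking squares. The substantive content will come in the companion statements — for example, verifying that for $\mc{J}=\mc{J}_J$ arising from a complex structure on $X$ the lifted almost complex structure is integrable (referenced as \cref{ex:liftComplexStructure}), or that the lift interacts correctly with $\sigma$ and the neutral metric $F\sigma$ identified in \eqref{eq:canonicalMetricInCoords}. I would flag in the proof that the analogous lift could equally well be built from the $\hat\pi$-side isomorphism of Theorem~\ref{thm:doubledTangentIsGCTangent}, and that the two agree whenever $\mc{J}$ is compatible with the duality exchanging $X$ and $\hat{X}$; but for the bare statement that $\pi^*\mc{J}$ is an almost complex structure, no such compatibility is needed.
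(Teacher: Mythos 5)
Your proposal is correct and follows essentially the same route as the paper: both transport $\mc{J}$ across the isomorphism $T\mathbb{X}\cong\pi^*(TX\oplus T^*X)$ from Theorem~\ref{thm:doubledTangentIsGCTangent} and conclude $(\pi^*\mc{J})^2=-1$ by functoriality of pullback (which is exactly your conjugation computation, spelled out). The extra remarks about the $\hat\pi$-side lift and integrability are fine but not needed for the statement.
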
 
    \begin{proof}
        This is a direct consequence of \cref{thm:doubledTangentIsGCTangent}.
        Namely, let $\mc{J}:TX\oplus T^*X\to TX\oplus T^*X$ be such that $\mc{J}^2=-1$
        and $\mc{J}^*=-\mc{J}$ under the canonical pairing on $TX\oplus T^*X$.
        This induces an endomorphism of $T\mathbb{X}\cong \pi^*(TX\oplus T^*X)$
        by pullback:
        \begin{equation}
            \pi^*(\mc{J}):T\mathbb{X}\to T\mathbb{X}
        \end{equation}
        and by functoriality of $\pi^*$ this satisfies $\pi^*(\mc{J})^2=-1$
        as desired.
    \end{proof} 

    \begin{example}[Lifts of Symplectic Structures, c.f.\ \protect{\cite{qin2020coisotropic}*{Remark~3.2}}]
        Let $\omega$ be a symplectic form on $X$. Then $\omega$ induces
        a generalized complex structure (\cref{ex:symplecticStructureGC})
        defined in block matrix form as
        \begin{equation}
            \mc{J}_\omega = 
            \begin{bmatrix}
                0 &\omega^{-1}\\
                -\omega &0
            \end{bmatrix} 
        \end{equation}.
        This lifts to a complex structure on $\mathbb{X}$ 
        which is a central object of study in \cite{qin2020coisotropic}.
    \end{example} 

    \begin{example}
        \label{ex:liftComplexStructure}
        Let $J$ be a complex structure on $X$. Then $J$ induces
        a generalized complex structure (\cref{ex:complexStructureGC})
        defined in block matrix form as
        \begin{equation}
            \mc{J}_J = 
            \begin{bmatrix}
                J &0\\
                0  &-J^*
            \end{bmatrix} 
        \end{equation}.
        From the construction of $\hat{X}$, we see that $-J^*$ is the
        complex structure on $\hat{X}$ and thus $\mc{J}_J$ is simply the
        induced complex structure on the product.
    \end{example} 

    \begin{example}
        [Lifts of Metrics]
        Suppose we equip $X$ with a Riemannian metric $g$. Then,
        $g$ induces a metric $g^{-1}$ on $\hat{X}$ and the pair forms into
        a Riemannian metric on $\mathbb{X}$. Using 
        the canonical metric, this induces an endomorphism $G\in\End(T\mathbb{X})$
        defined in block matrix form as
        \begin{equation}
            G = 
            \begin{bmatrix}
                0& g^{-1}\\
                g &0
            \end{bmatrix}
        \end{equation}
        which satisfies the algebraic properties of a generalized metric.
        
        If $X$ is equipped with a B-field $B\in H^2(X,\R)$,
        then the B-shifted generalized metric is given by
        \begin{equation}
            G^B = 
            \begin{bmatrix}
                -g^{-1}B & g^{-1}\\
                g-Bg^{-1}B &Bg^{-1}
            \end{bmatrix}
        \end{equation}
        which lifts as well to a positive-definite metric on $\mathbb{X}$.
        
    \end{example}

    \begin{example}[Lifts of K\"ahler Structures]
        \label{ex:liftsKahler}
        If $(g,J,\omega)$ is a K\"ahler structure on $X$, the lifts of $G$,
        $\mc{J}_J$, and $\mc{J}_\omega$ on $\mathbb{X}$ define a K\"ahler
        structure on $\mathbb{X}$. Indeed, the lift $G$ of $g$ is a
        positive-definite metric and by direct computation
        $\mc{J}_\omega=G\mc{J}_J$ can be written as
        \begin{equation}
            \mc{J}_\omega = -\pi^*\omega + \hat{\pi}^*\omega^{-1}
        \end{equation}
        which is the K\"ahler form associated to the product K\"ahler
        manifold $(X,g,J,-\omega)\times (\hat{X},g^{-1},-J^*,\omega^{-1})$.
    \end{example}
    
    Even if there is no metric on $X$, the doubling space comes with a canonical
    metric. With respect to this metric, a construction similar to
    \cref{ex:liftsKahler} can be performed.

    Let $\mc{J}$ be the lift to $\mathbb{X}$ of a generalized complex
    structure on $X$. Then, the form
    \begin{equation}
        \mc{J}^\sharp := \la \mc{J}\blank, \blank\ra:T\mathbb{X}\to T^*\mathbb{X}
    \end{equation}
    is a nondegenerate two-form on $\mathbb{X}$.
    \begin{example}
        \label{ex:liftOfComplexStructureSymplectic}
        Let $J$ be the complex structure on $X$, and $\mc{J}$ its lift. 
        Since $J$ is invariant under
        translation on $X$, the form $\la J\blank,\blank\ra$ is likewise
        an invariant form on $\mathbb{X}$. Thus this form is closed
        and $\mc{J}^\sharp$ defines a symplectic form on $\mathbb{X}$.
    \end{example}

\subsection{Lifting D-branes to the Doubling Space}
    
In this context, a (rank one) \textit{D-brane} is a pair $(M,\nabla)$ of smooth
submanifold $M$ along with a $U(1)$-bundle with $U(1)$ connection $\nabla$. If the 
submanifold $M$ is holomorphic and the connection is compatible with
the holomorphic structure, $(M,\nabla)$ defines a (rank one) \textit{B-type
D-brane}, whereas if $M$ is Lagrangian and $\nabla$ is flat, then $(M,\nabla)$
defines a (rank one) \textit{A-type D-brane}. B-type (A-type) D-branes are
natural boundary conditions for the topological B-model (A-model).

In \cite{qin2020coisotropic}, a lift of certain rank one A-type D-branes is
defined on a symplectic torus, which we review now.
Let $X$ be a symplectic torus, and $ L $ a Lagrangian submanifold of $ X $,
endowed with a flat $U(1)$ connection $\nabla$, or
a rank-one coisotropic brane in the sense of \cite{Kapustin:2001ij}.
Furthermore, fix a (set-theoretic) map
\begin{equation}
    \xi:H_1(L)\to \Z_2
\end{equation}
satisfying 
\begin{equation}
    \xi(\gamma+\gamma')-\xi(\gamma)-\xi(\gamma') = c_1(\nabla)(\gamma\wedge\gamma')
\end{equation}
in $\Z_2$.
Then,
\begin{definition}[\protect{\cite{qin2020coisotropic}*{Definition 3.4}}]
    The lift of $L$ is defined to be
    \begin{equation}
        \{(x,\hat{x})\in X\times\hat{X}\ |\ x\in L,
            \la \hat{x},\gamma_x\ra = (-1)^{\xi(\gamma_x)}\hol_\nabla(\gamma_x),
        \forall\gamma_x\in\pi_1(X,x)\}
    \end{equation}
    where $\gamma_x$ is any linear circle based at $x$.
\end{definition}
Y.\ Qin then computes the intersection theory of these lifts, and shows that
their Floer intersection cohomology is closely related to the Fukaya
category of $X$ itself. We now attempt to carry this construction over to the
B-model side where $X$ is a complex torus.

To that end, we now let $(M,\nabla)$ be a D-brane whose submanifold $M$ is a
(closed) holomorphic subtorus,
with $F_\nabla = \nabla^2$ denoting the curvature two-form which we require to be
translation-invariant. The embedding of $\Lambda$ into $X$ defines linear
representatives $\gamma$ for each element in $H_1(M,\Z)$. 
\begin{definition}
    \label{def:xi}
    A \textit{symmetric semi-character} for $E\in IF^2(M)$ is a (set-theoretic) map
    \begin{equation}\label{xiDefinition}
        \xi:\Lambda \to \Z_2
    \end{equation}
    satisfying
    \begin{equation}\label{xiConstraint}
        \xi( \gamma_1 + \gamma_2) -\xi(\gamma_1)-\xi(\gamma_2) =
        E(\gamma_1\wedge \gamma_2)
    \end{equation}
    for all $\gamma_{1,2}\in H_1(M,\Z)$. Here we implicitly use the isomorphism
    \eqref{eq:cohomologyIsIF}. 
\end{definition} 

\begin{proposition}
    \label{prop:existenceOfXi}
    For each invariant two-form $E$, symmetric semi-characters exist,
    and the space of symmetric semi-characters is an affine space
    over $H^1(M,\Z_2)$.
\end{proposition}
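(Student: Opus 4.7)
The plan is to recognize condition \eqref{xiConstraint} as the condition that $\xi$ be a quadratic refinement of $E \bmod 2$, and then reduce both claims to standard $\F_2$-linear algebra. Write $\bar{E}$ for the mod-$2$ reduction of $E$, regarded via \eqref{eq:cohomologyIsIF} as a pairing on $\Lambda\times\Lambda$. Since $E$ is alternating and $-1\equiv 1 \pmod 2$, $\bar{E}$ is a symmetric bilinear form on $\Lambda$. A useful first observation is that any symmetric semi-character automatically factors through $\Lambda/2\Lambda \cong H_1(M,\Z_2)$: setting $\gamma_1=\gamma_2=\gamma$ in \eqref{xiConstraint} and using $E(\gamma,\gamma)=0$ gives $\xi(2\gamma)\equiv 2\xi(\gamma)\equiv 0 \pmod 2$, and applying \eqref{xiConstraint} to $\gamma$ and $2\gamma'$ then yields $\xi(\gamma+2\gamma')=\xi(\gamma)$ since $\bar{E}(\gamma,2\gamma')=0$. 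So the problem becomes one of constructing and classifying quadratic refinements of $\bar{E}$ on the $\F_2$-vector space $H_1(M,\Z_2)$.

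For existence I will exhibit one explicitly. Fix a $\Z$-basis $\{e_i\}$ of $\Lambda$ and define
\begin{equation*}
    \xi\!\left(\sum_i a_i e_i\right) := \sum_{i<j} a_i a_j\, E(e_i,e_j) \pmod 2.
\end{equation*}
Verification of \eqref{xiConstraint} is a short direct computation from the bilinearity of $E$, using $E(e_i,e_i)=0$ together with the congruence $a_i b_j + a_j b_i \equiv a_i b_j - a_j b_i \pmod 2$. Equivalently, one can invoke the classical fact that every symmetric bilinear form over $\F_2$ admits a quadratic refinement.

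For the affine structure, if $\xi_1$ and $\xi_2$ are two symmetric semi-characters then their difference $\eta := \xi_1 - \xi_2\colon \Lambda \to \Z_2$ has vanishing coboundary in \eqref{xiConstraint}, hence is a group homomorphism; conversely, adding any $\eta\in\Hom(\Lambda,\Z_2)$ to a symmetric semi-character gives another. The set of symmetric semi-characters is therefore a torsor over $\Hom(\Lambda,\Z_2)$, which the universal coefficient theorem (applied to $H_1(M,\Z)\cong\Lambda$, a finitely generated free abelian group, so that the Ext term vanishes) identifies with $H^1(M,\Z_2)$. The proof has no serious obstacle; the only conceptual point worth flagging is the descent of $\xi$ to $\Lambda/2\Lambda$, which reduces everything to routine $\F_2$-linear algebra.
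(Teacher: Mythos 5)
Your proof is correct and follows essentially the same route as the paper's: an explicit quadratic formula $\sum_{i<j}a_ia_jE(e_i,e_j)$ on a basis for existence (the paper merely allows arbitrary values $\xi(s_i)$ as the linear part), and the observation that the difference of two solutions is a homomorphism to $\Z_2$ for the affine structure. Your additional framing as a quadratic refinement of $\bar{E}$ over $\F_2$, with the descent to $\Lambda/2\Lambda$, is a nice conceptual touch but does not change the substance of the argument.
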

\begin{proof}
    Let $\{s_i\}$ freely generate $H_1(M,\Z)$, and arbitrarily fix values for
    $\xi(s_i)$. Then, formula  \eqref{xiConstraint} determines the value of $\xi$
    on all linear combinations of $s_i$, hence all of $H_1(M,\Z)$. 
    In particular, since $E(s_i\wedge s_i)=0$, $\xi(ns_i) = n\xi(s_i)$,
    the value of $\xi$ is computed inductively as
        \begin{equation}
            \xi(a^is_i) = a^i\xi(s_i) + a^ia^jE(s_i\wedge s_j)
        \end{equation} 
    which satisfies the desired identity.

    The difference of any two such functions, say $\xi_1,\xi_2$, satisfies
        \begin{equation}
            (\xi_1-\xi_2)(\gamma_1+\gamma_2) = \xi_1(\gamma_1) + \xi_1(\gamma_2)
            -\xi_2(\gamma_1)-\xi_2(\gamma_2) = (\xi_1-\xi_2)(\gamma_1) +(\xi_1-\xi_2)(\gamma_2)
        \end{equation} 
    hence is an element of $H^1(M,\Z_2)$. 
    Conversely, adding an element $\sigma\in H^1(M,\Z_2)$ to $\xi$ clearly results
    in a new function satisfying the desired identity.
\end{proof}

\begin{proposition}
    For any choice of $\xi$, the map
        \begin{equation}
            v \mapsto (-1)^{\xi(v)}\hol_\nabla(x+\gamma_v)
        \end{equation} 
    is a group homomorphism from $H_1(M,\Z)$ to $U(1)$.
\end{proposition}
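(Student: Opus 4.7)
The plan is to recast the statement using the Cheeger-Simons differential character $f\in\hat{H}^2(M)$ associated to $(\mc{L},\nabla)$ under the holonomy isomorphism of \cref{DCtoLineBundleIsAdditive}. Writing $\hol_\nabla(\gamma) = f(\gamma)$ for $1$-cycles $\gamma$, and using \cref{CohomologyIsIFTheorem} to identify $\curv(f)\in\IF^2(M)$ with the invariant two-form $E$, the desired homomorphism property will follow from the single identity
\begin{equation}
    \hol_\nabla(\gamma_{v_1+v_2}) = (-1)^{E(v_1,v_2)}\hol_\nabla(\gamma_{v_1})\hol_\nabla(\gamma_{v_2}),
\end{equation}
since combining it with the defining relation \eqref{xiConstraint} produces two factors of $(-1)^{E(v_1,v_2)}$ that cancel.

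To produce this identity, I would lift to the universal cover of $M$ and consider the oriented triangle $\Sigma$ with vertices $x$, $x+v_1$, $x+v_1+v_2$. Its boundary projects onto $[\gamma_{v_1}^x] + [T_{v_1}\gamma_{v_2}^x] - [\gamma_{v_1+v_2}^x]$ in $Z_1(M)$, where $T_{v_1}$ denotes translation by $v_1$ and the superscripts record basepoints. The defining property of a differential character (\cref{def:CheegerSimonsDiffCharacter}) then yields
\begin{equation}
    f(\gamma_{v_1}^x)\,f(T_{v_1}\gamma_{v_2}^x)\,f(\gamma_{v_1+v_2}^x)^{-1} = \exp\left(2\pi i \int_\Sigma E\right).
\end{equation}
A direct parametrization of $\Sigma$, together with the fact that $E$ is constant as an alternating form on the Lie algebra of $M$, gives $\int_\Sigma E = \tfrac{1}{2}E(v_1,v_2)$, and integrality of $E$ on $\Lambda\times\Lambda$ collapses the exponential to $\exp(\pi i E(v_1,v_2)) = (-1)^{E(v_1,v_2)}$.

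The final step is to replace $T_{v_1}\gamma_{v_2}^x$ by $\gamma_{v_2}^x$. These two linear cycles cobound a cylinder $C$ on which $\int_C E = E(v_2,v_1)\in\Z$, so by a second application of the differential character equation $f(T_{v_1}\gamma_{v_2}^x) = f(\gamma_{v_2}^x)$. Substituting into \eqref{xiConstraint} then gives
\begin{equation}
    (-1)^{\xi(v_1+v_2)}\hol_\nabla(\gamma_{v_1+v_2}^x) = (-1)^{\xi(v_1)+\xi(v_2)+2E(v_1,v_2)}\hol_\nabla(\gamma_{v_1}^x)\hol_\nabla(\gamma_{v_2}^x),
\end{equation}
and the homomorphism property follows immediately.

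The main thing to get right is bookkeeping: the factor of $\tfrac{1}{2}$ in the triangle integral (had we used the full parallelogram instead, the resulting sign would be trivial and the role of $\xi$ would be lost), and the consistency of orientations of $\Sigma$ and its edges with the sign conventions in the differential character equation. Everything else is mechanical once the translation to differential characters is made.
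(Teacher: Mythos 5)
Your proof is correct and is essentially the paper's argument: both reduce the homomorphism defect of the holonomy to the integral of the invariant curvature over a triangle, use translation invariance to evaluate that integral as $\tfrac{1}{2}E(v_1,v_2)$, invoke integrality of $E$ on the lattice to turn the exponential into $(-1)^{E(v_1,v_2)}$, and cancel this against the defect of $\xi$ from \eqref{xiConstraint}. The only difference is presentational --- you phrase the Stokes step via the differential-character axiom \eqref{curvatureExistence} and track basepoints of the translated cycle explicitly, whereas the paper works with a local gauge potential --- but the underlying computation is identical.
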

\begin{proof}
    The obstruction to $\hol_\nabla$ being a group homomorphism is measured by
    the quotient
        \begin{equation}
            q(v_1,v_2) = \frac{\hol_\nabla(x+\gamma_{v_1+v_2})}
            { \hol_\nabla(x+\gamma_{v_1}) \hol_\nabla(x+\gamma_{v_2})}
        \end{equation} 
    which in general is not $1$ if the curvature of $\nabla$ is nonzero. Using the
    local expression for the holonomy along a path $\gamma$ as
        \begin{equation}
            \hol_\nabla\left(\gamma\right) = \exp\left(2\pi i \int_\gamma A\right)
        \end{equation} 
    for $A$ a local gauge potential, we see that Stokes' theorem guarantees
        \begin{equation}
            q(v_1,v_2) = \exp\left(2\pi i\int_{\Sigma_{1,2}} F_\nabla\right)
        \end{equation} 
    where $\Sigma_{1,2}$ is the triangular region bounded by $\gamma_{v_1+v_2},\gamma_{-v_2}$
    and $\gamma_{-v_1}$ in that order. 

    The map $v\mapsto (-1)^{\xi(v)}\hol_\nabla(x+\gamma_v)$ is a group homomorphism
    when the quotient
        \begin{equation}
            Q(v_1,v_2) = (-1)^{\xi(v_1+v_2)-\xi(v_1)-\xi(v_2)}q(v_1,v_2)
        \end{equation} 
    is unity. However, since $\xi$ is a semi-character, this quotient evaluates to
        \begin{equation}
            \begin{aligned}
                Q(v_1,v_2) &= (-1)^{F_\nabla(v_1,v_2)}\exp\left(2\pi i
                \int_{\Sigma_{1,2}}F_\nabla\right)\\
                           &= \exp\left(\pi i \left( F_\nabla(v_1,v_2) +
                           2\int_{\Sigma_{1,2}}F_\nabla \right)\right)
            \end{aligned}
        \end{equation}.
    Since $F_\nabla$ is an invariant form,
    \begin{equation}
        \int_{\Sigma_{1,2}}F_\nabla = \frac{1}{2}F_\nabla(v_1,v_2)
    \end{equation}
    resulting in
    \begin{equation}
        Q(v_1,v_2) = 1
    \end{equation}
    as desired.
\end{proof}

An element $\hat{x}\in \hat{X}$ of the dual torus naturally provides a group
homomorphism $H_1(X,\Z)\to U(1)$ in the following way. Choose any lift
$\hat{v}\in H^1(X,\R)$ of $\hat{x}$ to the universal cover, and for any $v\in
H_1(X,\Z)$ define
\begin{equation}
    \hat{x}(v) = \exp(2\pi i \hat{v}(v))
\end{equation} 
as an element of $U(1)$. Notice that any two choices of $\hat{v}$ differ by an
element of $H^1(X,\Z)$ the dual lattice of $H_1(X,\Z)$. Hence, shifting
$\hat{v}$ by an element of $H^1(X,\Z)$ shifts the value of $v$ by an integer,
leaving the expression for $\hat{x}$ unchanged.

\begin{definition}
    The \textit{doubled lift} of $(M,\nabla)$ is defined to be the submanifold
        \begin{equation}
            \mathbb{M} = \{ (x,\hat{x})\in X\times \hat{X}\ |\ x\in M,
                \hat{x}(v) = (-1)^{\xi(v)}\hol_\nabla(x + \gamma_v) \forall
                v\in H_1(L,\Z)\}
        \end{equation} 
\end{definition} 
\begin{remark}
    This is only well-defined if $\nabla$ is $F_\nabla$-flat, but this is
    not as restrictive as one might think. Indeed, every holomorphic line bundle
    admits a unique $c_1(\mc{L})$-flat connection, and computations
    in the topological B-model show that the holomorphic structure
    of the line bundle is the only relevant data to compute the open string
    spectrum.
\end{remark}

\subsubsection{Lifting Cheeger-Simons Differential Characters}

The lift has a different interpretation in terms of Cheeger-Simons differential
characters, which characterize principal $U(1)$-bundles with connection. That
is,
\begin{proposition}
    There is a one-to-one correspondence between space-filling rank-one D-branes
    on $X$ and the second differential cohomology group $\hat{H}^2(X)$ of $X$.
    Furthermore, the tensor product of D-branes corresponds to the addition
    law on differential cohomology.
\end{proposition}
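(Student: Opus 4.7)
The plan is to assemble the proposition from correspondences already established earlier in the paper, since a space-filling rank-one D-brane on $X$ is by definition the data of the pair $(X,\nabla)$ where $\nabla$ is a $U(1)$-connection on a principal $U(1)$-bundle over $X$ (equivalently, a connection on a complex line bundle with Hermitian structure).

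First I would unpack the definition: because the brane is space-filling, the underlying submanifold is fixed as $X$ itself, so the moduli are entirely carried by the bundle-with-connection datum. Invoking \cref{thm:principalIsLineBundle}, together with the remark immediately after it extending the correspondence to connections, I identify the set of such D-branes with the group $M_{h,\nabla}$ of isomorphism classes of Hermitian line bundles with Hermitian connection, where the group operation is tensor product.

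Next I would apply \cref{DCtoLineBundleIsAdditive}, which asserts that the holonomy map
\begin{equation}
    \hol: M_{h,\nabla} \to \hat{H}^2(X)
\end{equation}
is a group homomorphism. Together with the standard fact (recorded in the paper immediately before that proposition and also implicit in \cref{def:CheegerSimonsDiffCharacter}) that $\hat{H}^2(X)$ classifies principal $U(1)$-bundles with connection up to isomorphism, the map $\hol$ is a bijection and hence a group isomorphism. Composing with the identification of space-filling D-branes with $M_{h,\nabla}$ yields the desired one-to-one correspondence.

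For the second assertion, the additivity statement in \cref{DCtoLineBundleIsAdditive} (established via the local formula \eqref{tensorProductConnections} for tensor products of connections and the multiplicativity of holonomy \eqref{holonomyMultiplicative}) is precisely the claim that tensor product of D-branes corresponds to addition in $\hat{H}^2(X)$. There is no substantive obstacle here; the content of the proposition is really a bookkeeping observation that the previously developed dictionary between Hermitian line bundles with connection and Cheeger-Simons differential characters restricts to, and is additive on, the space-filling rank-one B-brane data.
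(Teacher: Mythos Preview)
Your proposal is correct and follows essentially the same approach as the paper: the paper's proof is the single line ``This follows immediately from \cref{DCtoLineBundleIsAdditive},'' and your argument simply unpacks the identifications (space-filling rank-one D-branes $\leftrightarrow$ $M_{h,\nabla}$ $\leftrightarrow$ $\hat{H}^2(X)$) that make that citation sufficient.
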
 
\begin{proof}
    This follows immediately from \cref{DCtoLineBundleIsAdditive}.
\end{proof}

\begin{definition}
    An $E$-flat line bundle is called a \textit{symmetric $E$-flat line bundle}
    if the holonomy around elements of $\Lambda$ (thought of as linear cycles through
    the origin) is contained in $\{\pm 1\}$. Equivalently,
    an $E$-flat line bundle is a symmetric $E$-flat line bundle if the holonomy
    is given by the exponential of a symmetric semi-character (\cref{def:xi}).
\end{definition} 

\begin{proposition}\label{xiConnectionsExistence}
    Fix an invariant $2$-form $E\in IF^2(X)$.
    For each $\xi$ a symmetric semi-character as in \cref{def:xi}, there exists
    a unique symmetric $E$-flat line bundle with holonomy around elements
    of $\Lambda$ given by $(-1)^\xi$.
\end{proposition}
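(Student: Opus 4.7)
The plan is to reduce this to Theorem~\ref{thm:structureDConTori}, the structural result on differential characters on tori, plus a compatibility check between the semi-character defect of $\xi$ and the area integral appearing in the curvature relation.

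First I would fix a basis $\gamma_1,\ldots,\gamma_{2n}$ of $\Lambda$ and take $\omega\in\IF^2(X)$ to be the unique invariant representative of the class $[E]$ guaranteed by Theorem~\ref{CohomologyIsIFTheorem}. Theorem~\ref{thm:structureDConTori} then produces a unique $f\in\hat{H}^2(X)$ with $\curv(f)=\omega$ and $f(\gamma_i)=(-1)^{\xi(\gamma_i)}$. Via the holonomy correspondence between $\hat{H}^2(X)$ and hermitian line bundles with compatible unitary connection, this $f$ determines, up to isomorphism, a unique $E$-flat line bundle $(\mc{L},\nabla)$ realizing the prescribed holonomies on the chosen basis.

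The next step---and really the only nontrivial one---is to verify that $\hol_\nabla(\gamma)=(-1)^{\xi(\gamma)}$ for \emph{every} $\gamma\in\Lambda$, not merely on the chosen basis. Given $\gamma_1,\gamma_2\in\Lambda$ viewed as linear loops at the origin, the linear loop $\gamma_{1+2}$ and the concatenation $\gamma_1\ast\gamma_2$ agree in $H_1$ and differ by the boundary of a triangle $\Sigma\subset X$ whose two edge vectors are $\gamma_1,\gamma_2$. Applying the defining property $f(\partial\Sigma)=\exp(2\pi i\int_\Sigma\omega)$ together with translation invariance of $\omega$, which yields $\int_\Sigma\omega=\tfrac12 E(\gamma_1,\gamma_2)$, I would deduce
\[
    f(\gamma_{1+2}) \;=\; f(\gamma_1)\,f(\gamma_2)\,\exp\bigl(\pi i\, E(\gamma_1,\gamma_2)\bigr).
\]
Since $E(\gamma_1,\gamma_2)\in\Z$, the phase equals $(-1)^{E(\gamma_1,\gamma_2)}$, which by the semi-character identity \eqref{xiConstraint} is precisely $(-1)^{\xi(\gamma_1+\gamma_2)-\xi(\gamma_1)-\xi(\gamma_2)}$. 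Induction on the expansion of $\gamma$ in the chosen basis then propagates the identity $f(\gamma_i)=(-1)^{\xi(\gamma_i)}$ to all of $\Lambda$.

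Uniqueness is then immediate from Theorem~\ref{thm:structureDConTori}: any symmetric $E$-flat line bundle produces a differential character with curvature $\omega$ and the same values on the basis cycles, so it must coincide with $f$. I expect the chief subtlety---really a matter of bookkeeping rather than genuine difficulty---to be the matching of the $\exp(\pi i E(\gamma_1,\gamma_2))$ factor arising from the area integral with the mod-$2$ defect $E(\gamma_1,\gamma_2)\bmod 2$ demanded by the semi-character condition. This compatibility rests squarely on the integrality of $E$ on $\Lambda\times\Lambda$, reducing to the identity $(-1)^n=e^{i\pi n}$ for $n\in\Z$.
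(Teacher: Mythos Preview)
Your proposal is correct and follows essentially the same route as the paper: both reduce to \cref{thm:structureDConTori} and use the compatibility between the semi-character defect \eqref{xiConstraint} and the curvature integral over the triangle bounding $\gamma_{\lambda_1+\lambda_2}-\gamma_{\lambda_1}-\gamma_{\lambda_2}$. The paper compresses your triangle computation into the single assertion that $\lambda\mapsto(-1)^{\xi(\lambda)}$ satisfies \eqref{curvatureExistence} on linear cycles ``exactly due to'' \eqref{xiConstraint}, then appeals to the \emph{proof} of \cref{thm:structureDConTori} rather than its statement; your version makes the extension from basis values to all of $\Lambda$ explicit, which is arguably clearer. One cosmetic point: since $E$ is already assumed to lie in $\IF^2(X)$, there is no need to invoke \cref{CohomologyIsIFTheorem} to produce an invariant representative---you may take $\omega=E$ directly.
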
 

\begin{proof}
    The association
    \begin{equation}
        \lambda\mapsto (-1)^{\xi(\lambda)}
    \end{equation}
    defines a map from the linear $1$-cycles $\Lambda$ on $X$ to $U(1)$.
    This map satisfies the condition \eqref{curvatureExistence} on linear cycles
    exactly due to the constraint \eqref{xiConstraint} on $\xi$.

    From the proof of \cref{thm:structureDConTori}, we see that such a $\xi$
    defines a unique differential character $f\in\hat{H}^2(X)$ satisfying
    \begin{equation}
        f(\lambda) = (-1)^{\xi(\lambda)}
    \end{equation}
    with
    \begin{equation}
        \curv(f) = E\in IF^2(X)
    \end{equation}.
    Using the isomorphism between $E$-flat connections and $\Pic^E(X)$, we associate
    to $f$ a line bundle with connection $(S_E,\nabla)$ with the prescribed
    holonomy.
\end{proof}

\begin{theorem}
    \label{thm:liftDescriptionDC}
    Let $\mc{L}$ be a line bundle, $E=c_1(\mc{L})$, and $\nabla$ an $E$-flat
    connection on $\mc{L}$. Let $S_{-E}$ be a symmetric $-E$-flat line bundle
    (notice the minus sign!) defined in \cref{xiConnectionsExistence}.
    Then, the lift of $(\mc{L},\nabla)$ is
    equivalently described as the graph
    \begin{equation}\label{liftOfDifferentialCharacter}
        \mathbb{L}(\mc{L}) = \Gamma\left(x\mapsto (S_{-E}\otimes t_{-x}^*\mc{L})\in
        \hat{X}\right)
    \end{equation}
    where $\hat{X}\cong\Pic^0(X)$ in the natural way.
\end{theorem}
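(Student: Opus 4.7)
The plan is to identify both sides of the equality as graphs of maps $X\to\hat{X}$ and then show that for each $x\in X$ the two prescriptions define the same element of $\hat{X}\cong\Pic^0(X)$. The key tools are the correspondence between $\hat{H}^2(X)$ and line bundles with connection (\cref{DCtoLineBundleIsAdditive}), the multiplicativity of holonomy under tensor products, and the uniqueness of a differential character determined by its curvature and its values on a basis of linear cycles (\cref{thm:structureDConTori}).

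First I would verify that the right-hand side actually takes values in $\hat{X}$: since translation is isotopic to the identity, $c_1(t_{-x}^*\mc{L})=c_1(\mc{L})=E$, while $c_1(S_{-E})=-E$, so $S_{-E}\otimes t_{-x}^*\mc{L}$ is topologically trivial and hence lies in $\Pic^0(X)\cong\hat{X}$. Next I would compute its monodromy around a linear cycle $\gamma_v$ based at the origin. By multiplicativity of holonomy,
\begin{equation*}
    \hol_{S_{-E}\otimes t_{-x}^*\nabla}(\gamma_v)
    = \hol_{S_{-E}}(\gamma_v)\cdot \hol_{t_{-x}^*\nabla}(\gamma_v)
    = (-1)^{\xi(v)}\cdot \hol_\nabla(t_{-x}(\gamma_v)),
\end{equation*}
where the first factor is $(-1)^{\xi(v)}$ by \cref{xiConnectionsExistence} and the second follows from naturality of pullback of connections. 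The translation-invariance of $F_\nabla$ (the $E$-flat condition) controls the dependence of holonomy on the basepoint of a linear cycle, and is precisely what identifies this second factor with $\hol_\nabla(x+\gamma_v)$ modulo the sign conventions implicit in the identification $\overline{\Omega}/\hat{\Lambda}\cong\Pic^0(X)$ from \eqref{eq:expCoveringToDualTorus}.

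Having matched the monodromy, the final step is to invoke \cref{thm:structureDConTori}: a flat differential character on $X$ is uniquely determined by its values on a basis of $\Lambda$. Since both the lift condition $\hat{x}(v)=(-1)^{\xi(v)}\hol_\nabla(x+\gamma_v)$ and the monodromy character of $S_{-E}\otimes t_{-x}^*\mc{L}$ assign the same values on such a basis with the same (vanishing) curvature, they define the same element of $\hat{X}$. Applying this pointwise in $x$ identifies $\mathbb{L}(\mc{L})$ with the graph of the asserted map.

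The main technical subtlety I expect is a careful tracking of signs. The dependence of $\hol_\nabla(y+\gamma_v)$ on the basepoint $y$ is an exponential of $2\pi i$ times the integral of the invariant form $F_\nabla$ over the cylinder swept out by the translation; the sign of this integral depends on orientation conventions for the cylinder and on the identification of $F_\nabla$ with $E_H$. The choice of $t_{-x}^*$ rather than $t_x^*$ in the statement, together with the sign of $-E$ in $S_{-E}$, is precisely calibrated so that these conventions conspire to reproduce the defining relation of $\mathbb{L}(\mc{L})$ on the nose, and verifying this match is where the bulk of the bookkeeping lies.
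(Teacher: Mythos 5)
The paper states this theorem without a proof, so there is nothing to compare against line by line; your skeleton (view both sides, for fixed $x$, as characters of $\Lambda$, use multiplicativity of holonomy, and invoke \cref{thm:structureDConTori} to reduce to matching values on a basis of linear cycles) is certainly the intended route, and the surrounding results (\cref{thm:EFlatIsHolomorphicIsomorphism}, \cref{xiConnectionsExistence}, \cref{ex:liftEFlat}) are set up to support it. However, there is a genuine gap at the one step that carries all the content. You write that $\hol_{t_{-x}^*\nabla}(\gamma_v)=\hol_\nabla(t_{-x}(\gamma_v))$ is identified with $\hol_\nabla(x+\gamma_v)$ ``modulo sign conventions.'' But $t_{-x}(\gamma_v)$ is the linear cycle based at $-x$, and by the very cylinder argument you sketch, $\hol_\nabla(-x+\gamma_v)$ and $\hol_\nabla(x+\gamma_v)$ differ by $\exp(\pm 4\pi i\, E(v_x,v))$ --- a nontrivial phase, not a sign, and one that cannot be absorbed into the identification $\overline{\Omega}/\hat{\Lambda}\cong\Pic^0(X)$ (flipping that identification would also invert $\chi$ and $\chi_S$, and still would not cancel the discrepancy). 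So the step you defer as ``bookkeeping'' is precisely where the proof could fail, or where one discovers whether the statement requires $t_{-x}^*$ or $t_x^*$ under a given set of conventions.

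To close the gap you must do the computation rather than route through $\hol_\nabla(t_{-x}(\gamma_v))$: by the Appel--Humbert translation formula, $S_{-E}\otimes t_{-x}^*\mc{L}=\mc{L}\bigl(0,\;\chi_S\cdot\chi\cdot\exp(-2\pi i\,E(v_x,\cdot))\bigr)$, while the holonomy of the canonical $E$-flat (Chern) connection around the cycle $x+\gamma_\lambda$, computed from the canonical factor of automorphy $a_{H,\chi}$ exactly as in the proof of \cref{thm:EFlatIsHolomorphicIsomorphism}, is $\chi(\lambda)^{\pm1}\exp(\pm 2\pi i\,E(v_x,\lambda))$ with the exponent fixed by the direction of the deck-transformation identification. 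Only after pinning that convention down (consistently with the paper's statement that $\mc{L}(0,\chi_0)$ has monodromy $\chi_0$) can one see whether the two minus signs in $S_{-E}$ and $t_{-x}^*$ conspire as claimed; as written, with the paper's stated conventions the naive comparison leaves a residual factor $\exp(4\pi i\,E(v_x,\lambda))$ to be explained. I would replace your second paragraph with this explicit Appel--Humbert computation (essentially the content of \cref{ex:liftEFlat}, where the graph map is rewritten as $\phi_{S_{-E}}(x)+[\mc{L}\otimes S_{-E}]$); the rest of your argument then goes through.
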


\begin{example}[Lifting the structure sheaf]
    \label{ex:liftStructure}
    Consider the trivial D-brane given by $(X,d)$ with $d$ the exterior derivative,
    acting on the trivial line bundle $X\times \C$. The sheaf of holomorphic sections
    of this line bundle is $\oh_X$. This lift plays special significance, so
    we denote it by $\mathbb{O}_X$.
    
    Since the connection is flat, i.e. $d^{2}=0$, and has trivial monodromy by
    the definition of the lift we find the lift is
        \begin{equation}
            \begin{aligned}
                \mathbb{O}_X := \mathbb{L}({\oh_X}) &= \{(x,\hat{x})\ |\
                (-1)^{\xi(\gamma_i)}\hat{x}(\gamma_i)=1\}\\ &= \{(x, \hat{x})\
            |\ e^{\pi i (\xi(\gamma_i) + 2\hat{v}(\gamma_i)} = 1\}\\
            \end{aligned}
        \end{equation} 
    where $\hat{v}\in H^1(X,\R)$. From this, we see that 
        \begin{equation}
            \frac{1}{2}\xi(\gamma_i) = \hat{v}(\gamma_i) \text{ (mod $\Z$)}
        \end{equation} 
    so that $\hat{v}$ is determined by $\xi$ in $H^1(X,\R)/H^1(X,\Z)$. This
    therefore determines a point $\hat{x}\in \hat{X}$ and the lift of $\oh_X$ is
    \begin{equation}
        \mathbb{O}_X = X\times \{\left[ e^{\pi i \xi} \right]\}
    \end{equation}.

    Using the language of differential characters, this line bundle corresponds
    to the trivial character $f=1$.  
    We still need to choose a $\Z_2$-character $\xi$, which corresponds
    to a choice of symmetric flat line bundle $S_0$ on $X$. Its square is
    trivial, hence this corresponds to a choice of $2$-torsion point in
    $\hat{X}$. The lift is then
    \begin{equation}
        \begin{aligned}
            \mathbb{O}_X &= \Gamma(x\mapsto S_0\otimes t_{-x}^*\oh_X)\\
                                &=\Gamma(x\mapsto S_0)\\
                                &= X\times \{\left[ S_0 \right]\}
        \end{aligned} 
    \end{equation}
    as computed above. That is, the lift is the fiber over the point $S_0$.
\end{example}

\begin{example}[Lifting other flat line bundles]
    \label{ex:liftFlat}
    Consider now the D-brane given by $(X,\nabla)$ where $\nabla$ is another
    flat connection on the trivial line bundle $X\times\C$. Since $\nabla$ is
    flat, the holonomy of $\nabla$ is given by monodromy, represented by an element
    $\alpha\in H^1(X,U(1))\cong \hat{X}$. Denote by $\mc{L}$ the corresponding
    line bundle.

    Choose a $\xi$ as in \cref{ex:liftStructure}, and denote the corresponding
    symmetric line bundle as $S_0$. Notice that since $\mc{L}$ is flat,
    it is translation-invariant.
    The lift of $\mc{L}$ is then
    \begin{equation}
        \begin{aligned}
            \mathbb{L}(\mc{L}) &= \Gamma(x\mapsto S_0\otimes t_{-x}^*\mc{L})\\
                               &=\Gamma(x\mapsto S_0\otimes\mc{L})\\
                               &= X\times \{\left[ S_0\otimes\mc{L} \right]\}
        \end{aligned} 
    \end{equation}
    which, in terms of characters, is the fiber over the point $(-1)^\xi \alpha$.
\end{example}

\begin{example}[Lifts of $E$-flat Connections]
    \label{ex:liftEFlat}
    Let $(X,\nabla)$ be a space-filling D-brane with $F_\nabla = i\pi E$ for some
    fixed $E\in IF^2(X)$ closed with integral periods.

    Using the isomorphism in \cref{thm:EFlatIsHolomorphicIsomorphism}, we associate
    to $\nabla$ a holomorphic line bundle $\mc{L}$ with $c_1(\mc{L}) = E$. 
    We choose as well an $-E$-flat line bundle $S_{-E}$ as in
    \cref{xiConnectionsExistence}.
    Then, the lift is defined to be
    \begin{equation}
        \mathbb{L}(\mc{L}) = \Gamma(x\mapsto S_{-E}\otimes t_{-x}^*\mc{L})
    \end{equation}
    as before.

    More can be said about this map. The line bundle $S_{-E}\otimes t_{-x}^*\mc{L}$
    can be rewritten as
    \begin{equation}
        t_x^*S_{-E}\otimes S_{-E}^{-1}\otimes(\mc{L}\otimes S_{-E})
    \end{equation}
    which, using the morphism \eqref{eq:phiLDefn}, is just
    \begin{equation}
        \phi_{S_{-E}}(x)+\left[ \mc{L}\otimes S_{-E} \right]
    \end{equation},
    a translate of $\phi_{S_{-E}}$.
\end{example}

\subsection{Geometry of Doubled Branes}
    To simplify discussion, choose once and for all a symmetric $E$-flat line
    bundle $S_E$ for each $E$, in particular choosing $\oh_X$ for $E=0$. 
    We take these symmetric $E$-flat line bundles as part of the data for the lift.
    The choice is arbitrary up to a translation by a $2$-torsion point in $\hat{X}$.

    \subsubsection{Properties of the Lift}
    
    Similar to what was found in \cite{qin2020coisotropic}, the lifted submanifold
    enjoys many nice geometric properties. As before, let $\mc{L}$ be a line
    bundle on $X$ and denote its first Chern class by $E := c_1(\mc{L})$.
    Let $\mathbb{L}(\mc{L})$ denote the lift of $\mc{L}$ described in
    \cref{thm:liftDescriptionDC}.

    \begin{proposition}[\protect{c.f.\ \cite{qin2020coisotropic}*{Proposition~3.6} in the
        case of A-branes}]
        \label{prop:tangentBundleLift}
        The tangent bundle of $\mathbb{L}(\mc{L})$ is the subbundle of $T\mathbb{X}$
        defined by
        \begin{equation}
            \label{eq:tangentBundleLift}
            T\mathbb{L}(\mc{L}) = \{ (v,\hat{v})\in TX\oplus T\hat{X}\ |\ 
            \iota_{\hat{v}}\sigma = \iota_v E\}
        \end{equation}.
        That is, under the isomorphism $T\mathbb{X}\cong \pi^*(TX\oplus T^*X)$ the
        tangent bundle of $\mathbb{L}(\mc{L})$ is isomorphic to the pullback of
        the generalized tangent bundle of $(X,E)$.
    \end{proposition}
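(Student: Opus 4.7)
The plan is to exploit that $\mathbb{L}(\mc{L})$ is cut out as a graph. By \cref{thm:liftDescriptionDC} we have $\mathbb{L}(\mc{L}) = \{(x, \phi(x)) : x \in X\}$ where $\phi : X \to \hat{X}$ is the smooth map $\phi(x) = [S_{-E} \otimes t_{-x}^*\mc{L}]$ (which lies in $\Pic^0(X) = \hat{X}$ because $c_1(S_{-E}) + c_1(\mc{L}) = 0$). Its tangent space at $(x, \phi(x))$ is therefore the graph of the differential $d\phi_x : T_xX \to T_{\phi(x)}\hat{X}$, and the theorem reduces to identifying $d\phi_x$ with the constraint $\iota_{\hat v}\sigma = \iota_v E$ under the isomorphism $T\mathbb{X} \cong \pi^*(TX \oplus T^*X)$ of \cref{thm:doubledTangentIsGCTangent}.

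To compute $d\phi_x$, I would invoke the rewriting in \cref{ex:liftEFlat}, which shows $\phi$ differs from the group homomorphism $\phi_{S_{-E}}$ by the constant $[\mc{L} \otimes S_{-E}]$, so $d\phi_x = d(\phi_{S_{-E}})_x$ for every $x$. Since $\phi_{S_{-E}}$ is a homomorphism of complex tori, its differential is translation-invariant and is computed from its analytic representation recalled just before \eqref{eq:phiLDefn}: namely the real-linear map $V \to \overline{\Omega}$, $v \mapsto -H(v,\blank)$, where $H$ is the Hermitian form on $V$ with $\Im H = E = c_1(\mc{L})$ (the minus sign coming from $c_1(S_{-E}) = -E$). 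Composing with the natural identification $\overline{\Omega} \cong V^*$ sending $\varphi \mapsto \Im\varphi$, the differential becomes the $V^*$-valued map $v \mapsto \Im(-H(v,\blank)) = -\iota_v E$.

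Finally, I would match this with the identification $(v,\hat v) \mapsto (v, \iota_{\hat v}\sigma)$ from the proof of \cref{thm:doubledTangentIsGCTangent}. A direct calculation in the coordinates of \eqref{eq:poincareBundleInCoords}, where $\sigma = \sum_i e_i \wedge e_i^*$, shows that contraction of $\sigma$ with $\hat v \in T\hat{X}$ yields $-\hat v$ under the canonical identification of $T\hat{X}$ with $T^*X$; combining this with $d\phi_x(v) = -\iota_v E$ yields precisely the relation $\iota_{\hat v}\sigma = \iota_v E$, proving the claim. The one delicate point is sign-bookkeeping: the signs in $\sigma = c_1(\mc{P})$, in the Hermitian form $-H$ associated to $S_{-E}$, and in the identification $\overline{\Omega} \cong V^*$ all interact, and they must conspire so that the two minus signs cancel to give $\iota_{\hat v}\sigma = +\iota_v E$. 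Beyond this, the proof is essentially a transcription of the Appel--Humbert description of $\phi_\mc{L}$ across the canonical isomorphism $T\mathbb{X} \cong \pi^*(TX \oplus T^*X)$.
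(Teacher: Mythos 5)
Your proof follows the same route as the paper's: realize $\mathbb{L}(\mc{L})$ as the graph of $\phi = \phi_{S_{-E}} + [\mc{L}\otimes S_{-E}]$, note that the constant drops out of the differential, and compute $d\phi$ from the linear analytic representation of $\phi_{S_{-E}}$ to identify the graph of $d\phi$ with the generalized tangent bundle $\{(v,\iota_v E)\}$. If anything you are more careful than the paper's own write-up, which records the analytic representation as $v\mapsto H(v,\blank)$ without tracking the sign coming from $c_1(S_{-E})=-E$ or the sign in $\iota_{\hat v}\sigma$; your observation that these two signs must cancel is exactly the bookkeeping the published proof elides.
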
 
    \begin{proof}
        Using the description in \cref{thm:liftDescriptionDC} of the lift
        as the graph of the function
        \begin{equation}
            \phi := \phi_{S_{-E}} + \left[ \mc{L}\otimes S_{-E} \right]
        \end{equation}
        the tangent space can be computed as the graph of the differential
        $d\phi$. However, $d\phi = d\phi_{S_{-E}}$ and $\phi_{S_{-E}}$
        has a linear analytic representation
        \begin{equation}
            \begin{aligned}
                \phi_{-E}:V&\to\overline{\omega}\\
                v\mapsto H(v,\blank)
            \end{aligned} 
        \end{equation}
        whose derivative at $(x,\phi(x))$, since $\phi_{-E}$ is linear, is
        evidently
        \begin{equation}\label{eq:differentialOfLiftMap}
            \begin{aligned}
                d\phi_{-E}:T_xX&\to T_{\phi(x)}\hat{X}\cong T^*_xX\\
                v&\mapsto (w\mapsto H(v,w))
            \end{aligned} 
        \end{equation}.
        Comparing this to the computation in the proof of \cref{thm:EFlatIsHolomorphicIsomorphism}
        we recover the description of the tangent space as the graph
        of
        \begin{equation}
            v\mapsto \iota_v E 
        \end{equation}
        as desired.
    \end{proof} 

    The construction of \cref{prop:tangentBundleLift} reveals that
    many of the properties that the generalized tangent bundle of $X$ enjoys
    are lifted to the doubling space. We illustrate some of these now.
    \begin{proposition}
        \label{prop:liftIsMaxIsotropic}
        The lift $\mathbb{L}(\mc{L})$ of a line bundle $\mc{L}$ is a maximal
        isotropic submanifold of $\mathbb{X}$ under the canonical neutral
        metric.
    \end{proposition}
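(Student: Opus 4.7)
The plan is to reduce the statement to the (algebraically trivial) fact that the graph of a skew form is a maximal isotropic subspace of the standard split inner product space $TX\oplus T^*X$, and then transport this back via the isomorphism of \cref{thm:doubledTangentIsGCTangent}. The work has already been done in \cref{prop:tangentBundleLift}, which identifies $T\mathbb{L}(\mc{L})$ with the generalized tangent bundle of $(X,E)$ sitting inside $\pi^*(TX\oplus T^*X)\cong T\mathbb{X}$, and in the remark following \cref{thm:doubledTangentIsGCTangent} which identifies the neutral metric $F\sigma$ on $T\mathbb{X}$ with the evaluation pairing on $TX\oplus T^*X$.

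First, I would take two tangent vectors to $\mathbb{L}(\mc{L})$ at a point. By \cref{prop:tangentBundleLift}, each such vector has the form $(v,\iota_v E)\in TX\oplus T^*X$ under the canonical identification, since the condition $\iota_{\hat v}\sigma=\iota_v E$ uniquely determines the $T^*X$-component. Next, I would compute the neutral pairing of two such elements $(v_1,\iota_{v_1}E)$ and $(v_2,\iota_{v_2}E)$ using the definition from \cref{SSS:basicConstructions}:
\begin{equation}
\langle v_1+\iota_{v_1}E,\, v_2+\iota_{v_2}E\rangle
=\tfrac{1}{2}\bigl((\iota_{v_1}E)(v_2)+(\iota_{v_2}E)(v_1)\bigr)
=\tfrac{1}{2}\bigl(E(v_1,v_2)+E(v_2,v_1)\bigr)=0,
\end{equation}
where the last equality uses that $E$ is a $2$-form and hence skew. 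This shows $T\mathbb{L}(\mc{L})$ is isotropic at every point.

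For maximality, I would use a dimension count. Over $\R$, $\dim\mathbb{X}=4n$ where $n=\dim_\C X$, and the split signature metric admits isotropic subspaces of rank at most $2n$. On the other hand, $\mathbb{L}(\mc{L})$ is the graph (in the sense of \cref{thm:liftDescriptionDC}) of a smooth map $X\to\hat X$, so it has real dimension $2n$. An isotropic subbundle of maximal rank is by definition a maximal isotropic subbundle, which completes the proof.

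The main obstacle is essentially bookkeeping: one has to keep track of the two distinct identifications at play, namely $T\mathbb{X}\cong\pi^*(TX\oplus T^*X)$ via contraction with $\sigma$, and the fact that under this identification the canonical neutral metric on $\mathbb{X}$ (built from $F$ and $\sigma$) becomes the evaluation pairing on $TX\oplus T^*X$. Once both identifications are in hand, the isotropy is immediate from skew-symmetry of $E$ and maximality is immediate from counting dimensions, so there is no substantive analytic difficulty.
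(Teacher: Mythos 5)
Your proof is correct and follows essentially the same route as the paper: identify the tangent space of the lift with the graph $\{(v,\iota_v E)\}$ inside $TX\oplus T^*X$ via \cref{prop:tangentBundleLift}, and observe that the neutral pairing of two such vectors is $E(v_1,v_2)+E(v_2,v_1)=0$ by skew-symmetry of $E$. If anything, yours is slightly more complete, since the paper's proof stops at the isotropy computation and leaves both the final vanishing and the maximality (which you settle by the dimension count for a graph in split signature $(2n,2n)$) implicit.
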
 
    \begin{proof}
        At a point $(x,\phi(x))$ of $\mathbb{L}(\mc{L})$, the tangent
        space, by \eqref{eq:tangentBundleLift}, consists of vectors
        of the form
        \begin{equation}
            T\mathbb{L}(\mc{L}) = \{ v + \hat{v}\ |\ \iota_{\hat{v}}\sigma = \iota_v E\}
        \end{equation}
        and thus if $v+\hat{v}$ and $w+\hat{w}$ are two tangent vectors to $\mathbb{L}(\mc{L})$,
        \begin{equation}
            \begin{aligned}
                \sigma(v+\hat{v},w+\hat{w})
                &= \sigma(\hat{v},w) + \sigma(\hat{w},v)\\
                &= \iota_{\hat{v}}\sigma(w) + \iota_{\hat{w}}\sigma(v)\\
                &= E(v,w) + E(w,v)
            \end{aligned} 
        \end{equation}
    \end{proof} 

    \begin{proposition}
        Let $\mc{J}$ be the complex structure on $\mathbb{X}$ induced by the
        complex structure on $X$, and let $\mc{J}^\sharp$ be
        the induced symplectic form as in
        \cref{ex:liftOfComplexStructureSymplectic}. Then, the lift
        $\mathbb{L}(\mc{L})$ is both $\mc{J}$-holomorphic
        and $\mc{J}^\sharp$-Lagrangian.
    \end{proposition}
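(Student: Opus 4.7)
The plan is to reduce both statements to a single algebraic identity. By \cref{prop:tangentBundleLift}, under the isomorphism $T\mathbb{X}\cong \pi^*(TX\oplus T^*X)$ of \cref{thm:doubledTangentIsGCTangent}, the tangent bundle of $\mathbb{L}(\mc{L})$ is the graph of the map $v\mapsto \iota_v E$. Since $\mc{L}$ is holomorphic, its first Chern class $E$ is represented by an invariant form of Hodge type $(1,1)$; equivalently, $E(Jv,Jw)=E(v,w)$ for all $v,w\in TX$, which rearranges (using $J^2=-1$ and the antisymmetry of $E$) to the two identities $E(Jv,w)=-E(v,Jw)$ and $E(v,Jw)=E(w,Jv)$. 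These are the only inputs I will need.

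For $\mc{J}$-holomorphicity, I will unpack $\mc{J}$ as the pullback of $\mc{J}_J$ from \cref{ex:liftComplexStructure}, so that on $\pi^*(TX\oplus T^*X)$ one has $\mc{J}(v,\xi)=(Jv,-J^*\xi)$. Taking a generic tangent vector $(v,\iota_v E)$ of $\mathbb{L}(\mc{L})$, its image $\mc{J}(v,\iota_v E)=(Jv,-J^*\iota_v E)$ lies in the graph precisely when $-J^*(\iota_v E)=\iota_{Jv}E$. Evaluating both sides on an arbitrary $w\in TX$ yields $-E(v,Jw)$ on the left and $E(Jv,w)$ on the right, and these agree by the first identity above. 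Hence $\mc{J}$ preserves $T\mathbb{L}(\mc{L})$, establishing holomorphicity.

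For the Lagrangian statement, I will compute $\mc{J}^\sharp$ directly on two tangent vectors $(v,\iota_v E)$ and $(w,\iota_w E)$ of $\mathbb{L}(\mc{L})$. Using the canonical pairing $\langle (v,\xi),(w,\eta)\rangle=\tfrac{1}{2}(\xi(w)+\eta(v))$ together with the formula for $\mc{J}$, one gets
\begin{equation}
\mc{J}^\sharp\bigl((v,\iota_v E),(w,\iota_w E)\bigr)
 =\tfrac{1}{2}\bigl(-E(v,Jw)+E(w,Jv)\bigr).
\end{equation}
The second identity above gives $E(v,Jw)=E(w,Jv)$, so this expression vanishes and $T\mathbb{L}(\mc{L})$ is $\mc{J}^\sharp$-isotropic. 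Since $\mathbb{L}(\mc{L})$ is the graph of a smooth map $X\to\hat{X}$, it has real dimension $2n$, which is exactly half of $\dim_\R\mathbb{X}=4n$; isotropy of middle dimension is Lagrangian, which finishes the argument.

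The only non-routine point is bookkeeping, namely verifying that the complex structure on $T\hat{X}$ corresponds, under the identification $T\hat{X}\cong \pi^*T^*X$ induced by $\sigma$ in \cref{thm:doubledTangentIsGCTangent}, to the action of $-J^*$ on $T^*X$. This compatibility is essentially built into the construction of $\hat{X}$ as the quotient of $\overline{\Omega}=\Hom_{\bar{\C}}(V,\C)$, whose natural complex structure is $-J^*$, but it is worth checking explicitly so that the expression $\mc{J}(v,\xi)=(Jv,-J^*\xi)$ used throughout genuinely describes the product complex structure on $X\times\hat{X}$.
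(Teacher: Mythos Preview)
Your proof is correct. Both your argument and the paper's rest on the same two ingredients: the graph description of $T\mathbb{L}(\mc{L})$ from \cref{prop:tangentBundleLift}, and the fact that $E$ is of Hodge type $(1,1)$. The execution differs slightly. For holomorphicity, the paper simply invokes \cref{ex:GCsubmanifoldComplex} (equivalently Gualtieri's Example~7.7), whereas you unpack that example by hand. For the Lagrangian part, the paper does not recompute the pairing on $E$ directly; instead it observes that once $\mc{J}$-holomorphicity is known, $\mc{J}^\sharp(v,w)=\langle \mc{J}v,w\rangle$ vanishes automatically because $\mc{J}v$ again lies in $T\mathbb{L}(\mc{L})$ and the lift is already isotropic for the neutral metric by \cref{prop:liftIsMaxIsotropic}. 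This structural step --- Lagrangian $=$ holomorphic $+$ isotropic --- is the one thing your direct computation obscures, but your verification of the identity $E(v,Jw)=E(w,Jv)$ is of course equivalent to it. Your closing remark about checking that the complex structure on $\hat{X}$ transports to $-J^*$ under the $\sigma$-identification is well placed; the paper records this in \cref{ex:liftComplexStructure}.
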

    \begin{proof}
        Since the tangent bundle of $\mathbb{L}(\mc{L})$ is identified
        with the generalized tangent bundle of $(X,E)$, the endomorphism
        $\mc{J}$ acts on $\mathbb{L}(\mc{L})$ as in
        \cite{gualtieri2004generalized}*{Example~7.7}. This is $\mc{J}$-holomorphic
        since $E$ is of type $(1,1)$ on $X$.

        This property, combined with \cref{prop:liftIsMaxIsotropic} implies
        that for every $v,w\in T\mathbb{L}(\mc{L})$,
        \begin{equation}
            \mc{J}^\sharp(v,w) = \la \mc{J}v, w\ra = 0
        \end{equation}
        since $\mc{J}v\in T\mathbb{L}(\mc{L})$ and $T\mathbb{L}(\mc{L})$ is
        isotropic under the metric. 
        Thus, $\mc{J}^\sharp$ vanishes on $\mathbb{L}(\mc{L})$ and $\mathbb{L}(\mc{L})$,
        being middle-dimensional, is Lagrangian.
    \end{proof}

    The intersections of the lifts of all line bundles on $X$
    can also be computed using the results of \cref{SSS:cohomologyOfLineBundles}.
    Let $M_L(\mathbb{X})$ denote the set of Lagrangian subspaces
    of $\mathbb{X}$, which carries an action of $X$ and $\hat{X}=\Pic^0(X)$
    given by translation.
    \begin{proposition}
        \label{prop:liftsAreEquivariant}
        The map $\Pic(X)\to {M}_L(X\times\hat{X})$ sending a line
        bundle to its lift is $\Pic^0(X)$-equivariant.
    \end{proposition}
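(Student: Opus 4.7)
The plan is to use the explicit graph description of the lift provided by \cref{thm:liftDescriptionDC} and verify equivariance by direct computation. The $\Pic^0(X)$-action on $\Pic(X)$ is by tensor product, while the action on $M_L(X\times\hat{X})$ is via the identification $\Pic^0(X)\cong\hat{X}$ acting by translation in the second factor. So I need to show that for $\mc{L}_0\in\Pic^0(X)$ and $\mc{L}\in\Pic(X)$,
\begin{equation}
    \mathbb{L}(\mc{L}_0\otimes\mc{L}) = t_{[\mc{L}_0]}\mathbb{L}(\mc{L})
\end{equation}
where $t_{[\mc{L}_0]}$ denotes translation by $[\mc{L}_0]\in\hat{X}$ in the fiber direction.

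First, I would note that since $c_1(\mc{L}_0)=0$, we have $c_1(\mc{L}_0\otimes\mc{L})=c_1(\mc{L})=:E$. Because the convention (fixed at the start of the subsection) is to choose one symmetric $(-E)$-flat line bundle $S_{-E}$ per Chern class, the same $S_{-E}$ appears in both lifts. This is the key compatibility between the auxiliary data $S_{-E}$ and the $\Pic^0(X)$-action.

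Next, I would invoke the fact that $\phi_{\mc{L}_0}$ depends only on $c_1(\mc{L}_0)=0$ (cited from Corollary~2.4.6 of \cite{lange2013complex}), so $\phi_{\mc{L}_0}\equiv 0$ and hence $t_{-x}^*\mc{L}_0\cong\mc{L}_0$ for all $x\in X$. Combined with the multiplicativity of pullback, this yields
\begin{equation}
    S_{-E}\otimes t_{-x}^*(\mc{L}_0\otimes\mc{L})
    \cong \mc{L}_0 \otimes \bigl(S_{-E}\otimes t_{-x}^*\mc{L}\bigr),
\end{equation}
so the graphing function for $\mathbb{L}(\mc{L}_0\otimes\mc{L})$ differs from that of $\mathbb{L}(\mc{L})$ by a fiberwise-constant shift of $[\mc{L}_0]$ in $\Pic^0(X)\cong\hat{X}$. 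Translating this through \cref{thm:liftDescriptionDC} gives exactly the claimed equivariance.

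The only potentially subtle point is verifying that the ambient translation action on $X\times\hat{X}$ corresponds, on the level of the graphing map, to addition in $\hat{X}\cong\Pic^0(X)$ with the group structure coming from tensor product of flat line bundles; this follows from the universal-bundle identification reviewed in the section on the dual torus. Beyond that, the argument is a short formal manipulation, so I do not anticipate any real obstacle.
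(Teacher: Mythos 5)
Your proposal is correct and follows essentially the same route as the paper: both use the graph description from \cref{thm:liftDescriptionDC}, note that the same $S_{-E}$ appears for both bundles since $c_1(\mc{L}_0\otimes\mc{L})=c_1(\mc{L})$, and observe that the flat factor $\mc{L}_0$ is translation-invariant so it contributes only a constant shift by $[\mc{L}_0]$ in $\hat{X}$. The paper's version is just a more compressed form of the same computation.
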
 
    \begin{proof}
        This follows immediately from the definition of the lift. Let $\mc{L}$ be
        arbitrary and $\mc{L}_0\in\Pic^0(X)$, represented by a point $\hat{x}\in\hat{X}$.
        Then, the lift of their tensor product is
        \begin{equation}
            \begin{aligned}
                \mathbb{L}(\mc{L}_0\otimes\mc{L})
                &= \Gamma(x\mapsto t_x^*S_{-E}\otimes\mc{L}\otimes\mc{L}_0)\\
                &=\Gamma(x\mapsto \phi_{S_{-E}}(x) + \left[ \mc{L}\otimes
                S_{-E}\otimes\mc{L}_0 \right])\\
                &=\mathbb{L}(\mc{L}) + \left[ \mc{L}_0 \right]
            \end{aligned} 
        \end{equation}
        as desired.
    \end{proof} 

    \begin{proposition}
        \label{prop:intersectionLiftUnderTensor}
        Let $\mc{L}_1,\mc{L}_2$ and $\mc{L}$ be arbitrary line bundles on $X$.
        Then, the operation $\blank\otimes\mc{L}$ leaves the intersection invariant
        up to a shift by $\Pic^0(X)$.
        That is,
        \begin{equation}
            \mathbb{L}(\mc{L}_1)\cap\mathbb{L}(\mc{L}_2)
            =
            t_{\hat{x}}(\mathbb{L}(\mc{L}_1\otimes\mc{L})\cap
            \mathbb{L}(\mc{L}_2\otimes\mc{L}))
        \end{equation}
        for some $\hat{x}\in\hat{X}$,
        as submanifolds of $\mathbb{X}$.
    \end{proposition}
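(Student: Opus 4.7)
The plan is to use the explicit graph description of the lifts from \cref{ex:liftEFlat}: each $\mathbb{L}(\mc{L}_i)$ is the graph over $X$ of the affine map $x \mapsto \phi_{S_{-E_i}}(x) + c_{\mc{L}_i}$, where $E_i = c_1(\mc{L}_i)$, $c_{\mc{L}_i} = [\mc{L}_i \otimes S_{-E_i}] \in \hat{X}$, and the homomorphism $\phi_{S_{-E_i}} : X \to \hat{X}$ depends only on the Chern class $-E_i$. The central identity I would invoke is the additivity $\phi_{\mc{A} \otimes \mc{B}} = \phi_{\mc{A}} + \phi_{\mc{B}}$, which in particular gives $\phi_{S_{-E_i - E}} = \phi_{S_{-E_i}} + \phi_{S_{-E}}$.

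Unpacking the intersection, a point $(x,y)$ lies in $\mathbb{L}(\mc{L}_1) \cap \mathbb{L}(\mc{L}_2)$ if and only if $y = \phi_{S_{-E_1}}(x) + c_{\mc{L}_1}$ together with the $x$-constraint $(\phi_{S_{-E_1}} - \phi_{S_{-E_2}})(x) = c_{\mc{L}_2} - c_{\mc{L}_1}$. The critical observation is that the coefficient $\phi_{S_{-E_1}} - \phi_{S_{-E_2}}$ depends only on the Chern class difference $E_2 - E_1$, which is unchanged by the simultaneous tensoring $\mc{L}_i \mapsto \mc{L}_i \otimes \mc{L}$; so the ``slope'' of the $x$-constraint is preserved and only the constant term can shift.

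A direct Appel--Humbert computation (\cref{appelHumbert}) identifies the shift: the new constant $c_{\mc{L}_2 \otimes \mc{L}} - c_{\mc{L}_1 \otimes \mc{L}}$ differs from the old $c_{\mc{L}_2} - c_{\mc{L}_1}$ by a $2$-torsion element $\delta \in \hat{X}$ measuring the defect $[S_{-E_i - E} \otimes S_{-E_i}^{-1} \otimes S_{-E}^{-1}]$ between the chosen $S_{-E_i - E}$ and the product $S_{-E_i} \otimes S_{-E}$. By selecting the family $\{S_{-E}\}$ additively in $E$---possible since \cref{prop:existenceOfXi} permits the symmetric semi-characters $\xi_E$ to be chosen linearly in $E$---I arrange $\delta = 0$, so that both intersections share a common $x$-projection $U \subseteq X$. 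On $U$, the two graph functions then differ by $\phi_{S_{-E}}(x) + c_{\mc{L}}$, which I would take to be the value of $\hat{x}$.

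The main obstacle is that $\phi_{S_{-E}}(x) + c_{\mc{L}}$ is \emph{a priori} a non-constant function of $x$, so the identification as a single element of $\hat{X}$ requires verifying constancy on the intersection locus. This constancy is automatic in the transverse regime, where $E_2 - E_1$ is nondegenerate so that $U$ is a single coset of the finite kernel of $\phi_{S_{-E_1}} - \phi_{S_{-E_2}}$ and the intersection is zero-dimensional---precisely the regime relevant to the $\Ext$-group computations of \cref{SSS:relationPhysicsExt}. There I would set $\hat{x} = \phi_{S_{-E}}(x_0) + c_{\mc{L}}$ for any $x_0 \in U$ and verify pointwise that $t_{\hat{x}}$ carries $\mathbb{L}(\mc{L}_1 \otimes \mc{L}) \cap \mathbb{L}(\mc{L}_2 \otimes \mc{L})$ onto $\mathbb{L}(\mc{L}_1) \cap \mathbb{L}(\mc{L}_2)$. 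In the degenerate case, constancy reduces to the kernel containment $\ker(\phi_{S_{-E_1}} - \phi_{S_{-E_2}}) \subseteq \ker \phi_{S_{-E}}$, which I would handle using the elementary divisor analysis recalled in \cref{SSS:cohomologyOfLineBundles}.
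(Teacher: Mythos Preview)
Your analysis is more careful than the paper's own argument, and you have correctly located the real difficulty: once both intersections are written as graphs over $X$, the two graph functions differ by $x\mapsto\phi_{S_{-E}}(x)+c_{\mc L}$, and this is \emph{not} constant in $x$ unless $E=0$. The paper's proof is in fact just your first step---tensor the defining equality $\hat x=S_{-E_i}\otimes t_{-x}^*\mc L_i$ by the flat bundle $\mc L\otimes S_{-E}$ and declare the shift to be $[\mc L\otimes S_{-E}]$---and it stops there, silently treating $\mc L$ as though $t_{-x}^*\mc L=\mc L$. So the gap you flagged is genuine and is present in the paper's proof as well.

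Your proposed patch, however, does not close it. In the transverse regime you claim constancy of $\phi_{S_{-E}}$ on $U$ is automatic because the intersection is zero-dimensional; but zero-dimensional only means $U$ is a coset of the \emph{finite} group $\ker\phi_{E_2-E_1}$, which typically has $\det(E_2-E_1)>1$ points, and $\phi_{S_{-E}}$ need not be constant on such a coset. Concretely, on an elliptic curve take $E_2-E_1$ of degree $2$ and $E$ of degree $3$: then $U$ is a coset of the $2$-torsion while $\ker\phi_{S_{-E}}$ is the $3$-torsion, and these meet only in a single point. The degenerate case fares no better: the containment $\ker\phi_{E_2-E_1}\subseteq\ker\phi_{E}$ you plan to extract from elementary divisors is simply false for generic $E$ (take $E_1=E_2$ and $E\neq 0$, so the left side is all of $X$). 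Indeed, already for $\mc L_1=\mc L_2=\oh_X$ and $\mc L$ of nonzero degree, the two ``intersections'' are $\mathbb O_X$ and $\mathbb L(\mc L)$, and the latter is visibly not a $\hat X$-translate of the former.

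What \emph{does} survive your computation is the correct weakening: with the additive choice of the $S_{-E}$'s (which you justify correctly), both intersections are graphs over the \emph{same} subset $U\subseteq X$, so $\pi$ identifies each of them with $U$. This is all that the application in \cref{thm:computingIntersectionArbitraryLifts} actually requires---note the phrase ``up to translations in $\hat X$ and $X$'' there, and that only the $X$-projection of the intersection is used. The honest fix is therefore to weaken the conclusion (identify the intersections via $\pi$ rather than via a rigid $t_{\hat x}$), not to search for a single $\hat x\in\hat X$, which in general does not exist.
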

    \begin{proof}
        Let $E_i=c_1(\mc{L}_i)$ and $E=c_1(\mc{L})$, recalling that
        $S_{-E_i}$ is the symmetric $-E_i$-flat line bundle fixed at the beginning
        of this section.
        If $(x,\hat{x})$ lies in the intersection of
        $\mathbb{L}_1:=\mathbb{L}(\mc{L}_1)$ and
        $\mathbb{L}_2:=\mathbb{L}(\mc{L}_2)$,
        then by definition
        \begin{equation}
            \hat{x} = S_{-E_1}\otimes t_{-x}^*\mc{L}_1
            =S_{-E_2}\otimes t_{-x}^*\mc{L}_2
        \end{equation}.
        After tensoring with $\mc{L}\otimes S_{-E}$, equality still holds at $x$:
        \begin{equation}
            S_{-E_1}\otimes t_{-x}^*\mc{L}_1\otimes\mc{L}\otimes S_{-E}
            =
            S_{-E_2}\otimes t_{-x}^*\mc{L}_2\otimes\mc{L}\otimes S_{-E}
        \end{equation}
        but this may no longer be the line bundle $\hat{x}$, as it has been
        shifted by the flat line bundle $\mc{L}\otimes S_{-E}$.
    \end{proof}

    Using these properties, as well as the results of
    \cref{SSS:cohomologyOfLineBundles},
    the intersections of lifts of arbitrary line bundles can be computed.

    \begin{theorem}
        \label{thm:liftsDoNotIntersect}
        Let $\mc{L}_1$ and $\mc{L}_2$ be two holomorphic line bundles with
        $c_1(\mc{L}_1) = c_1(\mc{L}_2)$. Then, their lifts do not intersect unless
        $\mc{L}_1=\mc{L}_2$. 
    \end{theorem}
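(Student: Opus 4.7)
The plan is to reduce the general case to a comparison between lifts of topologically trivial line bundles, where both lifts are literally fibers of $\hat{\pi}$, and then observe that two distinct fibers of a projection are disjoint.

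First I would invoke \cref{prop:intersectionLiftUnderTensor} with the tensor factor $\mc{L} = \mc{L}_2^{-1}$. This yields
\begin{equation}
    \mathbb{L}(\mc{L}_1)\cap \mathbb{L}(\mc{L}_2)
    = t_{\hat{x}}\bigl( \mathbb{L}(\mc{L}_1\otimes \mc{L}_2^{-1})\cap \mathbb{L}(\oh_X) \bigr)
\end{equation}
for some $\hat{x}\in\hat{X}$. Since the translation $t_{\hat{x}}$ is a diffeomorphism, the intersection on the left is empty if and only if the intersection on the right is, so it suffices to analyze the latter.

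Next, set $\mc{M} = \mc{L}_1\otimes \mc{L}_2^{-1}$. The hypothesis $c_1(\mc{L}_1) = c_1(\mc{L}_2)$ forces $c_1(\mc{M}) = 0$, so $\mc{M}\in \Pic^0(X)$ is a flat (translation-invariant) line bundle. In particular $t_{-x}^*\mc{M} = \mc{M}$ for every $x\in X$, and by \cref{ex:liftStructure} and \cref{ex:liftFlat} the lifts compute as
\begin{equation}
    \mathbb{L}(\mc{M}) = X\times \{[S_0\otimes \mc{M}]\},
    \qquad
    \mathbb{L}(\oh_X) = X\times \{[S_0]\},
\end{equation}
each a fiber of the projection $\hat{\pi}:\mathbb{X}\to \hat{X}$ over a single point of $\hat{X}$.

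Finally I would conclude: two fibers of $\hat{\pi}$ either coincide or are disjoint, so $\mathbb{L}(\mc{M})\cap \mathbb{L}(\oh_X)\ne \varnothing$ if and only if $[S_0\otimes \mc{M}] = [S_0]$ in $\hat{X}$, equivalently $\mc{M}\cong \oh_X$, equivalently $\mc{L}_1\cong \mc{L}_2$. There is no real obstacle here; the only thing to be careful about is the ambiguity of the auxiliary symmetric line bundle $S_0$, but it appears on both sides and so drops out. This proves the contrapositive: if $\mc{L}_1\neq \mc{L}_2$, then $\mathbb{L}(\mc{L}_1)\cap \mathbb{L}(\mc{L}_2)=\varnothing$.
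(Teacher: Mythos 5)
Your argument is correct. It takes a slightly different route from the paper's, which is a one-line appeal to \cref{prop:liftsAreEquivariant}: writing $\mc{L}_2 = \mc{L}_0\otimes\mc{L}_1$ with $\mc{L}_0\in\Pic^0(X)$, equivariance gives $\mathbb{L}(\mc{L}_2) = \mathbb{L}(\mc{L}_1) + [\mc{L}_0]$, a vertical translate (in the $\hat{X}$-direction) of the graph $\mathbb{L}(\mc{L}_1)$, which is therefore disjoint from the original graph unless $[\mc{L}_0]=0$. You instead tensor both bundles by $\mc{L}_2^{-1}$ via \cref{prop:intersectionLiftUnderTensor} and reduce to comparing two fibers of $\hat{\pi}$, which is equally valid; the fact that the translation $t_{\hat{x}}$ in that proposition is unspecified is harmless because only the (non)emptiness of the intersection matters. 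The paper's route is marginally more economical in that it avoids \cref{prop:intersectionLiftUnderTensor} altogether, while yours has the virtue of making the disjointness completely transparent (two distinct fibers of a projection). Two small remarks: by the convention fixed at the start of the section on the geometry of doubled branes, $S_0=\oh_X$, so the cancellation of the auxiliary symmetric bundle you flag is automatic; and your observation that flat line bundles are translation-invariant (so $t_{-x}^*\mc{M}=\mc{M}$) is exactly the content of \cref{ex:liftFlat}, so that step is already covered by the paper's examples.
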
 
    \begin{proof}
        This follows immediately from \cref{prop:liftsAreEquivariant}, since
        $\mc{L}_2 = \mc{L}_0\otimes\mc{L}_1$ for some $\mc{L}_0\in\Pic^0(X)$.
    \end{proof} 

    \begin{theorem}
        \label{thm:computingIntersectionArbitraryLifts}
        Let $\mc{L}_1$ and $\mc{L}_2$ be arbitrary line bundles,
        with lifts $\mathbb{L}_1$ and $\mathbb{L}_2$. Then,
        their intersection is either empty or is given by
        \begin{equation}
            \mathbb{L}_1\cap\mathbb{L}_2 = K(\mc{L}_2\otimes\mc{L}_1^{-1})
        \end{equation}
        up to translations in $\hat{X}$ and $X$,
        where $K(\mc{L}_2\otimes\mc{L}_1^{-1})$ is the kernel of the map
        defined in \eqref{eq:phiLDefn}, thought of as a submanifold of $\mathbb{X}$
        via the zero section $\mathbb{O}_X$.
    \end{theorem}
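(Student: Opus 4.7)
The plan is to reduce to the case where one of the line bundles is the structure sheaf via \cref{prop:intersectionLiftUnderTensor}, and then directly solve the defining equation using the analytic representation of $\phi_{\mc{L}}$. Set $\mc{M} = \mc{L}_2\otimes\mc{L}_1^{-1}$ and $E = c_1(\mc{M}) = c_1(\mc{L}_2)-c_1(\mc{L}_1)$. Tensoring both branes by $\mc{L}_1^{-1}$, \cref{prop:intersectionLiftUnderTensor} replaces $\mathbb{L}_1\cap\mathbb{L}_2$ with $\mathbb{O}_X \cap \mathbb{L}(\mc{M})$ up to translation in $\hat{X}$ (the translation by $[\mc{L}_1^{-1}\otimes S_{-c_1(\mc{L}_1)}]$ coming from the proof of that proposition). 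Hence it suffices to compute $\mathbb{O}_X\cap\mathbb{L}(\mc{M})$.

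Next I would write down the defining condition. By \cref{thm:liftDescriptionDC}, a point $(x,\hat{x})$ lies in $\mathbb{L}(\mc{M})$ exactly when $\hat{x} = S_{-E}\otimes t_{-x}^*\mc{M}$, and from \cref{ex:liftStructure} it lies in $\mathbb{O}_X$ exactly when $\hat{x} = [S_0]$ for the fixed flat symmetric square root used to define $\mathbb{O}_X$. Equating these and applying \eqref{eq:phiLDefn} in the form $t_{-x}^*\mc{M} = \phi_{\mc{M}}(-x)\otimes\mc{M}$, the intersection condition becomes
\begin{equation}
   \phi_{\mc{M}}(-x) \;=\; [S_0]\otimes S_{-E}^{-1}\otimes\mc{M}^{-1},
\end{equation}
where both sides are regarded as points of $\Pic^0(X)\cong\hat{X}$ (and the right-hand side is topologically trivial because $c_1(S_{-E}\otimes\mc{M}) = -E + E = 0$).

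The $x$-coordinate of a point in the intersection is therefore a preimage of a fixed element of $\hat{X}$ under the group homomorphism $\phi_{\mc{M}}:X\to\hat{X}$. Such preimages either do not exist (the intersection is empty) or form a single coset of $\ker(\phi_{\mc{M}}) = K(\mc{M}) = K(\mc{L}_2\otimes\mc{L}_1^{-1})$ (using additivity of $\phi$, which makes $K$ insensitive to inversion). For every such $x$, the corresponding $\hat{x}$ is uniquely determined (it is $[S_0]$), so projection to the $X$-factor gives a bijection of $\mathbb{O}_X\cap\mathbb{L}(\mc{M})$ with a translate of $K(\mc{M})$ inside $X$. Embedding this back into $\mathbb{X}$ through $\mathbb{O}_X = X\times\{[S_0]\}$ gives exactly the described statement, and the two kinds of ambiguity in the theorem (translations in $X$ and $\hat{X}$) correspond respectively to the choice of coset representative of $K(\mc{M})$ and to the $\hat{X}$-shift introduced by the reduction to $\mathbb{O}_X$.

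The argument is essentially bookkeeping once the algebraic reformulation is carried out; the only subtle point is to see that when the intersection is nonempty it is an actual coset of $K(\mc{M})$ rather than a larger or smaller set, which follows because $\phi_{\mc{M}}$ is a group homomorphism with image equal to its connected component in $\hat{X}$. I do not anticipate any serious obstacle here, only care in keeping track of which translations lie in $X$ versus $\hat{X}$ and in confirming that the cardinality/degree statement \eqref{eq:cardinalityOfKernelPhiL} is consistent with the coset structure we obtain.
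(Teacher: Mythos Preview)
Your proof is correct and follows essentially the same route as the paper: reduce via \cref{prop:intersectionLiftUnderTensor} to $\mathbb{O}_X\cap\mathbb{L}(\mc{M})$, read off the defining equation from \cref{thm:liftDescriptionDC}, and recognize the solution set as a fiber of a group homomorphism, hence empty or a coset of its kernel. The only cosmetic differences are that the paper works with $\phi_{S_{-E}}$ (via \cref{ex:liftEFlat}) rather than $\phi_{\mc{M}}$---these agree up to sign since $\phi$ depends only on the Chern class---and that the paper has already fixed $S_0=\oh_X$, so your $[S_0]$ term is zero there.
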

    \begin{proof}
        Let $E_i=c_1(\mc{L}_i)$, and recall that $S_{E_i}$ is the chosen
        symmetric $E_i$-flat line bundle. 
        By \cref{prop:intersectionLiftUnderTensor}, up to a shift in $\hat{X}$
        the intersection can equivalently be computed as
        \begin{equation}\label{eq:intersectionStructureLiftWithLineBundle}
            \begin{aligned}
                \mathbb{L}_1\cap\mathbb{L}_2
                &=\mathbb{L}(\mc{L}_1\otimes\mc{L}_1^{-1})
                \cap \mathbb{L}(\mc{L}_2\otimes\mc{L}_1^{-1})\\
                &=\mathbb{O}_X\cap\mathbb{L}(\mc{L}_2\otimes\mc{L}_1^{-1})
            \end{aligned}
        \end{equation}.
        
        As was noted in \cref{ex:liftEFlat}, the lift of
        $\mc{L}_2\otimes\mc{L}_1^{-1}$ is the graph of the map
        \begin{equation}
            \phi:= \phi_{S_{-E_2+E_1}} + \hat{x}_1
        \end{equation}
        for
        \begin{equation}
            \hat{x}_1 = \left( \mc{L}_2\otimes\mc{L}_1^{-1} \right)
            \otimes S_{-E_2+E_1}
        \end{equation}
        . 
        Since $\mathbb{O}_X$ is the zero section, the intersection to be computed
        \eqref{eq:intersectionStructureLiftWithLineBundle}
        is the kernel of this map, which is explicitly computed as the
        set of $x\in X$ satisfying
        \begin{equation}
            \phi(x) = -\hat{x}_1
        \end{equation}.
        But since $\phi$ is a group homomorphism, the set $\phi^{-1}(\hat{x}_1)$
        of solutions is either empty or a translate in $X$ of the kernel of $\phi$.
    \end{proof}

    \begin{remark}
        The case of \ref{thm:liftsDoNotIntersect} is a special case of        
        \ref{thm:computingIntersectionArbitraryLifts} when $E_2 = -E_1$.
        In this case,
        \begin{equation}
            \phi = \phi_{\oh_X} + \left( \mc{L}_2\otimes\mc{L}_1^{-1} \right)
        \end{equation}
        and since $\phi_{\oh_X}$ is the zero map, we recover the desired result.
    \end{remark}

    \subsubsection{Relation to Physics: $\Ext$-groups}
    \label{SSS:relationPhysicsExt}
    
    It is expected that the intersection theory of the doubled lifts of
    rank-one B-type D-branes is closely related to the corresponding open string
    spectrum. Specifically, in \cite{qin2020coisotropic} it was noted that
    the complex structure on $\mathbb{X}$ induced by the symplectic structure
    on $X$ selects a subspace of the intersection Floer cohomology on the lifts
    which computes the A-model open string spectra. 

    We now show that, in certain cases, a similar phenomenon happens to the
    intersection theory of lifts of B-type D-branes with respect to the induced
    complex structure. Namely, the total $Ext$-group between the two line bundles
    can be computed in this way. Recall (\cref{thm:katzSharpeExtGroups}) the
    B-model $\Hom$-spaces are computed using the total $\Ext$-group. We
    denote by $\HF^*$ the Lagrangian intersection Floer cohomology.

    \begin{definition}[\protect{c.f.\ \cite{qin2020coisotropic}*{Definition~5.1}}]
        Let $\mc{J}$ be the lift of the complex structure, defined in \cref{ex:liftComplexStructure}.
        The \textit{$\mc{J}$-holomorphic part} of the Floer cohomology ring
        $\HF^*(\mathbb{L},\mathbb{L})\cong H^*(\mathbb{L},\C)$ for any lift
        $\mathbb{L}$ is the $(0,*)$-part of $H^*(\mathbb{L},\C)$ under
        the Hodge decomposition with respect to the complex structure $\mc{J}$.
    \end{definition} 

    \begin{theorem}
        \label{thm:ExtIsIntersectionEqualChern}
        Let $\mc{L}_1$ and $\mc{L}_2$ be two holomorphic line bundles with
        $c_1(\mc{L}_1) = c_1(\mc{L}_2)$. Then,
        \begin{equation}
            \Hom_B(\mc{L}_1,\mc{L}_2) 
            = \HF^*_\mc{J}(\mathbb{L}(\mc{L}_1),\mathbb{L}(\mc{L}_2)
        \end{equation}
        where $\HF^*_\mc{J}$ is the $\mc{J}$-holomorphic part of $\HF^*$,
        and $\mc{J} = \mc{J}_J$ is the complex structure on $\mathbb{X}$
        induced by the complex structure on $X$ (\cref{ex:liftComplexStructure}).
    \end{theorem}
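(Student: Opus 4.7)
The plan is to reduce both sides of the claim to computations on $X$ and then compare. Begin by invoking \cref{thm:katzSharpeExtGroups} to identify $\Hom_B(\mc{L}_1, \mc{L}_2) = \bigoplus_q \Ext^q(\mc{L}_1,\mc{L}_2) \cong \bigoplus_q H^q(X, \mc{L}_1^{-1} \otimes \mc{L}_2)$. Since $c_1(\mc{L}_1) = c_1(\mc{L}_2)$, the line bundle $\mc{L}_1^{-1}\otimes\mc{L}_2$ lies in $\Pic^0(X)$, and is trivial if and only if $\mc{L}_1 \cong \mc{L}_2$. I would split the argument along this dichotomy.

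In the case $\mc{L}_1 \not\cong \mc{L}_2$, the bundle $\mc{L}_1^{-1} \otimes \mc{L}_2$ is a nontrivial flat line bundle. Applying \cref{thm:cohomologyOfLineBundles} with $H = 0$ and $r = s = 0$, so that $K(\mc{L})_0 = X$ while $\mc{L}|_{K(\mc{L})_0}$ fails to be trivial, forces $H^q(X, \mc{L}_1^{-1}\otimes\mc{L}_2) = 0$ for all $q$, so $\Hom_B(\mc{L}_1,\mc{L}_2) = 0$. On the Floer side, \cref{thm:liftsDoNotIntersect} asserts that the lifts $\mathbb{L}(\mc{L}_1)$ and $\mathbb{L}(\mc{L}_2)$ are disjoint, so $\HF^*(\mathbb{L}(\mc{L}_1), \mathbb{L}(\mc{L}_2)) = 0$, and its $\mc{J}$-holomorphic part vanishes, matching the B-model side.

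In the case $\mc{L}_1 \cong \mc{L}_2 =: \mc{L}$, the B-model side reduces to $\bigoplus_q H^q(X, \oh_X) = H^{0,*}(X)$. The lifts coincide, so $\HF^*(\mathbb{L}(\mc{L}), \mathbb{L}(\mc{L})) \cong H^*(\mathbb{L}(\mc{L}), \C)$ by the standard Floer self-intersection result. The key step is then to show that the projection $\pi: \mathbb{L}(\mc{L}) \to X$ is a biholomorphism when $\mathbb{L}(\mc{L})$ carries the complex structure inherited from $(\mathbb{X}, \mc{J})$, so that $\pi^*$ induces an isomorphism of Dolbeault cohomologies and in particular identifies the $\mc{J}$-holomorphic part of $H^*(\mathbb{L}(\mc{L}), \C)$ with $H^{0,*}(X) \cong \bigoplus_q H^q(X,\oh_X)$.

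The main obstacle is verifying that $\pi$ intertwines the complex structures. By \cref{thm:liftDescriptionDC} the lift is the graph of a smooth map $X \to \hat{X}$, so $\pi$ is automatically a diffeomorphism. For holomorphicity, \cref{prop:tangentBundleLift} identifies $T\mathbb{L}(\mc{L})$ with $\{(v, \iota_v E) : v \in TX\}$ under the isomorphism $T\mathbb{X} \cong \pi^*(TX \oplus T^*X)$, where $E = c_1(\mc{L})$. The structure $\mc{J}_J = \diag(J, -J^*)$ sends $(v, \iota_v E)$ to $(Jv, -J^*(\iota_v E))$. Since $E$ is of type $(1,1)$, a short computation using $E(JX, JY) = E(X,Y)$ yields $-J^*(\iota_v E) = \iota_{Jv} E$, so $\mc{J}_J$ preserves $T\mathbb{L}(\mc{L})$ and $d\pi$ intertwines $\mc{J}_J$ with $J$ on $TX$. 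Combined with $\pi$ being a diffeomorphism, this yields the biholomorphism and completes the comparison.
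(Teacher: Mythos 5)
Your proof is correct and follows essentially the same route as the paper's: split on whether $\mc{L}_1\cong\mc{L}_2$, use \cref{thm:liftsDoNotIntersect} to match the vanishing on both sides, and in the coincident case reduce to $H^*(\oh_X)$ versus the $(0,*)$-part of $H^*(\mathbb{L},\C)$. Your explicit verification that $\pi\colon\mathbb{L}(\mc{L})\to X$ is a biholomorphism (via the identity $-J^*(\iota_v E)=\iota_{Jv}E$ for $(1,1)$-forms) is a welcome detail that the paper leaves implicit in its final isomorphism $H^{0,*}(\mathbb{L},\C)\cong H^*(\oh_X)$.
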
 
    \begin{proof}
        The B-model morphism space is computed via Ext-groups, i.e.
        \begin{equation}
            \Hom_B(\mc{L}_1,\mc{L}_2) = \bigoplus_q\Ext^q(\mc{L}_1,\mc{L}_2)
        \end{equation}
        which vanishes unless $\mc{L}_1=\mc{L}_2$. By \cref{thm:liftsDoNotIntersect},
        the lifts of these two line bundles do not intersect unless they
        are equal as well.
        When they are equal, by tensoring with their inverse we find that
        \begin{equation}
            \Hom_B(\mc{L}_1,\mc{L}_2) = \Ext^*(\oh_X,\oh_X) = H^*(\oh_X)
        \end{equation}
        which we have computed in \cref{rk:cohomologyofStructure}.
        
        Let $\mathbb{L} = \mathbb{L}(\mc{L}_1)$ be the lift. The Floer
        cohomology of $\mathbb{L}$ is well-known to be
        \begin{equation}
            \HF^*(\mathbb{L},\mathbb{L}) \cong H^*(\mathbb{L},\C)\cong H^*(X,\C)
        \end{equation}.
        As expected, the $J$-holomorphic part
        \begin{equation}
            \HF^*_J(\mathbb{L},\mathbb{L})\cong H^{0,*}(\mathbb{L},\C)\cong H^*(\oh_X)
        \end{equation}
        computes the desired ext-group.
    \end{proof}

\section{Conclusions and Outlook} %---

The ideas of T-duality suggest that the geometry arising from string theory on
a torus can be studied using a corresponding theory on the product of a torus
and its dual. We have seen this bear out in the boundary theory of the topological
B-model by defining the doubled lift of a space-filling rank-one $E$-flat
D-brane and studying the properties of this lift.

The geometry of the doubling space was studied, and it was shown to have
a canonical neutral metric and fundamental two-form. Furthermore, we saw
that the complex structure on the original torus lifted to a complex
structure on the doubling space which, paired with the neutral metric, induced
a symplectic structure on the doubling space. The lifts of D-branes were shown
to be both holomorphic under the induced complex structure and
maximally isotropic under the canonical metric, making them Lagrangian with
respect to the induced symplectic form.

The intersection theory of these Lagrangian lifts was shown, in the case of
two line bundles of the same topological type, to compute the expected
$\Hom$-space of the topological B-model. This agrees with the parallel
analysis of \cite{qin2020coisotropic} in the A-model case. 
We also showed that the intersection properties of these Lagrangian lifts
generally is described by the kernel of the well-known morphism of
\eqref{eq:phiLDefn}.

Because the lift is $\Pic^0(X)$-equivariant up to translations, computations
of arbitrary intersections reduce to intersections of a modified lift
with the zero section $\mathbb{O}_X$ defined in \cref{ex:liftStructure}.
The topological B-model $\Hom$-space is expected to be given by
\begin{equation}
    \Hom_B(\oh_X,\mc{L}) = \bigoplus_q \Ext^q(\oh_X,\mc{L})\cong H^q(X,\mc{L})
\end{equation}.
In the case of $\mc{L}$ being positive-definite, the intersection of its lift
$\mathbb{L}(\mc{L})$ with $\mathbb{O}_X$ is zero-dimensional
and cohomology of $\mc{L}$ is concentrated in degree zero. The na\"ive
expectation that the cardinality of the intersection agree with $h^0(\mc{L})$,
however, is incorrect. In fact, comparing
\cref{thm:cohomologyOfLineBundles} with \eqref{eq:cardinalityOfKernelPhiL}
we see that
\begin{equation}
    \#(\mathbb{L}(\mc{L})\cap\mathbb{O}_X) = \left( h^0(\mc{L}) \right)^2
\end{equation}.
One possible hint to resolving this lies in \cite{kapustin2005open} where it was
shown that the open string spectrum for a generalized complex brane is
computable using Courant algebroid cohomology. One might expect that
this cohomology has a geometric analogue in the doubling space. We leave this
question, however, for future work.

%----- Bibliography ----------------

% \bib, bibdiv, biblist are defined by the amsrefs package.
\begin{bibdiv}
    \begin{biblist}

        \bib{Alvarez-Gaume:1981exv}{article}{
            author={Alvarez-Gaume, Luis},
            author={Freedman, Daniel~Z.},
            title={{Geometrical structure and ultraviolet finiteness in the supersymmetric sigma model}},
            date={1981},
            journal={Commun. Math. Phys.},
            volume={80},
            pages={443},
        }

        \bib{Aspinwall:2009isa}{book}{
            author={Aspinwall, Paul~S.},
            author={Bridgeland, Tom},
            author={Craw, Alastair},
            author={Douglas, Michael~R.},
            author={Kapustin, Anton},
            author={Moore, Gregory~W.},
            author={Gross, Mark},
            author={Segal, Graeme},
            author={Szendr\"oi, Bal\'azs},
            author={Wilson, P. M.~H.},
            title={{Dirichlet branes and mirror symmetry}},
            series={Clay Mathematics Monographs},
            publisher={AMS},
            address={Providence, RI},
            date={2009},
            volume={4},
        }

        \bib{aspinwall2001derived}{article}{
            author={Aspinwall, Paul~S},
            author={Lawrence, Albion},
            title={Derived categories and zero-brane stability},
            date={2001},
            journal={Journal of High Energy Physics},
            volume={2001},
            number={08},
            pages={004},
            eprint={hep-th/0104147},
            url={https://arxiv.org/abs/hep-th/0104147},
        }

        \bib{Bar2014differential}{book}{
            author={B\"ar, Christian},
            author={Becker, Christian},
            title={Differential characters},
            series={Lecture Notes in Mathematics},
            publisher={Springer, Cham},
            date={2014},
            volume={2112},
            ISBN={978-3-319-07033-9; 978-3-319-07034-6},
            url={https://doi.org/10.1007/978-3-319-07034-6},
            review={\MR{3237728}},
        }

        \bib{Barrett:1991aj}{article}{
            author={Barrett, J.~W.},
            title={Holonomy and path structures in general relativity and {Y}ang-{M}ills theory},
            date={1991},
            ISSN={0020-7748,1572-9575},
            journal={Internat. J. Theoret. Phys.},
            volume={30},
            number={9},
            pages={1171\ndash 1215},
            url={https://doi.org/10.1007/BF00671007},
            review={\MR{1122025}},
        }

        \bib{Belov:2006jd}{article}{
            author={Belov, Dmitriy},
            author={Moore, Gregory~W.},
            title={{Holographic action for the self-dual field}},
            date={2006},
            eprint={hep-th/0605038},
        }

        \bib{bridgeland2007stability}{article}{
            author={Bridgeland, Tom},
            title={Stability conditions on triangulated categories},
            date={2007},
            journal={Annals of Mathematics},
            pages={317\ndash 345},
            eprint={math/0212237},
            url={https://annals.math.princeton.edu/2007/166-2/p01},
        }

        \bib{simons1985differential}{incollection}{
            author={Cheeger, Jeff},
            author={Simons, James},
            title={Differential characters and geometric invariants},
            date={1985},
            booktitle={Geometry and topology ({C}ollege {P}ark, {M}d., 1983/84)},
            series={Lecture Notes in Math.},
            volume={1167},
            publisher={Springer, Berlin},
            pages={50\ndash 80},
            url={https://doi.org/10.1007/BFb0075216},
            review={\MR{827262}},
        }

        \bib{Deligne:1999qp}{book}{
            editor={Deligne, P.},
            editor={Etingof, P.},
            editor={Freed, D.~S.},
            editor={Jeffrey, L.~C.},
            editor={Kazhdan, D.},
            editor={Morgan, J.~W.},
            editor={Morrison, D.~R.},
            editor={Witten, Edward},
            title={{Quantum fields and strings: A course for mathematicians. Vol. 1, 2}},
            date={1999},
            ISBN={978-0-8218-2012-4},
        }

        \bib{Fukaya2009Lagrangian}{book}{
            author={Fukaya, Kenji},
            author={Oh, Yong-Geun},
            author={Ohta, Hiroshi},
            author={Ono, Kaoru},
            title={Lagrangian intersection {F}loer theory: anomaly and obstruction. {P}art {I}},
            series={AMS/IP Studies in Advanced Mathematics},
            publisher={American Mathematical Society, Providence, RI; International Press, Somerville, MA},
            date={2009},
            volume={46.1},
            ISBN={978-0-8218-4836-4},
            url={https://doi.org/10.1090/amsip/046.1},
            review={\MR{2553465}},
        }

        \bib{Griffiths1994principles}{book}{
            author={Griffiths, Phillip},
            author={Harris, Joseph},
            title={Principles of algebraic geometry},
            series={Wiley Classics Library},
            publisher={John Wiley \& Sons, Inc., New York},
            date={1994},
            ISBN={0-471-05059-8},
            url={https://doi.org/10.1002/9781118032527},
            note={Reprint of the 1978 original},
            review={\MR{1288523}},
        }

        \bib{gualtieri2004generalized}{article}{
            author={Gualtieri, Marco},
            title={Generalized complex geometry},
            date={2011},
            journal={Annals of Mathematics. Second Series},
            volume={174},
            number={1},
            pages={75\ndash 123},
            eprint={math/0401221},
        }

        \bib{halmrast2024supersymmetric}{book}{
            author={Halmrast, Daniel~Mark},
            title={Supersymmetric topological sigma models and doubling spaces},
            date={2024},
            url={https://escholarship.org/uc/item/8tz617wq},
            note={Thesis (Ph.D.)--University of California, Santa Barbara},
        }

        \bib{Hori:2003ic}{book}{
            author={Hori, K.},
            author={Katz, S.},
            author={Klemm, A.},
            author={Pandharipande, R.},
            author={Thomas, R.},
            author={Vafa, C.},
            author={Vakil, R.},
            author={Zaslow, E.},
            title={{Mirror symmetry}},
            series={Clay Mathematics Monographs},
            publisher={AMS},
            address={Providence, USA},
            date={2003},
            volume={1},
        }

        \bib{Hull:2004in}{article}{
            author={Hull, C.~M.},
            title={{A Geometry for non-geometric string backgrounds}},
            date={2005},
            journal={Journal of High Energy Physics},
            volume={10},
            pages={065},
            eprint={hep-th/0406102},
        }

        \bib{Hull:2006va}{article}{
            author={Hull, C~M},
            title={{Doubled geometry and T-folds}},
            date={2007},
            journal={Journal of High Energy Physics},
            volume={07},
            pages={080},
            eprint={hep-th/0605149},
        }

        \bib{Hull:2009sg}{article}{
            author={Hull, C.~M.},
            author={Reid-Edwards, R.~A.},
            title={{Non-geometric backgrounds, doubled geometry and generalised T-duality}},
            date={2009},
            journal={Journal of High Energy Physics},
            volume={09},
            pages={014},
            eprint={0902.4032},
        }

        \bib{Hull:2009mi}{article}{
            author={Hull, Chris},
            author={Zwiebach, Barton},
            title={{Double field theory}},
            date={2009},
            journal={Journal of High Energy Physics},
            volume={09},
            pages={099},
            eprint={0904.4664},
        }

        \bib{huybrechts2005complex}{book}{
            author={Huybrechts, Daniel},
            title={Complex geometry},
            series={Universitext},
            publisher={Springer-Verlag, Berlin},
            date={2005},
            ISBN={3-540-21290-6},
            review={\MR{2093043}},
        }

        \bib{Johnson:2023onr}{book}{
            author={Johnson, Clifford~V.},
            title={{D-Branes}},
            series={Cambridge Monographs on Mathematical Physics},
            date={2003},
            ISBN={978-1-009-40137-1, 978-1-009-40136-4, 978-1-009-40139-5, 978-0-511-05769-4, 978-0-521-03005-2, 978-0-521-80912-2, 978-0-511-60654-0},
        }

        \bib{Joyce:2008pc}{article}{
            author={Joyce, Dominic},
            author={Song, Yinan},
            title={{A theory of generalized {D}onaldson-{T}homas invariants}},
            date={2008-10},
            eprint={0810.5645},
        }

        \bib{kapustin2005open}{article}{
            author={Kapustin, Anton},
            author={Li, Yi},
            title={Open-string {BRST} cohomology for generalized complex branes},
            date={2005},
            journal={Advances in Theoretical and Mathematical Physics},
            volume={9},
            number={4},
            pages={559\ndash 574},
            eprint={hep-th/0501071},
        }

        \bib{Kapustin:2001ij}{article}{
            author={Kapustin, Anton},
            author={Orlov, Dmitri},
            title={{Remarks on {A} branes, mirror symmetry, and the {F}ukaya category}},
            date={2003},
            journal={Journal of Geometry and Physics},
            volume={48},
            number={1},
            pages={84\ndash 99},
            eprint={hep-th/0109098},
        }

        \bib{katz2002d}{article}{
            author={Katz, Sheldon},
            author={Sharpe, Eric},
            title={D-branes, open string vertex operators, and ext groups},
            date={2002},
            ISSN={1095-0753},
            journal={Advances in Theoretical and Mathematical Physics},
            volume={6},
            number={6},
            pages={979–1030},
            eprint={hep-th/0208104},
            url={http://dx.doi.org/10.4310/ATMP.2002.v6.n6.a1},
        }

        \bib{kontsevich1995homological}{inproceedings}{
            author={Kontsevich, Maxim},
            title={Homological algebra of mirror symmetry},
            organization={Springer},
            date={1995},
            booktitle={Proceedings of the {I}nternational {C}ongress of {M}athematicians},
            pages={120\ndash 139},
        }

        \bib{Kontsevich:2008fj}{article}{
            author={Kontsevich, Maxim},
            author={Soibelman, Yan},
            title={{Stability structures, motivic {D}onaldson-{T}homas invariants and cluster transformations}},
            date={2008},
            eprint={0811.2435},
        }

        \bib{lange2013complex}{book}{
            author={Lange, Herbert},
            author={Birkenhake, Christina},
            title={Complex abelian varieties},
            publisher={Springer Berlin, Heidelberg},
            date={2013},
            volume={302},
        }

        \bib{Narain:1985jj}{article}{
            author={Narain, K.~S.},
            title={{New heterotic string theories in uncompactified dimensions \ensuremath{<} 10}},
            date={1986},
            journal={Phys. Lett. B},
            volume={169},
            pages={41\ndash 46},
        }

        \bib{Ooguri:1996ck}{article}{
            author={Ooguri, Hirosi},
            author={Oz, Yaron},
            author={Yin, Zheng},
            title={{D-branes on {C}alabi-{Y}au spaces and their mirrors}},
            date={1996},
            journal={Nucl. Phys. B},
            volume={477},
            pages={407\ndash 430},
            eprint={hep-th/9606112},
        }

        \bib{polishchuk2003abelian}{book}{
            author={Polishchuk, Alexander},
            title={Abelian varieties, theta functions and the {F}ourier transform},
            series={Cambridge Tracts in Mathematics},
            publisher={Cambridge University Press, Cambridge},
            date={2003},
            volume={153},
            ISBN={0-521-80804-9},
            url={https://doi.org/10.1017/CBO9780511546532},
            review={\MR{1987784}},
        }

        \bib{qin2020coisotropic}{book}{
            author={Qin, Yingdi},
            title={Coisotropic branes on tori and homological mirror symmetry},
            publisher={ProQuest LLC, Ann Arbor, MI},
            date={2020},
            ISBN={979-8691-23608-2},
            url={http://gateway.proquest.com/openurl?url_ver=Z39.88-2004&rft_val_fmt=info:ofi/fmt:kev:mtx:dissertation&res_dat=xri:pqm&rft_dat=xri:pqdiss:28089681},
            note={Thesis (Ph.D.)--University of California, Berkeley},
            review={\MR{4197590}},
        }

        \bib{Sharpe:1999qz}{article}{
            author={Sharpe, Eric~R.},
            title={{D-branes, derived categories, and Grothendieck groups}},
            date={1999},
            journal={Nuclear Physics B},
            volume={561},
            pages={433\ndash 450},
            eprint={hep-th/9902116},
        }

        \bib{Siegel:1993th}{article}{
            author={Siegel, W.},
            title={{Superspace duality in low-energy superstrings}},
            date={1993},
            journal={Phys. Rev. D},
            volume={48},
            pages={2826\ndash 2837},
            eprint={hep-th/9305073},
        }

        \bib{Siegel:1993xq}{article}{
            author={Siegel, W.},
            title={{Two vierbein formalism for string inspired axionic gravity}},
            date={1993},
            journal={Phys. Rev. D},
            volume={47},
            pages={5453\ndash 5459},
            eprint={hep-th/9302036},
        }

        \bib{Strominger:1996it}{article}{
            author={Strominger, Andrew},
            author={Yau, Shing-Tung},
            author={Zaslow, Eric},
            title={{Mirror symmetry is T duality}},
            date={1996},
            journal={Nucl. Phys. B},
            volume={479},
            pages={243\ndash 259},
            eprint={hep-th/9606040},
        }

        \bib{witten1991mirror}{article}{
            author={Witten, Edward},
            title={{Mirror manifolds and topological field theory}},
            date={1998},
            journal={AMS/IP Stud. Adv. Math.},
            volume={9},
            pages={121\ndash 160},
            eprint={hep-th/9112056},
        }

        \bib{Zumino:1979et}{article}{
            author={Zumino, B.},
            title={{Supersymmetry and Kahler manifolds}},
            date={1979},
            journal={Phys. Lett. B},
            volume={87},
            pages={203},
        }

    \end{biblist}
\end{bibdiv}
 
\end{document}